\numberwithin{equation}{section}
\numberwithin{figure}{section}
\theoremstyle{plain}
\newtheorem{thm}{\protect\theoremname}[section]
\theoremstyle{plain}
\newtheorem{cor}[thm]{\protect\corollaryname}
\theoremstyle{remark}
\newtheorem{rem}[thm]{\protect\remarkname}
\theoremstyle{plain}
\newtheorem{prop}[thm]{\protect\propositionname}
\theoremstyle{plain}
\newtheorem{lem}[thm]{\protect\lemmaname}
\providecommand{\corollaryname}{Corollary}
\providecommand{\lemmaname}{Lemma}
\providecommand{\propositionname}{Proposition}
\providecommand{\remarkname}{Remark}
\providecommand{\theoremname}{Theorem}
\definecolor{green}{rgb}{0,0.8,0} 
\newcommand{\nrm}{\@ifstar{\nrmb}{\nrmi}}
\newcommand{\nrmi}[1]{\Vert{#1}\Vert}
\newcommand{\nrmb}[1]{\left\Vert{#1}\right\Vert}
\newcommand{\abs}{\@ifstar{\absb}{\absi}}
\newcommand{\absi}[1]{\vert{#1}\vert}
\newcommand{\absb}[1]{\left\vert{#1}\right\vert}
\newcommand{\brk}{\@ifstar{\brkb}{\brki}}
\newcommand{\brki}[1]{\langle{#1}\rangle}
\newcommand{\brkb}[1]{\left\langle{#1}\right\rangle}
\newcommand{\set}{\@ifstar{\setb}{\seti}}
\newcommand{\seti}[1]{\{#1\}}
\newcommand{\setb}[1]{\left\{ #1\right\}}
\newcommand{\td}[1]{\widetilde{#1}}
\newcommand{\br}[1]{\overline{#1}}
\newcommand{\ul}[1]{\underline{#1}}
\newcommand{\wh}[1]{\widehat{#1}}
\newcommand{\VERT}[1]{{\left\vert\kern-0.25ex\left\vert\kern-0.25ex\left\vert #1 
    \right\vert\kern-0.25ex\right\vert\kern-0.25ex\right\vert}}
\DeclareMathOperator{\supp}{supp}
\let\Re\relax
\DeclareMathOperator{\Re}{Re}
\let\Im\relax
\DeclareMathOperator{\Im}{Im}
\newcommand{\aeq}{\sim}
\newcommand{\aleq}{\lesssim}
\newcommand{\ageq}{\gtrsim}
\newcommand{\lap}{\Delta}
\newcommand{\ud}{d}
\newcommand{\rd}{\partial}
\newcommand{\nb}{\nabla}
\newcommand{\peq}{\relphantom{=}}			
\newcommand{\alp}{\alpha}
\newcommand{\gmm}{\gamma}
\newcommand{\Gmm}{\Gamma}
\newcommand{\dlt}{\delta}
\newcommand{\Dlt}{\Delta}
\newcommand{\eps}{\epsilon}
\newcommand{\lmb}{\lambda}
\newcommand{\Lmb}{\Lambda}
\newcommand{\tht}{\theta}
\newcommand{\zt}{\zeta}
\newcommand{\bff}{{\bf f}}
\newcommand{\bfu}{{\bf u}}
\newcommand{\bfv}{{\bf v}}
\newcommand{\bfz}{{\bf z}}
\newcommand{\bfD}{{\bf D}}
\newcommand{\bfV}{{\bf V}}
\newcommand{\bbC}{\mathbb C}
\newcommand{\bbN}{\mathbb N}
\newcommand{\bbQ}{\mathbb Q}
\newcommand{\bbR}{\mathbb R}
\newcommand{\bbS}{\mathbb S}
\newcommand{\bbZ}{\mathbb Z}
\newcommand{\calA}{\mathcal A}
\newcommand{\calC}{\mathcal C}
\newcommand{\calD}{\mathcal D}
\newcommand{\calE}{\mathcal E}
\newcommand{\calF}{\mathcal F}
\newcommand{\calH}{\mathcal H}
\newcommand{\calK}{\mathcal K}
\newcommand{\calL}{\mathcal L}
\newcommand{\calM}{\mathcal M}
\newcommand{\calN}{\mathcal N}
\newcommand{\calO}{\mathcal O}
\newcommand{\calP}{\mathcal P}
\newcommand{\calS}{\mathcal S}
\newcommand{\calT}{\mathcal T}
\newcommand{\calU}{\mathcal U}
\newcommand{\calZ}{\mathcal Z}
\newcommand{\frkm}{\mathfrak m}
\newcommand{\To}{\longrightarrow}
\newcommand{\weakto}{\rightharpoonup}
\newcommand{\embed}{\hookrightarrow}
\newcommand{\lin}{\mathrm{lin}}
\newcommand{\remain}{\mathrm{rem}}
\newcommand{\rN}{\mathring{\calN}}				
\newcommand{\rV}{\mathring{V}}				
\providecommand{\corollaryname}{Corollary}
\providecommand{\lemmaname}{Lemma}
\providecommand{\propositionname}{Proposition}
\providecommand{\remarkname}{Remark}
\providecommand{\theoremname}{Theorem}
\begin{document}
\global\long\def\bbC{\mathbb{C}}%
\global\long\def\bbN{\mathbb{N}}%
\global\long\def\bbQ{\mathbb{Q}}%
\global\long\def\bbR{\mathbb{R}}%
\global\long\def\bbS{\mathbb{S}}%
\global\long\def\bbZ{\mathbb{Z}}%
\global\long\def\bfD{{\bf D}}%
\global\long\def\bfv{{\bf v}}%
\global\long\def\bfV{{\bf V}}%
\global\long\def\calA{\mathcal{A}}%
\global\long\def\calC{\mathcal{C}}%
\global\long\def\calD{\mathcal{D}}%
\global\long\def\calE{\mathcal{E}}%
\global\long\def\calF{\mathcal{F}}%
\global\long\def\calH{\mathcal{H}}%
\global\long\def\calK{\mathcal{K}}%
\global\long\def\calL{\mathcal{L}}%
\global\long\def\calM{\mathcal{M}}%
\global\long\def\calN{\mathcal{N}}%
\global\long\def\calU{\mathcal{U}}%
\global\long\def\calO{\mathcal{O}}%
\global\long\def\calP{\mathcal{P}}%
\global\long\def\calS{\mathcal{S}}%
\global\long\def\calT{\mathcal{T}}%
\global\long\def\calU{\mathcal{U}}%
\global\long\def\calZ{\mathcal{Z}}%
\global\long\def\frkm{\mathfrak{m}}%
\global\long\def\alp{\alpha}%
\global\long\def\dlt{\delta}%
\global\long\def\Dlt{\Delta}%
\global\long\def\eps{\epsilon}%
\global\long\def\gmm{\gamma}%
\global\long\def\zt{\zeta}%
\global\long\def\Gmm{\Gamma}%
\global\long\def\tht{\theta}%
\global\long\def\lmb{\lambda}%
\global\long\def\Lmb{\Lambda}%
\global\long\def\rd{\partial}%
\global\long\def\lap{\Delta}%
\global\long\def\aeq{\sim}%
\global\long\def\aleq{\lesssim}%
\global\long\def\ageq{\gtrsim}%
\global\long\def\lan{\langle}%
\global\long\def\ran{\rangle}%
\global\long\def\peq{\mathrel{\phantom{=}}}%
\global\long\def\To{\longrightarrow}%
\global\long\def\weakto{\rightharpoonup}%
\global\long\def\embed{\hookrightarrow}%
\global\long\def\chf{\mathbf{1}}%
\global\long\def\td#1{\widetilde{#1}}%
\global\long\def\br#1{\overline{#1}}%
\global\long\def\ul#1{\underline{#1}}%
\global\long\def\wh#1{\widehat{#1}}%
\global\long\def\tint#1#2{{\textstyle \int_{#1}^{#2}}}%
\global\long\def\tsum#1#2{{\textstyle \sum_{#1}^{#2}}}%

\global\long\def\CR{\mathbf{D}_{+}}%
\global\long\def\dec{\mathrm{dec}}%
\global\long\def\avg{\mathrm{(avg)}}%
\global\long\def\lin{\mathrm{lin}}%
\global\long\def\rN{\mathring{\calN}}%
\global\long\def\rV{\mathring{V}}%
\global\long\def\remain{\mathrm{rem}}%

\title[Blow-up construction of strongly interacting regime]{Blow-up dynamics for radial self-dual Chern--Simons--Schrödinger
equation with prescribed asymptotic profile }
\author{Kihyun Kim}
\email{kihyun.kim@snu.ac.kr}
\address{Department of Mathematical Sciences and Research Institute of Mathematics,
Seoul National University, 1 Gwanak-ro, Gwanak-gu, Seoul 08826, Korea}
\author{Soonsik Kwon}
\email{soonsikk@kaist.edu}
\address{Department of Mathematical Sciences, Korea Advanced Institute of Science
and Technology, 291 Daehak-ro, Yuseong-gu, Daejeon 34141, Korea}
\author{Sung-Jin Oh}
\email{sjoh@math.berkeley.edu}
\address{Department of Mathematics, UC Berkeley, Evans Hall 970, Berkeley,
CA 94720-3840, USA and Korea Institute for Advanced Study, 80 Hoegi-ro,
Dongdaemun-gu, Seoul 02455, Korea}
\keywords{Chern-Simons-Schrödinger equation, self-duality, blow-up construction,
asymptotic profile}
\subjclass[2020]{35B44, 35Q55, 37K40}
\begin{abstract}
We construct finite energy blow-up solutions for the radial self-dual Chern--Simons--Schr\"odinger equation with a continuum of blow-up rates. Our result stands in stark contrast to the rigidity of blow-up of $H^{3}$ solutions proved by the first author for equivariant index $m \geq 1$, where the soliton-radiation interaction is too weak to admit the present blow-up scenarios. It is optimal (up to an endpoint) in terms of the range of blow-up rates and the regularity of the asymptotic profiles, in view of the authors' previous proof of $H^{1}$ soliton resolution for the self-dual Chern--Simons--Schr\"odinger equation in any equivariance class. 

Our approach is a backward construction combined with modulation analysis, starting from prescribed asymptotic profiles and deriving the corresponding blow-up rates from their strong interaction with the soliton. In particular, our work may be seen as an adaptation of the method of Jendrej--Lawrie--Rodriguez (developed for energy critical equivariant wave maps) to the Schr\"odinger case. 
However, the Schrödinger nature of the equation (in particular, the lack of finite speed of propagation) and the optimal range (up to the $H^{1}$-endpoint) of our blow-up construction give rise to new challenges. Notably, the construction of (approximate) radiation from the prescribed asymptotic profile is one of our key novelties and might be of independent interest.
\end{abstract}

\maketitle
\tableofcontents{}

\section{Introduction}

The subject of this paper is the Chern--Simons--Schrödinger equation,
which is a Lagrangian field theory associated with the action 
\begin{equation}
\calS[\phi,A]\coloneqq\int_{\bbR^{1+2}}\Big(\frac{1}{2}\Im(\br{\phi}\bfD_{t}\phi)+\frac{1}{2}|\bfD_{x}\phi|^{2}-\frac{g}{4}|\phi|^{4}\Big)+\int_{\bbR^{1+2}}\frac{1}{2}A\wedge F.\label{eq:action}
\end{equation}
Here, $g\in\bbR$, $\phi:\bbR^{1+2}\to\bbC$ is a scalar field, $\bfD_{\alpha}\coloneqq\rd_{\alpha}+iA_{\alpha}$
for $\alpha\in\{t,1,2\}$ are the covariant derivatives associated
with the real-valued $1$-form $A\coloneqq A_{t}dt+A_{1}dx_{1}+A_{2}dx_{2}$,
and $F\coloneqq dA$ is the curvature. The Chern--Simons--Schrödinger
equation is introduced by the physicists Jackiw--Pi \cite{JackiwPi1990PRL}
as a nonrelativistic planar quantum electromagnetic model. Since the
action is simply the sum of the Chern--Simons action and the action
for the (gauge-covariant) cubic nonlinear Schrödinger equation, it
can be viewed as a gauged nonlinear Schrödinger equation. With the
special choice of the coupling constant $g=1$, the system becomes
\emph{self-dual.} Remarkably, Jackiw--Pi \cite{JackiwPi1990PRL}
observed that there exist explicit vortex solitons in the self-dual
case. We refer to \cite{JackiwPi1990PRL,JackiwPi1990PRD,JackiwPi1991PRD,JackiwPi1992Progr.Theoret.,Dunne1995Springer}
for more physical background and the self-duality.

Preceded by \cite{KimKwon2019arXiv,KimKwon2020arXiv,KimKwonOh2020arXiv},
we continue our study on the blow-up dynamics of the self-dual Chern--Simons--Schrödinger
equation under equivariance. Our main result (Theorem~\ref{thm:MainThm})
is on the construction of finite-time blow-up solutions with exotic
blow-up rates in the radial case with the optimal range of blow-up rates. The exotic blow-up regime arises from the strong interaction between the soliton and the radiation. We use the approach of Jendrej--Lawrie--Rodriguez \cite{JendrejLawrieRodriguez2019arXiv}, developed in the context of energy critical one-equivariant wave maps. However, the Schr\"odinger case requires a new idea in constructing the radiation solution from the asymptotic profile, due to a different propagation of singularity for the Schr\"odinger equations.

\subsection{Self-dual Chern--Simons--Schrödinger equation}

We consider the \emph{self-dual Chern--Simons--Schrödinger} equation
\begin{equation}
\left\{ \begin{aligned}\bfD_{t}\phi & =i\bfD_{j}\bfD_{j}\phi+i|\phi|^{2}\phi,\\
F_{t1} & =-\Im(\br{\phi}\bfD_{2}\phi),\\
F_{t2} & =\Im(\br{\phi}\bfD_{1}\phi),\\
F_{12} & =-\tfrac{1}{2}|\phi|^{2},
\end{aligned}
\right.\label{eq:CSS-cov}
\end{equation}
which is the Euler--Lagrange equation associated with the action \eqref{eq:action}
with $g=1$. Repeated index $j$ means that we sum over $j\in\{1,2\}$.
The particular choice of the coefficient $g=1$ in front of $|\phi|^{2}\phi$
is referred to as the \emph{self-dual} case.

We remark that \eqref{eq:CSS-cov} has \emph{gauge invariance}; any
solution $(\phi,A)$ to \eqref{eq:CSS-cov} and a function $\chi:\bbR^{1+2}\to\bbR$
give rise to a gauge-equivalent solution $(e^{i\chi}\phi,A-d\chi)$
to \eqref{eq:CSS-cov}.

\subsubsection*{Symmetries and conservation laws}

\eqref{eq:CSS-cov} enjoys various gauge-covariant\footnote{Each symmetry described here consists of a pre-composition of $\phi$
with a coordinate transform $(t',x')\mapsto(t,x)$ and a further transformation
of the resulting $\phi(t',x')$. \emph{Gauge covariance} refers to
the feature that the $1$-form $A$ is simply pulled back by $(t',x')\mapsto(t,x)$.} symmetries and associated conservation laws. In this aspect, \eqref{eq:CSS-cov}
shares many similarities with the cubic NLS 
\begin{equation}
i\rd_{t}\psi+\Delta\psi+|\psi|^{2}\psi=0\quad\text{on }\bbR^{1+2}.\tag{NLS}\label{eq:NLS}
\end{equation}

Among the most basic symmetries are the \emph{time translation symmetry}
\[
(t,x)=(t'+t_{0},x'),\quad\phi'=\phi,\qquad(t_{0}\in\bbR)
\]
and the \emph{phase rotation symmetry} 
\[
(t,x)=(t',x'),\quad\phi'=e^{i\gmm}\phi.\qquad(\gmm\in\bbR)
\]
Associated to these symmetries are the conservation laws for the \emph{energy}
and the \emph{charge}:\footnote{We use the conventional notation $M$ for the charge.}
\begin{align}
E[\phi,A] & \coloneqq\int_{\bbR^{2}}\frac{1}{2}|\bfD_{x}\phi|^{2}-\frac{1}{4}|\phi|^{4}\,dx\label{eq:energy-cov}\\
M[\phi] & \coloneqq\int_{\bbR^{2}}|\phi|^{2}\,dx.\nonumber 
\end{align}
Of particular importance in this work are the \emph{scaling symmetry},
\[
(t,x)=(\lmb^{-2}t',\lmb^{-1}x'),\quad\phi'=\lmb^{-1}\phi,\qquad(\lmb>0)
\]
which preserves the $L^{2}$-norm (or $M[\phi]$), and the \emph{pseudoconformal
symmetry}, 
\begin{equation}
(t,x)=(-\tfrac{1}{t'},\tfrac{x'}{t'}),\quad[\calC\phi](t',x')=\phi'(t',x')=\tfrac{1}{t'}e^{i\frac{|x'|^{2}}{4t'}}\phi.\label{eq:def-pseudoconf}
\end{equation}
Thus \eqref{eq:CSS-cov} and \eqref{eq:NLS} are called \emph{mass-critical}.
Associated to these symmetries are the \emph{virial identities} 
\begin{equation}
\left\{ \begin{aligned}\rd_{t}\left(\int_{\bbR^{2}}|x|^{2}|\phi|^{2}dx\right) & =4\int_{\bbR^{2}}x^{j}\Im(\br{\phi}\bfD_{j}\phi)dx,\\
\rd_{t}\left(\int_{\bbR^{2}}x^{j}\Im(\br{\phi}\bfD_{j}\phi)dx\right) & =4E[\phi,A].
\end{aligned}
\right.\label{eq:virial-cov}
\end{equation}

\subsubsection*{Self-duality}

Introducing the \emph{covariant Cauchy--Riemann operator} 
\[
\CR\coloneqq\bfD_{1}+i\bfD_{2},
\]
a remarkable consequence is the self-dual expression of the energy
\begin{equation}
E[\phi,A]=\frac{1}{2}\int_{\bbR^{2}}|\CR\phi|^{2}\,dx,\label{eq:energy-sd}
\end{equation}
provided $F_{12}=-\frac{1}{2}|\phi|^{2}$. Therefore, the energy minimizers
(i.e., zero energy solutions) obey the \emph{Bogomol'nyi equation}
\begin{equation}
\left\{ \begin{aligned}\CR\phi & =0,\\
F_{12} & =-\frac{1}{2}|\phi|^{2}.
\end{aligned}
\right.\label{eq:bog}
\end{equation}
The last property is the manifestation of \emph{self-duality}. Any
zero-energy solution (or equivalently, a solution to \eqref{eq:bog})
is a static (i.e., $\rd_{t}\phi=0$) solution to \eqref{eq:CSS-cov}
with $A_{t}=-\tfrac{1}{2}|\phi|^{2}$. Conversely, any static solution
with $\phi\in H^{1}$ and mild conditions on $A_{t},A_{j}$ (e.g.,
boundedness) necessarily has zero energy and $A_{t}=-\tfrac{1}{2}|\phi|^{2}$
\cite{HuhSeok2013JMP}.

It was observed by Jackiw--Pi \cite{JackiwPi1990PRD} that, at points
where $\phi$ is nonzero, \eqref{eq:bog} implies that $|\phi|^{2}$
solves the Liouville equation $\Delta(\log|\phi|^{2})=-|\phi|^{2}$.
Using this, they found explicit $m$-equivariant static solutions
(see \eqref{eq:Q-formula} below) for all $m\geq0$.

\subsubsection*{Cauchy problem formulation and Coulomb gauge}

Due to the gauge invariance, we need to fix a gauge in order to consider
the Cauchy problem of \eqref{eq:CSS-cov}. In this paper, we impose
the \emph{Coulomb gauge condition}: 
\begin{equation}
\rd_{1}A_{1}+\rd_{2}A_{2}=0,\label{eq:coulomb}
\end{equation}
along with a suitable decay condition for $A(t,x)$ as $|x|\to\infty$
(at every $t$). We mention that the connection 1-form is determined
by $\phi$ (i.e., $A=A[\phi]$), and hence \eqref{eq:CSS-cov} in
Coulomb gauge can be viewed as an evolution equation solely for $\phi$.
As observed in \cite{JackiwPi1990PRD}, it admits the following Hamiltonian
formulation: 
\begin{equation}
\rd_{t}\phi=-i\nabla E[\phi],\label{eq:CSS-ham}
\end{equation}
where $\nabla$ (acting on a functional) is the Fréchet derivative
with respect to the real inner product $\int_{\bbR^{2}}\Re(\br{\psi}\phi)dx$,
and $E[\phi]=E[\phi,A[\phi]]$ is the energy.

\subsection{Equivariant self-dual Chern--Simons--Schrödinger equation}

In this work, we will work within equivariance symmetry (and Coulomb
gauge). A complex-valued function $\psi$ on $\bbR^{2}$ is said to
be \emph{$m$-equivariant}, $m\in\bbZ$, if 
\[
\psi(r,\tht)=e^{im\tht}v(r),
\]
where $(r,\tht)$ are the polar coordinates. Note that $m=0$ corresponds
to radial symmetry.

Within $m$-equivariance and Coulomb gauge, $A_{t}$,
$A_{r}$, $A_{\tht}$ become radial and the Coulomb gauge condition reduces
to 
\[
A_{r}=0.
\]
The radial profile $u$ of $\phi$, defined by 
\[
\phi(t,r,\tht)=e^{im\tht}u(t,r),
\]
obeys

\begin{equation}
i\rd_{t}u+\rd_{rr}u+\frac{1}{r}\rd_{r}u-\Big(\frac{m+A_{\tht}[u]}{r}\Big)^{2}u-A_{t}[u]u+|u|^{2}u=0,\tag{CSS}\label{eq:CSS-m-equiv}
\end{equation}
where $A_{t}[u]$ and $A_{\tht}[u]$ are given by 
\begin{equation}
A_{t}[u]=-\int_{r}^{\infty}(m+A_{\tht}[u])|u|^{2}\frac{dr'}{r'},\qquad A_{\tht}[u]=-\frac{1}{2}\int_{0}^{r}|u|^{2}r'dr'.\label{eq:def-A}
\end{equation}
Equations \eqref{eq:CSS-m-equiv} and \eqref{eq:def-A} furnish an
evolutionary equation for the radial profile $u$ of an $m$-equivariant
solution $\phi$ to \eqref{eq:CSS-cov} in Coulomb gauge.

The Cauchy--Riemann operator $\bfD_{+}$, with $A$ determined by
$u$ through \eqref{eq:def-A}, can be written in polar coordinates
as 
\[
\bfD_{+}=e^{i\tht}(\rd_{r}+\frac{i}{r}\rd_{\tht}-\frac{1}{r}A_{\tht}[u]).
\]
Thus, it maps $m$-equivariant functions to $(m+1)$-equivariant functions,
and we denote by $\bfD_{u}$ the radial operator acting on the radial
profile of $m$-equivariant functions defined by the relation 
\[
\bfD_{+}(e^{im\tht}w(r))=e^{i(m+1)\tht}[\bfD_{u}w](r).
\]
We have the formula 
\begin{equation}
\bfD_{u}w=\rd_{r}w-\frac{1}{r}(m+A_{\tht}[u])w.\label{eq:CR-radial}
\end{equation}
We call the (nonlinear) operator $u\mapsto\bfD_{u}u$, the \emph{Bogomol'nyi
operator} under $m$-equivariance.

Moreover, we can rewrite the energy functional for the radial profile
$u$ as (cf. \eqref{eq:energy-cov} and \eqref{eq:energy-sd})

\begin{align}
E[u] & =\int\frac{1}{2}|\rd_{r}u|^{2}+\frac{1}{2}\Big(\frac{m+A_{\tht}[u]}{r}\Big)^{2}|u|^{2}-\frac{1}{4}|u|^{2},\label{eq:energy-Coulomb-form}\\
 & =\int\frac{1}{2}|\bfD_{u}u|^{2},\label{eq:energy-self-dual-form}
\end{align}
where we denoted $\int f=2\pi\int_{0}^{\infty}f\,rdr$. The virial
identities (cf. \eqref{eq:virial-cov}) read 
\begin{align}
\rd_{t}\int r^{2}|u|^{2} & =4\int\Im(\br u\cdot r\rd_{r}u),\label{eq:virial-1}\\
\rd_{t}\int\Im(\br u\cdot r\rd_{r}u) & =4E[u].\label{eq:virial-2}
\end{align}

The Hamiltonian structure \eqref{eq:CSS-ham} reads 
\begin{equation}
\rd_{t}u=-i\nabla E[u].\label{eq:CSS-m-equiv-ham}
\end{equation}
As observed in \cite{KimKwon2019arXiv}, by the Hamiltonian structure
\eqref{eq:CSS-m-equiv-ham} and the energy expression \eqref{eq:energy-self-dual-form},
the evolution equation for $u$ can also be written in the self-dual
form: 
\begin{equation}
\rd_{t}u=-iL_{u}^{\ast}\bfD_{u}u,\label{eq:CSS-self-dual-form}
\end{equation}
where $L_{u}$ is obtained by linearizing the Bogomol'nyi operator
around $u$, and $L_{u}^{\ast}$ is its $L^{2}$-adjoint. Note that
$L_{u}$ is a nonlocal operator, which takes the form 
\[
L_{u}w=\rd_{r}w-\frac{1}{r}(m+A_{\tht}[u])w+\frac{u}{r}\int_{0}^{r}\Re(\br uw)r'dr'.
\]
See Section~\ref{subsec:Linearization-of-CSS} for more details.

Finally, for each $m\geq0$, there is a unique solution (Jackiw--Pi vortex) to the Bogomol'nyi equation 
\begin{equation}
\bfD_{Q}Q=0.\label{eq:Bogomol'nyi-eq}
\end{equation}
This gives an \emph{explicit} $m$-equivariant
static solution to \eqref{eq:CSS-self-dual-form} unique up to the symmetries of the equation: 
\begin{equation}
Qe^{im\tht}=\sqrt{8}(m+1)\frac{r^{m}}{1+r^{2m+2}}e^{im\tht},\qquad m\geq0.\label{eq:Q-formula}
\end{equation}
For the simplicity of notation, we suppressed the $m$-dependences
in $\bfD_{u},L_{u},Q$.

We remark that the dynamics of \eqref{eq:CSS-m-equiv} for $m\geq0$
and $m<0$ are completely different, since in the latter case there
are no nontrivial Jackiw--Pi vortices \cite{KimKwonOh2022arXiv1}.
There is a time reversal symmetry\footnote{The time reversal symmetry is \emph{not} simply given by conjugating
the scalar field $\phi$; it reads $\phi(t,x_{1},x_{2})\mapsto\br{\phi}(-t,x_{1},-x_{2})$,
$A_{\alpha}(t,x_{1},x_{2})\mapsto A_{\alpha}(-t,x_{1},-x_{2})$ for
$\alpha\in\{0,2\}$, and $A_{1}(t,x_{1},x_{2})\mapsto-A_{1}(-t,x_{1},-x_{2})$.} for \eqref{eq:CSS-cov}, but it \emph{does not} flip the equivariance
index. This is a sharp contrast to the \eqref{eq:NLS} case, where
there is a simple space-reflection symmetry $(t,x_{1},x_{2})=(t',x_{1}',-x_{2}')$
that can flip the equivariance index.

\subsection{\label{subsec:Known-results}Known results}

We briefly discuss the known results and motivations on our work.
Indeed, many results cover the non-self-dual case, where $i|\phi|^{2}\phi$
of \eqref{eq:CSS-cov} is replaced by $ig|\phi|^{2}\phi$ with arbitrary
$g\in\bbR$. In the following, the results hold true for general $g$,
unless otherwise mentioned.

The well-posedness of \eqref{eq:CSS-cov} was first studied in Coulomb
gauge; after the earlier works \cite{BergeDeBouardSaut1995Nonlinearity,Huh2013Abstr.Appl.Anal},
Lim \cite{Lim2018JDE} proved $H^{1}$-local well-posedness, though
the scaling-critical regularity is $L^{2}$. Under the heat gauge,
small data $H^{0+}$ local well-posedness is proved by Liu--Smith--Tataru
\cite{LiuSmithTataru2014IMRN}. Under equivariance within Coulomb
gauge, the equation becomes semilinear and the $L^{2}$-critical well-posedness
can be achieved by a standard application of the Strichartz estimates;
see \cite[Section 2]{LiuSmith2016}.

There are also works on the long-term dynamics. Bergé--de Bouard--Saut
\cite{BergeDeBouardSaut1995Nonlinearity} used Glassey's convexity
argument \cite{Glassey1977JMP} to derive a sufficient condition for
finite-time blow up. However, this method essentially applies for
negative energy solutions, which exist only in the focusing non-self-dual
case ($g>1$). The same authors \cite{BergeDeBouardSaut1995PRL} carried
out a formal computation to derive the log-log blow-up for negative
energy solutions, similarly to the mass-critical NLS case. Recently,
Oh--Pusateri \cite{OhPusateri2015} showed global existence and scattering
for small data in weighted Sobolev spaces.

From now on, we fix $g=1$. Much more is known under equivariance
within Coulomb gauge. The static solution $Q$ plays a central role
in describing the global dynamics. Liu--Smith \cite{LiuSmith2016}
proved the \emph{subthreshold theorem}: for each equivariance class
$m\geq0$, there hold the global well-posedness and scattering below
the charge of the ground state, $M[Q]$. Recently, Li--Liu \cite{LiLiu2020arXiv}
considered the threshold dynamics ($m\geq0$ again) and proved that
any $H^{1}$-solution to \eqref{eq:CSS-m-equiv} with charge $M[Q]$
must be, up to obvious symmetries, either (i) the static solution
$Q$, (ii) the explicit finite-time blow-up solution\footnote{When $m=0$, one must exclude this scenario due to $S(t)\notin H^{1}$.}
\[
S(t,r)=\frac{1}{|t|}Q\Big(\frac{r}{|t|}\Big)e^{-i\frac{r^{2}}{4|t|}},\qquad t<0
\]
which is obtained by taking the pseudoconformal transform to $Q$,
or (iii) a global scattering solution. We remark that the results
\cite{LiuSmith2016,LiLiu2020arXiv} also treat the non-self-dual case. Moreover, in the non-self-dual case, the threshold rigidity result was very recently extended to $L^{2}$-solutions by Dodson \cite{Dodson2024arXiv}.

Beyond the threshold, it is generally believed that \emph{soliton
resolution} holds, i.e., any maximal solutions to \eqref{eq:CSS-m-equiv}
must decompose into the sum of modulated solitons and a radiation.
Recently, the authors \cite{KimKwonOh2022arXiv1} established
\emph{soliton resolution for solutions in a weighted Sobolev class}
($H_{m}^{1,1}$) with a striking fact that \emph{the nonscattering
part must be a single soliton}. In particular, there is no multisoliton
configuration. When $m<0$, there is no soliton and every $H_{m}^{1,1}$-solution
scatters.

The soliton resolution result \cite{KimKwonOh2022arXiv1} in fact
applies to any finite energy finite-time blow-up solutions, which
we recall as follows. Here, we denote the modulated soliton by 
\[
Q_{\lmb,\gmm}(r)\coloneqq\frac{e^{i\gmm}}{\lmb}Q\Big(\frac{r}{\lmb}\Big),\qquad\lmb\in(0,\infty),\ \gmm\in\bbR/2\pi\bbZ,
\]
and denote by $H_{m}^{1}$ and $H_{m}^{1,1}$ the (weighted) Sobolev
spaces $H^{1,1}$ and $H^{1}$ restricted to $m$-equivariant functions. 
\begin{thm}[Soliton resolution for $H_{m}^{1}$ finite-time blow-up solutions
\cite{KimKwonOh2022arXiv1}]
\label{thm:asymptotic-description}Let $m\geq0$; let $u$ be a $H_{m}^{1}$-solution
to \eqref{eq:CSS-m-equiv} which blows up forwards in time at $T\in(0,\infty)$.
Then, $u(t)$ admits the decomposition 
\begin{equation}
u(t,\cdot)-Q_{\lmb(t),\gmm(t)}\to z^{\ast}\text{ in }L^{2}\text{ as }t\to T^{-},\label{eq:thm1-decomp}
\end{equation}
for some $\lmb(t)\in(0,\infty)$, $\gmm(t)\in\bbR/2\pi\bbZ$, and
$z^{\ast}\in L^{2}$ with the following properties: 
\begin{itemize}
\item (Further regularity of $z^{\ast}$) We have $\rd_{r}z^{\ast},\frac{1}{r}z^{\ast}\in L^{2}.$
Moreover, if $u$ is a $H_{m}^{1,1}$ finite-time blow-up solution,
then we also have $rz^{\ast}\in L^{2}$. 
\item (Bound on the blow-up speed) As $t\to T$, we have 
\begin{equation}
\lmb(t)\aleq_{M[u]}\sqrt{E[u]}(T-t).\label{eq:thm1-lmb-upper-bound}
\end{equation}
\item (Improved bound on the blow-up speed when $m=0$) When $m=0$, we
further have 
\begin{equation}
\lmb(t)\aleq_{M[u]}\frac{\sqrt{E[u]}(T-t)}{|\log(T-t)|^{\frac{1}{2}}}.\label{eq:thm1-lmb-upper-bound-radial}
\end{equation}
\end{itemize}
\end{thm}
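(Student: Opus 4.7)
My plan is to combine modulation analysis around the soliton $Q$ with the self-dual structure \eqref{eq:CSS-self-dual-form} and the virial identities \eqref{eq:virial-1}--\eqref{eq:virial-2}. First, I would establish a modulation decomposition $u(t) = Q_{\lmb(t),\gmm(t)} + \epsilon(t)$. By the variational characterization of $Q$ (equivalently, the sharp Gagliardo--Nirenberg-type inequality encoded in $E[u] = \frac{1}{2}\|\bfD_u u\|_{L^2}^2 \geq 0$, with equality only at Jackiw--Pi vortices), any $H_m^1$ finite-time blow-up must concentrate at least $M[Q]$ mass at some vanishing scale $\lmb(t)$. Standard modulation then fixes $(\lmb(t), \gmm(t))$ uniquely by imposing orthogonality of $\epsilon$ to the scaling and phase directions $\Lambda Q$ and $iQ$.

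To upgrade to $\epsilon(t)\to z^{\ast}$ in $L^2$, I would exploit the self-dual form $\rd_t u = -i L_u^{\ast}\bfD_u u$ together with energy conservation $\frac{1}{2}\|\bfD_u u\|_{L^2}^2 = E[u]$. Because $\bfD_Q Q = 0$, the quantity $\bfD_u u$ essentially measures the radiation at the soliton scale and is uniformly bounded in $L^2$, which I expect to yield a Cauchy-in-$L^2$ property for $\epsilon(t)$ via a cut-off and Duhamel argument outside the blow-up region. The regularity $\rd_r z^{\ast}, z^{\ast}/r \in L^2$ then follows from the expansion $\bfD_u u \approx L_u\epsilon$ combined with coercivity of $L_Q^{\ast}L_Q$ on the modulation-orthogonal complement, which transfers the $L^2$ bound on $\bfD_u u$ to an $H^1$-type bound on $\epsilon$ away from scale $\lmb(t)$. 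In the $H_m^{1,1}$ subcase, the variance $V(t) = \int r^2|u|^2$ is an explicit quadratic function of $t$ by \eqref{eq:virial-2} and hence stays bounded as $t \to T^-$, yielding $rz^{\ast} \in L^2$.

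The upper bound on $\lmb(t)$ is the most delicate part. Plugging the modulation decomposition into the first moment gives, to leading order, $V(t) \approx C_m \lmb(t)^2 + V_{z^{\ast}}(t)$ with $C_m = 2\pi\int s^3 Q^2\,ds$. Since $V''(t) = 16E[u]$, $V(t)$ is a parabola in $t$; matching the expansion of $V(t) - V(T^-)$ against this parabola, together with a careful analysis of the boundary momentum $V'(T^-)$ (identified with a moment of $z^{\ast}$ using the $H_m^{1,1}$-limit), gives $C_m \lmb\lmb' \sim -E(T-t)$ and hence $\lmb(t)^2 \aleq E(T-t)^2/C_m$, i.e., \eqref{eq:thm1-lmb-upper-bound}. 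For $m=0$, $Q(r)\sim r^{-2}$ at infinity forces $C_0=\infty$ (logarithmic divergence), so the argument must be carried out with a localized virial at the radiation scale; this produces a finite $C_0^{\mathrm{loc}} \sim |\log\lmb(t)| \sim |\log(T-t)|$, which substitutes to give \eqref{eq:thm1-lmb-upper-bound-radial}.

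The main obstacles I anticipate are: (i) establishing sharp coercivity of $L_Q^{\ast}L_Q$ on the modulation-orthogonal complement, which drives both the regularity argument for $z^{\ast}$ and the control of the error terms in the modulation equations; (ii) rigorously justifying the passage to the limit in the virial-type identity at $t=T^-$, which is where the $H_m^{1,1}$ hypothesis enters crucially; and (iii) managing the cut-off errors in the log-localized virial for $m=0$, which must be subdominant to the main logarithmic gain. The nonlocal dependence of $A_t, A_\tht$ on $u$ via \eqref{eq:def-A} introduces extra bookkeeping but should be absorbed by the good structure of the self-dual form \eqref{eq:CSS-self-dual-form}.
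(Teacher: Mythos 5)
This statement (Theorem~\ref{thm:asymptotic-description}) is cited from \cite{KimKwonOh2022arXiv1} and is not proved in the present paper, so there is no in-house proof to compare against line by line; I will comment on whether your sketch would work.

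There is a genuine gap in your argument for the blow-up speed bounds \eqref{eq:thm1-lmb-upper-bound} and \eqref{eq:thm1-lmb-upper-bound-radial}. You propose to derive these from the global virial quantity $V(t)=\int r^{2}|u|^{2}$, whose second derivative is $16E[u]$ by \eqref{eq:virial-2}, by writing $V(t)\approx C_{m}\lmb(t)^{2}+V_{z^{\ast}}(t)$ and matching against the parabola. But the theorem asserts the speed bounds for \emph{all} $H^{1}_{m}$ solutions, while $V(t)$ is finite only under the \emph{strictly stronger} weighted hypothesis $u\in H^{1,1}_{m}$ (which appears in the theorem only for the conclusion $rz^{\ast}\in L^{2}$). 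So the virial-parabola argument simply does not apply at the level of generality required. Moreover, even in the $H^{1,1}$ subcase the identification $V(t)-V_{z^{\ast}}(t)\approx C_{m}\lmb^{2}$ requires controlling the error terms (the cross term and $V_{z^{\ast}}''(t)$) to precision $O((T-t)^{2})$, which is not automatic and is circular in the regime where $\lmb(t)\sim T-t$. The mechanism that actually produces $\lmb(t)\aleq\sqrt{E[u]}(T-t)$ is a direct modulation estimate exploiting the conserved self-dual energy $\|\bfD_{u}u\|_{L^{2}}^{2}=2E[u]$: after rescaling, $\|L_{Q}\eps\|_{L^{2}}\aleq\lmb\sqrt{E}$, and the modulation equation $\lmb_{s}/\lmb\approx -b$ together with $|b|\aleq\|L_{Q}\eps\|_{L^{2}}$ yields $|\lmb_{t}|\aleq\sqrt{E}$ directly, so one may integrate from $T$ backwards with no weighted norm at all. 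The $m=0$ logarithmic gain is then not a localized virial phenomenon but an artifact of the normalization: since $\|\chi_{B_{0}}\tfrac{1}{2}yQ\|_{L^{2}}^{2}\sim\log B_{0}$ (cf.\ \eqref{eq:yQ-not-in-L2}), the coercivity relating $b$ to $\|L_{Q}\eps\|_{L^{2}}$ carries an extra factor $(\log B_{0})^{-1/2}\sim|\log(T-t)|^{-1/2}$, which is exactly the improvement in \eqref{eq:thm1-lmb-upper-bound-radial}. Your intuition about the slow decay $Q\sim r^{-2}$ being responsible is correct, but the place it enters is the modulation estimate, not a cut-off virial.

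A second, softer issue: your ``Duhamel argument outside the blow-up region'' for the strong $L^{2}$-convergence $\eps(t)\to z^{\ast}$ glosses over the key structural observation, recorded in Section~\ref{subsec:z-eqn} of this paper, that the radiation effectively obeys \eqref{eq:CSS-m-equiv} with the \emph{negative} equivariance index $\frkm=-m-2$, for which no nontrivial Jackiw--Pi vortex exists. This is what rules out the radiation re-forming a bubble and is what upgrades sequential convergence (from concentration compactness) to full convergence. Without invoking this mechanism the Cauchy-in-$L^{2}$ step does not close. Your Fatou-plus-virial argument for $rz^{\ast}\in L^{2}$ under the $H^{1,1}_{m}$ hypothesis, on the other hand, is a reasonable route.
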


In particular, any finite-time blow-up solution must be decomposed
in the form \eqref{eq:thm1-decomp}. A natural question is the refined
dynamics of \eqref{eq:thm1-decomp}. For example, we may ask for which
$\lmb(t)$ (and $\gmm(t)$) the blow-up solutions exist. We may also
ask in what extent these blow-up dynamics are stable.

When $m\geq1$, the first and second authors gave a quantitative description
of the dynamics in the vicinity of $S(t)$, and in particular studied
the \emph{pseudoconformal blow-up solutions}, i.e., 
\[
u(t,r)-\frac{e^{i\gmm^{\ast}}}{\ell(T-t)}Q\Big(\frac{r}{\ell(T-t)}\Big)\to z^{\ast}\quad\text{in }L^{2}
\]
for some $\gmm^{\ast}\in\bbR$ and $\ell\in(0,\infty)$ as $t\to T$.
On one hand, using backward construction, the authors in \cite{KimKwon2019arXiv}
constructed pseudoconformal blow-up solutions with a prescribed asymptotic
profile. Moreover, they exhibited an instability mechanism (the \emph{rotational
instability}) of these solutions. This is an analogue of the construction
of Bourgain--Wang solutions and their instability in the NLS context
\cite{BourgainWang1997,MerleRaphaelSzeftel2013AJM}. On the other
hand, in \cite{KimKwon2020arXiv}, using forward construction, they
studied conditional stability of pseudoconformal blow-up solutions
in the context of the Cauchy problem. Indeed, they constructed a codimension
one set of (smooth and finite energy) initial data leading to pseudoconformal
blow-up. Moreover, when $m\geq3$, they showed that the constructed
codimension one data set is Lipschitz. In view of the rotational instability
in \cite{KimKwon2019arXiv}, the codimension one condition seems to
be optimal.

When $m=0$, the explicit pseudoconformal blow-up solution $S(t)$
no longer has finite energy, due to the slow spatial decay of $Q\sim\langle r\rangle^{-2}$.
In \cite{KimKwonOh2020arXiv}, using forward construction, the authors
constructed a codimension one set of smooth finite energy initial
data leading to finite-time blow-up with the blow-up rate $\lmb(t)\sim|\log(T-t)|^{-2}(T-t)$.
Note that the pseudoconformal blow-up is excluded for finite energy
solutions due to \eqref{eq:thm1-lmb-upper-bound-radial}.

Finally, we refer the reader to Dodson \cite{Dodson2024DCDS, Dodson2023arXiv} for very recent progress on understanding above-threshold solutions merely in $L^{2}$.

\subsection{Main result}

Our main result is on the construction of finite energy finite-time
blow-up solutions with much wider range of blow-up rates in the radial
case $m=0$. More precisely, for all $p>1$, we construct such solutions
with the blow-up rate $\lmb(t)\sim|\log(T-t)|^{-1}(T-t)^{p}$. Note
that the range $p>1$ is almost optimal in view of \eqref{eq:thm1-lmb-upper-bound-radial}.
However, it turns out to be \emph{optimal} in view of the regularity
of the asymptotic profile $z^{\ast}$; see Remark~\ref{rem:optimality}
below. The blow-up solutions considered in this paper are derived
from the \emph{strong soliton-radiation interaction} in the radial
case. This is a consequence of the fact that the soliton $Q$ exhibits
the weakest spatial decay when $m=0$, and hence the result is restricted
to the radial case. 
\begin{thm}[Exotic finite-time blow-up solutions]
\label{thm:MainThm}Let $m=0$. Let $q,\nu\in\bbC$ with $\Re(\nu)>0$
and $q\neq0$. Define the asymptotic profile 
\begin{equation}
z^{\ast}(r)\coloneqq qr^{\nu}\chi(r),\label{eq:def-asymp-profile}
\end{equation}
where $\chi(r)$ is a smooth radial cutoff function with $\chi(r)=1$
for $r\leq1$ and $\chi(r)=0$ for $r\geq2$. Then, there exist $t_{0}^{\ast}<0$
and a $H_{0}^{1,1}$-solution $u(t)$ on the time interval $[t_{0}^{\ast},0)$
that satisfies 
\begin{equation}
u(t)-Q_{\lmb_{q,\nu}(t),\gmm_{q,\nu}(t)}\to z^{\ast}\text{ in }L^{2}\text{ as }t\to0^{-},\label{eq:thm-decomp}
\end{equation}
where the parameters $\lmb_{q,\nu}(t)\in(0,\infty)$ and $\gmm_{q,\nu}(t)\in\bbR/2\pi\bbZ$
are defined by 
\begin{equation}
\lmb_{q,\nu}(t)e^{i\gmm_{q,\nu}(t)}=-\frac{\sqrt{2}}{4}\frac{\Gmm(\tfrac{\nu}{2})}{\Re(\nu)+1}\cdot q\frac{(4it)^{\frac{\nu}{2}+1}}{|\log|t||}.\label{eq:lmb-gmm-formula}
\end{equation}
In particular, we have 
\[
\lmb_{q,\nu}(t)\sim_{q,\nu}\frac{|t|^{\frac{\Re(\nu)}{2}+1}}{|\log|t||}.
\]
\end{thm}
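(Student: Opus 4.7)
The plan is a backward modulation analysis in the spirit of Jendrej--Lawrie--Rodriguez, adapted to the pseudoconformally invariant Schr\"odinger equation \eqref{eq:CSS-m-equiv}. I would decompose any putative blow-up solution in the form
\[
u(t,r) = Q_{\lmb(t),\gmm(t)}(r) + P(t,r) + w(t,r),
\]
where $P(t)$ is a deterministic inhomogeneous correction matching the prescribed profile $z^{\ast}$ outside the soliton core and chosen, by solving a sequence of linear nonlocal Schr\"odinger-type problems around $Q_{\lmb,\gmm}$, so as to cancel the leading-order forcing on $w$ produced by the soliton--radiation interaction. The remainder $w$ is required to satisfy orthogonality conditions against the generalized kernel of the linearization $L_{Q}^{\ast}L_{Q}$ at the soliton (whose structure is laid out in Section~\ref{subsec:Linearization-of-CSS} and rests on the self-dual factorization $\rd_{t}u=-iL_{u}^{\ast}\bfD_{u}u$), and these orthogonality conditions in turn fix the modulation parameters $\lmb(t),\gmm(t)$.

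The modulation conditions produce an ODE system for $\lmb e^{i\gmm}$ whose dominant source is the interaction integral between $Q_{\lmb,\gmm}$ and $P\approx z^{\ast}$. At leading order it takes the schematic form
\[
|\log|t||\cdot(\lmb e^{i\gmm})'(t)\;\approx\;c_{\nu}\,q\,(4it)^{\nu/2},
\]
where $c_{\nu}$ involves a Mellin-type pairing $\int_{0}^{\infty}r^{\nu}\cdot(\text{generalized kernel element})\,r\,dr$. Using the explicit form $Q(r)=\sqrt{8}/(1+r^{2})$ and the explicit generalized null modes coming from scaling and phase together with their pseudoconformally generated partners, this integral evaluates to a constant multiple of $\Gmm(\nu/2)/(\Re(\nu)+1)$. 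The $|\log|t||$ weight on the left originates from the logarithmic divergence of the $\dot{H}^{1}$-weight of the radial ($m=0$) soliton, which is precisely the mild non-generic feature that permits the continuum of exotic blow-up rates here. Integrating this ODE from $0$ backward yields \eqref{eq:lmb-gmm-formula}.

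To produce an actual solution (rather than only a formal one), I would run a compactness argument. For a sequence $t_{n}\nearrow0^{-}$, solve \eqref{eq:CSS-m-equiv} backward on $[t_{0}^{\ast},t_{n}]$ with initial data $u_{n}(t_{n})=Q_{\lmb(t_{n}),\gmm(t_{n})}+P(t_{n})$, suitably truncated to lie in $H^{1,1}_{0}$; local well-posedness in $H^{1}$ follows from \cite{Lim2018JDE,LiuSmith2016}. Closing uniform bounds (in $n$) on $w_{n}(t)\coloneqq u_{n}(t)-Q_{\lmb(t),\gmm(t)}-P(t)$ in a weighted Sobolev norm on $[t_{0}^{\ast},t_{n}]$, together with uniform control of the modulation parameters, allows extraction of a limit solution $u(t)$ on $[t_{0}^{\ast},0)$ that satisfies \eqref{eq:thm-decomp}; the $H^{1,1}_{0}$-regularity of the asymptotic profile claimed in the statement then follows from Theorem~\ref{thm:asymptotic-description}.

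The principal obstacle will be closing the bootstrap for $w_{n}$ with sufficient quantitative decay as $t\to0^{-}$ to ensure $L^{2}$-convergence. This relies on (i) coercivity of $L_{Q}^{\ast}L_{Q}$ modulo its generalized kernel, available from the self-dual formalism of Section~\ref{subsec:Linearization-of-CSS}; (ii) a sufficiently accurate construction of $P$ via an inner--outer matching, since the slow $r^{-2}$ decay of $Q$ makes the soliton--radiation coupling strong and forces the iteration to be carried out to an order that controls all logarithmically divergent remainders; and (iii) a careful accounting of logarithmic losses, since the very blow-up rate \eqref{eq:lmb-gmm-formula} carries a $1/|\log|t||$ factor and the argument cannot afford any slack—this is also the reason the result is optimal up to an endpoint. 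Because \eqref{eq:CSS-m-equiv} is a Schr\"odinger (and not wave) equation, one must work with the Hamiltonian pairing induced by $\rd_{t}u=-i\nabla E[u]$ rather than an energy--momentum pairing, which I expect to complicate the modulation computation relative to the Jendrej--Lawrie--Rodriguez setting but not to change its overall architecture.
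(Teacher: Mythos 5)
Your high-level architecture — backward modulation analysis, orthogonality conditions against the generalized kernel of $L_Q^*L_Q$, a compactness argument on a sequence of final times, and the correct identification of the $|\log|t||$ factor with the logarithmic divergence of the truncated kernel element at $m=0$ — matches the paper. However there are several genuine gaps in the mechanism you describe.

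First, the paper \emph{explicitly avoids} the modified-profile/tail-computation strategy you propose. You decompose $u=Q_{\lmb,\gmm}+P+w$ with $P$ built by inner--outer matching ``so as to cancel the leading-order forcing on $w$''; but the forcing depends on $\lmb(t)$, which is only determined a posteriori from the modulation ODE, so your construction of $P$ is circular as stated. The paper sidesteps this by writing $u=e^{i\gmm_z}[(Q+\eps)_{\lmb,\gmm}+z]$ where $z$ is \emph{not} a correction to the soliton but an approximate solution of the effective radiation equation, constructed entirely from $z^{\ast}$ (Proposition~\ref{prop:ApprxRadiation}) and independent of $\lmb,\gmm$. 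Moreover the effective equation for $z$ is the $\frkm=-2$-equivariant phase-rotated CSS — the equivariance index flips from $m=0$ to $\frkm=-m-2$ because of the nonlocal effect of the concentrated soliton (Section~\ref{subsec:z-eqn}) — not ``linear nonlocal Schr\"odinger-type problems around $Q_{\lmb,\gmm}$'' as you describe. This structural observation is what makes the radiation construction tractable.

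Second, your account of where the $\Gmm(\nu/2)$ comes from is incorrect. You attribute it to a spatial ``Mellin-type pairing'' $\int_0^\infty r^\nu\cdot(\hbox{kernel element})\,r\,dr$ at fixed time, and simultaneously write ``$P\approx z^\ast$'' in the interaction integral. A time-independent pairing cannot produce the oscillatory factor $(4it)^{\nu/2}$ appearing in \eqref{eq:lmb-gmm-formula}. In fact the mechanism is temporal, not spatial: evolving $r^\nu$ under the linear Schr\"odinger flow produces, in the self-similar zone $r\ll|t|^{1/2}$, the asymptotics $z(t,r)\approx qp(4it)^{(\nu-2)/2}r^2$ with $p=\tfrac12\Gmm(\nu/2+2)$, where $p$ comes from the connection coefficient for the confluent hypergeometric ODE (equivalently a Fresnel-type contour integral, see Lemma~\ref{lem:lin-ss-connect}). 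It is this value-at-the-origin, fed into the interaction term $(R_{Q,z^\flat},\Lmb Q)_r$ (Lemma~\ref{lem:inn-prod-interaction}), that sources the modulation ODE; the shift $\Gmm(\nu/2+2)\mapsto\Gmm(\nu/2)$ and the factor $(\Re(\nu)+1)^{-1}$ then emerge separately, from the two time-integrations of the second-order modulation system and from $4\pi\log B_0\sim 2\pi(\Re(\nu)+1)|\log|t||$.

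Third, you do not address the two technical pillars of the bootstrap. The paper's modulation parameters $\bm{\zt}$ and $\bm{b}$ are not the raw inner products but \emph{nonlinear} corrections built from truncated virial/mass functionals of $Q+\eps$; this nonlinear choice is what reveals the quadratic $|\bm{b}|^2$ nonlinearity in the $\bm{b}$-equation and lets the ODE close. And because $\nu$ (hence $\bm{b}$) is allowed to be complex, the JLR one-sided-bound argument you implicitly lean on fails (Remark~\ref{rem:complex-valued}); the paper instead integrates a differential inequality for $\calE-2\pi\log B_0\cdot|\bm{b}/\br{\bm{\zt}}|^2$ (Lemma~\ref{lem:H1-energy-est}) to propagate smallness of the positive quantity $P$ backwards and justify the modulation ODE. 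Without some substitute for these two devices, your bootstrap would not close at the required $o(b^2)$ accuracy — you correctly note that there is no slack, but have not produced the machinery that eliminates the slack.
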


Our result is largely inspired from the work \cite{JendrejLawrieRodriguez2019arXiv}
on corotational wave maps by Jendrej, Lawrie, and Rodriguez. As in
\cite{JendrejLawrieRodriguez2019arXiv}, we construct the radiation
from the asymptotic profile and then capture the strong interaction
between the radiation and the blow-up profile. As opposed to the wave
maps, the construction of the radiation in the present case is more
nontrivial, and we also deal with the optimal range $\nu$ (which is
$\Re(\nu)>0$). See Section~\ref{subsec:Strategy-of-the-proof} for
more details.

Applying the pseudoconformal transform to the solution constructed
in Theorem~\ref{thm:MainThm}, we obtain infinite-time blow-up solutions (cf.~the global solutions case in \cite[Theorem~1.1]{KimKwonOh2022arXiv1}).
\begin{cor}[Infinite-time blow-up]
\label{cor:InfiniteTimeBlowUp}Let $m=0$. Let $q$, $\nu$, and
$z^{\ast}$ be as in Theorem~\ref{thm:MainThm}. Let $u^{\ast}\in H_{0}^{1,1} \cap r L^{2}$
be defined by
\begin{equation}
u^{\ast}(\rho)=-\frac{i}{2}\int_{0}^{\infty}J_{-2}(\frac{1}{2}\rho r)z^{\ast}(r)rdr,\label{eq:def-u-ast}
\end{equation}
where $J_{-2}$ is the Bessel function of the first kind of order $-2$, so that $u^{\ast}$ satisfies
$[e^{it\Delta^{(-2)}}u^{\ast}](r)=\calC[e^{it\Delta^{(-2)}}z^{\ast}](t,r)$,
where\footnote{Formally, $\Delta^{(-2)}$ coincides with the Laplacian applied to a $(-2)$-equivariant function $u(r) e^{-2 i \tht}$ on $\bbR^{2}$ written as an operator acting on $u(r)$, hence our notation.} $\Delta^{(-2)} = \rd_{r}^{2} + \frac{1}{r} \rd_{r} - \frac{4}{r^{2}}$. Then, there exist $t_{0}>0$
and a $H_{0}^{1,1}$-solution $u$ on the time interval $[t_{0},\infty)$
such that 
\[
u(t)-Q_{\wh{\lmb}_{q,\nu}(t),\wh{\gmm}_{q,\nu}(t)}-e^{it\Delta^{(-2)}}u^{\ast}\to0\text{ in }L^{2}
\]
as $t\to\infty$, where the modulation parameters $\wh{\lmb}_{q,\nu}(t)\in(0,\infty)$
and $\wh{\gmm}_{q,\nu}(t)\in\bbR/2\pi\bbZ$ are defined by 
\[
\wh{\lmb}_{q,\nu}(t)e^{i\wh{\gmm}_{q,\nu}(t)}\coloneqq t\cdot\lmb_{q,\nu}(-\tfrac{1}{t})e^{i\wh{\gmm}_{q,\nu}(-\frac{1}{t})}=-\frac{\sqrt{2}}{4}\frac{\Gmm(\tfrac{\nu}{2})}{\Re(\nu)+1}\cdot q\frac{t\cdot(-4i/t)^{\frac{\nu}{2}+1}}{\log t}.
\]
In particular, we have 
\[
\wh{\lmb}_{q,\nu}(t)\sim_{q,\nu}\frac{1}{t^{\frac{\Re(\nu)}{2}}\log t}.
\]
\end{cor}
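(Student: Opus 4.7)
The plan is to obtain the desired solution by applying the pseudoconformal transform $\calC$ (see~\eqref{eq:def-pseudoconf}) to the finite-time blow-up solution $u$ of Theorem~\ref{thm:MainThm}. Since $\calC$ is a gauge-covariant symmetry of \eqref{eq:CSS-cov} that preserves the Coulomb gauge and the $m$-equivariance class, the function $\tilde u(t',r') \coloneqq \tfrac{1}{t'} e^{i r'^2/(4t')} u(-1/t',r'/t')$ is a radial solution of \eqref{eq:CSS-m-equiv} on $t' \in [-1/t_0^*,\infty)$. I would first verify that $\tilde u(t') \in H_0^{1,1}$ for each $t'$, using the $H_0^{1,1}$-regularity of $u$ together with the standard mapping properties of $\calC$, which essentially exchange derivatives and spatial weights.

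Next, I apply $\calC$ to the decomposition $u(t) = Q_{\lmb_{q,\nu}(t),\gmm_{q,\nu}(t)} + z^{*} + \eps(t)$ with $\|\eps(t)\|_{L^2}\to 0$, and analyze each piece separately. A direct computation gives
\begin{equation*}
\calC[Q_{\lmb_{q,\nu},\gmm_{q,\nu}}](t',r') = e^{i r'^2/(4t')} \, Q_{\wh\lmb_{q,\nu}(t'),\wh\gmm_{q,\nu}(t')}(r'),
\end{equation*}
where $\wh\lmb_{q,\nu}(t')e^{i\wh\gmm_{q,\nu}(t')} = t'\lmb_{q,\nu}(-1/t')e^{i\gmm_{q,\nu}(-1/t')}$, matching the stated formula. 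Subtracting the unchirped soliton leaves the factor $(e^{i r'^2/(4t')}-1)Q_{\wh\lmb,\wh\gmm}(r')$; since $\wh\lmb(t')\to 0$ as $t'\to\infty$, the substitution $\rho = r'/\wh\lmb$ and dominated convergence (with the integrable weight $\rho/(1+\rho^{2})^{2}$ from $|Q|^{2}$) show this tends to $0$ in $L^2$. For the error, the change of variables $\tilde r = r'/t'$ shows that $\calC$ acts isometrically on the spatial $L^{2}$-norm, so $\|\calC\eps(t')\|_{L^{2}_{r'}} = \|\eps(-1/t')\|_{L^{2}_{\tilde r}} \to 0$.

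The main substantive step is then to establish
\begin{equation*}
\calC[z^{*}](t',r') = \tfrac{1}{t'} e^{i r'^2/(4t')} z^{*}(r'/t') \; \longrightarrow \; [e^{i t' \Delta^{(-2)}} u^{*}](r') \quad \text{in } L^{2} \text{ as } t'\to\infty.
\end{equation*}
The appearance of $\Delta^{(-2)} = \rd_r^{2}+\tfrac{1}{r}\rd_r-\tfrac{4}{r^{2}}$ reflects the asymptotic behavior $A_{\tht}[u] \to A_{\tht}[Q] \to -2$ at spatial infinity (using the explicit vortex~\eqref{eq:Q-formula}), so the linearized radiation dynamics of \eqref{eq:CSS-m-equiv} obey a Schr\"odinger equation with the inverse-square potential $-4/r^{2}$. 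I would invoke the classical pseudoconformal (dispersive) asymptotic
\begin{equation*}
[e^{i t' \Delta^{(-2)}} u^{*}](r') = \tfrac{1}{2it'} e^{i r'^2/(4t')} \calH_{-2}[u^{*}](r'/(2t')) + o_{L^{2}}(1) \quad (t'\to\infty),
\end{equation*}
where $\calH_{-2} f(s) \coloneqq \int_0^\infty J_{-2}(sr) f(r) r dr$ denotes the Hankel transform of order $-2$, which is involutive. Matching the two expressions forces $\calH_{-2}[u^{*}](s) = 2i z^{*}(2s)$, and applying $\calH_{-2}$ again produces the explicit formula~\eqref{eq:def-u-ast}.

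The main obstacle is the rigorous $L^{2}$-justification of this dispersive asymptotic, in particular handling the slow decay of $u^{*}$ at infinity coming from the behavior $z^{*}(r) \sim q r^{\nu}$ near the origin. This will require either a direct stationary-phase analysis of the Bessel-integral representation of the propagator $e^{it'\Delta^{(-2)}}$, or an approximation-and-density argument from Schwartz profiles together with uniform bounds exploiting $\Re\nu>0$ and the compact support of $z^{*}$. The membership $u^{*} \in H_{0}^{1,1} \cap rL^{2}$ then follows from the weighted $L^{2}$ mapping properties of $\calH_{-2}$ applied to the smooth, compactly supported profile $z^{*}$.
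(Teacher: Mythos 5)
Your overall strategy — applying the pseudoconformal transform $\calC$ to the Theorem~\ref{thm:MainThm} solution, matching the soliton and stripping off the pseudoconformal chirp by DCT, and identifying the transformed radiation with a free $\Delta^{(-2)}$-evolution — is the same as the paper's. The one genuine difference is in how the radiation piece is handled. You apply $\calC$ directly to the \emph{constant-in-time} profile $z^{\ast}$ and then invoke a dispersive asymptotic $e^{it'\Delta^{(-2)}}u^{\ast} = \calC[z^{\ast}] + o_{L^2}(1)$. The paper instead first rewrites the Theorem~\ref{thm:MainThm} asymptotic as $v(t) - Q_{\lmb,\gmm} - z_{\lin}(t)\to 0$ with $z_{\lin}(t)=e^{it\Delta^{(-2)}}z^{\ast}$ (equivalent, by strong continuity of the group near $t=0$), and then uses the \emph{exact} algebraic fact that $\calC$ intertwines the free propagator with itself: $\calC\, e^{it\Delta^{(-2)}}z^{\ast} = e^{it\Delta^{(-2)}}u^{\ast}$, read off from the convolution kernel. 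The two routes are logically equivalent — the $L^{2}$ dispersive asymptotic is precisely this exact identity applied to data plus the DCT bound $\|(e^{i|x|^2/(4t)}-1)f\|_{L^{2}}\to 0$ — but the paper's version avoids ever having to say ``asymptotic'' and therefore bypasses the step you flag as the ``main obstacle.''

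On that note, what you identify as the main difficulty is not one. The $L^{2}$-convergence $e^{it\Delta}f \to M_t D_t\hat f$ holds for \emph{every} $f\in L^{2}$, independent of decay of $u^{\ast}$: writing $e^{it\Delta}=M_tD_t\calF M_t$ with $M_tD_t\calF$ unitary gives $\|e^{it\Delta}f - M_tD_t\hat f\|_{L^{2}}=\|(M_t-1)f\|_{L^{2}}\to 0$ by dominated convergence, no stationary phase or density argument required. The slow decay of $u^{\ast}$ is only an issue for \emph{pointwise} asymptotics, which you do not need. You should instead spend the effort on the part your sketch leaves implicit: verifying $u^{\ast}\in H^{1,1}_0\cap rL^{2}$, which the paper does by viewing $z^{\ast}$ as a $(-2)$-equivariant function in $H^{1,1}(\bbR^{2})$, using that $\calF$ preserves $H^{1,1}$, and invoking Hardy's inequality for $(-2)$-equivariant functions to get $r^{-1}u^{\ast}\in L^{2}$. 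Finally, a sign check: with the paper's Fourier convention, the $(-2)$-equivariant dispersive asymptotic carries the prefactor $-\tfrac{1}{2it'}=\tfrac{i}{2t'}$ (not $\tfrac{1}{2it'}$ as you wrote), because $\widehat{f(r)e^{-2i\tht}}(\xi)=-2\pi e^{-2i\tht_{\xi}}\calH_{-2}[f](|\xi|)$; your matching as written produces $u^{\ast}=+\tfrac{i}{2}\calH_{-2}[z^{\ast}](\rho/2)$, with the opposite sign from \eqref{eq:def-u-ast}.
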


\begin{rem}[Optimality]
\label{rem:optimality} As long as we consider an asymptotic profile
of the form \eqref{eq:def-asymp-profile}, our range of $\nu$ in
Theorem~\ref{thm:MainThm} is \emph{optimal}. Indeed, when $\Re(\nu)\leq0$,
we have $\frac{1}{r}z^{\ast}\not\in L^{2}$ and hence $z^{\ast}$
is not admissible by Theorem~\ref{thm:asymptotic-description}.

We have constructed the blow-up rates $\lmb(t)\sim|t|^{p}/|\log|t||$
for any $p>1$, whereas the condition $p\geq1$ is necessary according
to \eqref{eq:thm1-lmb-upper-bound-radial}. In this sense, our constructions
are almost optimal. However, we do not know whether the (end-point)
blow-up rate $|t|/|\log|t||$ is possible or not. Moreover, we do not claim any optimality regarding the powers of $\log |t|$ in the constructible blow-up rates with the current method, nor in the upper bound \eqref{eq:thm1-lmb-upper-bound-radial}.
\end{rem}

\begin{rem}[Strongly interacting regime]
Our method crucially utilizes the strong soliton-radiation interaction
in the radial case ($m=0$), which directly affects the blow-up rate
\eqref{eq:lmb-gmm-formula}. A similar blow-up construction exhibiting
a direct connection between the asymptotic profile and the blow-up
rate is provided in \cite{JendrejLawrieRodriguez2019arXiv} for the
$1$-equivariant wave maps.

Our analysis in the radial case shows a drastic contrast to the case of higher equivariance
indices $m\geq1$, where the soliton has better spatial decay and
the soliton-radiation interaction is weaker. There is pseudoconformal blow-up $\lmb(t)\sim|t|$,
which is purely driven by pseudoconformal phases of solitons, both in the
$m\geq1$ case \cite{KimKwon2019arXiv,KimKwon2020arXiv} and the \eqref{eq:NLS}
case \cite{BourgainWang1997,MerleRaphaelSzeftel2013AJM,KimKwon2019arXiv}.
However, the recent work \cite{Kim2022arXiv2} of the first author shows that pseudoconformal blow-up is the only possible finite-time blow-up scenario for $H^3_m$, $m\geq1$-equivariant solutions; the continuum of blow-up rates (for regular solutions) seems to arise only in the radial case.
\end{rem}

\begin{rem}[On the continuum of blow-up rates and regularity class]
The continuum of blow-up rates of the blow-up solutions $u(t)$ in Theorem~\ref{thm:MainThm} reveals that there is a complex zoo of blow-up dynamics for solutions merely in $H^{1, 1}_{0}$. Nevertheless, in view of the (backward) propagation of the singularity $r^{\nu} \chi(r)$ at $r = 0$ for the Schr\"odinger equation, we expect the solutions $u(t)$ to fail to belong to $H^{s}_{0}$ with $s$ too large (see also Remark~\ref{rem:z-lin-td}). It is an interesting open problem whether for $H^{\infty}_{0}$ solutions -- which do not include the blow-up solutions in Theorem~\ref{thm:MainThm} -- the possible blow-up rates become rigid and discrete.
(cf.~the discussion under ``comments on the result'' in \cite{RaphaelSchweyer2014AnalPDE})
\end{rem}

\begin{rem}[Comparison with the Krieger--Schlag--Tataru approach]
Another powerful approach for obtaining a continuum of blow-up rates, which predates \cite{JendrejLawrieRodriguez2019arXiv}, is the approach of Krieger--Schlag--Tataru \cite{KriegerSchlagTataru2008Invent,KriegerSchlagTataru2009AdvMath, KriegerSchlagTataru2009Duke}. It is also a backward blow-up construction, but instead of prescribing the asymptotic profile $z^{\ast}$ and deriving the blow-up rate via modulation analysis, it begins with a \emph{directly prescribed} blow-up rate, constructs an approximate blow-up solution (of which $z^{\ast}$ is a part), and finally obtains a genuine blow-up solution by a perturbative argument. This approach was initiated in the setting of wave-type equations, but subsequently, it has also been extended to the setting of Schr\"odinger-type equations \cite{Perelman2014CMP, OrtolevaPerelman2013, Schmid2023}. It would be interesting to also adapt the Krieger--Schlag--Tataru approach to \eqref{eq:CSS-m-equiv}.
\end{rem}

\begin{rem}[On uniqueness]
In the work \cite{JendrejLawrieRodriguez2019arXiv} by Jendrej, Lawrie,
and Rodriguez, the authors also showed \emph{the uniqueness of blow-up
rates} in the context of $1$-equivariant wave maps. Here, the uniqueness
means that if $u_{1}-Q_{\lmb_{1}}\to z^{\ast}$ and $u_{2}-Q_{\lmb_{2}}\to z^{\ast}$,
then $\frac{\lmb_{1}}{\lmb_{2}}\to1$. Such a question in our case
$m=0$ remains an interesting open problem.

On the other hand, for Bourgain--Wang solutions to \eqref{eq:NLS} constructed
in \cite{BourgainWang1997,MerleRaphaelSzeftel2013AJM} or pseudoconformal
blow-up solutions to $m\geq1$-equivariant \eqref{eq:CSS-m-equiv}
constructed in \cite{KimKwon2019arXiv}, one can easily generate blow-up
rates $\lmb(t)=c|t|$ for any $c>0$ with the same prescribed asymptotic
profile. Therefore, it would be interesting to figure out \emph{to
what extent a prescribed asymptotic profile determines the blow-up rate}. Note that in the case of the zero
asymptotic profile (with blow-up time at $t=0$), the classification
results \cite{Merle1993Duke} and \cite{LiLiu2020arXiv} of the threshold
dynamics say that there is only a two-dimensional family of finite-time
blow-up solutions (within radial or equivariance symmetry) achieving
the zero asymptotic profile. 
\end{rem}

\subsection{\label{subsec:Strategy-of-the-proof}Strategy of the proof}

Our proof of Theorem~\ref{thm:MainThm} proceeds via a backward blow-up construction and modulation analysis. Specifically, we adapt the approach developed in \cite{JendrejLawrieRodriguez2019arXiv} in the context of energy critical one-equivariant wave maps to the present case of the radial self-dual Chern--Simons--Schr\"odinger equation. Some major differences -- which make \eqref{eq:CSS-m-equiv} challenging -- are: 
\begin{enumerate}[label=(\roman*)]
\item the construction of the radiation from the asymptotic profile $z^{\ast}(r) = q r^{\nu} \chi(r)$ is more intricate in the Schr\"odinger case (in part due to the infinite speed of propagation, and also due to our optimal range of $\nu$ for $H^{1}_{m}$-solutions),
\item the scaling and phase rotation parameters $(\lmb(t), \gmm(t))$ for the blow-up profile $Q$ are strongly coupled (as opposed to the wave maps case, where $\gmm(t) = 0$ can be enforced by symmetry), and 
\item the nonlinearity of \eqref{eq:CSS-m-equiv} is nonlocal.
\end{enumerate}
With these differences in mind, we now turn to the outline of the proof.

\subsubsection{Construction of the associated radiation}
We first discuss the construction of what we refer to as the \emph{radiation} $z(t, r)$, which is the part of the solution arising from the asymptotic profile $z^{\ast}(r) = q r^{\nu} \chi(r)$ (by solving backwards in time). The effective equation for $z$ reads
\begin{equation} \label{eq:z-eqn-intro}
i\partial_{t}z+\rd_{rr}z+\frac{1}{r}\rd_{r}z - \frac{4}{r^{2}} z + (\hbox{nonlocal nonlinearity in $z$}) = 0,
\end{equation}
where we note the presence of the additional potential $- \frac{4}{r^{2}}$ arising from the nonlocal effect of the concentrated soliton to $z$; see Section~\ref{subsec:z-eqn} for the complete discussion.

The radiation $z(t,r)$ needs to be constructed for $t<0$. In view of time reversal symmetry, and for the sake of simplicity, we will discuss its construction for $t>0$.

In \cite{JendrejLawrieRodriguez2019arXiv}, the radiation is constructed by solving the corresponding equation (which is simply the original wave maps equation) and is described inside the backward light cone using the fundamental solution for the wave equation. In comparison, our construction requires some new ideas due to (i)~the different (in particular, infinite speed) propagation of singularity for the Schr\"odinger equation, and (ii)~allowing $\nu$ that is arbitrarily close to $0$. It proceeds in two steps:

\smallskip
\textbf{Step 1.} Approximate solution $\wh{z}_{\lin}$ to the linear equation. 

Taking cue from the homogeneity of the singular part $q r^{\nu}$, we first look for a \emph{self-similar} solution of the form $t^{\frac{\nu}{2}} Z(t^{-\frac{1}{2}} r)$ to the linear equation $(i \rd_{t} + \rd_{rr} + \frac{1}{r} \rd_{r} - 4 r^{-2}) v = 0$ on $\set{t > 0}$. As is well-known, this leads to a second-order ODE (of the confluent hypergeometric type) for $Z$. Also requiring that the self-similar solution be regular at $r = 0$ for $t > 0$ -- which is justified in view of the compact support property of the actual data $z^{\ast}$ and the local smoothing effect for the Schr\"odinger equation -- $Z$ is uniquely specified. Our approximate solution to the linear equation is taken to be the truncation $\wh{z}_{\lin}(t, x) := t^{\frac{\nu}{2}} Z(t^{-\frac{1}{2}} r) \chi(r)$. Employing tools from second-order ODE, we may obtain rather precise information on $Z$, and thus $\wh{z}_{\lin}(t, x)$, near $r = 0$ for $t > 0$. 

\smallskip
\textbf{Step 2.} Approximate solution $z$ to the nonlinear equation.

Given $\wh{z}_{\lin}$, one may attempt to use a perturbative argument to upgrade it to a solution to \eqref{eq:z-eqn-intro}. This idea is feasible but runs into many technical difficulties, especially when $\Re \nu > 0$ is small (i.e., $z^{\ast}$ is very singular) because the associated solution would only have limited global Sobolev regularity by propagation of singularity (see also Remark~\ref{rem:z-lin-td}). Instead, our simple observation, which leads to a significant technical simplification, is that \emph{we need not construct an exact solution to \eqref{eq:z-eqn-intro}, but only an approximate solution} with an error $\Psi_{z} := (\hbox{LHS of \eqref{eq:z-eqn-intro}})$ that is admissible in the blow-up construction. In particular, $\Psi_{z}$ need not be as regular as in the adequate control of an actual solution (for which we need to prove good local regularity near $r = 0$ despite the limited global Sobolev regularity). As a result, when $\Re \nu > 0$ is small enough (more precisely, $\Re \nu < 1$), it is sufficient to simply take $z = \wh{z}_{\lin}$ as the approximate radiation.

At the other extreme, when $\Re \nu$ is large, we expect $z(t, 0)$ to vanish rapidly as $t \to 0$ and correspondingly we need the error $\Psi_{z}$ to also vanish rapidly as $t \to 0$. We achieve this by performing an expansion of $z$ in powers of $t$ near $t = 0$ up to the order needed; see \eqref{eq:z-form}--\eqref{eq:def-hn} for the precise definition of $z$.

\smallskip
We refer the reader to Sections~\ref{subsec:z-lin} and \ref{subsec:z-nonlin} for the detailed execution of Steps~1 and 2, respectively.

\medskip{}

\subsubsection{Blow-up construction}

With the (approximate) radiation $z(t, r)$ constructed as above, we now consider a solution $u(t, r)$ to \eqref{eq:CSS-m-equiv} with the decomposition
\[
u(t, r)=e^{i\gmm_{z}(t)} \left[ \frac{e^{i \gmm(t)}}{\lmb(t)}\left[ Q(\cdot)+\eps(t, \cdot)\right]\left(\frac{r}{\lmb(t)}\right)+z(t, r) \right],
\]
where $\gmm_{z}(t) \in \bbR$ is an extra phase rotation caused by the nonlocal effect of $z$ to the rescaled $Q$ (see \eqref{eq:Def-tht_z}). The scaling and phase rotation parameters $\lmb(t) > 0$ and $\gmm(t) \in \bbR$ are determined through the orthogonality conditions $(\eps,\calZ_{1})_{r}=(\eps,\calZ_{2})_{r}=0$, where $\calZ_{1},\calZ_{2}\in C_{c,m}^{\infty}$ satisfy a suitable nondegeneracy condition (see \eqref{eq:Z1Z2-transversality}). By initializing the construction (essentially) at the blow-up time $t = 0$ and solving backwards, our goal is to find a solution $u(t)$ of the above form for $t$ in some interval $[t_{0}^{\ast}, 0)$ such that
\begin{equation*}
	\nrm{\eps(t, \cdot)}_{L^{2}} \to 0, \quad \hbox{ and } \lmb(t), \gmm(t) \hbox{ asymptotically behave as \eqref{eq:lmb-gmm-formula} as } t \to 0^{-}.
\end{equation*} 

We first summarize the formal derivation of the above behavior, in particular the precise formula \eqref{eq:lmb-gmm-formula}. As is usual in modulation analysis, we pass to the renormalized variables $(s, y, u^{\flat})$ satisfying $\ud t = \lmb^{2} \ud s$, $r = \lmb y$, and $u = \frac{e^{i \gmm}}{\lmb} u^{\flat}$ -- this has the effect of fixing the scaling and phase rotation parameters, and we may view the flow $u^{\flat}$ in these variables as being a perturbation of $Q$. Let $\calL_{Q}$ be the Hessian of the energy functional $E[u]$ at $Q$, so that the linearization of \eqref{eq:CSS-m-equiv} at $Q$ (in the Hamiltonian form $\rd_{t} u + i \nb E[u] = 0$) becomes $(\rd_{t} + i \calL_{Q}) v = 0$. Clearly, $\Lmb Q$ and $i Q$ -- which are obtained by applying the scaling and phase rotation symmetries to $Q$ -- belong to $\ker i \calL_{Q}$. There also hold generalized kernel relations
\begin{equation*}
	i \calL_{Q} (i \tfrac{y^{2}}{4} Q) = \Lmb Q, \quad i \calL_{Q} \rho = i Q,
\end{equation*}
where $i \tfrac{y^{2}}{4} Q$ and $\rho$ are regular and $O(1)$ as $r \to \infty$ (see Sections~\ref{subsec:Linearization-of-CSS}--\ref{subsec:Invariant-subspace-decomposition}). Motivated by these relations, we introduce modulation parameters $(b(s), \eta(s))$ and take $\eps$ to be of the form
\begin{equation} \label{eq:eps-expect}
	\eps(s, y) \approx \left[ b(s) (-i \tfrac{y^{2}}{4} Q) + \eta(s) \rho \right] \chi_{B_{0}} + \hbox{(smaller terms)}.
\end{equation}
where $\chi_{B_{0}}$ is a truncation in $y$ near the self-similar scale $B_{0} := \lmb(t)^{-1} \abs{t}^{1/2}$, which is needed as the terms in square brackets do not decay as $y \to \infty$ (see also Remark~\ref{rem:mod-est-cutoff-radius} for a more precise but technical discussion of admissible cutoff radii).
In view of the generalized kernel relations (or, more specifically, comparing the terms involving $\Lmb Q$ and $i Q$ in the evolution equation for $\eps(s, y)$), one should expect that the following \emph{adiabatic ansatz} holds:
\begin{equation*}
\frac{\lmb_{s}}{\lmb}+b \approx 0,\qquad\gmm_{s}+\eta \approx 0.
\end{equation*}
The formal evolution laws for $b$ and $\eta$ may be derived by testing the evolution equation for $\eps$ by $i \Lmb(Q+\eps)$ and $Q+\eps$ in the spirit of Pohozaev-type
computations (see, for example, \cite[Section 3]{RaphaelRodnianski2012Publ.Math.}, as well as \cite[Section 4]{KimKwon2019arXiv} for a related computation for \eqref{eq:CSS-m-equiv}). We keep only the main terms, which turn out to be the quadratic nonlinearity in $(b, \eta)$ and the nonlinear interaction between $z$ and $Q$. Moreover, we compute the precise asymptotics of the interaction term using our precise description of $z$. In terms of the complexified variables $\bm{b} := b + i \eta$ and $\bm{\lmb} = \lmb e^{i \gmm}$, the resulting formal modulation ODE takes the form
\begin{equation*}
	\rd_{s} \bm{\lmb} = - \bm{\lmb} \bm{b}, \quad
	\rd_{s} \bm{b} + \abs{\bm{b}}^{2} + \lmb^{2} \frac{\br{\bm{\lmb}} \cdot 8 \sqrt{8} \pi p q (4 i t)^{\frac{\nu - 2}{2}}}{4 \pi \log B_{0}} = 0.
\end{equation*}
Here, the first equation is the adiabatic ansatz, and $p$ is a constant that arises in the description of the approximate radiation $z$. Integrating these ODEs gives rise to the formula \eqref{eq:lmb-gmm-formula}. For more details on this part, see Section~\ref{subsec:Formal-derivation-of-blow-up}. 

Next, we describe our overall strategy for rigorously constructing a solution $u$ with the above decomposition and formally derived behavior. We avoid the idea of modified profile and tail computation (in contrast to, say, the earlier works \cite{KimKwon2020arXiv, KimKwonOh2020arXiv} on forward construction) and instead use the following ideas as in \cite{JendrejLawrieRodriguez2019arXiv}:

\smallskip \textbf{1.} Refined modulation parameters $\bm{\zt}$ and $\bm{b}$.

We introduce \emph{refined modulation parameters} $\bm{\zt}$ and $\bm{b}$, which are corrections of $\bm{\lmb}$ and $- \bm{\lmb}^{-1} \rd_{s} \bm{\lmb}$, respectively. Their evolution equations better resemble the formal modulation ODE. The correction $\bm{\zt}$ of $\bm{\lmb}$ (see \eqref{eq:def-zeta}) is motivated by the invariant subspace decomposition of the linearized flow $(\rd_{s} + i \calL_{Q}) v = 0$. The definition of $\bm{b}$ (see \eqref{eq:def-b} and \eqref{eq:def-eta}) is nonlinear and thus more subtle; it is motivated by the nonlinear virial/mass identities for $u$. Such a careful selection of the nonlinear (in $\eps$) correction is necessary, in particular, to reveal the correct quadratic nonlinearity $\abs{\bm{b}}^{2}$ in the evolution equation for $\bm{b}$. Specifically, see \eqref{eq:b-diff-equality} and \eqref{eq:P-positivity} in Lemma~\ref{lem:Control-of-b-and-eta}, whose proof uses the nonlinear correction in a crucial way. We also refer the reader to the discussion around equations \eqref{eq:def-b}, \eqref{eq:def-zeta}, and \eqref{eq:def-eta} in Section~\ref{sec:Blow-up-Const} for a more detailed explanation of the motivation behind our choices of the refined modulation parameters.

\smallskip \textbf{2.} Nonlinear energy functional $\calE$.

To complete the construction, it remains to obtain control of $\eps$ (via backward propagation) and to justify that the refined parameters indeed evolve essentially according to the formal modulation equations. The key step is to construct a suitable nonlinear energy functional $\calE$ that (i)~controls the $\dot{H}^{1}$-norm of $\eps$, and (ii)~obeys the expected evolution equation $\rd_{t} \calE = (\hbox{explicit}) + (\hbox{error})$, where $(\hbox{explicit})$ refers to an explicitly computable term that captures the interaction between $Q$ and $z^{\flat}$ exactly as in the formal modulation ODE. See \eqref{eq:def-calE} for the definition of $\calE$ and the preceding discussion for its heuristic derivation. In this high-level discussion, we simply emphasize that the explicit interaction term in $\rd_{t} \calE$, which suitably matches the interaction term in the formal modulation ODE, is important for justifying the expected leading order behavior of $\eps$ (see \eqref{eq:eps-expect}) and thus $\bm{b}$ to close the argument. 

\smallskip

Having explained the overall strategy, a final (somewhat technical) comparison with \cite{JendrejLawrieRodriguez2019arXiv} is in order. Aside from the additional technical difficulties arising from the nonlocal nonlinearity of \eqref{eq:CSS-m-equiv} (which makes our argument lengthier), some key differences between the present problem and that of \cite{JendrejLawrieRodriguez2019arXiv} are (i)~the presence of a quadratic nonlinearity $\abs{\bm{b}}^{2}$ in the formal modulation ODE and (ii)~the complex-valued nature of $\bm{b}$ (especially when $\nu$ is not real). Item~(i) underscores the importance of carefully selecting the quadratic correction in $\eps$ to $\bm{b}$ in our problem (which is done in accordance with nonlinear virial/mass identities), as discussed above. Item~(ii) seems to preclude the argument of \cite[Section~5]{JendrejLawrieRodriguez2019arXiv} that combines a sharp \emph{one-sided} lower bound for $b$ with a sharp upper bound for (the kinetic energy part of) the energy to close the main bootstrap; see also Remark~\ref{rem:complex-valued}. Instead, our approach is to use the energy estimate (and backward integration with suitably chosen data) to precisely identify the main part of the energy as
\begin{equation*}
	\calE = 2 \pi \log B_{0} \cdot \abs{\bm{b} / \bm{\zt}}^{2} + (\hbox{smaller}),
\end{equation*}
which is sufficient to justify the evolution equation for $\bm{b}$ up to acceptable errors and to close our argument. 

\medskip{}

\noindent \mbox{\textbf{Organization of the paper.} }
In Section~\ref{sec:Basic-preliminaries}, we collect the notation and preliminaries for our overall analysis. In Section~\ref{sec:ApprxRadiation}, we construct the radiation $z(t, r)$ from $z^{\ast}(r)$. Then, after making more preparations for the blow-up construction in Section~\ref{sec:blow-up-prelim}, in Section~\ref{sec:Blow-up-Const}, we put everything together, carry out the blow-up construction, and prove Theorem~\ref{thm:MainThm} and Corollary~\ref{cor:InfiniteTimeBlowUp}.
\medskip{}

\noindent \mbox{\textbf{Acknowledgements.} }
Part of this work was done while K.~Kim was supported by the Huawei Young Talents Programme
at IHES. 
K.~Kim was supported by the New Faculty Startup Fund from Seoul National University, the POSCO Science Fellowship of POSCO TJ Park Foundation, and the National Research Foundation of Korea (NRF) grant funded by the Korea government (MSIT) RS-2025-00523523.
S.~Kwon was partially supported by the National Research Foundation of Korea, RS-2019-NR040050 and NRF-2022R1A2C1091499. 
S.-J. Oh was partially supported by the Samsung Science
\& Technology Foundation under Project Number BA1702-02, a Sloan Research Fellowship, an NSF CAREER Grant under NSF-DMS-1945615, and an NSF Grant under NSF-DMS-2452760.

\section{\label{sec:Basic-preliminaries}Basic preliminaries}

In this section, we introduce the basic notation, equivariant Sobolev
spaces, and a decomposition of the nonlinearity that will be used
throughout this paper. Other preliminary facts that are more relevant
to the blow-up analysis will be collected in Section~\ref{sec:blow-up-prelim}.

\subsection{\label{subsec:Notations}Notation}

For $A\in\bbC$ and $B\geq0$, we use the standard asymptotic notation
$A\lesssim B$ or $A=O(B)$ if there exists a constant $C>0$ such that
$|A|\leq CB$. $A\sim B$ means that $A\lesssim B$ and $B\lesssim A$.
The dependencies of $C$ are specified by subscripts, e.g., $A\lesssim_{E}B\Leftrightarrow A=O_{E}(B)\Leftrightarrow|A|\leq C(E)B$.
In this paper, any dependencies on the equivariance index $m$ and
the parameters $q$ and $\nu$ (recall $z^{\ast}(r)=qr^{\nu}\chi(r)$)
will be omitted.

We also use the notation $\langle x\rangle$, $\log_{+}x$, $\log_{-}x$
defined by 
\[
\langle x\rangle\coloneqq(|x|^{2}+1)^{\frac{1}{2}},\quad\log_{+}x\coloneqq\max\{\log x,0\},\quad\log_{-}x\coloneqq\max\{-\log x,0\}.
\]
We let $\chi=\chi(x)$ be a smooth spherically symmetric cutoff function
such that $\chi(x)=1$ for $|x|\leq1$ and $\chi(x)=0$ for $|x|\geq2$.
For $A>0$, we define its rescaled version by $\chi_{A}(x)\coloneqq\chi(x/A)$.

We mainly work with equivariant functions on $\bbR^{2}$, say $\phi:\bbR^{2}\to\bbC$,
or equivalently their radial part $u:\bbR_{+}\to\bbC$ with $\phi(x)=u(r)e^{im\tht}$,
where $\bbR_{+}\coloneqq(0,\infty)$ and $x_{1}+ix_{2}=re^{i\tht}$.
The flat Cauchy--Riemann operator and its adjoints are denoted by
$\rd_{\pm}=\rd_{1}\pm i\rd_{2}$. We define $\rd_{\pm}^{(m)}$ via
the relations $\rd_{\pm}\phi=[\rd_{\pm}^{(m)}u]e^{i(m\pm1)\tht}$.
Thus $\rd_{\pm}$ sends $m$-equivariant functions to $(m\pm1)$-equivariant
functions and its radial part is given by $\rd_{\pm}^{(m)}=\rd_{r}\mp\frac{m}{r}$.
We denote by $\Delta^{(m)}=\rd_{rr}+\tfrac{1}{r}\rd_{r}-\tfrac{m^{2}}{r^{2}}$
the Laplacian acting on $m$-equivariant functions.

The integral symbol $\int$ means 
\[
\int=\int_{\bbR^{2}}\,dx=2\pi\int\,rdr.
\]
For complex-valued functions $f$ and $g$, we define their \emph{real
inner product} by 
\[
(f,g)_{r}\coloneqq\int\Re(\br fg).
\]
For a real-valued functional $F$ and a function $u$, we denote by
$\nabla F[u]$ the \emph{functional derivative} of $F$ at $u$ with
respect to this real inner product.

We denote by $\Lmb$ the $L^{2}$-scaling generator: 
\[
\Lmb\coloneqq r\rd_{r}+1.
\]
For a cutoff parameter $A>0$, its truncated version is defined by
\begin{equation}
\Lmb_{A}\coloneqq\chi_{A}r\rd_{r}+(\chi_{A}+\tfrac{1}{2}r\chi_{A}')=\chi_{A}\Lmb+\tfrac{1}{2}r\chi_{A}',\label{eq:def-truncated-scaling-gen}
\end{equation}
whose zeroth order term is determined to make $\Lmb_{A}$ antisymmetric.

Given a scaling parameter $\lmb\in\bbR_{+}$ and a phase rotation parameter
$\gmm\in\bbR/2\pi\bbZ$, we denote by 
\[
f_{\lmb,\gmm}(r)\coloneqq\frac{e^{i\gmm}}{\lmb}f\Big(\frac{r}{\lmb}\Big)
\]
for a function $f$. When $\lmb$ and $\gmm$ are clear from the context,
we will also denote the above by $f^{\sharp}$ as in \cite{KimKwon2019arXiv},
i.e., 
\[
f^{\sharp}\coloneqq f_{\lmb,\gmm}.
\]
Similarly, we define its inverse by $\flat$: 
\[
g^{\flat}(y)\coloneqq\lmb e^{-i\gmm}g(\lmb y).
\]
When a time-dependent scaling parameter $\lmb(t)$ is given, we define
a rescaled spacetime variables $s,y$ by the relations 
\begin{equation}
\frac{ds}{dt}=\frac{1}{\lmb^{2}(t)}\qquad\text{and}\qquad y=\frac{r}{\lmb(t)}.\label{eq:def-renormalized-var}
\end{equation}
The raising operation $\sharp$ converts a function $f=f(y)$ to a
function of $r$: $f^{\sharp}=f^{\sharp}(r)$. Similarly, the lowering
operation $\flat$ converts a function $g=g(r)$ to a function of
$y$: $g^{\flat}=g^{\flat}(y)$. In modulation analysis, the dynamical
parameters such as $\lmb,\gmm,b,\eta$ are functions of either the
variables $t$ or $s$ under $\frac{ds}{dt}=\frac{1}{\lmb^{2}}$.

For $k\in\bbN$, we define 
\begin{align*}
|f|_{k} & \coloneqq\max\{|f|,|r\rd_{r}f|,\dots,|(r\rd_{r})^{k}f|\},\\
|f|_{-k} & \coloneqq\max\{|\rd_{r}^{k}f|,|\tfrac{1}{r}\rd_{r}^{k-1}f|,\dots,|\tfrac{1}{r^{k}}f|\}.
\end{align*}
We note that $|f|_{k}\sim r^{k}|f|_{-k}$. The following Leibniz rules
hold: 
\[
|fg|_{k}\aleq|f|_{k}|g|_{k},\qquad|fg|_{-k}\aleq|f|_{k}|g|_{-k}.
\]

For normed function spaces $X$ and $Y$ and a scalar $s\in\bbR$,
we define 
\begin{align*}
r^{s}X & \coloneqq\{r^{s}f:f\in X\}, & \|f\|_{r^{s}X} & \coloneqq\|r^{-s}f\|_{X},\\
X+Y & \coloneqq\{f+g:f\in X,\ g\in Y\}, & \|f\|_{X+Y} & \coloneqq\inf_{f=f_{X}+f_{Y}}\|f_{X}\|_{X}+\|f_{Y}\|_{Y},
\end{align*}
and 
\[
\|f\|_{X\cap Y}\coloneqq\|f\|_{X}+\|f\|_{Y}.
\]
The relevant function spaces will be discussed in Section~\ref{subsec:Adapted-function-spaces}.

\subsection{\label{subsec:function-spaces}Equivariant Sobolev spaces}

Let $m\in\bbZ$. For $s\geq0$, we denote by $H_{m}^{s}$ the restriction
of the usual Sobolev space $H^{s}(\bbR^{2})$ on $m$-equivariant
functions. The sets $C_{c,m}^{\infty}$ of $m$-equivariant smooth
compactly supported functions as well as $\calS_{m}$ of $m$-equivariant
Schwartz functions, are dense in $H_{m}^{s}$. We use the $H_{m}^{s}$-norms
and $\dot{H}_{m}^{s}$-norms to mean the usual $H^{s}$-norms and
$\dot{H}^{s}$-norms, but the subscript $m$ is used to emphasize
that we are applying these norms to $m$-equivariant functions.

One of the advantages of using equivariant Sobolev spaces is the generalized
Hardy's inequality \cite[Lemma A.7]{KimKwon2019arXiv}: whenever $0\leq k\leq|m|$,
we have 
\begin{equation}
\||f|_{-k}\|_{L^{2}}\sim\|f\|_{\dot{H}_{m}^{k}},\qquad\forall f\in\calS_{m}.\label{eq:GenHardySection2}
\end{equation}
In particular, when $0\leq k\leq|m|$, we can define the \emph{homogeneous
equivariant Sobolev space} $\dot{H}_{m}^{k}$ by taking the completion
of $\calS_{m}$ under the $\dot{H}_{m}^{k}$-norm and have the embeddings
\[
\calS_{m}\embed H_{m}^{k}\embed\dot{H}_{m}^{k}\embed L_{\mathrm{loc}}^{2}.
\]
Specializing this to $k=1$ and applying the fundamental theorem of
calculus to $\rd_{r}|f|^{2}=2\Re(\br f\rd_{r}f)$, we have the \emph{Hardy-Sobolev
inequality} \cite[Lemma A.6]{KimKwon2019arXiv}: whenever $|m|\geq1$,
we have 
\begin{equation}
\|r^{-1}f\|_{L^{2}}+\|f\|_{L^{\infty}}=\|f\|_{rL^{2}\cap L^{\infty}}\aleq\|f\|_{\dot{H}_{m}^{1}}.\label{eq:HardySobolevSection2}
\end{equation}
Note in general that $H_{0}^{1}\embed L^{\infty}$ is \emph{false}.

Finally, we define the weighted Sobolev space $H_{m}^{1,1}\coloneqq H_{m}^{1}\cap r^{-1}L^{2}$.
Thus
\[
\|f\|_{H_{m}^{1,1}}=\|f\|_{H_{m}^{1}}+\|rf\|_{L^{2}}.
\]

\subsection{\label{subsec:nonlinearity}Decomposition of the nonlinearity}

In this subsection, we introduce a notation for the nonlinearity of
\eqref{eq:CSS-m-equiv} following \cite{KimKwon2019arXiv}. 

Denote by $\calN(u)$ the nonlinearity of \eqref{eq:CSS-m-equiv},
i.e., 
\[
i\rd_{t}u+\rd_{rr}u+\frac{1}{r}\rd_{r}u-\frac{m^{2}}{r^{2}}u=\calN(u).
\]
Therefore, the nonlinearity $\calN(u)$ (and its potential term $V_{u}$)
is explicitly given by 
\[
\calN(u)\coloneqq V_{u}u\coloneqq(-|u|^{2}+\tfrac{2m}{r^{2}}A_{\tht}[u]+\tfrac{1}{r^{2}}A_{\tht}^{2}[u]+A_{t}[u])u.
\]
This nonlinearity decomposes into the sum of the cubic and quintic
nonlinearities
\[
\calN=\calN_{3,0}+m(\calN_{3,1}+\calN_{3,2})+\calN_{5,1}+\calN_{5,2},
\]
where we abbreviate $\calN_{\ast}(u)\coloneqq\calN_{\ast}(u,\dots,u)$
(where $\ast$ is a place-holder) and denote the cubic nonlinearities
by 
\begin{align*}
\calN_{3,0}(u_{1},u_{2},u_{3}) & \coloneqq V_{2,0}[u_{1},u_{2}]u_{3}\coloneqq-\Re(\br{u_{1}}u_{2})u_{3},\\
\calN_{3,1}(u_{1},u_{2},u_{3}) & \coloneqq V_{2,1}[u_{1},u_{2}]u_{3}\coloneqq\tfrac{2}{r^{2}}A_{\tht}[u_{1},u_{2}]u_{3},\\
\calN_{3,2}(u_{1},u_{2},u_{3}) & \coloneqq V_{2,2}[u_{1},u_{2}]u_{3}\coloneqq-(\tint r{\infty}\Re(\br{u_{1}}u_{2})\tfrac{dr'}{r'})u_{3},
\end{align*}
and the quintic nonlinearities by 
\begin{align*}
\calN_{5,1}(u_{1},\dots,u_{5}) & \coloneqq V_{4,1}[u_{1},\dots,u_{4}]u_{5}\coloneqq\tfrac{1}{r^{2}}A_{\tht}[u_{1},u_{2}]A_{\tht}[u_{3},u_{4}]u_{5},\\
\calN_{5,2}(u_{1},\dots,u_{5}) & \coloneqq V_{4,2}[u_{1},\dots,u_{4}]u_{5}\coloneqq-(\tint r{\infty}A_{\tht}[u_{1},u_{2}]\Re(\br{u_{3}}u_{4})\tfrac{dr'}{r'})u_{5}.
\end{align*}
We also denote 
\[
\calN_{3}\coloneqq\calN_{3,0}+m(\calN_{3,1}+\calN_{3,2})\qquad\text{and}\qquad\calN_{5}\coloneqq\calN_{5,1}+\calN_{5,2}.
\]
We remark that $\calN_{3,1}$ and $\calN_{3,2}$ do not appear in
the case $m=0$.

In view of the Hamiltonian structure of \eqref{eq:CSS-m-equiv}, the
nonlinearity of \eqref{eq:CSS-m-equiv} appears as a part of the functional
derivative of energy, i.e., 
\[
\nabla E[u]=-\Delta^{(m)}u+\calN(u).
\]
In order to relate each $\calN_{\ast}$ to each component of energy,
we decompose 
\[
E[u]=\tfrac{1}{2}\tint{}{}(|\rd_{r}u|^{2}+\tfrac{m^{2}}{r^{2}}|u|^{2})+\calM_{4,0}[u]+m\calM_{4,1}[u]+\calM_{6}[u],
\]
where we abbreviate $\calM_{\ast}(u)\coloneqq\calM_{\ast}(u,\dots,u)$
and denote the multilinear forms by 
\begin{align*}
\calM_{4,0}(u_{1},\dots,u_{4}) & \coloneqq-\tfrac{1}{4}\Re(\br{u_{1}}u_{2})\Re(\br{u_{3}}u_{4}),\\
\calM_{4,1}(u_{1},\dots,u_{4}) & \coloneqq\tint{}{}\tfrac{1}{r^{2}}A_{\tht}[u_{1},u_{2}]\Re(\br{u_{3}}u_{4}),\\
\calM_{6}(u_{1},\dots,u_{6}) & \coloneqq\tfrac{1}{2}\tint{}{}\tfrac{1}{r^{2}}A_{\tht}[u_{1},u_{2}]A_{\tht}[u_{3},u_{4}]\Re(\br{u_{5}}u_{6}).
\end{align*}
It is then easy to verify that 
\begin{equation}
\left\{ \begin{aligned}(\calN_{3,0}(u_{1},u_{2},u_{3}),u_{4})_{r} & =4\calM_{4,0}(u_{1},u_{2},u_{3},u_{4}),\\
(m\calN_{3,1}(u_{1},u_{2},u_{3}),u_{4})_{r} & =2m\calM_{4,1}(u_{1},u_{2},u_{3},u_{4}),\\
(m\calN_{3,2}(u_{1},u_{2},u_{3}),u_{4})_{r} & =2m\calM_{4,1}(u_{3},u_{4},u_{1},u_{2}),\\
(\calN_{5,1}(u_{1},u_{2},u_{3},u_{4},u_{5}),u_{6})_{r} & =2\calM_{6}(u_{1},u_{2},u_{3},u_{4},u_{5},u_{6}),\\
(\calN_{5,2}(u_{1},u_{2},u_{3},u_{4},u_{5}),u_{6})_{r} & =4\calM_{6}(u_{1},u_{2},u_{5},u_{6},u_{3},u_{4}).
\end{aligned}
\right.\label{eq:duality-relations}
\end{equation}
We remark that $\calM_{4,1}$ does not appear in the case $m=0$.

Finally, we introduce a notation for the nonlinearity of the \emph{phase-rotated
CSS}, i.e., \eqref{eq:CSS-m-equiv} with the outermost integral $-\int_{r}^{\infty}\cdot\frac{dr'}{r'}$
(for $A_{t}[u]$) replaced by $\int_{0}^{r}\cdot\frac{dr'}{r'}$.
We set 
\begin{align*}
\rN_{3,0} & =\calN_{3,0}, & \rN_{3,1} & =\calN_{3,1}, & \rN_{5,1} & =\calN_{5,1},\\
\rV_{2,0} & =V_{2,0}, & \rV_{2,1} & =V_{2,1}, & \rV_{4,1} & =V_{4,1},
\end{align*}
whereas we set 
\begin{align*}
\rN_{3,2}(u_{1},u_{2},u_{3}) & =\rV_{2,2}[u_{1},u_{2}]u_{3}, & \rV_{2,2} & =\tint 0r\Re(\br{u_{1}}u_{2})\tfrac{dr'}{r'},\\
\rN_{5,2}(u_{1},\dots,u_{5}) & =\rV_{4,2}[u_{1},\dots,u_{4}]u_{5}, & \rV_{4,2} & =\tint 0rA_{\tht}[u_{1},u_{2}]\Re(\br{u_{3}}u_{4})\tfrac{dr'}{r'}.
\end{align*}
We denote 
\[
\rN_{3}=\rN_{3,0}+m(\rN_{3,1}+\rN_{3,2}),\qquad\rN_{5}=\rN_{5,1}+\rN_{5,2},\qquad\rN=\rN_{3}+\rN_{5}.
\]
We remark that 
\[
m\rV_{2,2}[u]+\rV_{4,2}[u]=mV_{2,2}[u]+V_{4,2}[u]+\tint 0{\infty}(m+A_{\tht}[u])|u|^{2}\tfrac{dr}{r}.
\]

\section{\label{sec:ApprxRadiation}Asymptotic profile and approximate radiation}

Our goal here in this section is to associate a given asymptotic
profile $z^{\ast}(r)=qr^{\nu}\chi(r)$ with an adequate nonlinear evolution
$z(t,r)$, which we call the \emph{radiation} associated with $z^{\ast}$,
having sufficiently precise asymptotics near where singularity would
form. As we shall see in Section~\ref{sec:Blow-up-Const}, the strong
interaction between the radiation $z$ and $Q_{\lmb,\gmm}$ will drive
the blow-up dynamics.

\subsection{Equations for the radiation} \label{subsec:z-eqn}

We begin by discussing the PDE that the radiation should solve (see
also \cite{KimKwon2019arXiv}). For this general discussion, we may
assume that the equivariance index $m$ is merely nonnegative instead
of $0$.

Consider an $m$-equivariant solution $u$ to \eqref{eq:CSS-m-equiv}
admitting the formal decomposition $u=Q_{\lmb,\gmm}+\zt$ and dropping
all terms that vanish weakly as $\lmb\to0$ as $\zt$ is fixed (and
assumed to be sufficiently smooth). The equation solved by $\zt$
after such a formal manipulation is 
\begin{equation}
i\partial_{t}\zt+\rd_{rr}\zeta+\frac{1}{r}\rd_{r}\zeta-\Big(\frac{\frkm+A_{\tht}[\zeta]}{r}\Big)^{2}\zeta-A_{t}^{(\frkm)}[\zeta]\zeta+|\zeta|^{2}\zeta=0,\label{eq:m-equiv-CSS-zt}
\end{equation}
where $\frkm=-m-2$ and $A_{t}^{(\frkm)}[\zeta]$ are defined by the
formula \eqref{eq:def-A} with $m$ replaced by $\frkm$.

Observe that \eqref{eq:m-equiv-CSS-zt} differs from the original
equation \eqref{eq:CSS-m-equiv} only in its equivariance index. Moreover,
unlike \eqref{eq:CSS-m-equiv}, the equivariant index $\frkm$ associated
with \eqref{eq:m-equiv-CSS-zt} is \emph{strictly negative}. This
remarkable property of the nonlocal nonlinearity of the self-dual
Chern--Simons--Schrödinger equation was the basis of the proof of
the soliton resolution theorems (Theorem~\ref{thm:asymptotic-description})
in \cite{KimKwonOh2022arXiv1}. In the derivation of \eqref{eq:m-equiv-CSS-zt},
this phenomenon arises concretely due to the following computation:
As $\lmb\to0$, 
\begin{align*}
A_{\tht}[Q_{\lmb,\gmm}+\zt](r) & =-\frac{1}{2}\int_{0}^{r}\abs{Q_{\lmb,\gmm}+\zt}^{2}r'dr'\\
 & \to-\frac{1}{2}\int_{0}^{\infty}Q^{2}r'dr'-\frac{1}{2}\int_{0}^{r}\abs\zt^{2}r'dr'=-2(m+1)+A_{\tht}[\zt]
\end{align*}
where we used \eqref{eq:Q-formula} to compute $\int_{0}^{\infty}Q^{2}r'dr'=4(m+1)$.
As a result, observe that $\bfD_{u}^{(m)}u\weakto\bfD_{\zt}^{(-m-2)}\zt$
and $\bfD_{u}^{(m)\ast}\bfD_{u}^{(m)}u\weakto\bfD_{\zt}^{(-m-2)\ast}\bfD_{\zt}^{(-m-2)}\zt$
as $\lmb\to0$. Afterward, the derivation of \eqref{eq:m-equiv-CSS-zt}
is straightforward.

By performing a suitable phase rotation, we now derive another formulation of \eqref{eq:m-equiv-CSS-zt}, in which the domain of integration $(r, \infty)$ in $A_{t}^{(\frkm)}[\zeta]$ is altered to $(0, r)$. As we shall see, this formulation will
be more convenient in Section~\ref{sec:Blow-up-Const} (it will capture
the phase correction on the soliton $Q_{\lmb,\gmm}$ due to the radiation).
Introducing $z$ through the relation 
\[
\zt(t,r)=e^{i\gmm_{z}(t)}z(t,r),
\]
where $\gmm_{z}(t)$ is given by (note that $|z|=|\zeta|$) 
\begin{equation}
\gmm_{z}(t)\coloneqq-\int_{0}^{t}\tht_{z}(t')dt'\quad\text{with}\quad\tht_{z}\coloneqq-\int_{0}^{\infty}(\frkm+A_{\tht}[z])|z|^{2}\frac{dr'}{r'},\label{eq:Def-tht_z}
\end{equation}
we arrive at the \emph{phase-rotated CSS}: 
\begin{equation}
i\partial_{t}z+\rd_{rr}z+\frac{1}{r}\rd_{r}z-\Big(\frac{\frkm+A_{\tht}[z]}{r}\Big)^{2}z-\Big(\int_{0}^{r}(\frkm+A_{\tht}[z])|z|^{2}\frac{dr'}{r'}\Big)z+|z|^{2}z=0.\label{eq:m-equiv-PhaseRotatedCSS}
\end{equation}
As $\tht_{z}$ only depends on the modulus of $z$, it is easy to
pass back and forth between $z$ and $\zt$.

\subsection{Statement of the main result}

We now state the main result of this section. We formulate it in terms
of the phase-rotated variable $z$, which approximately solves \eqref{eq:m-equiv-PhaseRotatedCSS}
with $\frkm=-2$ as discussed in the previous subsection. 
\begin{prop}[Properties of the radiation]
\label{prop:ApprxRadiation}Let $q,\nu\in\bbC$ with $\Re(\nu)>0$
and $q\neq0$. Then, there exist $\dlt_{z}>0$ and an approximate
radiation $z(t,r)$ that solves the equation 
\begin{equation}
i\partial_{t}z+\rd_{rr}z+\frac{1}{r}\rd_{r}z-\Big(\frac{\frkm+A_{\tht}[z]}{r}\Big)^{2}z-\Big(\int_{0}^{r}(\frkm+A_{\tht}[z])|z|^{2}\frac{dr'}{r'}\Big)z+|z|^{2}z=\Psi_{z}\label{eq:eqn-aprx-radiation}
\end{equation}
with $\frkm=-2$ on the time interval $[-1,1]$ and satisfies the
following estimates: 
\begin{enumerate}
\item (Initial data and smooth radiation) We have 
\begin{equation}
\lim_{t\to0}\|z(t,r)-qr^{\nu}\chi(r)\|_{H_{\frkm}^{1,1}}=0.\label{eq:z-initial-justification}
\end{equation}
Moreover, $z(t)$ is a smooth $\frkm$-equivariant function for any
$t\neq0$. 
\item (Pointwise bounds in the self-similar zone) For any $k\in\bbN$, we
have 
\begin{align}
\chf_{r\leq|t|^{\frac{1}{2}}}|z(t,r)|_{k} & \aleq_{k}|t|^{\frac{1}{2}(\Re(\nu)-2)}r^{2},\label{eq:z-ss-bound}\\
\chf_{r\leq|t|^{\frac{1}{2}}}|z(t,r)-qp(4it)^{\frac{\nu-2}{2}}r^{2}|_{k} & \aleq_{k}|t|^{\frac{1}{2}(\Re(\nu)-4)}r^{4},\label{eq:z-ss-leading}\\
\chf_{r\leq|t|^{\frac{1}{2}}}|z_{1}(t,r)-4qp(4it)^{\frac{\nu-2}{2}}r|_{k} & \aleq_{k}|t|^{\frac{1}{2}(\Re(\nu)-4)}r^{3},\label{eq:z1-ss-leading}\\
\chf_{r\leq|t|^{\frac{1}{2}}}|\rd_{-}^{(\frkm)}z(t,r)|_{k} & \aleq_{k}|t|^{\frac{1}{2}(\Re(\nu)-2)}r^{3},\label{eq:rd-z-ss-degen}
\end{align}
where $z_{1}=\bfD_{z}^{(\frkm)}z$ and 
\begin{equation}
p=\frac{1}{2}\Gmm\Big(\frac{\nu}{2}+2\Big).\label{eq:def-p}
\end{equation}
\item (Pointwise bounds outside the self-similar zone) For any $k\in\bbN$,
we have 
\begin{equation}
\chf_{r\geq|t|^{\frac{1}{2}}}|z(t,r)|_{k}\aleq_{k}\chf_{|t|^{\frac{1}{2}}\leq r\leq2}\{r^{\Re(\nu)}+|t|^{\Re(\nu)+1-k}r^{-\Re(\nu)-2+2k}\}.\label{eq:z-ext-bound}
\end{equation}
\item (Estimates for $\Psi_{z}$) We need 
\begin{align}
\|\Psi_{z}\|_{L^{2}} & \aleq|t|^{\frac{1}{2}(\Re(\nu)-1)+\dlt_{z}},\label{eq:Psi_z-L2}\\
\|r^{-s}|\Psi_{z}|_{-1}\|_{L^{2}} & \aleq|t|^{\frac{1}{2}(\Re(\nu)-2)+\dlt_{z}},\qquad s\in[0,\dlt_{z}].\label{eq:Psi_z-Hdot1}
\end{align}
\end{enumerate}
\end{prop}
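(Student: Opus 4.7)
\medskip

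\noindent\textbf{Proof plan.} The overall strategy follows the two-step outline in Section~\ref{subsec:Strategy-of-the-proof}. First, I construct a self-similar approximate solution $\wh{z}_{\lin}$ of the linear equation with potential $-4/r^{2}$ coming from $\frkm = -2$. Then, when $\Re\nu$ is small I take $z = \wh{z}_{\lin}$ directly, and when $\Re\nu$ is large I augment it by a finite number of ``Picard-type'' polynomial-in-$t$ corrections chosen to cancel the leading Taylor coefficients of the resulting error. The pointwise bounds (ii)--(iii) then read off from the ODE analysis of the similarity profile, and the error estimate (iv) is closed separating the self-similar zone $\{r \leq |t|^{1/2}\}$ from the exterior $\{|t|^{1/2} \leq r \leq 2\}$.

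\smallskip

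\noindent\textbf{Step 1 (similarity profile).} Taking the Schr\"odinger-adapted similarity variable $\rho = r/\sqrt{4it}$, I look for $\wh{z}_{\lin}^{\mathrm{free}}(t,r) = q(4it)^{\nu/2} Z(\rho)$ solving $(i\rd_{t} + \rd_{rr} + \tfrac{1}{r}\rd_{r} - \tfrac{4}{r^{2}})v = 0$. A direct substitution reduces this to a second-order ODE for $Z(\rho)$ of confluent hypergeometric type. Two conditions single out $Z$ uniquely (up to the already-fixed normalization $q$): first, regularity of an $\frkm$-equivariant function at the origin forces $Z(\rho) = p\rho^{2} + O(\rho^{4})$ as $\rho\to 0$; second, matching the data as $t\to 0$ forces $Z(\rho) \sim \rho^{\nu}$ as $\rho\to\infty$. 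Using the standard Kummer connection formula for the confluent hypergeometric function identifies the normalization as $p = \tfrac{1}{2}\Gmm(\tfrac{\nu}{2}+2)$ in \eqref{eq:def-p}. From this ODE representation, ODE asymptotic analysis yields uniform pointwise bounds on $Z$ and all its $\rho$-derivatives in $\{\rho \leq 1\}$ and $\{\rho \geq 1\}$. Defining $\wh{z}_{\lin}(t,r) \coloneqq q(4it)^{\nu/2} Z(\rho) \chi(r)$ and translating these bounds back to $(t,r)$ gives the self-similar estimates \eqref{eq:z-ss-bound}--\eqref{eq:z-ss-leading}, the exterior bound \eqref{eq:z-ext-bound}, and the degeneracy \eqref{eq:z1-ss-leading}--\eqref{eq:rd-z-ss-degen} (the extra power of $r$ coming from the algebraic cancellation $\rd_{-}^{(-2)}(r^{2}) = 0$).

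\smallskip

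\noindent\textbf{Step 2 (nonlinear correction when needed).} Plugging $\wh{z}_{\lin}$ into the left-hand side of \eqref{eq:eqn-aprx-radiation}, the resulting error has three sources: (a) the commutator $[\chi,\Dlt^{(-2)}]\,(q(4it)^{\nu/2} Z)$, supported in $\{1\leq r\leq 2\}$ where $\wh{z}_{\lin}$ is bounded, contributing $O(|t|^{\Re\nu/2})$; (b) the nonlocal cubic and quintic nonlinearities $-\bigl(\tfrac{A_{\tht}+A_{\tht}^{2}}{r^{2}}\bigr)z$, $-\bigl(\int_{0}^{r}(\frkm+A_{\tht})|z|^{2}\tfrac{dr'}{r'}\bigr)z$, and $|z|^{2}z$; and (c) no linear error inside the cone, by the self-similar construction. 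Item~(b) is at least cubic in $z$, so in the self-similar zone the pointwise bounds of Step~1 yield a gain of $|t|^{\Re\nu+\dlt_{z}}$ over the linear size, which is comfortably admissible when $\Re\nu$ is small (say $<1$). When $\Re\nu$ is large, however, one must make $\Psi_{z}$ vanish faster at $t=0$; for this I add finitely many corrections $h_{n}(t,r)$, each a polynomial in $t$ with smooth, compactly supported coefficients in $r$, constructed by matching powers of $t$ in the inhomogeneous linear equation $(i\rd_{t}+\Dlt^{(-2)})h_{n} = F_{n}$, so that $z = \wh{z}_{\lin} + \sum_{n=1}^{N} h_{n}$ has error vanishing to the prescribed order in $|t|$; the $h_{n}$ do not disturb the pointwise asymptotics of Step~1 since they are $o(\wh{z}_{\lin})$ near $r=0$. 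The choice $N = N(\Re\nu)$ is determined so that estimates \eqref{eq:Psi_z-L2}--\eqref{eq:Psi_z-Hdot1} hold with some $\dlt_{z} > 0$; the nonlocal terms in (b) are controlled by using the sharp support and pointwise bounds on $|z|$ in each of the two zones together with Hardy-type estimates for $A_{\tht}$, $\int_{0}^{r}\cdots\tfrac{dr'}{r'}$.

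\smallskip

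\noindent\textbf{Step 3 (initial data) and main obstacle.} The $H_{\frkm}^{1,1}$ convergence \eqref{eq:z-initial-justification} reduces, via the matching behavior $Z(\rho)\to\rho^{\nu}$ as $\rho\to\infty$ and standard dominated convergence on $\{r\leq 2\}$, to verifying that the self-similar remainder $q(4it)^{\nu/2} Z(\rho)\chi(r) - qr^{\nu}\chi(r)$ and each correction $h_{n}$ go to zero in $H^{1,1}_{\frkm}$ as $t\to 0$; this follows from the next-order term in the $\rho\to\infty$ asymptotic of $Z$ and the explicit polynomial-in-$t$ structure of $h_{n}$. The technically most delicate step is Step~1: even though the ODE for $Z$ is classical, $\nu$ is \emph{complex} with $\Re\nu>0$ arbitrary, and we need derivative bounds \emph{uniformly in $k$} on both $\{\rho\leq 1\}$ and $\{\rho\geq 1\}$ and the \emph{exact} leading constant $p$ including the phase $(4i)^{(\nu-2)/2}$ that appears in \eqref{eq:z-ss-leading}. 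The secondary difficulty is to close \eqref{eq:Psi_z-L2}--\eqref{eq:Psi_z-Hdot1} uniformly across all $\Re\nu>0$, which is precisely the role of the Picard dressing in Step~2.
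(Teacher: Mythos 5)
Your proposal follows the same overall strategy as the paper: construct a self-similar approximate solution $\wh{z}_{\lin}$ to the linear equation with the $-4/r^{2}$ potential, identify the similarity profile $Z$ via the confluent hypergeometric equation (you invoke the Kummer connection formula directly, which is the alternative ODE proof sketched in Remark~\ref{rem:lin-ss-connect-ODE}; the paper's main proof of Lemma~\ref{lem:lin-ss-connect} instead uses a distributional fundamental-solution argument), and then, for larger $\Re\nu$, augment $\wh{z}_{\lin}$ by polynomial-in-$t$ corrections $t^{n}g_{n}(r)$ chosen to cancel the leading Taylor coefficients of the error. The target decomposition and the role of the pointwise bounds in (ii)--(iii) are correctly identified.

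There are two gaps in Step~2. First, the size of the commutator error $[\chi,\Dlt^{(-2)}]\wh{z}_{\lin}$ on $\{1\leq r\leq 2\}$ is $O(1)$ as $t\to 0$, not $O(|t|^{\Re\nu/2})$: in that region $\wh{z}_{\lin}$ is dominated by the $f_{1}$ branch of the similarity profile, which behaves like $r^{\nu}$ rather than like $t^{\nu/2}$; the suppressed factor $t^{\nu/2}$ is eaten by $f_{1}(Y)\sim Y^{\nu}$ with $Y=t^{-1/2}r\sim t^{-1/2}$. This is precisely why the corrections must cancel the terms $t^{n}P_{\lin,n}(r)$, $n=0,\dots,N-1$, in the Taylor expansion of the commutator error (cf.\ \eqref{eq:z-lin-hat-eqn-expn}--\eqref{eq:Plin-bound}); your current size estimate would suggest they are only needed for the cubic terms. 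Second, and more substantively, the corrections must be truncated outside the self-similar zone (the paper uses $\chi_{r\gtrsim t^{1/2}}$ in \eqref{eq:z-form}); your claim that ``the $h_{n}$ do not disturb the pointwise asymptotics of Step~1 since they are $o(\wh{z}_{\lin})$ near $r=0$'' is true only in an absolute sense. The precision estimate \eqref{eq:z-ss-leading} requires the error relative to $qp(4it)^{\frac{\nu-2}{2}}r^{2}$ to be $O(|t|^{\frac{\Re\nu-4}{2}}r^{4})$, which has a higher power of $r$ than the uncut correction $t^{n}g_{n}(r)\sim t^{n}r^{3\Re\nu+2-2n}$. For instance, with $\Re\nu=1$ and $n=N=1$, the uncut correction scales like $tr^{3}$ while the target is $t^{-3/2}r^{4}$, so the ratio $t^{5/2}/r$ is unbounded as $r\to 0$. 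The cutoff removes the corrections entirely from the region where the sharp self-similar asymptotics are read off from the ODE for $Z$, and without it \eqref{eq:z-ss-leading}--\eqref{eq:z1-ss-leading} fail near $r=0$.
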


The remainder of this section is devoted to the proof of this proposition.
For an overview of the strategy, we refer to Section~\ref{subsec:Strategy-of-the-proof}.

\subsection{Approximate solution for the linear evolution} \label{subsec:z-lin}

We begin by constructing an explicit approximate solution to the linear
equation 
\begin{equation}
(i\rd_{t}+\lap^{(\frkm)})v=\left(i\rd_{t}+\rd_{rr}+\frac{1}{r}\rd_{r}-\frac{\frkm^{2}}{r^{2}}~\right)v(t,r)=0,\label{eq:m-equiv-lin}
\end{equation}
with initial data $z^{\ast}(r)=qr^{\nu}\chi(r)$. To clarify the role
of $\frkm$ in this construction, \textbf{we shall simply assume that
$\frkm$ is an integer such that $\frkm\neq0$ in this subsection},
although we shall soon specialize to the case $\frkm=-2$ in the next
subsection. Note that a similar argument may be carried out for $\frkm=0$,
but logarithmic corrections would appear. Moreover, \textbf{we assume
that $t>0$}; the case $t<0$ is handled by time reversal symmetry.

Our idea is to use the scaling property of the untruncated initial
data $qr^{\nu}$. We introduce the self-similar variable 
\[
Y=t^{-\frac{1}{2}}r
\]
and the ansatz 
\[
v(t,r)=t^{\frac{\nu}{2}}V(Y).
\]
We note that the factor $t^{\frac{\nu}{2}}$ is motivated by the scaling
property of the untruncated initial data $qr^{\nu}$. Note that 
\[
t^{-\frac{\nu}{2}-1}\left(i\rd_{t}+\rd_{rr}+\frac{1}{r}\rd_{r}-\frac{\frkm^{2}}{r^{2}}\right)v=\calA_{\frkm,\nu}V.
\]
where 
\[
\calA_{\frkm,\nu}\coloneqq\rd_{YY}+\frac{1}{Y}\rd_{Y}-\frac{\frkm^{2}}{Y^{2}}-\frac{i}{2}Y\rd_{Y}+\frac{i\nu}{2}.
\]

The ODE $\calA_{\frkm,\nu}V=0$ has a regular singularity at the origin
and an irregular singularity at infinity. By the standard theory of
ODE's, we have the following asymptotic series for a fundamental system
near each singularity: 
\begin{lem}[Fundamental systems for $\calA_{\frkm,\nu}$]
\label{lem:lin-ss-fundsys}~
\begin{itemize}
\item (Fundamental system adapted for large $Y$) There exists a fundamental
system $\set{f_{1},f_{2}}$ for $\calA_{\frkm,\nu}$ as $Y\to\infty$
that admits the following asymptotic series expansion: for every $K\geq0$,
\begin{align}
f_{1}(Y) & =Y^{\nu}\left(1+\sum_{n=1}^{K-1}c_{f_{1},2n}Y^{-2n}+e_{f_{1},2K}(Y)\right),\label{eq:f1-expn}\\
f_{2}(Y) & =e^{i\frac{Y^{2}}{4}}Y^{-(\nu+2)}\left(1+\sum_{n=1}^{K-1}c_{f_{2},2n}Y^{-2n}+e_{f_{1},2K}(Y)\right),\label{eq:f2-expn}
\end{align}
where 
\begin{equation}
|e_{f_{1},2K}|_{k}+|e_{f_{2},2K}|_{k}\aleq_{k}Y^{-2K}\qquad\text{for }Y\geq\frac{1}{100}\text{ and }k\in\bbN.\label{eq:e_f-bound}
\end{equation}
\item (Fundamental system adapted for small $Y$) There exists a fundamental
system $\set{e_{1},e_{2}}$ for $\calA_{\frkm,\nu}$ as $Y\to0$ that
admits the following asymptotic series expansion: for every $K\geq0$,
\begin{align*}
e_{1}(Y) & =Y^{\abs\frkm}\left(1+\sum_{n=1}^{K-1}c_{e_{1},2n}Y^{2n}+e_{e_{1},2K}(Y)\right),\\
e_{2}(Y) & =Y^{-\abs\frkm}\left(1+\sum_{n=1}^{K-1}c_{e_{2},2n}Y^{2n}+e_{e_{2},2K}(Y)\right),
\end{align*}
where 
\begin{equation}
|e_{e_{1},2K}|_{k}+|e_{e_{2},2K}|_{k}\aleq_{k}Y^{2K}\qquad\text{for }Y\leq100\text{ and }k\in\bbN.\label{eq:e_e-bound}
\end{equation}
\end{itemize}
\end{lem}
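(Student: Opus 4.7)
The plan is to apply standard ODE methods to the second-order linear equation $\calA_{\frkm,\nu}V=0$, which has a regular singular point at $Y=0$ (with indicial exponents $\pm|\frkm|$) and an irregular singular point of rank $1$ at $Y=\infty$. A conceptually clean route is to reduce the equation to Kummer's equation: setting $V(Y)=Y^{|\frkm|}W(iY^{2}/4)$, a direct computation transforms $\calA_{\frkm,\nu}V=0$ into the confluent hypergeometric equation
\[
\zeta W''+\bigl((|\frkm|+1)-\zeta\bigr)W'-\tfrac{|\frkm|-\nu}{2}W=0
\]
with parameters $c=|\frkm|+1$ and $a=(|\frkm|-\nu)/2$, and one can then transport the classical asymptotics of Kummer's $M$ and Tricomi's $U$ back through the substitution. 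I will outline a direct ODE-theoretic proof, since it makes the error bounds transparent.

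For the small-$Y$ fundamental system, the Frobenius ansatz $V=Y^{s}\sum_{n\geq0}a_{n}Y^{n}$ yields the two-step recursion
\[
\bigl((n+s)^{2}-\frkm^{2}\bigr)\,a_{n}+\tfrac{i}{2}(\nu-n-s+2)\,a_{n-2}=0,
\]
so only even indices are excited. For $s=|\frkm|$ the prefactor $n(n+2|\frkm|)$ is nonvanishing for $n\geq2$ and the recursion uniquely determines $e_{1}$; a ratio bound $|a_{2n+2}/a_{2n}|\aleq n^{-1}$ shows that the series is in fact convergent (so $e_{1}$ is entire), and the truncation estimate $|e_{e_{1},2K}|_{k}\aleq_{k}Y^{2K}$ follows by bounding the tail and differentiating term-by-term. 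The second solution $e_{2}$ is built by reduction of order: Abel's formula gives the Wronskian $W_{12}(Y)=CY^{-1}e^{iY^{2}/4}$, whence
\[
e_{2}(Y)=e_{1}(Y)\int_{0}^{Y}\frac{e^{i\tau^{2}/4}}{\tau\,e_{1}(\tau)^{2}}\,d\tau+c\,e_{1}(Y)
\]
for a suitable additive constant $c$ that normalizes $e_{2}$ to the claimed form. Expanding $e^{i\tau^{2}/4}/(\tau e_{1}(\tau)^{2})$ as $\tau^{-2|\frkm|-1}$ times an analytic power series in $\tau^{2}$ and integrating term-by-term yields the stated asymptotic expansion, with the error $e_{e_{2},2K}$ bounded by the integrated tail.

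For the large-$Y$ fundamental system, dominant-balance analysis identifies the two branches. The substitution $V=Y^{\nu}\sum c_{n}Y^{-2n}$ gives the nondegenerate recursion $c_{n}=-\frac{(\nu-2n+2)^{2}-\frkm^{2}}{2in}c_{n-1}$, while $V=e^{iY^{2}/4}Y^{-(\nu+2)}\sum d_{n}Y^{-2n}$ gives the corresponding recursion for the exponential branch (both formal series are generically divergent). To upgrade these to actual solutions, I set up the standard variation-of-parameters integral equation with the truncated formal series $f_{i}^{(K)}$ as the seed and the error $f_{i}-f_{i}^{(K)}$ as the fixed point of a contraction on a weighted $L^{\infty}$ space; this is an Olver-type construction that delivers the asymptotic with a remainder of size $O(Y^{-2K})$. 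The higher-derivative bounds $|e_{f_{i},2K}|_{k}\aleq_{k}Y^{-2K}$ are then obtained by differentiating the integral equation and using $\calA_{\frkm,\nu}$ itself to convert $\rd_{Y}^{j}$-derivatives of the error into lower-order derivatives whose bounds are already under control.

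The main obstacle is the construction at the irregular singular point: although the successive-approximations scheme is standard, setting up the correct integral kernel and verifying contractivity on the appropriate weighted space uniformly over $K$, and then propagating the derivative bounds uniformly in $k$, is the technically demanding step. Once the two fundamental systems are established in their respective regions of validity, the statement of the lemma is an immediate repackaging of the resulting asymptotics.
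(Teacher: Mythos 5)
The paper gives no proof of this lemma --- it is presented as a consequence of ``the standard theory of ODE's'' --- and the only further hint is Remark~\ref{rem:lin-ss-connect-ODE}, which records the Kummer reduction $V=Y^{\abs\frkm}w(iY^{2}/4)$ that you also open with. Your outline supplies precisely the argument the paper silently invokes: Frobenius plus reduction of order at the regular singular point $Y=0$, and a dominant-balance ansatz promoted to a genuine solution by an Olver-type variation-of-parameters contraction at the rank-one irregular singular point $Y=\infty$. This is the correct standard framework, and the $e_{1}$ and $\{f_{1},f_{2}\}$ parts of your plan (entirety of $e_{1}$ via the ratio test; Olver contraction plus ODE-bootstrap for the derivative bounds on $Y\geq\tfrac{1}{100}$) go through. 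One small slip: balancing the coefficient of $Y^{\nu-2n}$ in $\calA_{\frkm,\nu}\bigl(Y^{\nu}\tsum{}{}c_{n}Y^{-2n}\bigr)$ gives $in\,c_{n}+\bigl[(\nu-2n+2)^{2}-\frkm^{2}\bigr]c_{n-1}=0$, so the denominator in your recursion should read $in$, not $2in$.

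More substantively, your reduction-of-order formula for $e_{2}$ exposes a gap that you gloss over rather than resolve. The integrand $e^{i\tau^{2}/4}/(\tau\,e_{1}(\tau)^{2})$ equals $\tau^{-2\abs\frkm-1}$ times an analytic series in $\tau^{2}$, and its $\tau^{-1}$ term integrates to $\log\tau$. Since the indicial exponents $\pm\abs\frkm$ differ by the positive even integer $2\abs\frkm$ --- equivalently Kummer's $c=\abs\frkm+1$ is a positive integer, so $U(a,c,\zeta)$ carries a $\log\zeta$ in its small-$\zeta$ expansion --- that coefficient is generically nonzero: for $\abs\frkm=1$ it is proportional to $\nu+1$, and via the $\Gamma(a-\abs\frkm)$ criterion it vanishes only when $\nu+\abs\frkm\in 2\bbZ_{\geq 0}$. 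Thus $e_{2}$ acquires a $Y^{2\abs\frkm}\log Y$ correction inside the parenthesis, and the bound $\abs{e_{e_{2},2K}}_{k}\aleq_{k}Y^{2K}$ cannot hold near $Y=0$ once $K\geq\abs\frkm$. Your claim that ``integrating term-by-term yields the stated asymptotic expansion'' is therefore not correct as written; a complete proof must either restrict to $K<\abs\frkm$ or admit the logarithmic term, and should note that this is a limitation of the lemma's $e_{2}$ statement itself. It is ultimately harmless for the paper: the only property of $e_{2}$ used downstream (in Lemma~\ref{lem:lin-ss-connect}) is its singularity at $Y=0$, which singles out $e_{1}$ as the unique solution regular there.
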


We are now ready to motivate the choice of our explicit approximate
solution $\td z_{\lin}$ to \eqref{eq:m-equiv-lin} with initial data
$z^{\ast}(r)=qr^{\nu}\chi(r)$. First, observe that as $r$ is fixed
and $t\to0$, we have $Y=t^{-\frac{1}{2}}r\to\infty$. Moreover, $t^{\frac{\nu}{2}}f_{1}\to r^{\nu}$
whereas $f_{2}\to0$ in the same limit. Finally, to achieve the initial
data $z^{\ast}=qr^{\nu}\chi(r)$, we consider the truncated ansatz (see also Remark~\ref{rem:z-lin-td} below)
\begin{equation} \label{eq:lin-rad-hat}
\wh z_{\lin} \coloneqq qt^{\frac{\nu}{2}}\{f_{1}(t^{-\frac{1}{2}}r)+\alp(\nu)f_{2}(t^{-\frac{1}{2}}r)\}\chi(r),
\end{equation}
where it remains to specify $\alp(\nu)\in\bbC$.


Since $z^{\ast}(r)=qr^{\nu}\chi(r)$ is compactly supported, in view
of the local smoothing property of the Schrödinger equation, the corresponding
exact linear evolution $e^{it\lap^{(\frkm)}}z^{\ast}$ must be smooth
for $t>0$. Since $e_{2}(Y)$ is singular at $Y=0$, this motivates
choosing $\alp(\nu)$ so that $f_{1}(Y)+\alp(\nu)f_{2}(Y)$ is proportional
to $e_{1}(Y)$ and hence the linear evolution becomes smooth near
$r=0$ for all $t>0$. The following lemma guaranties the existence
of such an $\alp(\nu)$ and, moreover, computes the proportionality
constant.
\begin{lem}[Connection formula]
\label{lem:lin-ss-connect} For every $\nu\in\bbC$ with $\Re(\nu)>-\abs\frkm-2$,
there exists a unique $\alp=\alp(\nu)$ such that $f_{1}+\alp f_{2}$
is proportional to $e_{1}$. In fact, 
\begin{equation}
f_{1}+\alp f_{2}=(4i)^{\frac{\nu-\abs\frkm}{2}}pe_{1},\label{eq:f1-to-e1}
\end{equation}
where 
\[
p=\frac{\Gmm(\tfrac{\nu+\abs\frkm}{2}+1)}{\Gmm(\abs\frkm+1)}.
\]
\end{lem}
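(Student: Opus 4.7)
The plan is to reduce $\calA_{\frkm,\nu} V = 0$ to the classical Kummer (confluent hypergeometric) equation, where standard connection formulae directly yield both the existence/uniqueness of $\alp$ and the explicit value of the proportionality constant. Substituting $V(Y) = Y^{|\frkm|} W(w)$ with $w = iY^2/4$ and computing, one finds that $W$ solves Kummer's equation
\[
w W'' + (c - w) W' - a W = 0, \qquad a = \tfrac{|\frkm|-\nu}{2}, \quad c = |\frkm|+1.
\]
Since $e_1$ is characterized up to scale as the unique solution of $\calA_{\frkm,\nu} V = 0$ regular at $Y = 0$ with leading behavior $Y^{|\frkm|}$, and since Kummer's function $M(a, c, w)$ is uniquely characterized among Kummer solutions by $M(a, c, 0) = 1$ and analyticity at $w = 0$, one identifies $e_1(Y) = Y^{|\frkm|} M(a, c, iY^2/4)$.

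Next, I would appeal to the classical large-$|w|$ asymptotic expansion of $M(a,c,w)$, which in any sector containing $\arg w = \pi/2$ reads
\[
M(a,c,w) = \frac{\Gmm(c)}{\Gmm(c-a)}(-w)^{-a}\bigl(1 + O(|w|^{-1})\bigr) + \frac{\Gmm(c)}{\Gmm(a)} e^{w} w^{a-c}\bigl(1 + O(|w|^{-1})\bigr).
\]
Multiplying by $Y^{|\frkm|}$ and substituting $w = iY^2/4$, the first summand becomes a full asymptotic series in $Y^{-2}$ with leading term proportional to $Y^\nu$, matching the profile of $f_1$ from Lemma~\ref{lem:lin-ss-fundsys}, while the second is a series with leading term proportional to $e^{iY^2/4}Y^{-(\nu+2)}$, matching $f_2$. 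These two profiles are linearly independent (one purely algebraic, one carrying the oscillatory factor $e^{iY^2/4}$), so reading off coefficients yields a unique expansion $e_1 = A f_1 + B f_2$. Consequently, $\alp := B/A$ and $\beta := 1/A$ are uniquely determined provided $A \neq 0$. The non-vanishing of $A$ amounts to $1/\Gmm(c-a) \neq 0$, i.e., $c - a = (|\frkm|+\nu)/2 + 1$ not being a non-positive integer, which is guaranteed by the hypothesis $\Re(\nu) > -|\frkm| - 2$.

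It remains to simplify $\beta$. Using the principal branch $-i = e^{-i\pi/2}$, the factor $(-w)^{-a}$ becomes $4^a e^{ia\pi/2} Y^{-2a}$, so the coefficient of $Y^\nu$ in $e_1$ reads $A = \frac{\Gmm(c)}{\Gmm(c-a)} \cdot 4^a e^{ia\pi/2} = \frac{\Gmm(c)}{\Gmm(c-a)} \cdot (4i)^a$. Inverting,
\[
\beta = \frac{\Gmm(c-a)}{\Gmm(c)} (4i)^{-a} = p \cdot (4i)^{(\nu-|\frkm|)/2},
\]
exactly as claimed, using $c - a = (|\frkm|+\nu)/2 + 1$ in the Gamma ratio. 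The main obstacle I anticipate is the careful bookkeeping of complex branch cuts for the powers $(-w)^{-a}$ and $(4i)^{-a}$ so that the phase factors combine to exactly $(4i)^{(\nu-|\frkm|)/2}$; a related but minor issue is justifying the applicability of the asymptotic expansion on the Stokes ray $\arg w = \pi/2$, where both subdominant terms coexist but are separable via their differing algebraic versus oscillatory growth in $Y$. Once these branch considerations are handled, the Gamma-function manipulation is purely algebraic.
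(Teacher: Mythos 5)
Your proof is correct, and it coincides with the alternative ODE argument the paper itself sketches in Remark~\ref{rem:lin-ss-connect-ODE}: identify $e_{1}(Y)=Y^{|\frkm|}M(a,c,iY^{2}/4)$ with $a=-\tfrac{\nu-|\frkm|}{2}$, $c=|\frkm|+1$, and read the connection constant off the large-argument expansion of Kummer's $M$ (equivalently, off Olver's connection formula relating $M$ to $U$ and $V$). The paper's main proof of the lemma takes a different, PDE-flavored route: it realizes the self-similar solution as $e^{it\lap_{\bbR^{2|\frkm|+2}}}(r^{\nu-|\frkm|})$ acting on a tempered distribution, proves smoothness for $t>0$ by local smoothing plus repeated integration by parts against the Fresnel kernel, and computes $p$ directly as a Fresnel integral at $(t,r)=(1,0)$. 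Your route is shorter and algebraically transparent but imports the confluent-hypergeometric connection formula as a black box; the paper's route is self-contained and also explains conceptually why the regular-at-origin solution is singled out (compact support of the data plus local smoothing of the Schr\"odinger flow). On your two anticipated obstacles: the sector of validity of the Poincar\'e expansion for $M$ comfortably contains $\arg w=\pi/2$, which is an anti-Stokes line rather than a Stokes line, so the fixed-coefficient expansion remains valid there without further argument; and the sign-dependent phase $e^{\pm a\pi i}$ built into the standard form of that expansion (with $+$ for $\arg w>0$) is exactly what your principal-branch choice $-i=e^{-i\pi/2}$ reproduces when the algebraic term is written as $(-w)^{-a}$ instead of $w^{-a}$. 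Your verification that $A\neq 0$ under $\Re(\nu)>-|\frkm|-2$ is correct, since then $\Re(c-a)>0$ so $c-a$ cannot be a non-positive integer.
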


\begin{proof}
We present here a proof that uses distribution theory and the fundamental
solution to explicitly construct a solution to the free Schrödinger
equation with initial data $r^{\nu}$; for an alternative ODE proof,
see Remark~\ref{rem:lin-ss-connect-ODE} below.

Our starting point is the formal relation 
\[
r^{-\abs\frkm}(i\rd_{t}+\lap^{(\frkm)})f(t,r)=(i\rd_{t}+\lap_{\bbR^{2\abs\frkm+2}})(r^{-\abs\frkm}f(t,r)),
\]
where for each $t$, $r^{-\abs\frkm}f(t,r)$ is viewed as a radial
function on $\bbR^{2\abs\frkm+2}$. We shall therefore consider $r^{-\abs\frkm}r^{\nu}$
as a radial function on $\bbR^{2\abs\frkm+2}$ and try to understand
its evolution under the unitary group $e^{it\lap_{\bbR^{2\abs\frkm+2}}}$.
Observe that $r^{\nu-\abs\frkm}$ is locally integrable on $\bbR^{2\abs\frkm+2}$
if and only if $\Re(\nu)>-\abs\frkm-2$; since it (as well as its
derivatives) grows only polynomially as $r\to\infty$, $r^{\nu-\abs\frkm}\in\calS'(\bbR^{2\abs\frkm+2})$
as well for the same range of $\nu$. Hence, $\bfv(t,x)=e^{it\lap_{\bbR^{2\abs\frkm+2}}}(r^{\nu-\abs\frkm})$
is well-defined. A useful way to represent $\bfv$ is to consider
the approximating sequence (defined for each positive integer $N$)
\[
\bfv_{N}=e^{it\lap_{\bbR^{2\abs\frkm+2}}}(r^{\nu-\abs\frkm}\chi_{\aleq1}(\tfrac{r}{N}))=\int_{\bbR^{2\abs\frkm+2}}\frac{1}{(4\pi it)^{\abs\frkm+1}}e^{i\frac{\abs{x-y}^{2}}{4t}}\abs y^{\nu-\abs\frkm}\chi_{\aleq1}(\tfrac{\abs y}{N})\,dy,
\]
where the last integral formula makes sense since $\abs y^{\nu-\abs\frkm}\chi_{\aleq1}(\tfrac{\abs y}{N})\in L^{1}(\bbR^{2\abs\frkm+2})$.
Note that $\bfv_{N}\weakto\bfv$ in $\calS'(\bbR^{2\abs\frkm+2})$.

We now claim that for every fixed $t>0$, $\bfv(t,x)$ is smooth in
$x$. Fix $N_{0}\in2^{\bbZ_{\geq0}}$, and consider the decomposition
\[
\bfv(t,x)=\bfv_{N_{0}}(t,x)+\sum_{N\in2^{\bbZ_{\geq0}},N\geq N_{0}}(\bfv_{2N}-\bfv_{N})(t,x),
\]
where the series converges in $\calS'(\bbR^{2\abs\frkm+2})$. Observe
first that $\bfv_{N_{0}}(t,x)$ is smooth in $x$ by the smoothness
of $\frac{1}{(4\pi it)^{\abs\frkm+1}}e^{i\frac{\abs x^{2}}{4t}}$
and the compact support of $\abs y^{\nu-\abs\frkm}\chi_{\aleq1}(\tfrac{\abs y}{N_{0}})$
(this is the local smoothing phenomenon). To handle the remaining
sum, consider 
\begin{align*}
(\bfv_{2N}-\bfv_{N})(t,x) & =\int_{\bbR^{2\abs\frkm+2}}\frac{1}{(4\pi it)^{\abs\frkm+1}}e^{i\frac{\abs{x-y}^{2}}{4t}}\abs y^{\nu-\abs\frkm}\left(\chi_{\aleq1}(\tfrac{\abs y}{2N})-\chi_{\aleq1}(\tfrac{\abs y}{N})\right)\,dy.
\end{align*}
Let $x$ lie in the ball $\abs x<\tfrac{1}{2}N_{0}\leq\tfrac{1}{2}N$.
Integrating by parts repeatedly using $-\frac{t(x-y)^{j}}{2i\abs{x-y}^{2}}\rd_{y^{j}}e^{i\frac{\abs{x-y}^{2}}{4t}}=e^{i\frac{\abs{x-y}^{2}}{4t}}$,
and observing that $\abs y\aeq\abs{x-y}\aeq N$ on the support of
the integrand, we have 
\begin{align*}
\abs{\rd_{x}^{\alp}(\bfv_{2N}-\bfv_{N})(t,x)} & \aleq_{k}t^{-\abs\frkm-1}(\tfrac{t}{N^{2}})^{k}(\tfrac{N}{t})^{\abs\alp}N^{\Re(\nu)+\abs\frkm+2}
\end{align*}
for any $k=0,1,2,\ldots$. Choosing $k$ sufficiently large depending
on $\abs\alp+\Re(\nu)+\abs\frkm$, we see that $\sum_{N\in2^{\bbZ_{\geq0}},\,N\geq N_{0}}\rd_{x}^{\alp}(\bfv_{2N}-\bfv_{N})(t,x)$
is uniformly convergent on $\set{\abs x\leq\frac{1}{2}N_{0}}$. Since
$\alp$ and $N_{0}$ were arbitrary, the claim follows.

Next, observe from the rotational and scaling invariances of $e^{it\lap_{2\abs\frkm+2}}$
that $\bfv(t,x)$ is radial and self-similar in the sense that $\bfv(t,x)=t^{\frac{\nu-\abs\frkm}{2}}\bfV(t^{-\frac{1}{2}}r)$
for some smooth function $\bfV$ and $r=\abs x$. Moreover, $V(Y)=Y^{\abs\frkm}\bfV(Y)$
solves the ODE $\calA V=0$. By the smoothness of $\bfv(1,x)=\bfV(r)$
near the origin, it follows that $V$ is proportional to $e_{1}$.
On the other hand, since $\set{f_{1},f_{2}}$ is a fundamental system
for the ODE $\calA V=0$, there exists a unique pair $(\alp',\alp)$
such that $V=\alp'f_{1}+\alp f_{2}$. Moreover, $\alp'=1$ by comparing
$\lim_{t\to0}t^{\frac{\nu}{2}}V(t^{-\frac{1}{2}}r)=r^{\nu}$ with
the behaviors of $t^{\frac{\nu}{2}}f_{1}$ and $t^{\frac{\nu}{2}}f_{2}$
as $t\to0$.

It remains to compute $p$. We fix $t=1$ and observe that 
\begin{align*}
p & =(4i)^{-\frac{\nu-\abs\frkm}{2}}\bfv(1,0)=(4i)^{-\frac{\nu-\abs\frkm}{2}}\lim_{N\to\infty}\bfv_{N}(1,0)\\
 & =(4i)^{-\frac{\nu-\abs\frkm}{2}}\lim_{N\to\infty}\int_{\bbR^{2\abs\frkm+2}}\frac{1}{(4\pi i)^{\abs\frkm+1}}e^{i\frac{\abs y^{2}}{4}}\abs y^{\nu-\abs\frkm}\chi_{\aleq1}(\tfrac{\abs y}{N})\,dy\\
 & =(4i)^{-\frac{\nu-\abs\frkm}{2}}\abs{\bbS^{2\abs\frkm+1}}\lim_{N\to\infty}\int_{0}^{\infty}\frac{1}{(4\pi i)^{\abs\frkm+1}}e^{i\frac{r^{2}}{4}}r^{\nu+\abs\frkm+1}\chi_{\aleq1}(\tfrac{r}{N})\,dr\\
 & =(4i)^{-\frac{\nu-\abs\frkm}{2}}\frac{2^{\nu-\abs\frkm+1}i^{-\abs\frkm-1}}{\Gmm(\abs\frkm+1)}\lim_{N\to\infty}\int_{0}^{\infty}e^{i\rho^{2}}\rho^{\nu+\abs m+1}\chi_{\aleq1}(2N^{-1}\rho)\,d\rho.
\end{align*}
Performing a contour integral (and integrating by parts repeatedly
using $\frac{1}{2i\rho}\rd_{\rho}e^{i\rho^{2}}=e^{i\rho^{2}}$), we
easily see that 
\begin{align*}
\int_{0}^{\infty}e^{i\rho^{2}}\rho^{\nu+\abs m+1}\chi_{\aleq1}(2N^{-1}\rho)\,d\rho=\frac{i^{\frac{\nu+\abs\frkm+2}{2}}}{2}\Gmm(\tfrac{\nu+\abs\frkm}{2}+1)+o_{N\to\infty}(1).
\end{align*}
Hence, $p=\frac{\Gmm(\tfrac{\nu+\abs\frkm}{2}+1)}{\Gmm(\abs\frkm+1)}$,
as desired. \qedhere 
\end{proof}
\begin{rem}
\label{rem:lin-ss-connect-ODE} Here we sketch another way to prove
Lemma~\ref{lem:lin-ss-connect} via the theories of ODEs and special
functions. First observe that under the change of variables $z=i\tfrac{Y^{2}}{4}$
and $V=Y^{\abs\frkm}w$, the ODE $\calA_{\frkm,\nu}V=0$ becomes the
\emph{confluent hypergeometric equation} \cite[Ch.~7, Eq.~(9.01)]{Olver}
\[
z\rd_{zz}w+(c-z)\rd_{z}w-aw=0
\]
with $c=\abs\frkm+1$ and $a=-\frac{\nu-\abs\frkm}{2}$. It follows
that $e_{1}=Y^{\abs\frkm}M(a,c,i\tfrac{Y^{2}}{4})$, where $M(a,c,z)$
is Kummer's function \cite[Ch.~7, Eq.~(9.03)]{Olver}. Moreover, we
have $f_{1}=(\tfrac{i}{4})^{a}Y^{\abs\frkm}U(a,c,i\tfrac{Y^{2}}{4})$
and $f_{2}=(-\tfrac{i}{4})^{-a+c}V(a,c,z)$ \cite[Ch.~7, Eq.~(10.01) and (10.02)]{Olver}.
By the connection formula \cite[Ch.~7, Eq.~(9.04) and (10.09)]{Olver}
\[
M(a,c,z)=\Gmm(c)\left(\frac{e^{a\pi i}}{\Gmm(c-a)}U(a,c,z)+\frac{e^{(a-c)\pi i}}{\Gmm(a)}V(a,c,z)\right),
\]
it follows that $(4i)^{\frac{\nu-\abs\frkm}{2}}p=(\tfrac{i}{4})^{a}e^{-a\pi i}\frac{\Gmm(c-a)}{\Gmm(c)}$,
which implies $p=\frac{\Gmm(\frac{\nu+\abs\frkm}{2}+1)}{\Gmm(\abs\frkm+1)}$. 
\end{rem}


With $\alp(\nu)$ chosen according to Lemma~\ref{lem:lin-ss-connect} (see also Remark~\ref{rem:lin-ss-connect-ODE}), the resulting $\wh z_{\lin}$ has the following properties.
\begin{prop}[Properties of $\wh z_{\lin}$]
Let $\wh z_{\lin}$ be defined as in \eqref{eq:lin-rad-hat} with
$\alp(\nu)$ given by Lemma~\ref{lem:lin-ss-connect}. 
\begin{itemize}
\item (Expansion in the self-similar zone) For every $K\geq0$ and any $r\leq t^{\frac{1}{2}}$,
we have 
\begin{equation}
\wh z_{\lin}=qp(4it)^{\frac{\nu-|\frkm|}{2}}r^{|\frkm|}\Big(1+\sum_{n=1}^{K-1}c_{e_{1},2n}(t^{-\frac{1}{2}}r)^{2n}+e_{e_{1},2K}(t^{-\frac{1}{2}}r)\Big),\label{eq:z-lin-hat-ss}
\end{equation}
where $c_{e_{1},2n}$ and $e_{e_{1},2K}$ are defined in Lemma~\ref{lem:lin-ss-fundsys}.
\item (Global-in-space expansion) For every $K\geq0$, we have 
\begin{equation}
\wh z_{\lin}(t,r)=\sum_{n=0}^{K-1}t^{n}\cdot qc_{f_{1},2n}r^{\nu-2n}\chi(r)+e_{\lin,K}(t,r)\label{eq:z-lin-hat-expn}
\end{equation}
with the bound for all $k\in\bbN$
\begin{equation}
\begin{aligned}|e_{\lin,K}|_{k} & \aleq_{k}\chf_{r\leq t^{\frac{1}{2}}}\{t^{\frac{1}{2}(\Re(\nu)-|\frkm|)}r^{|\frkm|}+\chf_{K\geq1}r^{\Re(\nu)-2(K-1)}\}\\
 & \qquad+\chf_{t^{\frac{1}{2}}\leq r\leq2}\{t^{K}r^{\Re(\nu)-2K}+t^{\Re(\nu)+1-k}r^{-(\Re(\nu)+2)+2k}\}.
\end{aligned}
\label{eq:elin-bound}
\end{equation}
\item (Expansion for the linear error) For every $K\geq0$, we have 
\begin{equation}
i\rd_{t}\wh z_{\lin}+\lap^{(\frkm)}\wh z_{\lin}=\sum_{n=0}^{K-1}t^{n}P_{\lin,n}(r)+E_{\lin,K}(t,r)\label{eq:z-lin-hat-eqn-expn}
\end{equation}
with the following bounds for all $k\in\bbN$ 
\begin{align}
|P_{\lin,n}|_{k} & \aleq_{k}\chf_{1\leq r\leq2},\label{eq:Plin-bound}\\
|E_{\lin,K}|_{k} & \aleq_{k}\chf_{1\leq r\leq2}\{t^{K}+t^{\Re(\nu)-k}\}.\label{eq:Elin-bound}
\end{align}
\end{itemize}
\end{prop}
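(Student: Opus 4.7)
The cornerstone of the proof will be the observation that the \emph{untruncated} self-similar profile
\[
	g(t, r) := q t^{\nu/2} [f_{1}(Y) + \alp(\nu) f_{2}(Y)], \quad Y = t^{-\frac{1}{2}} r,
\]
is an \emph{exact} solution of the linear equation $(i \rd_{t} + \lap^{(\frkm)}) g = 0$. Indeed, the self-similar ansatz $v = t^{\nu/2} V(Y)$ converts $(i \rd_{t} + \lap^{(\frkm)}) v$ to $t^{\nu/2 - 1} \calA_{\frkm, \nu} V$, and both $f_{1}, f_{2}$ annihilate $\calA_{\frkm, \nu}$ by Lemma~\ref{lem:lin-ss-fundsys}. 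Since $\wh z_{\lin} = g \cdot \chi(r)$, one gets for free that $\wh z_{\lin}$ satisfies \eqref{eq:m-equiv-lin} exactly on $\{r \leq 1\}$, and the linear error in part~(3) arises \emph{solely} from the commutator of $\lap^{(\frkm)}$ with $\chi$, which is supported on $\{1 \leq r \leq 2\}$. All estimates will then follow by region-wise analysis: the self-similar zone $\{r \leq t^{1/2}\}$ using the small-$Y$ expansion of $e_{1}$ via the connection formula \eqref{eq:f1-to-e1}, and the outer zone $\{r \geq t^{1/2}\}$ (which contains $\supp \nb \chi$) using the large-$Y$ expansions \eqref{eq:f1-expn}--\eqref{eq:f2-expn}.

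\textbf{Parts (1) and (2): expansions of $\wh z_{\lin}$.} For part~(1), in the self-similar zone one has $\chi \equiv 1$, so Lemma~\ref{lem:lin-ss-connect} rewrites the profile as $\wh z_{\lin} = q p (4i)^{(\nu - |\frkm|)/2} t^{\nu/2} e_{1}(Y)$. Substituting the small-$Y$ expansion of $e_{1}$ together with the identity $(4i)^{(\nu - |\frkm|)/2} t^{\nu/2} Y^{|\frkm|} = (4it)^{(\nu - |\frkm|)/2} r^{|\frkm|}$ will yield \eqref{eq:z-lin-hat-ss} immediately. For part~(2), in the outer region the large-$Y$ expansion \eqref{eq:f1-expn} of $f_{1}$ contributes $\sum_{n=0}^{K-1} t^{n} q c_{f_{1}, 2n} r^{\nu - 2n} \chi(r)$ via the identity $t^{\nu/2} Y^{\nu - 2n} = t^{n} r^{\nu - 2n}$; the $f_{1}$-tail $q r^{\nu} e_{f_{1}, 2K}(Y) \chi(r)$ is controlled by $t^{K} r^{\Re(\nu) - 2K}$ via \eqref{eq:e_f-bound} with no loss in derivatives (since no oscillation is present), while the $f_{2}$-contribution is proportional to $e^{i r^{2}/(4t)} t^{\nu + 1} r^{-\nu - 2}$ up to lower-order corrections. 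Applying $(r \rd_{r})^{k} = (Y \rd_{Y})^{k}$ to the latter, and noting that each derivative landing on the oscillatory factor produces $Y^{2} \sim r^{2}/t$, yields the $t^{\Re(\nu) + 1 - k} r^{-\Re(\nu) - 2 + 2k}$ bound in \eqref{eq:elin-bound}. In the self-similar zone, $e_{\lin, K}$ is bounded pointwise by the modulus of $\wh z_{\lin}$ (from part~(1)) plus the subtracted polynomials, the largest of which in $\{r \leq t^{1/2}\}$ corresponds to $n = K - 1$ and produces the $r^{\Re(\nu) - 2(K-1)}$ term when $K \geq 1$ (using $t \leq 1$).

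\textbf{Part (3): linear error.} Since $(i \rd_{t} + \lap^{(\frkm)}) g = 0$, Leibniz's rule produces
\[
	(i \rd_{t} + \lap^{(\frkm)}) \wh z_{\lin} = 2 \rd_{r} g \cdot \rd_{r} \chi + g \cdot (\rd_{rr} \chi + r^{-1} \rd_{r} \chi),
\]
which is supported on $\{1 \leq r \leq 2\}$. Substituting the large-$Y$ expansions \eqref{eq:f1-expn}--\eqref{eq:f2-expn} into $g$ and collecting the polynomial-in-$t$ contributions from $f_{1}$ produces explicit smooth cutoff-supported functions $P_{\lin, n}(r)$ satisfying \eqref{eq:Plin-bound}. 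The remainder $E_{\lin, K}$ decomposes into the $f_{1}$-tail (giving $O(t^{K})$ with all derivatives bounded on $\{r \sim 1\}$, as no oscillation is present) and the $f_{2}$-contribution. The latter yields a term of size $O(t^{\Re(\nu) + 1})$ without derivatives; since $\rd_{r} g \cdot \rd_{r} \chi$ absorbs one extra derivative on $g$ (which costs a factor of $Y \sim t^{-1/2}$ on the oscillatory factor), and each subsequent $r \rd_{r}$ on $e^{iY^{2}/4}$ costs another factor of $Y^{2} \sim 1/t$ on $\{r \sim 1\}$, one obtains the sharp bound $O(t^{\Re(\nu) - k})$ at order $k$, matching \eqref{eq:Elin-bound}.

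\textbf{Main obstacle.} The technical heart of the argument is the systematic bookkeeping of derivatives across the three summands contributing to $\wh z_{\lin}$ and its error (leading polynomials in $t$, the $f_{1}$-tail, and the $f_{2}$-oscillatory part): each $r \rd_{r}$ on the oscillatory piece of $f_{2}$ produces a factor of $Y^{2} \sim 1/t$, and matching the precise powers of $t$ and $r$ in \eqref{eq:elin-bound} and \eqref{eq:Elin-bound} requires tracking carefully how many derivatives are absorbed by $\chi$ versus how many hit the oscillatory phase. Everything else follows routinely from the exact solvability of the untruncated ansatz and the already-established asymptotic expansions in Lemmas~\ref{lem:lin-ss-fundsys} and~\ref{lem:lin-ss-connect}.
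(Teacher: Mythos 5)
Your proposal is correct and uses essentially the same approach as the paper: it exploits that the untruncated self-similar profile $g(t,r) = qt^{\nu/2}[f_1 + \alpha(\nu)f_2]$ solves the linear equation exactly (since $f_1, f_2$ lie in the kernel of $\calA_{\frkm,\nu}$), so the linear error is $qt^{\nu/2}[\Delta^{(\frkm)},\chi](f_1 + \alpha f_2)$, supported in $\{1 \leq r \leq 2\}$, and then carries out region-wise analysis using the small-$Y$ expansion of $e_1$ via the connection formula in the self-similar zone and the large-$Y$ expansions of $f_1, f_2$ in the outer zone.

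One minor imprecision in your derivative bookkeeping: you write that one extra derivative on $g$ (from $\rd_r g \cdot \rd_r\chi$) ``costs a factor of $Y \sim t^{-1/2}$ on the oscillatory factor.'' In fact $r\rd_r e^{iY^2/4}$ produces $\tfrac{i}{2}Y^2 e^{iY^2/4}$, i.e., the cost is $Y^2 \sim 1/t$, not $Y$. (The paper handles this cleanly by using $|[\Delta^{(\frkm)},\chi] w|_k \aleq \chf_{1\leq r\leq 2}|w|_{k+1}$ and then the bound $|t^{\nu/2}f_2(Y)|_{k+1} \aleq t^{\Re(\nu)/2} Y^{-\Re(\nu)+2k}$ at $Y \sim t^{-1/2}$.) Your final bound $O(t^{\Re(\nu)-k})$ is nevertheless correct, and your quoted factor-counting in \eqref{eq:elin-bound}, where you correctly attribute $Y^2 \sim r^2/t$ per derivative on the oscillatory piece, is consistent. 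The rest of the argument — the self-similar-zone expansion via \eqref{eq:f1-to-e1}, the identification of the subtracted polynomial $t^{K-1}r^{\Re(\nu)-2(K-1)}$ as the dominant subtracted term in $\{r \leq t^{1/2}\}$ when $K\geq 1$, and the $O(t^K)$ bound from the non-oscillatory $f_1$-tail — all matches the paper's proof.
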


\begin{proof}
\textbf{Step 1.} Expansion in the self-similar zone.

The expansion \eqref{eq:z-lin-hat-ss} immediately follows from the
definition \eqref{eq:lin-rad-hat}, $r\leq t^{\frac{1}{2}}$, and
Lemma~\ref{lem:lin-ss-connect}.

\smallskip
\textbf{Step 2.} Global-in-space expansion.

In this step, we show that $\wh z_{\lin}$ admits the expansion \eqref{eq:z-lin-hat-expn}
with the error bound \eqref{eq:elin-bound}. In the self-similar zone
$r\leq t^{\frac{1}{2}}$, we use \eqref{eq:f1-to-e1} to have 
\begin{align*}
e_{\lin,K}(t,r) & =\wh z_{\lin}(t,r)-\tsum{n=0}{K-1}t^{n}\cdot qc_{f_{1},2n}r^{\nu-2n}\chi(r)\\
 & =t^{\frac{\nu}{2}}(4i)^{\frac{\nu-\abs\frkm}{2}}pe_{1}(t^{-\frac{1}{2}}r)\chi(r)-\tsum{n=0}{K-1}t^{n}\cdot qc_{f_{1},2n}r^{\nu-2n}\chi(r).
\end{align*}
Estimating each term, we have the desired bound 
\[
\chf_{r\leq t^{\frac{1}{2}}}|e_{\lin,K}|_{k}\aleq_{k}\chf_{r\leq t^{\frac{1}{2}}}\{t^{\frac{1}{2}(\Re(\nu)-|\frkm|)}r^{|\frkm|}+\chf_{K\geq1}r^{\Re(\nu)-2(K-1)}\}.
\]
Outside the self-similar zone, i.e., when $r\geq t^{\frac{1}{2}}$,
we have 
\begin{align*}
e_{\lin,K}(t,r) & =\wh z_{\lin}(t,r)-\tsum{n=0}{K-1}t^{n}\cdot qc_{f_{1},2n}r^{\nu-2n}\chi(r)\\
 & =q\{t^{\frac{\nu}{2}}f_{1}(t^{-\frac{1}{2}}r)-\tsum{n=0}{K-1}t^{n}\cdot c_{f_{1},2n}r^{\nu-2n}\}\chi(r)+qt^{\frac{\nu}{2}}f_{2}(t^{-\frac{1}{2}}r)\chi(r)\\
 & =qt^{\frac{\nu}{2}}\{e_{f_{1},2K}(t^{-\frac{1}{2}}r)+f_{2}(t^{-\frac{1}{2}}r)\}\chi(r).
\end{align*}
Next, we apply \eqref{eq:e_f-bound}. We note from \eqref{eq:f2-expn}
that $f_{2}(Y)$ has a pseudoconformal phase $e^{i\frac{Y^{2}}{4}}$.
Then, we have the desired bound 
\begin{align*}
\chf_{r\geq t^{\frac{1}{2}}}|e_{\lin,K}|_{k} & \aleq_{k}\chf_{t^{\frac{1}{2}}\leq r\leq2}t^{\frac{1}{2}\Re(\nu)}\{(t^{-\frac{1}{2}}r)^{\Re(\nu)-2K}+(t^{-\frac{1}{2}}r)^{-(\Re(\nu)+2)+2k}\}\\
 & \aleq_{k}\chf_{t^{\frac{1}{2}}\leq r\leq2}\{t^{K}r^{\Re(\nu)-2K}+t^{\Re(\nu)+1-k}r^{-(\Re(\nu)+2)+2k}\}.
\end{align*}
This completes the proof of \eqref{eq:elin-bound}.

\smallskip
\textbf{Step 3.} Expansion for the linear error.

In this step, we prove the expansion \eqref{eq:z-lin-hat-eqn-expn}
with the bounds \eqref{eq:Plin-bound} and \eqref{eq:Elin-bound}.
By the definition \eqref{eq:lin-rad-hat} of $\wh z_{\lin}$, we have
\[
i\rd_{t}\wh z_{\lin}+\Dlt^{(\frkm)}\wh z_{\lin}=qt^{\frac{\nu}{2}}[\Dlt^{(\frkm)},\chi]\{f_{1}(t^{-\frac{1}{2}}r)+\alp f_{2}(t^{-\frac{1}{2}}r)\}.
\]
Using the expansion \eqref{eq:f1-expn} of $f_{1}(Y)$, we obtain
\[
i\rd_{t}\wh z_{\lin}+\Dlt^{(\frkm)}\wh z_{\lin}=\sum_{n=0}^{K-1}t^{n}P_{\lin,n}(r)+E_{\lin,K}(t,r)
\]
with (set $c_{f_{1},0}=1$)
\begin{align*}
P_{\lin,n}(r) & =qc_{f_{1},2n}[\Dlt^{(\frkm)},\chi]r^{\nu},\\
E_{\lin,K}(t,r) & =[\Dlt^{(\frkm)},\chi]\{qr^{\nu}e_{f_{1},2K}(t^{-\frac{1}{2}}r)+\alp t^{\frac{\nu}{2}}f_{2}(t^{-\frac{1}{2}}r)\}.
\end{align*}
Now, \eqref{eq:Plin-bound} is immediate from the previous display.
We show \eqref{eq:Elin-bound}. The contribution of the first term
of $E_{\lin,K}$ is estimated using \eqref{eq:e_f-bound}: 
\[
\big|[\Dlt^{(\frkm)},\chi]qr^{\nu}e_{f_{1},2K}(t^{-\frac{1}{2}}r)\big|_{k}\aleq_{k}\chf_{1\leq r\leq2}Y^{-2K}\aleq_{k}\chf_{1\leq r\leq2}t^{K}.
\]
For the contribution of the second term of $E_{\lin,K}$, recall from
\eqref{eq:f2-expn} that we have the pseudoconformal phase factor
$e^{i\frac{Y^{2}}{4}}$ for $f_{2}$. Using the bound \eqref{eq:e_f-bound}
for $f_{2}$ with $K=0$, we obtain 
\begin{align*}
\big|[\Dlt^{(\frkm)},\chi]qt^{\frac{\nu}{2}}f_{2}(t^{-\frac{1}{2}}r)\big|_{k} & \aleq_{k}\chf_{1\leq r\leq2}|t^{\frac{\nu}{2}}f_{2}(t^{-\frac{1}{2}}r)|_{k+1}\\
 & \aleq_{k}\chf_{1\leq r\leq2}t^{\frac{1}{2}\Re(\nu)}Y^{-\Re(\nu)+2k}\aleq_{k}\chf_{1\leq r\leq2}t^{\Re(\nu)-k}.
\end{align*}
This completes the proof.
\end{proof}
\begin{rem}[A more precise ansatz and propagation of singularity]
\label{rem:z-lin-td} Instead of $\wh{z}_{\lin}$, one may consider the ansatz
\[
\td z_{\lin}=q\{f_{1}(t^{-\frac{1}{2}}r)\chi(r)+\alp(\nu)f_{2}(t^{-\frac{1}{2}}r)\},
\]
where $\alp(\nu)$ is given by Lemma~\ref{lem:lin-ss-connect}. This
$\td z_{\lin}(t,r)$ is also smooth for all $t>0$ and converges to
$z^{\ast}(r)$ as $t\to0$, but it has no cutoff for the $f_{2}$-term.
This property makes two notable differences compared to $\wh z_{\lin}$:
(i) $f_{2}$ has the same Sobolev regularity (and no better)
as $qr^{\nu}\chi(r)$ (namely, $H^{\Re(\nu)+1-}$); and (ii) the linear equation error is smoother
than \eqref{eq:Elin-bound} in the sense that it has bounded derivatives
of all orders: 
\[
i\rd_{t}\td z_{\lin}+\lap^{(\frkm)}\td z_{\lin}=\sum_{n=0}^{K-1}t^{n}P_{\lin,n}(r)+\td E_{\lin,K}(t,r),
\]
where $P_{\lin,n}(r)$ is the same profile as in \eqref{eq:z-lin-hat-eqn-expn}
and $\td E_{\lin,K}$ satisfies 
\[
|\td E_{\lin,K}|_{k}\aleq_{k}\chf_{1\leq r\leq2}t^{K}.
\]
The error estimate is more favorable because the cutoff error
involving $f_{2}$ (hence involving the pseudoconformal phase) does
not appear. 

It turns out that the more naive (and regular) ansatz $\wh{z}_{\lin}$ leads to some technical simplification and still produces an approximate radiation $z$ with error bounds sufficient for our blow-up construction, hence our choice. Nevertheless, the properties of $\td{z}_{\lin}$ provides some insights about the regularity of $u$ (which we do not precisely prove in this paper). In view of (i) and (ii), $\td{z}_{\lin}$ may be regarded as a more accurate reflection of propagation of singularity (of $q r^{\nu} \chi(r)$ at $r = 0$) under the Schr\"odinger equation. The limited Sobolev regularity of $\td{z}_{\lin}$ (due to its behavior in the self-similar zone $r \sim t^{\frac{1}{2}}$) is also expected to manifest in the full solution $u$ we construct below, because the self-similar zone is well-separated from the blow-up scale (i.e., $\lmb(t) \ll t^{\frac{1}{2}}$). This feature is the Schr\"odinger analogue of the propagation of singularity across the backward light cone in \cite{KriegerSchlagTataru2008Invent, JendrejLawrieRodriguez2019arXiv}. 
\end{rem}

\begin{rem}[More general class of initial data $z^{\ast}(r)$]
Our construction may be generalized to a larger class of initial
data with some additional ideas. One useful idea is to differentiate
in the parameter $\nu$; this idea would lead to the construction
of $\td z_{\lin}$ corresponding to $z^{\ast}=r^{\nu}\log r\chi(r)$.
More generally, we may construct $\td z_{\lin}$ corresponding to
any data $z_{sing}^{\ast}(r)\chi(r)$ where $z_{sing}^{\ast}$ is
expressible as $\int r^{\nu}\varphi(\nu)\,\ud\nu$ for some $\varphi\in\calD'(\bbC)$,
provided that $\supp\varphi\subseteq\set{\Re(\nu)>-\abs\frkm-2}$
and $\varphi$ obeys suitable decay conditions as $\abs\nu\to\infty$. 
\end{rem}

\subsection{Proof of Proposition~\ref{prop:ApprxRadiation}} \label{subsec:z-nonlin}

As stated before, \textbf{we assume that $m=0$, hence $\frkm=-2$,
unless otherwise stated}. Also, \textbf{we assume that $0<t<1$};
the case $-1<t<0$ is handled by time reversal symmetry. Recall that
implicit constants may depend on $q$ and $\nu$.

The challenge is to ensure a sufficient rate of vanishing in $t$
for the error. The idea is to perform a formal expansion of the solution
in $t$ and include them in the approximate radiation; we will set
the approximate radiation of the form
\begin{equation}
z(t,r)=\wh z_{\lin}(t,r)+\chi_{r\ageq t^{\frac{1}{2}}}\sum_{n=1}^{N}t^{n}g_{n}(r)\label{eq:z-form}
\end{equation}
with $\wh z_{\lin}$ defined in \eqref{eq:lin-rad-hat} and some appropriate
choices of $N\in\bbN$ and functions $g_{n}(r)$. Explicitly, we choose
\begin{equation}
N=\Big\lfloor\frac{\Re(\nu)+1}{2}\Big\rfloor\label{eq:def-N}
\end{equation}
and define $g_{n}$, $n=0,1,\dots,N$, by the recursion relation 
\begin{equation}
\left\{ \begin{aligned}g_{0}(r) & \coloneqq0,\\
g_{n+1}(r) & \coloneqq\frac{i}{n+1}\Big(\Dlt^{(\frkm)}g_{n}+P_{\lin,n}-\sum_{k\in\{3,5\}}\sum_{n_{1}+\dots+n_{k}=n}\rN_{k}(h_{n_{1}},\dots,h_{n_{k}})\Big),
\end{aligned}
\right.\label{eq:def-gn}
\end{equation}
where we denoted 
\begin{equation}
h_{n}(r)\coloneqq qc_{f_{1},2n}r^{\nu-2n}\chi(r)+g_{n}(r).\label{eq:def-hn}
\end{equation}
The goal is then to show that this $z(t,r)$ satisfies the statements
in Proposition~\ref{prop:ApprxRadiation}.

We first motivate the choices \eqref{eq:def-N} and \eqref{eq:def-gn}.
In the exterior zone $r\ageq t^{\frac{1}{2}}$, \eqref{eq:z-form}
, and \eqref{eq:z-lin-hat-expn} imply that $z(t,r)$ is approximated
by 
\[
z(t,r)\approx\sum_{n=0}^{N}t^{n}h_{n}(r),\qquad h_{n}(r)=qc_{f_{1},2n}r^{\nu-2n}\chi(r)+g_{n}(r),
\]
where $g_{1},\dots,g_{N}$ are yet to be constructed. A rough computation
, together with \eqref{eq:z-lin-hat-eqn-expn}, gives 
\begin{align*}
\Psi_{z} & =i\rd_{t}z+\Dlt^{(\frkm)}z-\rN(z)\\
 & \approx(i\rd_{t}\wh z_{\lin}+\Dlt^{(\frkm)}\wh z_{\lin})+(i\rd_{t}+\Dlt^{(\frkm)})\sum_{n=1}^{N}t^{n}g_{n}(r)-\rN\Big(\sum_{n=0}^{N}t^{n}h_{n}(r)\Big)\\
 & \approx\sum_{n=0}^{N-1}t^{n}\Big(P_{\lin,n}+i(n+1)g_{n+1}+\Dlt^{(\frkm)}g_{n}-\sum_{\ell\in\{3,5\}}\sum_{n_{1}+\dots+n_{\ell}=n}\rN_{\ell}(h_{n_{1}},\dots,h_{n_{\ell}})\Big).
\end{align*}
In order to ensure a sufficient rate of vanishing in $t$ for $\Psi_{z}$,
we choose $g_{n}$s such that the right hand vanishes; this motivates 
the recursive definition \eqref{eq:def-gn} of $g_{n}$. Formally,
this expansion method up to $N$th order makes the error term $\Psi_{z}$
of size $\aeq t^{N}$. Comparing this with one of the necessary estimates
\eqref{eq:Psi_z-L2} for $\Psi_{z}$ implies that $N>\frac{1}{2}(\Re(\nu)-1)$
is necessary; the choice \eqref{eq:def-N} of $N$ is the minimal
one with this requirement. 

We will now proceed with the proof of Proposition~\ref{prop:ApprxRadiation}
. 
\begin{lem}[Pointwise estimates for the approximate radiation $z(t,r)$]
\label{lem:ptwise-est-z}~
\begin{itemize}
\item (Pointwise bounds in the self-similar zone) The estimates \eqref{eq:z-ss-bound}--\eqref{eq:rd-z-ss-degen}
hold.
\item (Estimates for the expansion of $z(t,r)$) For all $n=0,1,\dots,N$
and $k\in\bbN$, we have the pointwise estimates
\begin{align}
|g_{n}|_{k} & \aleq_{k}\chf_{r\leq2}r^{3\Re(\nu)+2-2n},\label{eq:gn-bounds}\\
|h_{n}|_{k} & \aleq_{k}\chf_{r\leq2}r^{\Re(\nu)-2n}.\label{eq:hn-bounds}
\end{align}
The estimate \eqref{eq:z-ext-bound} also holds. Finally, we have
\begin{equation}
z(t,r)=\sum_{n=0}^{K-1}t^{n}h_{n}(r)+E_{z,K}(t,r),\qquad\forall K=0,1,\dots,N,\label{eq:z-expn}
\end{equation}
where 
\begin{equation}
|E_{z,K}|_{1}\aleq\chf_{r\leq t^{\frac{1}{2}}}t^{\frac{1}{2}(\Re(\nu)-1)}r+\chf_{t^{\frac{1}{2}}\leq r\leq2}t^{K}r^{\Re(\nu)-2K}.\label{eq:z-expn-err-bound}
\end{equation}
\end{itemize}
\end{lem}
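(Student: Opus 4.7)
The proof splits into three components. For the \emph{self-similar zone bounds} \eqref{eq:z-ss-bound}--\eqref{eq:rd-z-ss-degen}, I first observe that inside $r\leq|t|^{\frac{1}{2}}$ the cutoff $\chi_{r\ageq|t|^{\frac{1}{2}}}$ in \eqref{eq:z-form} vanishes, so $z(t,r)=\wh z_{\lin}(t,r)$ and all four claims reduce to statements about the truncated self-similar solution. Specializing \eqref{eq:z-lin-hat-ss} to $|\frkm|=2$ and invoking \eqref{eq:e_e-bound} yields \eqref{eq:z-ss-bound} and \eqref{eq:z-ss-leading} directly. For \eqref{eq:z1-ss-leading} I decompose $\bfD_{z}^{(\frkm)}z=\rd_{+}^{(\frkm)}z-(A_{\tht}[z]/r)z$ and compute $\rd_{+}^{(\frkm)}(r^{2})=4r$, noting that the nonlocal correction is negligible since the leading estimate $|z|\aleq|t|^{(\Re(\nu)-2)/2}r^{2}$ gives $|A_{\tht}[z]|\aleq|t|^{\Re(\nu)-2}r^{6}$ in this zone. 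Estimate \eqref{eq:rd-z-ss-degen} hinges on the cancellation $\rd_{-}^{(\frkm)}(r^{2})=0$, which reflects the antiholomorphicity of $r^{2}e^{-2i\tht}=(x_{1}-ix_{2})^{2}$ for $\frkm=-2$; this forces the leading contribution to $\rd_{-}^{(\frkm)}z$ to come from higher-order terms in the self-similar Taylor expansion, producing the extra factor $r^{2}$ compared to the naive bound $|z|/r$.

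For the \emph{inductive pointwise bounds} \eqref{eq:gn-bounds}--\eqref{eq:hn-bounds}, I induct on $n$ using the recursion \eqref{eq:def-gn}. The base case $g_{0}=0$, $h_{0}(r)=qr^{\nu}\chi(r)$ is immediate. For the inductive step, assume the bounds hold for all indices $\leq n$. Then (i)~$|\Dlt^{(\frkm)}g_{n}|_{k}\aleq r^{-2}|g_{n}|_{k+2}\aleq\chf_{r\leq 2}\,r^{3\Re(\nu)-2n}$; (ii)~$|P_{\lin,n}|_{k}\aleq\chf_{1\leq r\leq 2}$ by \eqref{eq:Plin-bound}; (iii)~each cubic contribution $\rN_{3}(h_{n_{1}},h_{n_{2}},h_{n_{3}})$ with $n_{1}+n_{2}+n_{3}=n$ is controlled by $\chf_{r\leq 2}\,r^{3\Re(\nu)-2n}$ after applying the Leibniz rule for $|\cdot|_{k}$ and $|\cdot|_{-k}$ and tracking the homogeneity of the nonlocal potentials $A_{\tht}[u_{1},u_{2}]$ and $\int_{0}^{r}(\cdots)\,dr'/r'$ against the $h_{n_{i}}$-profiles; (iv)~each quintic contribution is bounded by $\chf_{r\leq 2}\,r^{5\Re(\nu)-2n}$, which is smaller since $r\leq 2$ and $\Re(\nu)>0$. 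Summing gives $|g_{n+1}|_{k}\aleq\chf_{r\leq 2}\,r^{3\Re(\nu)+2-2(n+1)}$, and the bound for $h_{n+1}$ follows since the explicit piece $qc_{f_{1},2(n+1)}r^{\nu-2(n+1)}\chi(r)$ dominates $g_{n+1}$ on $r\leq 2$ by a factor of $r^{2\Re(\nu)+2}$.

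For the \emph{expansion} \eqref{eq:z-expn} with error bound \eqref{eq:z-expn-err-bound}, I combine the global expansion \eqref{eq:z-lin-hat-expn} of $\wh z_{\lin}$ with the definition \eqref{eq:z-form} of $z$. Using $g_{0}=0$ and the smooth partition $1=(1-\chi_{r\ageq t^{1/2}})+\chi_{r\ageq t^{1/2}}$, the error decomposes as
\[
E_{z,K}=e_{\lin,K}-(1-\chi_{r\ageq t^{1/2}})\sum_{n=1}^{K-1}t^{n}g_{n}+\chi_{r\ageq t^{1/2}}\sum_{n=K}^{N}t^{n}g_{n}.
\]
Then \eqref{eq:z-expn-err-bound} follows by applying \eqref{eq:elin-bound} and \eqref{eq:gn-bounds} region by region: inside $r\leq t^{1/2}$, each piece is absorbed into $t^{(\Re(\nu)-1)/2}r$ by trading powers $r^{2}\leq t$ and using $K\leq N\leq(\Re(\nu)+1)/2$ together with $\Re(\nu)>0$; outside $r\geq t^{1/2}$, each piece is absorbed into $t^{K}r^{\Re(\nu)-2K}$ by direct power-counting (verified by noting that $(t/r^{2})^{n-K}\leq 1$ for $n\geq K$ and that $K\leq\Re(\nu)$ in the relevant parameter range). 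The global exterior bound \eqref{eq:z-ext-bound} is recovered as the case $K=0$. The main bookkeeping obstacle is the homogeneity-critical cubic contribution in the inductive step: since $\rN_{3}(h_{n_{1}},h_{n_{2}},h_{n_{3}})$ matches the target exponent $r^{3\Re(\nu)-2n}$ exactly (with no slack), one must carefully verify that neither the local term $|u|^{2}u$ nor the nonlocal pieces $(2\frkm/r^{2})A_{\tht}u$ and $(\int_{0}^{r}\Re(\br{u_{1}}u_{2})\,dr'/r')u$ introduce any spurious singular factors when acting on the $h_{n_{i}}$-profiles.
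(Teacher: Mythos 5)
Your proof is correct and follows essentially the same route as the paper's: reduction to $\wh z_{\lin}$ in the self-similar zone via the vanishing cutoff; induction on $n$ for $g_n,h_n$ using the recursion \eqref{eq:def-gn}; and the global-in-space expansion with error bounds obtained by power-counting across the two zones. In fact your algebraic decomposition of $E_{z,K}$ is the carefully correct one: starting from \eqref{eq:z-form} and $h_n = q c_{f_1,2n} r^{\nu-2n}\chi + g_n$ with $g_0=0$, one indeed gets
\[
E_{z,K} = e_{\lin,K} + \chi_{r\gtrsim t^{1/2}}\sum_{n=K}^{N}t^{n}g_{n} - \bigl(1-\chi_{r\gtrsim t^{1/2}}\bigr)\sum_{n=1}^{K-1}t^{n}g_{n},
\]
and all three pieces are absorbable (the $\chi_{r\gtrsim t^{1/2}}\sum_{n\geq K}$ piece contributes $\lesssim t^{K}r^{\Re(\nu)-2K}$ after trading $(t/r^{2})^{n-K}\leq 1$ there). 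The paper's displayed identity in the corresponding step records only $e_{\lin,K}$ plus a single cutoff sum, which appears to be a benign transcription slip since the dropped piece satisfies the same estimate; your version makes this explicit. Two small remarks: (i)~in your inductive step, the quintic bound is actually $\lesssim \chf_{r\leq 2}\,r^{5\Re(\nu)+2-2n}$ rather than $r^{5\Re(\nu)-2n}$ (the nonlocal integrals $A_\tht[\cdot,\cdot]$ each contribute $+2$ to the homogeneity, not just offset $-2$ from the prefactor $1/r^2$) — harmless on $r\leq 2$ but worth recording; (ii)~your phrase ``trading powers $r^{2}\leq t$'' for the self-similar zone is the right mechanism, though to hit the target $t^{\frac12(\Re(\nu)-1)}r$ one should apply it \emph{term-by-term} to $t^{n}r^{\Re(\nu)-2n}$ for $n\leq K-1$ (using $\Re(\nu)-2n-1\geq 0$) rather than to the coarse bound $\chf_{K\geq 1}r^{\Re(\nu)-2(K-1)}$ from \eqref{eq:elin-bound}, which is not quite tight enough for $K\geq 2$.
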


\begin{proof}
\textbf{Step 1.} Pointwise bounds in the self-similar zone.

In this step, we show \eqref{eq:z-ss-bound}--\eqref{eq:rd-z-ss-degen}.
In the self-similar zone $r\leq t^{\frac{1}{2}}$, we have $z(t,r)=\wh z_{\lin}(t,r)$.
Applying \eqref{eq:z-lin-hat-ss} and \eqref{eq:e_e-bound} with $|\frkm|=2$
and $K=0,1$ implies 
\begin{align*}
|z(t,r)|_{k} & \aleq_{k}t^{\frac{1}{2}(\Re(\nu)-2)}r^{2},\\
|z(t,r)-qp(4it)^{\frac{\nu-2}{2}}r^{2}|_{k} & \aleq_{k}t^{\frac{1}{2}(\Re(\nu)-4)}r^{4},
\end{align*}
which are \eqref{eq:z-ss-bound} and \eqref{eq:z-ss-leading}. To
show \eqref{eq:z1-ss-leading}, we begin with 
\[
z_{1}=\bfD_{z}^{(\frkm)}z=\rd_{r}z-\frac{-2+A_{\tht}[z]}{r}z=\Big(\rd_{r}+\frac{2}{r}\Big)z-\frac{A_{\tht}[z]}{r}z.
\]
By \eqref{eq:z-ss-bound} and \eqref{eq:z-ss-leading}, we have 
\[
|(\rd_{r}z+\tfrac{2}{r}z)-4qp(4it)^{\frac{\nu-2}{2}}r|_{k}\aleq_{k}t^{\frac{1}{2}(\Re(\nu)-4)}r^{3}
\]
whereas 
\[
|r^{-1}A_{\tht}[z]z|_{k}\aleq r^{-1}\cdot t^{\Re(\nu)-2}r^{6}\cdot t^{\frac{1}{2}(\Re(\nu)-2)}r^{2}\aeq t^{\frac{3}{2}(\Re(\nu)-2)}r^{7}\aleq t^{\frac{3}{2}\Re(\nu)-1}r^{3}.
\]
As $\frac{3}{2}\Re(\nu)-1>\frac{1}{2}(\Re(\nu)-4)$, the nonlinear
term $r^{-1}A_{\tht}[z]z$ is absorbed into the error term, completing
the proof of \eqref{eq:z1-ss-leading}. Finally, \eqref{eq:rd-z-ss-degen}
follows from $\rd_{-}^{(\frkm)}=\rd_{r}-\frac{2}{r}$ and \eqref{eq:z-ss-leading}.

\smallskip
\textbf{Step 2.} Bounds for $g_{n}$ and $h_{n}$.

In this step, we show \eqref{eq:gn-bounds} and \eqref{eq:hn-bounds}
by induction on $n$. For $n=0$, these bounds are immediate from
$g_{0}\equiv0$ and $h_{n}=qr^{\nu}\chi(r)$. Now assume the bounds
\eqref{eq:gn-bounds} and \eqref{eq:hn-bounds} for all $n=0,\dots,K$
for some $0\leq K\leq N-1$. Then, the formulas \eqref{eq:def-gn}
and \eqref{eq:def-hn} imply
\begin{align*}
|g_{K+1}|_{k} & \aleq_{k}r^{-2}|g_{K}|_{k+2}+|P_{\lin,K}|_{k}+\sum_{\ell\in\{3,5\}}\sum_{n_{1}+\dots+n_{\ell}=K}|\rN_{\ell}(h_{n_{1}},\dots,h_{n_{\ell}})|_{k},\\
|h_{K+1}|_{k} & \aleq_{k}\chf_{r\leq2}r^{\Re(\nu)-2(K+1)}+|g_{K+1}|_{k}.
\end{align*}
Using the inductive bound for $g_{K}$, \eqref{eq:Plin-bound} for
$P_{\lin,K}$, 
\[
\sum_{n_{1}+\dots+n_{\ell}=K}|\rN_{\ell}(h_{n_{1}},\dots,h_{n_{\ell}})|_{k}\aleq_{k}\begin{cases}
\chf_{r\leq2}r^{3\Re(\nu)-2K} & \text{if }\ell=3,\\
\chf_{r\leq2}r^{5\Re(\nu)+2-2K} & \text{if }\ell=5
\end{cases}
\]
for the nonlinearity and $\Re(\nu)>0$, we obtain
\begin{align*}
|g_{K+1}|_{k} & \aleq_{k}\chf_{r\leq2}\{r^{3\Re(\nu)-2K}+r^{5\Re(\nu)+2-2K}\}\aleq_{k}\chf_{r\leq2}r^{3\Re(\nu)-2K},\\
|h_{K+1}|_{k} & \aleq_{k}\chf_{r\leq2}\{r^{\Re(\nu)-2(K+1)}+r^{3\Re(\nu)-2K}\}\aleq_{k}\chf_{r\leq2}r^{\Re(\nu)-2(K+1)},
\end{align*}
which are \eqref{eq:gn-bounds} and \eqref{eq:hn-bounds} for $n=K+1$.

\smallskip
\textbf{Step 3.} Proof of \eqref{eq:z-ext-bound}.

As $z(t,r)=\wh z_{\lin}(t,r)+\chi_{r\ageq t^{\frac{1}{2}}}\sum_{n=0}^{N-1}t^{n}g_{n}(r)$,
we use the bounds \eqref{eq:elin-bound} for $K=0$ and \eqref{eq:gn-bounds}
to obtain 
\[
\chf_{r\geq t^{\frac{1}{2}}}|z|_{k}\aleq_{k}\chf_{t^{\frac{1}{2}}\leq r\leq2}\{r^{\Re(\nu)}+t^{\Re(\nu)+1-k}r^{-(\Re(\nu)+2)+2k}\},
\]
which is \eqref{eq:z-ext-bound}.

\smallskip
\textbf{Step 4.} Estimates for the expansion of $z(t,r)$.

In this step, we show that $z(t,r)$ admits the expansion \eqref{eq:z-expn}
with the estimate \eqref{eq:z-expn-err-bound}. Writing $z(t,r)$
as in \eqref{eq:z-expn}, we have 
\begin{align*}
E_{z,K}(t,r) & =z(t,r)-\tsum{n=0}{K-1}t^{n}h_{n}(r)\\
 & =\{\wh z_{\lin}(t,r)-\tsum{n=0}{K-1}t^{n}\cdot qc_{f_{1},2n}r^{\nu-2n}\chi(r)\}+\chi_{r\aleq t^{\frac{1}{2}}}\tsum{n=0}{K-1}t^{n}g_{n}(r).
\end{align*}
For the first term, we apply \eqref{eq:elin-bound} with $k=1$, $|\frkm|=2$,
$\Re(\nu)-2(K-1)\geq1$, and $K\leq\Re(\nu)$ to have 
\begin{align*}
 & \big|\wh z_{\lin}(t,r)-\tsum{n=0}{K-1}t^{n}\cdot qc_{f_{1},2n}r^{\nu-2n}\chi(r)\big|_{1}\\
 & \quad\aleq\chf_{r\leq t^{\frac{1}{2}}}t^{\frac{1}{2}(\Re(\nu)-1)}r+\chf_{t^{\frac{1}{2}}\leq r\leq2}t^{K}r^{\Re(\nu)-2K}.
\end{align*}
For the second term, we apply \eqref{eq:gn-bounds} and $K-1\leq\frac{1}{2}(\Re(\nu)-1)$
to have 
\begin{align*}
 & \big|\chi_{r\aleq t^{\frac{1}{2}}}\tsum{n=0}{K-1}t^{n}g_{n}(r)\big|_{1}\\
 & \quad\aleq\chf_{K\geq1}\cdot\chf_{r\leq2t^{\frac{1}{2}}}t^{K-1}r^{3\Re(\nu)+2-2(K-1)}\aleq\chf_{r\leq2t^{\frac{1}{2}}}t^{\frac{1}{2}(\Re(\nu)-1)}r^{2\Re(\nu)+3}.
\end{align*}
Previous two displays imply \eqref{eq:z-expn-err-bound}.
\end{proof}
\begin{lem}[Estimates for the expansion of $i\rd_{t}z+\Dlt^{(\frkm)}z$]
We have 
\begin{equation}
i\rd_{t}z+\Dlt^{(\frkm)}z=\chi_{r\ageq t^{\frac{1}{2}}}\sum_{n=0}^{N-1}t^{n}\Big(i(n+1)\cdot g_{n+1}(r)+P_{\lin,n}(r)+\Dlt^{(\frkm)}g_{n}(r)\Big)+\Psi_{\lin,N}(t,r)\label{eq:z-lin-evol-error}
\end{equation}
with the estimates for all $k\in\bbN$
\begin{equation}
|\Psi_{\lin,N}|_{k}\aleq_{k}\chf_{t^{\frac{1}{2}}\leq r\leq2}t^{N}r^{3\Re(\nu)-2N}+\chf_{1\leq r\leq2}t^{\Re(\nu)-k}.\label{eq:Psi-lin-bound}
\end{equation}
\end{lem}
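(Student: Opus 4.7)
The plan is to substitute the defining expression \eqref{eq:z-form} for $z$ directly into $i\rd_t + \Dlt^{(\frkm)}$, use the recursive definition \eqref{eq:def-gn} of $g_n$ to isolate the advertised main terms, and then estimate what remains. First I would apply \eqref{eq:z-lin-hat-eqn-expn} with $K=N$ to expand
\begin{equation*}
i\rd_t \wh z_{\lin} + \Dlt^{(\frkm)} \wh z_{\lin} = \sum_{n=0}^{N-1} t^n P_{\lin, n}(r) + E_{\lin, N}(t, r).
\end{equation*}
Next I would compute $(i\rd_t + \Dlt^{(\frkm)})\bigl[\chi_{r \ageq t^{\frac{1}{2}}} \sum_{n=1}^{N} t^n g_n(r)\bigr]$. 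In the region where the cutoff equals $1$, the time and Laplacian terms reindex (using $g_0 \equiv 0$) as
\begin{equation*}
\sum_{n=0}^{N-1} i(n+1) t^n g_{n+1}(r) + \sum_{n=0}^{N} t^n \Dlt^{(\frkm)} g_n(r)
= \sum_{n=0}^{N-1} t^n \bigl(i(n+1) g_{n+1} + \Dlt^{(\frkm)} g_n\bigr) + t^N \Dlt^{(\frkm)} g_N,
\end{equation*}
and all other contributions come from derivatives falling on $\chi_{r \ageq t^{\frac{1}{2}}}$. Since $P_{\lin, n}$ is supported in $\{1 \leq r \leq 2\}$, the cutoff acts as the identity on it for $t \leq 1$, so combining the two computations yields the displayed main expansion with $\Psi_{\lin, N}$ comprising exactly three pieces: the linear remainder $E_{\lin, N}$, the tail $t^N \chi_{r \ageq t^{\frac{1}{2}}} \Dlt^{(\frkm)} g_N$, and the cutoff commutator terms.

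To establish \eqref{eq:Psi-lin-bound} I would estimate each piece separately. The $E_{\lin, N}$ bound $\chf_{1 \leq r \leq 2}(t^N + t^{\Re(\nu)-k})$ is immediate from \eqref{eq:Elin-bound}; the first summand is controlled by the second term of \eqref{eq:Psi-lin-bound} evaluated at $r \sim 1$, and the $t^{\Re(\nu)-k}$ summand is precisely the second term. For the tail, \eqref{eq:gn-bounds} and the Leibniz rule for $|\cdot|_k$ give $|\Dlt^{(\frkm)} g_N|_k \aleq r^{-2}|g_N|_{k+2} \aleq \chf_{r \leq 2} r^{3\Re(\nu)-2N}$, so the cutoff $\chi_{r \ageq t^{\frac{1}{2}}}$ and the factor $t^N$ produce exactly $\chf_{t^{1/2} \leq r \leq 2} t^N r^{3\Re(\nu)-2N}$.

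The main (and only nontrivial) obstacle is handling the cutoff commutator contributions. Each spatial derivative that lands on $\chi_{r \ageq t^{\frac{1}{2}}}$ contributes a factor of $t^{-1/2}$, and $\rd_t \chi_{r \ageq t^{\frac{1}{2}}}$ has size $t^{-1}$; both are supported on the annulus $r \sim t^{\frac{1}{2}}$. Using \eqref{eq:gn-bounds}, the worst such term is $(\rd_t \chi_{r \ageq t^{\frac{1}{2}}}) \cdot t \cdot g_1$, which on the annulus has size $t^{-1} \cdot t \cdot t^{3\Re(\nu)/2} = t^{3\Re(\nu)/2}$; the other terms with $n \geq 2$ pick up an extra $t^{n-1}$ and are strictly smaller. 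On $r \sim t^{1/2}$ the target bound $t^N r^{3\Re(\nu)-2N}$ also evaluates to $t^{3\Re(\nu)/2}$, so the two sides match at the correct scaling for every admissible $n$. Iterating this bookkeeping across all $n$ and derivative counts $k$ (spatial commutators gain $t^{-k/2}$ but lose a corresponding number of powers of $r$, again matching on $r \sim t^{1/2}$), every commutator term is absorbed into $\chf_{t^{1/2} \leq r \leq 2} t^N r^{3\Re(\nu)-2N}$, which completes \eqref{eq:Psi-lin-bound}.
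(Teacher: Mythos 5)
Your proof reproduces the paper's decomposition exactly: expand $\wh z_{\lin}$ via \eqref{eq:z-lin-hat-eqn-expn}, reindex the $(i\rd_t+\Dlt^{(\frkm)})$ action on $\chi_{r\ageq t^{1/2}}\sum t^n g_n$, use $P_{\lin,n}=\chi_{r\ageq t^{1/2}}P_{\lin,n}$, and collect $E_{\lin,N}$, the tail $\chi_{r\ageq t^{1/2}}t^N\Dlt^{(\frkm)}g_N$, and the cutoff commutator into $\Psi_{\lin,N}$; the estimates then follow from \eqref{eq:Elin-bound} and \eqref{eq:gn-bounds}, as in the paper. One minor slip: you assert that the commutator terms with $n\geq2$ are ``strictly smaller'' than the $n=1$ term because of the extra factor $t^{n-1}$, but \eqref{eq:gn-bounds} gives $|g_n|\aleq r^{3\Re(\nu)+2-2n}$, which on $r\sim t^{1/2}$ is larger by a compensating factor $t^{-(n-1)}$; all $n$ contribute at the same size $t^{3\Re(\nu)/2}$, which happens to equal the target $t^Nr^{3\Re(\nu)-2N}$ on the annulus, so the bound still closes, but the stated reason is not correct.
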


\begin{proof}
By the definition \eqref{eq:z-form} of $z$, we have 
\begin{align*}
i\rd_{t}z+\Dlt^{(\frkm)}z & =(i\rd_{t}\wh z_{\lin}+\Dlt^{(\frkm)}\wh z_{\lin})+\chi_{r\ageq t^{\frac{1}{2}}}(i\rd_{t}+\Dlt^{(\frkm)})\Big(\sum_{n=1}^{N}t^{n}g_{n}(r)\Big)\\
 & \peq+[i\rd_{t}+\Dlt^{(\frkm)},\chi_{r\ageq t^{\frac{1}{2}}}]\Big(\sum_{n=1}^{N}t^{n}g_{n}(r)\Big).
\end{align*}
Applying \eqref{eq:z-lin-hat-eqn-expn} for the first term, using
$P_{\lin,n}(r)=\chi_{r\ageq t^{\frac{1}{2}}}P_{\lin,n}(r)$, and rearranging
the second term, we arrive at 
\begin{align*}
 & i\rd_{t}z+\Dlt^{(\frkm)}z\\
 & =\chi_{r\ageq t^{\frac{1}{2}}}\sum_{n=0}^{N-1}t^{n}\Big(i(n+1)\cdot g_{n+1}(r)+P_{\lin,n}(r)+\Dlt^{(\frkm)}g_{n}(r)\Big)\\
 & \peq+E_{\lin,N}(t,r)+\chi_{r\ageq t^{\frac{1}{2}}}t^{N}\Dlt^{(\frkm)}g_{N}(r)+[i\rd_{t}+\Dlt^{(\frkm)},\chi_{r\ageq t^{\frac{1}{2}}}]\Big(\sum_{n=1}^{N}t^{n}g_{n}(r)\Big).
\end{align*}
Therefore, $\Psi_{\lin,N}$ collects the terms in the last line, and
it suffices to estimate each term. First, applying \eqref{eq:Elin-bound}
with $K=N$, we have 
\[
|E_{\lin,N}|_{k}\aleq_{k}\chf_{1\leq r\leq2}\{t^{N}+t^{\Re(\nu)-k}\}.
\]
Next, we apply \eqref{eq:gn-bounds} to have 
\[
\Big|\chi_{r\ageq t^{\frac{1}{2}}}t^{N}\Dlt^{(\frkm)}g_{N}(r)+[i\rd_{t}+\Dlt^{(\frkm)},\chi_{r\ageq t^{\frac{1}{2}}}](\tsum{n=1}Nt^{n}g_{n}(r))\Big|_{k}\aleq_{k}\chf_{t^{\frac{1}{2}}\leq r\leq2}t^{N}r^{3\Re(\nu)-2N}.
\]
This completes the proof.
\end{proof}
\begin{lem}[Estimates for the expansion of nonlinearity]
We can write 
\begin{align}
\rN_{3}(z)(t,r) & =\sum_{n=0}^{N-1}t^{n}P_{\rN_{3},n}(r)+E_{\rN_{3},N}(t,r),\label{eq:rN-ext-cubic}\\
\rN_{5}(z)(t,r) & =\sum_{n=0}^{N-1}t^{n}P_{\rN_{5},n}(r)+E_{\rN_{5},N}(t,r),\label{eq:rN-ext-quintic}
\end{align}
where 
\begin{align*}
P_{\rN_{3},n}(r) & =\sum_{n_{1}+n_{2}+n_{3}=n}\rN_{3}(h_{n_{1}},h_{n_{2}},h_{n_{3}}),\\
P_{\rN_{5},n}(r) & =\sum_{n_{1}+\dots+n_{5}=n}\rN_{5}(h_{n_{1}},\dots,h_{n_{5}}),
\end{align*}
together with the estimates 
\begin{align}
|P_{\rN_{3},n}(r)|_{k} & \aleq_{k}\chf_{r\leq2}r^{3\Re(\nu)-2n},\label{eq:PN3-bound}\\
|P_{\rN_{5},n}(r)|_{k} & \aleq_{k}\chf_{r\leq2}r^{5\Re(\nu)+2-2n},\label{eq:PN5-bound}
\end{align}
and 
\begin{align}
\chf_{r\geq t^{\frac{1}{2}}}|E_{\rN_{3},N}(t,r)|_{1} & \aleq\chf_{t^{\frac{1}{2}}\leq r\leq2}t^{N}r^{3\Re(\nu)-2N},\label{eq:EN3-bound}\\
\chf_{r\geq t^{\frac{1}{2}}}|E_{\rN_{5},N}(t,r)|_{1} & \aleq\chf_{t^{\frac{1}{2}}\leq r\leq2}t^{N}r^{5\Re(\nu)+2-2N}.\label{eq:EN5-bound}
\end{align}
\end{lem}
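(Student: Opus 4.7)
The plan is to exploit the multilinear structure of $\rN_3$ and $\rN_5$ in their slots and substitute the expansion $z = z_{\mathrm{poly}} + E_{z, N}$ from \eqref{eq:z-expn}, where $z_{\mathrm{poly}} \coloneqq \tsum{n=0}{N-1} t^n h_n$, collecting terms by powers of $t$. By multilinearity, the purely polynomial contribution $\rN_j(z_{\mathrm{poly}}, \ldots, z_{\mathrm{poly}})$ produces $\sum_{n_1, \ldots, n_j = 0}^{N-1} t^{n_1 + \cdots + n_j} \rN_j(h_{n_1}, \ldots, h_{n_j})$. Grouping by $n = n_1 + \cdots + n_j$, the contribution with $n \leq N - 1$ yields exactly the claimed $\tsum{n=0}{N-1} t^n P_{\rN_j, n}(r)$; the residual with $n \geq N$, together with the $2^j - 1$ cross terms involving at least one $E_{z, N}$ slot, is collected in $E_{\rN_j, N}$.

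For the bounds \eqref{eq:PN3-bound} and \eqref{eq:PN5-bound} on $P_{\rN_j, n}$, I would apply the pointwise estimate \eqref{eq:hn-bounds} together with the Leibniz rule $|fg|_k \aleq_k |f|_k |g|_k$. Pointwise products of $h_{n_j}$'s produce a factor $r^{\sum_j (\Re(\nu) - 2 n_j)}$. For the nonlocal ingredients $A_{\tht}$ and $\rV_{2,2}$, a direct computation (using $\rd_r A_{\tht}[f,g] = -\tfrac{r}{2}\Re(\br{f}g)$ for the $|\cdot|_k$ estimate) yields $|A_{\tht}[h_{n_1}, h_{n_2}]|_k \aleq_k \chf_{r \leq 2} r^{2\Re(\nu) - 2(n_1 + n_2) + 2}$ and $|\rV_{2,2}[h_{n_1}, h_{n_2}]|_k \aleq_k \chf_{r \leq 2} r^{2\Re(\nu) - 2(n_1 + n_2)}$; convergence of the integrals at the origin is ensured by the choice \eqref{eq:def-N}, since $n_j \leq N - 1$ gives $n_1 + n_2 \leq \Re(\nu) - 1$. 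Combining these estimates with the $r^{-2}$ prefactors (where applicable) produces the claimed $r^{3\Re(\nu) - 2n}$ bound for $\rN_3$ and $r^{5\Re(\nu) + 2 - 2n}$ bound for $\rN_5$.

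To bound $|E_{\rN_j, N}|_1$ in $r \geq t^{\frac{1}{2}}$, I would treat the two contributions separately. For the $h$-only residual with total index $n \geq N$, I rewrite $t^n r^{3\Re(\nu) - 2n} = t^N (t/r^2)^{n-N} r^{3\Re(\nu) - 2N} \aleq t^N r^{3\Re(\nu) - 2N}$ using $t/r^2 \leq 1$ in this region (and analogously for $\rN_5$). For the cross terms carrying at least one $E_{z, N}$ slot, I use \eqref{eq:z-expn-err-bound} restricted to $r \geq t^{\frac{1}{2}}$ to get $|E_{z, N}|_1 \aleq t^N r^{\Re(\nu) - 2N}$, while each $z_{\mathrm{poly}}$ slot is controlled by the geometric sum $|z_{\mathrm{poly}}|_k \aleq_k \tsum{n=0}{N-1} t^n r^{\Re(\nu) - 2n} \aleq r^{\Re(\nu)}$ in $r \geq t^{\frac{1}{2}}$. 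Leibniz's rule and the pointwise products then deliver $r^{3\Re(\nu) - 2N}$ and $r^{5\Re(\nu) + 2 - 2N}$ as required.

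The hard part will be handling the nonlocal operators $A_{\tht}$ and $\rV_{2,2}$ in the cross terms, because they integrate from $r' = 0$ and therefore sample $E_{z,N}$ and $z$ inside the self-similar zone $r' \leq t^{\frac{1}{2}}$ even when the output point $r$ lies outside. To address this, I would split each such nonlocal integral as $\tint{0}{t^{1/2}} + \tint{t^{1/2}}{r}$: on the inner piece apply the self-similar bounds \eqref{eq:z-ss-bound} and the self-similar part $|E_{z, N}|_1 \aleq t^{(\Re(\nu)-1)/2} r'$ of \eqref{eq:z-expn-err-bound}; on the outer piece apply \eqref{eq:z-ext-bound} and $|E_{z, N}|_1 \aleq t^N (r')^{\Re(\nu) - 2N}$. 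A short bookkeeping computation shows the inner contribution matches the outer at $r' = t^{\frac{1}{2}}$ and is dominated by the value of the outer integral there, so aggregating gives $|A_{\tht}[E_{z, N}, z]|(r) \aleq t^N r^{2\Re(\nu) - 2N + 2}$ and $|\rV_{2,2}[E_{z, N}, z]|(r) \aleq t^N r^{2\Re(\nu) - 2N}$ in $r \geq t^{\frac{1}{2}}$. The $|\cdot|_1$ bound then follows from $\rd_r A_{\tht}[f,g] = -\tfrac{r}{2} \Re(\br{f(r)} g(r))$ (and its analogue for $\rV_{2,2}$), which reduces derivatives of the nonlocal factors to pointwise products of the ingredients, closing the estimate via one final application of Leibniz.
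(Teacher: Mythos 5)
Your proposal is correct and follows essentially the same strategy as the paper: expand $z = \tsum{n=0}{N-1} t^{n}h_{n} + E_{z,N}$, substitute into the multilinear nonlinearities, and estimate the nonlocal factors by splitting the integrals at the self-similar scale $r' = t^{1/2}$ and invoking \eqref{eq:z-ss-bound}, \eqref{eq:z-ext-bound}, \eqref{eq:z-expn-err-bound}. The paper's bookkeeping is slightly different -- it builds up telescoping expansions for the intermediate potential factors $|z|^{2}$, $A_{\tht}[z]$, $\rV_{2,2}[z]$, $\rV_{4,1}[z]$, $\rV_{4,2}[z]$ first and then multiplies, rather than expanding $\rN_{j}$ directly and tracking the $n\geq N$ residual plus cross terms as you do -- but the underlying estimates coincide.
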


\begin{proof}
\textbf{Step 1.} Estimates for the cubic nonlinearity $\rN_{3}$.

First, we claim that 
\begin{equation}
|z|^{2}=\sum_{n=0}^{K-1}t^{n}P_{|z|^{2},n}(r)+E_{|z|^{2},K}(t,r),\label{eq:|z|^2-expn}
\end{equation}
where (recall the expansion \eqref{eq:z-expn} of $z(t,r)$) 
\begin{align*}
P_{|z|^{2},n} & =\tsum{n_{1}=0}nh_{n_{1}}\br h_{n-n_{1}},\\
|P_{|z|^{2},n}(r)|_{k} & \aleq_{k}\chf_{r\leq2}r^{2\Re(\nu)-2n},\\
|E_{|z|^{2},K}(t,r)|_{1} & \aleq\chf_{r\leq t^{\frac{1}{2}}}t^{\Re(\nu)-1}r^{2}+\chf_{t^{\frac{1}{2}}\leq r\leq2}t^{K}r^{2\Re(\nu)-2K}.
\end{align*}
The bounds for $P_{|z|^{2},n}(r)$ is immediate from \eqref{eq:hn-bounds}.
To estimate $E_{|z|^{2},K}(t,r)$, the expansion \eqref{eq:z-expn}
for $z(t,r)$ implies that (note that $E_{z,0}(t,r)=z(t,r)$)
\[
E_{|z|^{2},K}=\tsum{n=0}{K-1}t^{n}h_{n}\br E_{z,K-n}+E_{z,K}\br E_{z,0}.
\]
Using \eqref{eq:hn-bounds} and \eqref{eq:z-expn-err-bound}, we have
\begin{align*}
 & |E_{|z|^{2},K}(r)|_{1}\\
 & \aleq\tsum{n=0}{K-1}\{\chf_{r\leq t^{\frac{1}{2}}}t^{n}r^{\Re(\nu)-2n}\cdot t^{\frac{1}{2}(\Re(\nu)-1)}r+\chf_{t^{\frac{1}{2}}\leq r\leq2}t^{n}r^{\Re(\nu)-2n}\cdot t^{K-n}r^{\Re(\nu)-2(K-n)}\}\\
 & \peq+\{\chf_{r\leq t^{\frac{1}{2}}}t^{\Re(\nu)-1}r^{2}+\chf_{t^{\frac{1}{2}}\leq r\leq2}t^{K}r^{2\Re(\nu)-2K}\}\\
 & \aleq\chf_{r\leq t^{\frac{1}{2}}}\{\tsum{n=0}{K-1}t^{n+\frac{1}{2}(\Re(\nu)-1)}r^{\Re(\nu)-2n+1}+t^{\Re(\nu)-1}r^{2}\}+\chf_{t^{\frac{1}{2}}\leq r\leq2}t^{K}r^{2\Re(\nu)-2K}.
\end{align*}
Since $\Re(\nu)-2n\geq1$ for $n=0,\dots,K-1$, we have $t^{n}r^{\Re(\nu)-2n}\aleq t^{\frac{1}{2}(\Re(\nu)-1)}r$
in the zone $r\leq t^{\frac{1}{2}}$. Therefore, 
\[
|E_{|z|^{2},K}(r)|_{1}\aleq\chf_{r\leq t^{\frac{1}{2}}}t^{\Re(\nu)-1}r^{2}+\chf_{t^{\frac{1}{2}}\leq r\leq2}t^{K}r^{2\Re(\nu)-2K}
\]
as desired.

Next, we take the integrals $\int_{0}^{r}|z|^{2}r'dr'$ and $\int_{0}^{r}|z|^{2}\frac{dr'}{r'}$
to obtain 
\begin{align}
A_{\tht}[z](t,r) & =\sum_{n=0}^{K-1}t^{n}P_{A_{\tht},n}(r)+E_{A_{\tht},K}(t,r),\label{eq:A_tht-expn}\\
\rV_{2,2}[z](t,r) & =\sum_{n=0}^{K-1}t^{n}P_{\rV_{2,2},n}(r)+E_{\rV_{2,2},K}(t,r),\label{eq:rV_2,2-expn}
\end{align}
with the estimates (using $N\leq\Re(\nu)$) 
\begin{align*}
\chf_{r\leq2}|P_{A_{\tht},n}|_{k} & \aleq_{k}\chf_{r\leq2}r^{2\Re(\nu)+2-2n},\\
\chf_{r\leq2}|P_{\rV_{2,2},n}|_{k} & \aleq_{k}\chf_{r\leq2}r^{2\Re(\nu)-2n},
\end{align*}
and 
\begin{align*}
\chf_{r\leq2}|E_{A_{\tht},K}|_{1} & \aleq\chf_{r\leq t^{\frac{1}{2}}}t^{\Re(\nu)-1}r^{4}+\chf_{t^{\frac{1}{2}}\leq r\leq2}t^{K}r^{2\Re(\nu)+2-2K},\\
\chf_{r\leq2}|E_{\rV_{2,2},K}|_{1} & \aleq\chf_{r\leq t^{\frac{1}{2}}}t^{\Re(\nu)-1}r^{2}+\chf_{t^{\frac{1}{2}}\leq r\leq2}t^{K}r^{2\Re(\nu)-2K}.
\end{align*}

Now we consider the cubic nonlinearity $\rN_{3}$. Recall that $\rN_{3}$
is a linear combination of $|z|^{2}z$, $\frac{1}{r^{2}}A_{\tht}[z]z$,
and $\int_{0}^{r}|z|^{2}\frac{dr'}{r'}z$. Therefore, we simply multiply
the expansions \eqref{eq:|z|^2-expn}, \eqref{eq:A_tht-expn} (times
$\frac{1}{r^{2}}$), and \eqref{eq:rV_2,2-expn}, with \eqref{eq:z-expn}.
This easily implies the bound \eqref{eq:PN3-bound} for $P_{\rN_{3},n}(r)$.
The bound \eqref{eq:EN3-bound} for $E_{\rN_{3},N}(t,r)$ follows
from 
\begin{align*}
 & \chf_{r\geq t^{\frac{1}{2}}}|E_{\rN_{3},N}(t,r)|_{1}\\
 & \aleq\chf_{t^{\frac{1}{2}}\leq r\leq2}\tsum{n=0}{N-1}\{t^{n}r^{2\Re(\nu)-2n}\cdot t^{N-n}r^{\Re(\nu)-2(N-n)}\}+t^{N}r^{2\Re(\nu)-2N}\cdot r^{\Re(\nu)}\\
 & \aleq\chf_{t^{\frac{1}{2}}\leq r\leq2}t^{N}r^{3\Re(\nu)-2N}.
\end{align*}

\smallskip
\textbf{Step 2.} Estimates for the quintic nonlinearity $\rN_{5}$.

The proof for the quintic nonlinearity is similar. Multiplying the
expansions \eqref{eq:A_tht-expn} and \eqref{eq:|z|^2-expn}, we obtain
\begin{align}
\rV_{4,1}[z]=\frac{1}{r^{2}}(A_{\tht}[z])^{2} & =\sum_{n=0}^{K-1}t^{n}P_{\rV_{4,1},n}(r)+E_{\rV_{4,1},K}(t,r),\label{eq:rV_4,1-expn}\\
A_{\tht}[z]|z|^{2} & =\sum_{n=0}^{K-1}t^{n}P_{A_{\tht}|z|^{2},n}(r)+E_{A_{\tht}|z|^{2},K}(t,r),\nonumber 
\end{align}
with the estimates 
\begin{align*}
\chf_{r\leq2}\{|P_{\rV_{4,1},n}|_{k}+|P_{A_{\tht}|z|^{2},n}|_{k}\} & \aleq_{k}\chf_{r\leq2}r^{4\Re(\nu)+2-2n},\\
\chf_{r\leq2}\{|E_{\rV_{4,1},K}|_{1}+|E_{A_{\tht}|z|^{2},K}|_{1}\} & \aleq\chf_{r\leq t^{\frac{1}{2}}}t^{2\Re(\nu)-2}r^{6}+\chf_{t^{\frac{1}{2}}\leq r\leq2}t^{K}r^{4\Re(\nu)+2-2K}.
\end{align*}
Taking the integral $\int_{0}^{r}\cdot\frac{dr'}{r'}$ for $A_{\tht}[z]|z|^{2}$
and using $N\leq\Re(\nu)$ again, we further have
\begin{equation}
\rV_{4,2}[z]=\sum_{n=0}^{K-1}t^{n}P_{\rV_{4,2},n}(r)+E_{\rV_{4,2},K}(t,r)\label{eq:rV-4,2-expn}
\end{equation}
with the estimates 
\begin{align*}
\chf_{r\leq2}|P_{\rV_{4,2},n}|_{k} & \aleq_{k}\chf_{r\leq2}r^{4\Re(\nu)+2-2n},\\
\chf_{r\leq2}|E_{\rV_{4,2},K}|_{1} & \aleq\chf_{r\leq t^{\frac{1}{2}}}t^{2\Re(\nu)-2}r^{6}+\chf_{t^{\frac{1}{2}}\leq r\leq2}t^{K}r^{4\Re(\nu)+2-2K}.
\end{align*}
Now recall that $\rN_{5}(z)$ is a linear combination of $\rV_{4,1}\cdot z$
and $\rV_{4,2}\cdot z$. Therefore, multiplying the expansions \eqref{eq:rV_4,1-expn}
and \eqref{eq:rV-4,2-expn} with \eqref{eq:z-expn}, we obtain \eqref{eq:PN5-bound}
and \eqref{eq:EN5-bound} as desired.
\end{proof}
We are now ready to finish the proof of Proposition~\ref{prop:ApprxRadiation}.
\begin{proof}[Proof of Proposition~\ref{prop:ApprxRadiation}]
~

\textbf{Step 1.} Justification of initial data.

Here we show \eqref{eq:z-initial-justification}. When $\Re(\nu)\geq1$,
we can apply \eqref{eq:z-expn} and \eqref{eq:z-expn-err-bound} with
$K=1$ to have 
\[
|z(t,r)-qr^{\nu}\chi(r)|_{1}\aleq\chf_{r\leq t^{\frac{1}{2}}}t^{\frac{1}{2}(\Re(\nu)-1)}r+\chf_{t^{\frac{1}{2}}\leq r\leq2}tr^{\Re(\nu)-2}.
\]
We also note that the above function is supported in $r\leq2$. Hence,
we have 
\[
\|z(t,r)-qr^{\nu}\chi(r)\|_{H_{\frkm}^{1,1}}\aleq\||z(t,r)-qr^{\nu}\chi(r)|_{-1}\|_{L^{2}}\aleq t^{\frac{1}{2}\Re(\nu)}+t^{\frac{1}{2}}\to0
\]
as $t\to0$. When $0<\Re(\nu)<1$, we write 
\[
z(t,r)-qr^{\nu}\chi(r)=q\{t^{\frac{\nu}{2}}f_{1}(t^{-\frac{1}{2}}r)+\alp t^{\frac{\nu}{2}}f_{2}(t^{-\frac{1}{2}}r)-r^{\nu}\}\chi(r)
\]
and estimate this using \eqref{eq:f1-to-e1} (in the self-similar
zone $r\leq t^{\frac{1}{2}}$) and \eqref{eq:e_f-bound} (in the exterior
zone $r\geq t^{\frac{1}{2}}$) as 
\begin{align*}
 & |z(t,r)-qr^{\nu}\chi(r)|_{1}\\
 & \aleq\chf_{r\leq t^{\frac{1}{2}}}\{t^{\frac{1}{2}(\Re(\nu)-2)}r^{2}+r^{\Re(\nu)}\}+\chf_{t^{\frac{1}{2}}\leq r\leq2}\{tr^{\Re(\nu)-2}+t^{\Re(\nu)}r^{-\Re(\nu)}\}\\
 & \aleq\chf_{r\leq t^{\frac{1}{2}}}r^{\Re(\nu)}+\chf_{t^{\frac{1}{2}}\leq r\leq2}t^{\Re(\nu)}r^{-\Re(\nu)}.
\end{align*}
Hence the $H_{\frkm}^{1,1}$-norm is bounded by 
\[
\|z(t,r)-qr^{\nu}\chi(r)\|_{H_{\frkm}^{1,1}}\aleq t^{\frac{1}{2}\Re(\nu)}\to0
\]
as $t\to0$. This completes the proof of \eqref{eq:z-initial-justification}.

\smallskip
\textbf{Step 2.} Pointwise bounds for $z$.

These are already proved in Lemma~\ref{lem:ptwise-est-z}.

\smallskip
\textbf{Step 3.} Estimates for $\Psi_{z}$.

In this step, we show \eqref{eq:Psi_z-L2} and \eqref{eq:Psi_z-Hdot1}.
Recall that $\Psi_{z}=i\rd_{t}z+\Dlt^{(\frkm)}z-\rN(z)$. Using \eqref{eq:z-lin-evol-error},
\eqref{eq:rN-ext-cubic}, and \ref{eq:rN-ext-quintic}, we obtain
\begin{align*}
\Psi_{z} & =\chi_{r\ageq t^{\frac{1}{2}}}\Big\{\sum_{n=0}^{N-1}t^{n}\Big(i(n+1)\cdot g_{n+1}(r)+\Dlt^{(\frkm)}g_{n}(r)+P_{\lin,n}(r)-P_{\rN_{3},n}(r)-P_{\rN_{5},n}(r)\Big)\Big\}\\
 & \peq+\Psi_{\lin,N}-\chi_{r\aleq t^{\frac{1}{2}}}\rN(z)-\chi_{r\ageq t^{\frac{1}{2}}}\{E_{\rN_{3},N}+E_{\rN_{5},N}\}.
\end{align*}
By the choice \eqref{eq:def-gn} of $g_{n}$, the first line vanishes.
Therefore, 
\[
\Psi_{z}=\Psi_{\lin,N}-\chi_{r\aleq t^{\frac{1}{2}}}\rN(z)-\chi_{r\ageq t^{\frac{1}{2}}}\{E_{\rN_{3},N}+E_{\rN_{5},N}\}.
\]

It remains to show that each term in the previous display satisfies
the bounds \eqref{eq:Psi_z-L2} and \eqref{eq:Psi_z-Hdot1}. First,
\eqref{eq:Psi-lin-bound}, $\Re(\nu)>0$, and $\frac{1}{2}(\Re(\nu)-1)<N\leq\Re(\nu)$
imply the bounds 
\begin{align*}
\|\Psi_{\lin,N}\|_{L^{2}} & \aleq t^{N}+t^{\Re(\nu)}\aleq t^{\frac{1}{2}(\Re(\nu)-1)+\dlt_{z}},\\
\|r^{-\dlt_{z}}|\Psi_{\lin,N}|_{-1}\|_{L^{2}} & \aleq t^{N}+t^{\Re(\nu)-1}\aleq t^{\frac{1}{2}(\Re(\nu)-2)+\dlt_{z}},
\end{align*}
provided that $\dlt_{z}>0$ is sufficiently small. Next, \eqref{eq:z-ss-bound}
implies 
\begin{align*}
\chf_{r\leq2t^{\frac{1}{2}}}|\rN(z)|_{1} & \aleq\chf_{r\leq2t^{\frac{1}{2}}}\{|\rN_{3}(z)|_{1}+|\rN_{5}(z)|_{1}\}\\
 & \aleq\chf_{r\leq2t^{\frac{1}{2}}}\{t^{\frac{3}{2}(\Re(\nu)-2)}r^{6}+t^{\frac{5}{2}(\Re(\nu)-2)}r^{12}\},
\end{align*}
which in turn gives 
\begin{align*}
\|\chi_{r\aleq t^{\frac{1}{2}}}\rN(z)\|_{L^{2}} & \aleq t^{\frac{3}{2}\Re(\nu)+\frac{1}{2}}+t^{\frac{5}{2}\Re(\nu)+\frac{3}{2}}\aleq t^{\frac{1}{2}(\Re(\nu)-1)+\dlt_{z}},\\
\|r^{-\dlt_{z}}|\chi_{r\aleq t^{\frac{1}{2}}}\rN(z)|_{-1}\|_{L^{2}} & \aleq t^{\frac{3}{2}\Re(\nu)-\frac{1}{2}\dlt_{z}}+t^{\frac{5}{2}\Re(\nu)+1-\frac{1}{2}\dlt_{z}}\aleq t^{\frac{1}{2}(\Re(\nu)-2)+\dlt_{z}}.
\end{align*}
Next, \eqref{eq:EN3-bound}, \eqref{eq:EN5-bound}, $\Re(\nu)>0$,
and $\frac{1}{2}(\Re(\nu)-1)<N\leq\Re(\nu)$ imply 
\[
\|E_{\rN_{3},N}+E_{\rN_{5},N}\|_{L^{2}}+\|r^{-\dlt_{z}}|E_{\rN_{3},N}+E_{\rN_{5},N}|_{-1}\|_{L^{2}}\aleq t^{N}\aleq t^{\frac{1}{2}(\Re(\nu)-1)+\dlt_{z}},
\]
provided that $\dlt_{z}>0$ is sufficiently small. This completes
the proof.
\end{proof}

\section{\label{sec:blow-up-prelim}Preliminaries for the blow-up construction}

In this section, we collect some preliminary facts on linearization,
adapted function spaces, duality estimates for \eqref{eq:CSS-m-equiv},
and a decomposition lemma near modulated solitons, which we shall need
to carry out the blow-up construction.

\subsection{\label{subsec:Linearization-of-CSS}Linearization of \eqref{eq:CSS-m-equiv}}

\subsubsection*{Linearization of the Bogomol'nyi operator}

Consider the (radial Coulomb-gauge) Bogomol'nyi operator $w\mapsto\bfD_{w}w$.
We can write 
\begin{equation}
\bfD_{w+\eps}(w+\eps)=\bfD_{w}w+L_{w}\eps+N_{w}(\eps),\label{eq:LinearizationBogomolnyi}
\end{equation}
where 
\begin{align*}
L_{w}\eps & \coloneqq\bfD_{w}\eps+wB_{w}\eps,\\
N_{w}(\eps) & \coloneqq\eps B_{w}\eps+\tfrac{1}{2}wB_{\eps}\eps+\tfrac{1}{2}\eps B_{\eps}\eps,\\
B_{w}\eps & \coloneqq-\tfrac{2}{y}A_{\tht}[w,\eps]=\tfrac{1}{y}{\textstyle \int_{0}^{y}}\Re(\br w\eps)y'dy',
\end{align*}
and $A_{\tht}[\psi_{1},\psi_{2}]$ is defined through the polarization
\begin{equation}
A_{\tht}[\psi_{1},\psi_{2}]\coloneqq-\tfrac{1}{2}\tint 0r\Re(\br{\psi_{1}}\psi_{2})r'dr'.\label{eq:A-tht-def}
\end{equation}
The $L^{2}$-adjoint $L_{w}^{\ast}$ of $L_{w}$ takes the form 
\begin{align*}
L_{w}^{\ast}v & =\bfD_{w}^{\ast}v+B_{w}^{\ast}(\br wv),\\
B_{w}^{\ast}v & =w{\textstyle \int_{y}^{\infty}}\Re(v)\,dy'.
\end{align*}
We remark that the operators $L_{w}$, $B_{w}$, and their adjoints
are \emph{only} $\bbR$-linear.

In particular, when $m\geq0$ and $w=Q$, we use $\bfD_{Q}Q=0$ and
\eqref{eq:energy-self-dual-form} to have the following expansion
for the energy: 
\begin{equation}
E[Q+\eps]=\tfrac{1}{2}\|L_{Q}\eps\|_{L^{2}}^{2}+\text{(h.o.t.)}.\label{eq:linearized-energy-expn}
\end{equation}

\subsubsection*{Linearization of \eqref{eq:CSS-m-equiv}}

Next, we linearize \eqref{eq:CSS-m-equiv}, which we write in the
Hamiltonian form $\rd_{t}u+i\nabla E[u]=0$. We decompose 
\begin{equation}
\nabla E[w+\eps]=\nabla E[w]+\calL_{w}\eps+R_{w}(\eps),\label{eq:grad-energy-lin-nonlin}
\end{equation}
where $\calL_{w}\eps$ collects all the linear terms in $\eps$, and
$R_{w}(\eps)$ collects the remainders. Note that $\calL_{w}$ is
the Hessian of $E$, i.e., 
\[
\nabla^{2}E[w]=\calL_{w}.
\]
If one recalls the self-duality \eqref{eq:energy-self-dual-form},
i.e., $\nabla E[u]=L_{u}^{\ast}\bfD_{u}u$, then \eqref{eq:grad-energy-lin-nonlin}
and \eqref{eq:LinearizationBogomolnyi} yield 
\begin{align*}
\calL_{w}\eps & =L_{w}^{\ast}L_{w}\eps+\bfD_{w}w(B_{w}\eps)+B_{w}^{\ast}(\br{\eps}\bfD_{w}w)+B_{\eps}^{\ast}(\br w\bfD_{w}w).
\end{align*}
We remark again that the operator $\calL_{w}$ is only $\bbR$-linear.
Being the Hessian of the energy, $\calL_{w}$ is formally symmetric
with respect to the real inner products: 
\[
(\calL_{w}f,g)_{r}=(f,\calL_{w}g)_{r}.
\]

In particular, when $m\geq0$ and $w=Q$, from $\bfD_{Q}Q=0$, we
observe the self-dual factorization of $i\calL_{Q}$: 
\begin{equation}
i\calL_{Q}=iL_{Q}^{\ast}L_{Q}.\label{eq:calLQ}
\end{equation}
This identity was first observed in \cite{LawrieOhShashahani_unpub}.
Thus, the linearization of \eqref{eq:CSS-self-dual-form} at $Q$
is 
\begin{equation}
\rd_{t}\eps+i\calL_{Q}\eps=0,\quad\text{or}\quad\rd_{t}\eps+iL_{Q}^{\ast}L_{Q}\eps=0.\label{eq:lin-CSS-Q}
\end{equation}

Although it will not be used in this paper, let us note the linear
conjugation identity discovered in \cite{KimKwon2020arXiv}: 
\[
L_{Q}iL_{Q}^{\ast}=iA_{Q}^{\ast}A_{Q},
\]
where $A_{Q}\coloneqq\bfD_{Q}-\tfrac{1}{r}$ (which is $\bbC$-linear
and local). Remarkably enough, the above identity says that $L_{Q}iL_{Q}^{\ast}$
is now $\bbC$-linear and local. Moreover, $A_{Q}^{\ast}A_{Q}$ is
exactly the same as the linearized operator around the equivariant
harmonic maps into the two sphere. See \cite[Remark 2.3]{KimKwon2020arXiv}
for further discussions.

\subsection{\label{subsec:Invariant-subspace-decomposition}Formal invariant
subspace decomposition for $i\protect\calL_{Q}$}

Let $m\geq0$. In this subsection, we study the invariant subspace
decomposition for the linear operator $i\calL_{Q}$, which lies at
the heart of modulation analysis. Our discussion here is formal but
can be made rigorous when $m$ is large.

We first recall the formal generalized kernel of $i\calL_{Q}$: 
\[
N_{g}(i\calL_{Q})=\mathrm{span}_{\bbR}\{\Lmb Q,iQ,\tfrac{i}{4}r^{2}Q,\rho\}
\]
with the relations (see \cite[Proposition 3.4]{KimKwon2019arXiv})
\begin{equation}
\left\{ \begin{aligned}i\calL_{Q}(i\tfrac{r^{2}}{4}Q) & =\Lmb Q, & i\calL_{Q}\rho & =iQ,\\
i\calL_{Q}(\Lmb Q) & =0, & i\calL_{Q}(iQ) & =0,
\end{aligned}
\right.\label{eq:gen-kernel-rel}
\end{equation}
where the existence of $\rho$ is given in \cite[Lemma 3.6]{KimKwon2019arXiv}.
In fact, we have: 
\begin{equation}
\left\{ \begin{aligned}L_{Q}(i\tfrac{r^{2}}{4}Q) & =i\tfrac{r}{2}Q, & L_{Q}\rho & =\tfrac{1}{2(m+1)}rQ,\\
L_{Q}^{\ast}(i\tfrac{r}{2}Q) & =-i\Lmb Q, & L_{Q}^{\ast}(\tfrac{1}{2(m+1)}rQ) & =Q,\\
L_{Q}(\Lmb Q) & =0, & L_{Q}(iQ) & =0.
\end{aligned}
\right.\label{eq:gen-kernel-rel-LQ}
\end{equation}
We will need the following properties of $\rho$. 
\begin{lem}[{{The generalized null mode $\rho$; see \cite[Lemma 3.6]{KimKwon2019arXiv}
and \cite[Lemma 2.1]{KimKwon2020arXiv}}}]
\label{lem:rho}Let $m\geq0$. There exists a unique smooth function
$\rho:(0,\infty)\to\bbR$ satisfying the following properties: 
\begin{enumerate}
\item (Smoothness on the ambient space) The $m$-equivariant extension $\rho(x)\coloneqq\rho(r)e^{im\tht}$,
$x=re^{i\tht}$, is smooth on $\bbR^{2}$. 
\item (Relations) $\rho(r)$ satisfies 
\[
L_{Q}\rho=\tfrac{1}{2(m+1)}rQ\qquad\text{and}\qquad\calL_{Q}\rho=Q.
\]
\item (Pointwise bounds and matched spatial asymptotics) For any $k\in\bbN$,
we have 
\begin{align}
|\rho|_{k} & \aleq_{k}r^{2}Q,\label{eq:rho-ptwise-bdd}\\
\chf_{r\geq2}|\rho-\tfrac{1}{4(m+1)}r^{2}Q|_{k} & \aleq_{k}Q\cdot(\log r)^{2}.\label{eq:matched-spatial-asymp}
\end{align}
\end{enumerate}
\end{lem}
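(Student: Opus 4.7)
The plan is to construct $\rho$ as a real-valued, $m$-equivariant function solving the first-order nonlocal equation $L_{Q}\rho = \tfrac{1}{2(m+1)}rQ$, from which $\calL_{Q}\rho = Q$ follows immediately from the self-dual factorization \eqref{eq:calLQ} together with the identity $L_{Q}^{\ast}(\tfrac{1}{2(m+1)}rQ) = Q$. The latter is a direct computation using $\bfD_{Q}Q = 0$ (equivalently $rQ' = (m+A_{\tht}[Q])Q$), the global mass relation $\int_{0}^{\infty} Q^{2} r\,dr = 4(m+1)$, and the definitions of $\bfD_{Q}^{\ast}$ and $B_{Q}^{\ast}$; it is already recorded in \eqref{eq:gen-kernel-rel-LQ}.

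Since $B_{Q}\eps$ only sees $\Re(\br{Q}\eps)$ and $Q$ is real, the restriction of $L_{Q}$ to real profiles becomes an $\bbR$-linear first-order operator with a nonlocal integral term,
\begin{equation*}
L_{Q}\rho = \rd_{r}\rho - \tfrac{m+A_{\tht}[Q]}{r}\rho + \tfrac{Q(r)}{r} \int_{0}^{r} Q(r')\rho(r')\, r'\, dr'.
\end{equation*}
Introducing $F(r) = \int_{0}^{r} Q\rho\, r'\, dr'$ (so that $\rho = F'/(rQ)$) converts the problem into a second-order linear ODE for $F$, whose homogeneous kernel includes the element induced by $\Lmb Q \in \ker L_{Q}$ (itself obtained by differentiating $\bfD_{Q_{\lmb}}Q_{\lmb} = 0$ in $\lmb$ at $\lmb = 1$). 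Variation of parameters, adapted to handle the nonlocal structure, together with standard regular-singular ODE theory then furnishes a particular solution. Uniqueness of $\rho$ is enforced by the pointwise bound $|\rho| \aleq r^{2} Q$: near the origin this forces $|\rho| \aleq r^{m+2}$, which rules out any nonzero $\Lmb Q$ contribution since $\Lmb Q \aeq r^{m}$ as $r \to 0$.

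For the regularity and matched asymptotics, I analyze the reduced ODE at each endpoint. Near $r = 0$, $Q \aeq r^{m}$ makes the equation regular-singular, with indicial exponents $0$ and $2m+2$ for $F$; matching the source $\tfrac{r^{2}Q^{2}}{2(m+1)} \aeq r^{2m+2}$ yields a smooth Frobenius solution with leading behavior $\rho(r) \aeq r^{m+2}$, which supplies both the ambient smoothness of $\rho(r)e^{im\tht}$ on $\bbR^{2}$ and the bound \eqref{eq:rho-ptwise-bdd}. At infinity, $Q \aeq r^{-m-2}$, and plugging the ansatz $\rho = \tfrac{r^{2}Q}{4(m+1)} + \td{\rho}$ into the equation produces an effective problem for $\td{\rho}$ whose source is governed by the tail $A_{\tht}[Q] + 2(m+1) = O(r^{-2m-2})$; one iteration yields $\td{\rho} = O(Q\log r)$, and a second iteration upgrades this to the sharp $O(Q(\log r)^{2})$ bound in \eqref{eq:matched-spatial-asymp}. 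The derivative bounds $|\cdot|_{k}$ follow by commuting $(r\rd_{r})^{k}$ through the same scheme. The main technical difficulty is this far-field resonance: the nonlocal integral $B_{Q}\rho$ feeds back into the leading correction of $\tfrac{r^{2}Q}{4(m+1)}$ and forces a genuine $(\log r)^{2}$ correction rather than an $O(1)$ one, and keeping careful track of this through the iteration is the heart of the argument.
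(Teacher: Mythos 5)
Your approach is structurally sound and, after the notational translation $\td{\rho} := Q^{-1}\rho$ (equivalently your $F$, with $\rho = F'/(rQ)$), it reduces to the same underlying device the paper uses. However, the paper handles the bulk of the lemma differently: it \emph{cites} items (1), (2), and \eqref{eq:rho-ptwise-bdd} to \cite[Lemma~3.6]{KimKwon2019arXiv} and \cite[Lemma~2.1]{KimKwon2020arXiv} and proves only \eqref{eq:matched-spatial-asymp} from scratch. The starting point is that $\td{\rho} = Q^{-1}\rho$ is characterized as the unique solution of the first-order \emph{integral} equation $\rd_{r}\td{\rho} + \tfrac{1}{r}\int_{0}^{r}Q^{2}\td{\rho}\,r'dr' = \tfrac{r}{2(m+1)}$ in the class $|\td{\rho}| \aleq r^{2}$. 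Setting $\wh{\rho} := \td{\rho} - \tfrac{r^{2}}{4(m+1)}$ cancels the source, leaving $\rd_{r}\wh{\rho} = -\tfrac{1}{r}\int_{0}^{r}Q^{2}\td{\rho}\,r'dr'$, and a \emph{single} integration of the known bound $|\td{\rho}| \aleq r^{2}$ then gives $|\wh{\rho}| \aleq (\log r)^{2}$ for $m = 0$ (and the sharper $\log r$ for $m \geq 1$). Your proposal to reconstruct existence, uniqueness, and regularity via a second-order ODE for $F$ with Frobenius analysis is legitimate and self-contained, at the cost of considerable extra work compared to citing the prior lemmas.

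One detail in your argument is garbled and should be corrected: you claim the far-field analysis requires two iterations, the first giving $O(Q\log r)$ and the second upgrading to $O(Q(\log r)^{2})$. This is backwards as stated (a second iteration cannot worsen a bound), and it also misdiagnoses the mechanism. Once $|\td{\rho}| \aleq r^{2}$ is in hand (your Frobenius/uniqueness step provides this), the $(\log r)^{2}$ for $m = 0$ comes out of one integration, because $Q^{2}\td{\rho}r' \aleq r^{-2m-1}$ is borderline non-integrable precisely when $m = 0$, giving $\int_{0}^{r} \aeq \log r$ and hence $\rd_{r}\wh{\rho} \aeq \tfrac{\log r}{r}$, then $\wh{\rho} \aeq (\log r)^{2}$. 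Plugging $\wh{\rho} \aleq (\log r)^{2}$ back into the integral produces only an $O(1)$ contribution (since $Q^{2}(\log r)^{2} r' \aleq r^{-3}(\log r)^{2}$ is integrable), so the estimate is already self-consistent after one pass. For $m \geq 1$ you get $\log r$ directly and stay there; there is no iteration that promotes $\log r$ to $(\log r)^{2}$.
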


\begin{proof}
All the statements except \eqref{eq:matched-spatial-asymp} are proved
in \cite[Lemma 2.1]{KimKwon2020arXiv}. To prove \eqref{eq:matched-spatial-asymp},
by the proof of \cite[Lemma 3.6]{KimKwon2019arXiv}, we recall that
$\td{\rho}\coloneqq Q^{-1}\rho$ is a unique solution to the integral
equation 
\[
\rd_{r}\td{\rho}+\frac{1}{r}\int_{0}^{r}Q^{2}\td{\rho}r'dr'=\frac{r}{2(m+1)}
\]
in the class $|\td{\rho}(r)|\aleq r^{2}$. Next, we set $\wh{\rho}\coloneqq\td{\rho}-\frac{r^{2}}{4(m+1)}$
and observe that 
\begin{equation}
\rd_{r}\wh{\rho}=-\frac{1}{r}\int_{0}^{r}Q^{2}\td{\rho}r'dr'\aleq\begin{cases}
\chf_{r\leq2}r+\chf_{r\geq2}\frac{1}{r} & \text{if }m\geq1,\\
\chf_{r\leq2}r+\chf_{r\geq2}\frac{\log r}{r} & \text{if }m=0.
\end{cases}\label{eq:rho-property-tmp1}
\end{equation}
Integrating the above from the origin yields 
\[
|\wh{\rho}(r)|\aleq\begin{cases}
\chf_{r\leq2}r^{2}+\chf_{r\geq2}\log r & \text{if }m\geq1,\\
\chf_{r\leq2}r^{2}+\chf_{r\geq2}(\log r)^{2} & \text{if }m=0.
\end{cases}
\]
The bounds for $|\wh{\rho}(r)|_{k}$ follow from repeated differentiations
of \eqref{eq:rho-property-tmp1} and the bound \eqref{eq:rho-ptwise-bdd}.
This completes the proof. 
\end{proof}
Now, $L^{2}$ has a formal invariant subspace decomposition under
the linearized dynamics $(\rd_{t}+i\calL_{Q})w=0$: 
\begin{equation}
L^{2}=N_{g}(i\calL_{Q})\oplus N_{g}(\calL_{Q}i)^{\perp},\label{eq:invariant-subspace-decomp}
\end{equation}
where\footnote{Note that $y^{2}Q,\rho\notin L^{2}$ when $m\leq1$. This formal invariant
subspace decomposition makes sense when $m\geq2$.} 
\begin{align*}
N_{g}(i\calL_{Q}) & =\mathrm{span}_{\bbR}\{\Lmb Q,iQ,i\tfrac{r^{2}}{4}Q,\rho\},\\
N_{g}(\calL_{Q}i)^{\perp} & =\{i\Lmb Q,Q,\tfrac{1}{4}r^{2}Q,i\rho\}^{\perp}.
\end{align*}
One might understand that $\Lmb Q$- and $iQ$-coordinates of $w$
correspond to the scaling and phase rotation parameters for $u=Q+w$.
These coordinates can be detected by the inner products $(w,\tfrac{1}{4}r^{2}Q)_{r}$
and $(w,i\rho)_{r}$.\footnote{More precisely, one needs to consider $(w,\tfrac{1}{4}r^{2}Q-c_{1}Q)_{r}$
and $(w,i\rho-c_{2}\Lmb Q)_{r}$ such that $\tfrac{1}{4}r^{2}Q-c_{1}Q$
and $i\rho-c_{2}\Lmb Q$ are orthogonal to $\rho$ and $i\tfrac{r^{2}}{4}Q$,
respectively. Anyway, such inner products will not make sense when
$m\leq1$ and our discussion is at the formal level.} By \eqref{eq:gen-kernel-rel}, their time evolutions are detected
by 
\begin{align*}
\rd_{t}(w,\tfrac{1}{4}y^{2}Q)_{r} & +(w,i\Lmb Q)_{r}=0,\\
\rd_{t}(w,i\rho)_{r} & +(w,Q)_{r}=0.
\end{align*}
These relations will motivate our use of the inner products $(\eps,i\Lmb Q)_{r}$
and $(\eps,Q)_{r}$ in later sections.

When $m\in\{0,1\}$, due to the slow decay of $Q$ (and hence all
the generalized kernel elements), the inner products involved in the
invariant subspace decomposition \eqref{eq:invariant-subspace-decomp}
do not make sense. For our blow-up construction in the $m=0$ case,
we will need their truncated versions. 
\begin{lem}[Truncated generalized kernel relations]
Let $m=0$; let $R>1$ be a cutoff radius. 
\begin{enumerate}
\item (Logarithmic divergence) We have 
\begin{equation}
(\tfrac{1}{2}rQ,\chi_{R}\tfrac{1}{2}rQ)_{r}=4\pi\log R+O(1).\label{eq:yQ-not-in-L2}
\end{equation}
\item (Generalized kernel relations) We have 
\begin{align}
\|L_{Q}(\chi_{R}\tfrac{1}{4}ir^{2}Q)-\chi_{R}i\tfrac{r}{2}Q\|_{L^{2}}+\|L_{Q}(\chi_{R}\rho)-\chi_{R}\tfrac{r}{2}Q\|_{L^{2}} & \aleq1,\label{eq:truncated-est-1}\\
\|\langle r\rangle(L_{Q}^{\ast}(\chi_{R}\tfrac{1}{2}irQ)+i\Lmb_{R}Q)\|_{L^{2}}+\|\langle r\rangle(L_{Q}^{\ast}(\chi_{R}\tfrac{1}{2}rQ)-\chi_{R}Q)\|_{L^{2}} & \aleq1,\label{eq:truncated-est-2}
\end{align}
\item (Transversality) We have 
\begin{equation}
\begin{pmatrix}(\Lmb Q,-\tfrac{r^{2}}{4}Q\chi_{R})_{r} & (iQ,-\tfrac{r^{2}}{4}Q\chi_{R})_{r}\\
(\Lmb Q,-i\rho\chi_{R})_{r} & (iQ,-i\rho\chi_{R})_{r}
\end{pmatrix}=\begin{pmatrix}4\pi\log R+O(1) & 0\\
0 & -4\pi\log R+O(1)
\end{pmatrix}.\label{eq:transversality-radial-case}
\end{equation}
\end{enumerate}
\end{lem}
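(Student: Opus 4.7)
The plan is to verify each of the three parts by direct computation, using the explicit form $Q(r) = \sqrt{8}/(1+r^{2})$ for $m=0$, the algebraic identities \eqref{eq:gen-kernel-rel-LQ}, and the matched asymptotic $\rho = \tfrac{1}{4} r^{2} Q + O(Q (\log r)^{2})$ from Lemma~\ref{lem:rho}.

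For part (1), substituting the explicit $Q$ reduces the claim to showing
\begin{equation*}
4\pi \int_{0}^{\infty} \chi_{R}(r) \frac{r^{3}}{(1+r^{2})^{2}}\, dr = 4\pi \log R + O(1),
\end{equation*}
which follows from the decomposition $r^{3}/(1+r^{2})^{2} = r/(1+r^{2}) - r/(1+r^{2})^{2}$: the first summand integrates to $\tfrac{1}{2}\log(1+R^{2}) + O(1) = \log R + O(1)$, and the second is globally integrable.

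For part (2), I would decompose $L_{Q} = \bfD_{Q} + Q B_{Q}$ and $L_{Q}^{\ast} = \bfD_{Q}^{\ast} + B_{Q}^{\ast}(\bar Q\,\cdot\,)$ and exploit the following observation: the nonlocal pieces both contain the factor $\Re(\bar Q\,\cdot\,)$, which vanishes on arguments of the form $\chi_{R} \cdot i \cdot (\text{real})$ since $Q$ is real. Consequently, for the estimates involving $\chi_{R} \tfrac{1}{4} i r^{2} Q$ and $\chi_{R} \tfrac{1}{2} i r Q$, only the local commutators $[\chi_{R}, \bfD_{Q}] f = (\rd_{r} \chi_{R}) f$ and $[\chi_{R}, \bfD_{Q}^{\ast}] f = -(\rd_{r} \chi_{R}) f$ contribute; combined with the kernel relations \eqref{eq:gen-kernel-rel-LQ}, the first reduces to a trivial $L^{2}$ bound on $(\rd_{r} \chi_{R}) \tfrac{1}{4} i r^{2} Q$, while the second becomes the exact identity $L_{Q}^{\ast}(\chi_{R} \tfrac{1}{2} i r Q) = -i \Lmb_{R} Q$, because $\Lmb_{R} Q = \chi_{R} \Lmb Q + \tfrac{1}{2} r \chi_{R}' Q$ is tailored precisely to absorb the boundary term. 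For the remaining two estimates on $\chi_{R} \rho$ and $\chi_{R} \tfrac{1}{2} r Q$, both arguments are real, so the nonlocal commutator survives and takes the schematic form
\begin{equation*}
Q(r) \cdot \frac{1}{r} \int_{0}^{r} (\chi_{R}(r') - \chi_{R}(r))\, Q(r') \rho(r')\, r'\, dr' \quad \text{(or a mirror version with $\int_{r}^{\infty}$);}
\end{equation*}
I would bound this by splitting into the regions $r \leq R$, $R \leq r \leq 2R$, $r \geq 2R$ (observing that the integrand vanishes for $r \leq R$) and using the matched asymptotic so that $Q \rho \cdot r \sim 1/r$ near $r = R$, which keeps the inner integral logarithmic while the prefactor $Q(r)/r$ provides enough decay to bound the $L^{2}$- (resp.\ weighted $\langle r \rangle L^{2}$-) norm uniformly in $R$.

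For part (3), the off-diagonal entries $(iQ, -\tfrac{r^{2}}{4} Q \chi_{R})_{r}$ and $(\Lmb Q, -i\rho \chi_{R})_{r}$ vanish identically by parity, since $Q$, $\rho$, $\Lmb Q$, $r^{2} Q \chi_{R}$ are all real and each integrand is $(\text{real}) \cdot \Re(\pm i) = 0$. For the $(1,1)$-entry, I would use $2 r Q \cdot \Lmb Q = \rd_{r}(r^{2} Q^{2})$ and integrate by parts to reduce to $\tfrac{\pi}{2} \int_{0}^{\infty} r^{3} Q^{2} \chi_{R}\, dr + O(1)$, which yields $4\pi \log R + O(1)$ by the integral evaluation of part (1). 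For the $(2,2)$-entry, expanding $(iQ, -i \rho \chi_{R})_{r} = -\int \chi_{R} Q \rho$ and substituting $\rho = \tfrac{1}{4} r^{2} Q + O(Q (\log r)^{2})$ produces $-\tfrac{\pi}{2} \int_{0}^{\infty} r^{3} Q^{2} \chi_{R}\, dr + O(1) = -4\pi \log R + O(1)$. The main technical obstacle I anticipate lies in the bookkeeping of the nonlocal truncation errors in part (2), where both $Q$ and $\rho$ are real and one must carefully balance the logarithmic growth of the inner integral against the $Q(r)/r$ prefactor --- and, for the $L_{Q}^{\ast}$ estimates, also against the $\langle r \rangle$ weight --- in order to obtain bounds that are uniform in $R$.
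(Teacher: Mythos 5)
Your proposal is correct. For parts (1) and (2) it follows the paper's own argument: one notes that $B_Q$ and $B_Q^{\ast}$ annihilate $i\cdot(\text{real})$ arguments, so the two bounds with arguments $\chi_R\cdot\tfrac{i}{4}r^2Q$ and $\chi_R\cdot\tfrac{i}{2}rQ$ reduce to the local cutoff commutator against $\bfD_Q$, $\bfD_Q^{\ast}$, the second of which gives exactly $-i\Lambda_R Q$ so that the corresponding error vanishes identically; for the real arguments $\chi_R\rho$ and $\chi_R\tfrac12 rQ$ the nonlocal commutator survives and is controlled exactly as you describe, trading the $\langle r\rangle^{-2}$ decay of $Q$ against the at-worst-logarithmic inner integral. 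In part (3) you compute the diagonal entries directly, via the pointwise identity $2rQ\,\Lambda Q=\rd_r(r^2Q^2)$ and the matched asymptotics $\rho=\tfrac14 r^2Q+O(Q\langle\log r\rangle^2)$, whereas the paper instead moves $L_Q$ and $L_Q^{\ast}$ across the real inner product and lands on the part (2) commutator estimates followed by part (1); both routes reduce to the same integral $\tfrac{\pi}{2}\int_0^\infty r^3 Q^2\chi_R\,dr$ and are equally valid. Two minor slips in the write-up, neither consequential: the commutator sign should be $[\chi_R,\bfD_Q]=-\chi_R'$ (equivalently $[\bfD_Q,\chi_R]=\chi_R'$), and the ``mirror'' nonlocal commutator for $L_Q^{\ast}(\chi_R\tfrac12 rQ)$ has integrand $\tfrac12 r'Q^2$ rather than $Q\rho$, which decays polynomially rather than logarithmically --- it is precisely this extra decay that makes the stronger $\langle r\rangle$-weighted bound in \eqref{eq:truncated-est-2} possible.
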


\begin{proof}
The proof of \eqref{eq:yQ-not-in-L2} is immediate from \eqref{eq:Q-formula}.
The proof of \eqref{eq:truncated-est-1} follows from the pointwise
bounds 
\begin{equation}
L_{Q}(\chi_{R}i\tfrac{r^{2}}{4}Q)-\chi_{R}i\tfrac{r}{2}Q=\bfD_{Q}(\chi_{R}i\tfrac{r^{2}}{4}Q)-\chi_{R}i\tfrac{r}{2}Q=\chi_{R}'\cdot(i\tfrac{r^{2}}{4}Q)\aleq\chf_{[R,2R]}rQ\label{eq:truncated-est-ptwise1}
\end{equation}
and 
\begin{align}
L_{Q}(\chi_{R}\rho) & -\chi_{R}\tfrac{r}{2}Q=[L_{Q},\chi_{R}]\rho\label{eq:truncated-est-ptwise2}\\
 & =\chi_{R}'\cdot\rho+(1-\chi_{R})\tfrac{Q}{r}\tint 0rQ\rho r'dr'-\tfrac{Q}{r}\tint 0r(1-\chi_{R})Q\rho r'dr'\nonumber \\
 & \aleq Q\cdot(\chf_{[R,2R]}r+\chf_{[R,\infty)}\tfrac{1}{r}\langle\log r\rangle).\nonumber 
\end{align}
The proof of \eqref{eq:truncated-est-2} follows from 
\[
L_{Q}^{\ast}(\chi_{R}\tfrac{1}{2}irQ)=i\bfD_{Q}^{\ast}(\chi_{R}\tfrac{1}{2}rQ)=-\chi_{R}i\Lmb Q-\chi_{R}'i\tfrac{1}{2}rQ=-i\Lmb_{R}Q
\]
and 
\begin{align*}
 & L_{Q}^{\ast}(\tfrac{1}{2}r\chi_{R}Q)-\chi_{R}Q=[L_{Q}^{\ast},\chi_{R}](\tfrac{1}{2}rQ)\\
 & \qquad=-\chi_{R}'(\tfrac{1}{2}rQ)+(1-\chi_{R})Q\tint r{\infty}Q^{2}r'dr'-Q\tint r{\infty}(1-\chi_{R})Q^{2}r'dr'\\
 & \qquad\aleq Q\cdot(\chf_{[R,2R]}+\chf_{[R,\infty)}\langle y\rangle^{-2}+\chf_{(0,2R]}R^{-2}).
\end{align*}
This completes the proof.

Finally, the proof of \eqref{eq:transversality-radial-case} for the
diagonal entries follows from 
\begin{align*}
(\Lmb Q,-\tfrac{r^{2}}{4}Q\chi_{R})_{r} & =(L_{Q}^{\ast}(\tfrac{1}{2}irQ),\chi_{R}\tfrac{r^{2}}{4}Q)_{r}=(\tfrac{1}{2}irQ,L_{Q}\chi_{R}\tfrac{r^{2}}{4}Q)_{r},\\
(iQ,-i\rho\chi_{R})_{r} & =(L_{Q}^{\ast}(\tfrac{1}{2}irQ),\chi_{R}\tfrac{r^{2}}{4}Q)_{r}=(\tfrac{1}{2}rQ,L_{Q}\chi_{R}\rho)_{r},
\end{align*}
and applying \eqref{eq:truncated-est-ptwise1}, \eqref{eq:truncated-est-ptwise2},
and \eqref{eq:yQ-not-in-L2}. Note that the off-diagonal entries of
\eqref{eq:transversality-radial-case} are zero by considering the
real and imaginary parts. This completes the proof. 
\end{proof}

\subsection{\label{subsec:Adapted-function-spaces}Adapted function spaces}

In this subsection, we introduce the adapted function spaces $\dot{\calH}_{m}^{1}$
and a linear coercivity estimate associated to energy. All the following
facts have already appeared in \cite{KimKwon2019arXiv,KimKwonOh2020arXiv}.

Let $m\geq0$. We recall the linear coercivity of energy, namely,
the coercivity estimates for the linearized Bogomol'nyi operator $L_{Q}$
at the $\dot{H}^{1}$-level. The available Hardy-type controls on
$f$ from $\|L_{Q}f\|_{L^{2}}$ are different for the cases $m=0$
and $m\geq1$. When $m\geq1$, we have a coercivity of $L_{Q}$ in
terms of the usual $\dot{H}_{m}^{1}$-norm. Thus we let\footnote{We note that $\dot{\calH}_{m}^{1}$ in this paper does not coincide
with $\dot{\calH}_{2}^{1}$ in \cite[Section 3.3]{KimKwonOh2020arXiv}
when $m=2$. This is because the former is defined through the coercivity
estimates of $L_{Q}$, but the latter was defined through that of
the operator $A_{Q}^{\ast}=(\bfD_{Q}-\tfrac{1}{r})^{\ast}$.

For the coercivity estimates of higher order (adapted) derivatives,
one needs to introduce spaces such as $\dot{\calH}_{m}^{k}$ which
differ from the usual Sobolev space $\dot{H}_{m}^{k}$; see \cite[Section 2.3]{KimKwon2020arXiv}.} 
\[
\dot{\calH}_{m}^{1}\coloneqq\dot{H}_{m}^{1}\quad\text{when }m\geq1.
\]
When $m=0$, the adapted function space $\dot{\calH}_{0}^{1}$ is
defined by the completion of the space $\calS_{0}$ under the $\dot{\calH}_{0}^{1}$-norm
\[
\|f\|_{\dot{\calH}_{0}^{1}}\coloneqq\|\rd_{r}f\|_{L^{2}}+\|\langle\log_{-}r\rangle^{-1}r^{-1}f\|_{L^{2}}.
\]
We remark that the logarithmic loss near the origin $r=0$ is introduced
due to the failure of Hardy's inequality when $m=0$. Let us note
$\dot{\calH}_{0}^{1}\embed\dot{H}_{0}^{1}$ and $\dot{\calH}_{0}^{1}\cap L^{2}=H_{0}^{1}$.
Moreover, we have a logarithmically weakened version of \eqref{eq:HardySobolevSection2}:
\begin{equation}
\|\langle\log_{-}r\rangle^{-1}f\|_{L^{\infty}\cap rL^{2}}\aleq\|f\|_{\dot{\calH}_{0}^{1}}.\label{eq:m=00003D0-HardySobolev}
\end{equation}

We now state the coercivity estimates of $L_{Q}$ at the $\dot{H}^{1}$-level.
To obtain the coercivity of $L_{Q}$, it is necessary to preclude
the kernel elements $\Lmb Q$ and $iQ$ of $L_{Q}$. We do this by
imposing suitable otrhogonality conditions. First, we require the
profiles $\calZ_{1},\calZ_{2}\in C_{c,m}^{\infty}$ for orthogonality
conditions to satisfy \emph{the transversality condition} 
\begin{equation}
\det\begin{pmatrix}(\Lmb Q,\calZ_{1})_{r} & (iQ,\calZ_{1})_{r}\\
(\Lmb Q,\calZ_{2})_{r} & (iQ,\calZ_{2})_{r}
\end{pmatrix}\neq0.\label{eq:Z1Z2-transversality}
\end{equation}

\begin{lem}[Coercivity of $L_{Q}$; \cite{KimKwon2019arXiv,KimKwonOh2020arXiv}]
\label{lem:linear-coercivity}Let $m\geq0$. Let $\calZ_{1},\calZ_{2}\in C_{c,m}^{\infty}$
satisfy \eqref{eq:Z1Z2-transversality}. Then, 
\begin{equation}
\|L_{Q}f\|_{L^{2}}\sim\|f\|_{\dot{\calH}_{m}^{1}},\qquad\forall f\in\dot{\calH}_{m}^{1}\text{ with }(f,\calZ_{1})_{r}=(f,\calZ_{2})_{r}=0.\label{eq:coercivity}
\end{equation}
\end{lem}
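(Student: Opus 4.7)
The upper bound $\|L_{Q}f\|_{L^{2}}\lesssim\|f\|_{\dot{\calH}_{m}^{1}}$ is routine: starting from $L_{Q}f=\rd_{r}f-\tfrac{m+A_{\tht}[Q]}{r}f+QB_{Q}f$, it follows from the smoothness and rapid decay of $Q$, the boundedness of $A_{\tht}[Q]$, a Cauchy--Schwarz on the defining integral of $B_{Q}$, and standard Hardy-type bounds --- the generalized Hardy inequality \eqref{eq:GenHardySection2} when $m\geq1$, and the log-weighted control built into the definition of $\|\cdot\|_{\dot{\calH}_{0}^{1}}$ when $m=0$. For the coercivity $\|f\|_{\dot{\calH}_{m}^{1}}\lesssim\|L_{Q}f\|_{L^{2}}$, I would use a compactness/contradiction argument resting on two preliminary steps.

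\emph{Step 1 (subcoercivity).} Establish
\begin{equation*}
\|f\|_{\dot{\calH}_{m}^{1}}\lesssim\|L_{Q}f\|_{L^{2}}+\|\chi_{R}f\|_{L^{2}}
\end{equation*}
for $R$ sufficiently large. Since $(m+A_{\tht}[Q])/r\to -(m+2)/r$ as $r\to\infty$ and $A_{\tht}[Q]=O(r^{2})$ as $r\to 0$, modulo rapidly decaying corrections $L_{Q}$ has the principal-part structure of $\rd_{r}+\tfrac{m+2}{r}$ outside a large ball and of $\rd_{r}-\tfrac{m}{r}$ inside a small ball; in both regions, a Hardy-type inequality with coefficient of absolute value $\geq m$ or $m+2$ controls the $\dot{\calH}_{m}^{1}$-norm by $\|L_{Q}f\|_{L^{2}}$, up to boundary errors that are absorbed into $\|\chi_{R}f\|_{L^{2}}$. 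The nonlocal piece $QB_{Q}f$ is similarly compact by the rapid decay of $Q$. \emph{Step 2 (kernel identification).} The space $\{f\in\dot{\calH}_{m}^{1}:L_{Q}f=0\}$ --- where the $\bbR$-linearity of $L_{Q}$ is kept in mind --- equals $\mathrm{span}_{\bbR}\{\Lmb Q,iQ\}$. The inclusion $\supseteq$ is contained in \eqref{eq:gen-kernel-rel-LQ}; for $\subseteq$, one reduces $L_{Q}f=0$ to a first-order ODE system in $(\Re f,\Im f)$ whose fundamental solutions pair $Q$ and $\Lmb Q$ with logarithmically/polynomially larger companion solutions, and then rules out the companions by the finiteness of $\|f\|_{\dot{\calH}_{m}^{1}}$, much as in \cite{KimKwon2019arXiv}.

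Given these two steps, the conclusion is standard. Suppose for contradiction that the coercivity fails, producing a sequence $f_{n}\in\dot{\calH}_{m}^{1}$ with $(f_{n},\calZ_{j})_{r}=0$ for $j=1,2$, $\|f_{n}\|_{\dot{\calH}_{m}^{1}}=1$, and $\|L_{Q}f_{n}\|_{L^{2}}\to0$. After extracting a subsequence, $f_{n}\weakto f_{\infty}$ weakly in $\dot{\calH}_{m}^{1}$ and strongly in $L^{2}_{\mathrm{loc}}$ by Rellich, so the limit obeys $L_{Q}f_{\infty}=0$ and $(f_{\infty},\calZ_{j})_{r}=0$ (valid since $\calZ_{j}\in C_{c,m}^{\infty}$). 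Step~2 then gives $f_{\infty}=a\Lmb Q+b\,iQ$ for some $a,b\in\bbR$, and the transversality \eqref{eq:Z1Z2-transversality} forces $a=b=0$. Therefore $\|\chi_{R}f_{n}\|_{L^{2}}\to0$, and Step~1 yields $\|f_{n}\|_{\dot{\calH}_{m}^{1}}\to0$, contradicting $\|f_{n}\|_{\dot{\calH}_{m}^{1}}=1$. The principal obstacle is Step~2 in the critical case $m=0$: the slow $\langle r\rangle^{-2}$ decay of $Q$ places $\Lmb Q$ and $iQ$ only marginally in $\dot{\calH}_{0}^{1}$ (they rely precisely on the logarithmic weight in the norm definition), so the ODE analysis that discards the logarithmically larger companion solutions must match this weighted regularity threshold exactly.
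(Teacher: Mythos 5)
Your proposal is correct in substance, and it reconstructs the argument that the cited references \cite{KimKwon2019arXiv,KimKwonOh2020arXiv} use (the paper itself only quotes the lemma); the route is the standard one: upper bound by direct estimate, subcoercivity modulo a compact term, identification of the kernel, and a compactness/contradiction argument closed by the transversality hypothesis \eqref{eq:Z1Z2-transversality}.

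One imprecision worth flagging, because it would trip up a careful reader, is in Step 2. Since $Q$ and $B_Q f$ are real, $L_Q$ decouples over the real/imaginary decomposition $f = f_1 + i f_2$: one has $\Im(L_Q f) = \bfD_Q f_2$, which is a genuine first-order \emph{scalar} linear ODE whose solution space is one-dimensional, namely $\{cQ\}$ — there is no companion solution to discard here. It is only the real-part equation, once you introduce the auxiliary variable $g(r)=\int_0^r Q f_1\,r'dr'$ to localize the nonlocal term, that becomes a genuine $2\times 2$ first-order system with a companion solution (behaving like $\log(1/r)$ near the origin when $m=0$, like $r^{-m}$ when $m\geq 1$, and tending to a nonzero constant as $r\to\infty$). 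Moreover, the two ends use different pieces of the $\dot{\calH}^1_m$-norm to exclude the companion: near $r=0$ it is already the gradient term $\|\rd_r f\|_{L^2}$ that fails (so the $m=0$ case is not actually delicate at the origin, contrary to what the final sentence of your proposal suggests), while as $r\to\infty$ it is the Hardy-weight term ($\|\langle\log_- r\rangle^{-1}r^{-1}f\|_{L^2}$ when $m=0$, $\|r^{-1}f\|_{L^2}$ when $m\geq 1$) that rules out the branch tending to a constant. With that caveat, your subcoercivity step, the kernel count $\ker L_Q\cap\dot{\calH}^1_m=\mathrm{span}_{\bbR}\{\Lmb Q,iQ\}$, and the concluding compactness argument are all sound.
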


\begin{rem}[Choice of $\calZ_{1}$ and $\calZ_{2}$ in this paper]
In this paper, we not only require $\calZ_{1}$ and $\calZ_{2}$
to satisfy the transversality condition \eqref{eq:Z1Z2-transversality}
but also require $\calZ_{1}$ and $\calZ_{2}$ to satisfy the gauge
condition 
\begin{equation}
(-i\tfrac{y^{2}}{4}Q,\calZ_{k})_{r}=(\rho,\calZ_{k})_{r}=0,\qquad\forall k\in\{1,2\}.\label{eq:Z1Z2-gauge-condition}
\end{equation}
\end{rem}

\subsection{Weighted $L^{1}$-estimate}

We record the following weighted $L^{1}$-estimate. This will mostly
be used to estimate $\|Qz^{\flat}\|_{L^{1}}$ and $\|Q\eps\|_{L^{1}}$
in our blow-up analysis. 
\begin{lem}[Weighted $L^{1}$-estimate]
\label{lem:L1-log-bound}For any $f\in L^{2}$ with $\|f\|_{L^{2}}\aleq1$,
we have 
\begin{equation}
\|\langle r\rangle^{-2}f\|_{L^{1}}\aleq\langle\log_{-}\|\langle r\rangle^{-1}f\|_{L^{2}}\rangle^{\frac{1}{2}}\|\langle r\rangle^{-1}f\|_{L^{2}}.\label{eq:weighted-L1}
\end{equation}
We also note that the function $x\mapsto\langle\log_{-}x\rangle^{\frac{1}{2}}x$
from $[0,\infty)\to[0,\infty)$ is increasing. 
\end{lem}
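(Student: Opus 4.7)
The plan is to split the integral at a cutoff radius $R \geq 1$ and apply Cauchy--Schwarz separately to the inner and outer regions, then optimize in $R$. Set $A \coloneqq \|\langle r\rangle^{-1}f\|_{L^{2}}$ and note that $A \leq \|f\|_{L^{2}} \lesssim 1$. Writing
\[
\int \langle r\rangle^{-2}|f|\,dx = \int_{r\leq R}\langle r\rangle^{-2}|f|\,dx + \int_{r\geq R}\langle r\rangle^{-2}|f|\,dx,
\]
I would control the inner integral by pairing $\langle r\rangle^{-1}f$ against $\chi_{r\leq R}\langle r\rangle^{-1}$, using the explicit computation $\|\chi_{r\leq R}\langle r\rangle^{-1}\|_{L^{2}}^{2} = \pi\log(1+R^{2}) \lesssim \log(2+R)$; the outer integral is controlled by pairing $f$ with $\chi_{r\geq R}\langle r\rangle^{-2}$, which satisfies $\|\chi_{r\geq R}\langle r\rangle^{-2}\|_{L^{2}} \lesssim (1+R)^{-1}$.

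Combining the two bounds yields
\[
\|\langle r\rangle^{-2}f\|_{L^{1}} \lesssim A\sqrt{\log(2+R)} + (1+R)^{-1},
\]
after which I would optimize by choosing $R := \max(1, A^{-1})$. In the case $A \geq 1$, the choice $R = 1$ already gives $\|\langle r\rangle^{-2}f\|_{L^{1}} \lesssim A$, which matches the target since $\langle\log_{-}A\rangle^{1/2} \sim 1$ there. When $0 < A \leq 1$, the choice $R = 1/A$ makes $(1+R)^{-1} \leq A$ and $\sqrt{\log(2+R)} \lesssim \langle\log_{-}A\rangle^{1/2}$, producing exactly \eqref{eq:weighted-L1}. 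The trivial case $A = 0$ requires no argument.

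Finally, for the monotonicity of $g(x) \coloneqq \langle\log_{-}x\rangle^{1/2}\,x$, I would simply note that $g(x) = x$ on $[1,\infty)$, while on $(0,1]$ direct differentiation of $g(x) = x\sqrt{1 + (\log x)^{2}}$ yields
\[
g'(x)\sqrt{1 + \log^{2}x} = \log^{2}x + \log x + 1,
\]
a quadratic in $\log x$ with discriminant $-3 < 0$, hence strictly positive, and continuity at $x = 1$ concludes the verification. No serious obstacle is anticipated: the argument is a routine Cauchy--Schwarz split, and the only delicate point is tuning $R$ so as to extract the sharp exponent $1/2$ on the logarithm.
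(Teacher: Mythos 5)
Your proof is correct and follows essentially the same route as the paper: a dyadic split at a radius $R$, Cauchy--Schwarz on the inner and outer pieces (giving $A\sqrt{\log(2+R)}$ and $(1+R)^{-1}$ respectively), and then optimizing $R \sim \max(1, A^{-1})$. You additionally verify the monotonicity of $x \mapsto \langle\log_- x\rangle^{1/2}x$ by direct differentiation, which the paper states without proof.
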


\begin{proof}
If $\|\langle r\rangle^{-1}f\|_{L^{2}}\geq1$, then the inequality
holds since $\|\langle y\rangle^{-2}f\|_{L^{1}}\aleq\|f\|_{L^{2}}\aleq1$.
If $f=0$, the inequality is obvious. Henceforth, we assume $0<\|\langle r\rangle^{-1}f\|_{L^{2}}\leq1$.
For some $R_{0}\geq1$ to be chosen, we estimate 
\begin{align*}
\|\langle y\rangle^{-2}f\|_{L^{1}} & \aleq\|\chf_{y\leq R_{0}}\langle y\rangle^{-2}f\|_{L^{1}}+\|\chf_{y\geq R_{0}}\langle y\rangle^{-2}f\|_{L^{1}}\\
 & \aleq\langle\log_{+}R_{0}\rangle^{\frac{1}{2}}\|\langle y\rangle^{-1}f\|_{L^{2}}+R_{0}^{-1}.
\end{align*}
Choosing $R_{0}=1/\|\langle r\rangle^{-1}f\|_{L^{2}}$ completes the
proof. 
\end{proof}

\subsection{Integral estimates}

The following is a key lemma to estimate various integral operators
appearing in $\calN_{\ast}$ and $\calM_{\ast}$. 
\begin{lem}[{{Mapping properties for integral operators \cite[Lemma 2.2]{KimKwonOh2022arXiv1}}}]
\label{lem:mapping-integral-op}Let $1\leq p,q\leq\infty$ and $s\in[0,2]$
be such that $(p,q,s)=(1,\infty,0)$ or $\frac{1}{q}+1=\frac{1}{p}+\frac{s}{2}$
with $p>1$. Then, we have 
\begin{equation}
\Big\|\frac{1}{r^{s}}\int_{0}^{r}f(r')r'dr'\Big\|_{L^{q}}\lesssim_{p}\|f\|_{L^{p}}.\label{eq:integral-operator-bdd1}
\end{equation}
Denoting by $p'$ and $q'$ the Hölder conjugates of $p$ and $q$,
respectively, we have the dual estimate 
\begin{equation}
\Big\|\int_{r}^{\infty}g(r')(r')^{1-s}dr'\Big\|_{L^{p'}}\aleq\|g\|_{L^{q'}}.\label{eq:integral-operator-bdd2}
\end{equation}
\end{lem}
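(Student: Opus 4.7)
The plan is to split into cases according to the exponent range. The trivial case $(p,q,s) = (1,\infty,0)$ is immediate: the pointwise bound
\[
\Big| \int_0^r f(r') r' dr' \Big| \leq \int_0^\infty |f(r')| r' dr' \sim \|f\|_{L^1}
\]
gives the desired $L^\infty$ estimate, recalling the measure convention $\int f = 2\pi\int_0^\infty f\,r\,dr$.

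For the main case $p > 1$, $s \in (0,2]$, the plan is to reduce \eqref{eq:integral-operator-bdd1} to Young's convolution inequality on $\mathbb{R}$ via the logarithmic change of variable $r = e^t$. Setting $\tilde f(t) := f(e^t) e^{2t/p}$ yields $\|\tilde f\|_{L^p(\mathbb{R}, dt)} \sim \|f\|_{L^p}$, and performing the analogous rescaling on the $q$-side, a direct computation converts the operator in \eqref{eq:integral-operator-bdd1} into a convolution $\tilde f \mapsto \tilde f \ast K$ with kernel
\[
K(u) := e^{-2u/p'} \mathbf{1}_{\{u > 0\}}.
\]
The scaling identity $\tfrac{1}{q}+1 = \tfrac{1}{p}+\tfrac{s}{2}$ enters precisely here: it is exactly the relation that cancels the residual $t$-dependent exponential prefactor generated by the substitution, thereby producing a genuine convolution. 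Young's inequality with the exponents $\tfrac{1}{p}+\tfrac{1}{r} = 1+\tfrac{1}{q}$, i.e., $r = 2/s$, then concludes the proof, since
\[
\|K\|_{L^{2/s}(\mathbb{R})}^{2/s} = \int_0^\infty e^{-4u/(sp')}\,du = \tfrac{sp'}{4} < \infty
\]
whenever $p > 1$ and $s \in (0,2]$. Note that $p > 1$ is essential to make this integral finite.

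The dual estimate \eqref{eq:integral-operator-bdd2} follows by a standard duality argument. A Fubini computation furnishes the adjoint identity
\[
\int_0^\infty \Big( \frac{1}{r^s} \int_0^r f(r') r' dr' \Big) g(r)\,r\,dr = \int_0^\infty f(r') \Big( \int_{r'}^\infty g(r) r^{1-s} dr \Big) r'\,dr',
\]
so the $L^p \to L^q$ boundedness of the operator in \eqref{eq:integral-operator-bdd1} translates immediately into the $L^{q'} \to L^{p'}$ boundedness claimed in \eqref{eq:integral-operator-bdd2}. Since Young's inequality and duality are both routine, the only substantive step in this plan is the change-of-variables computation verifying that the scaling condition exactly cancels the prefactor and yields a pure convolution; there is no serious obstacle to this argument.
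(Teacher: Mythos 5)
The lemma is cited in this paper from \cite[Lemma 2.2]{KimKwonOh2022arXiv1} without a proof, so there is nothing in the present source to compare against; I can only assess your argument on its own merits. Your proof is correct and complete. The logarithmic change of variables $r=e^t$, with the renormalizations $\tilde f(t) = f(e^t)e^{2t/p}$ and $\tilde g(t) = g(e^t)e^{2t/q}$, does indeed turn the operator into $\tilde g = \tilde f * K$ with $K(u) = e^{-2u/p'}\chf_{\{u>0\}}$, and the residual exponential prefactor $e^{(2/q - s + 2/p')t}$ vanishes exactly under the scaling condition $\tfrac1q + 1 = \tfrac1p + \tfrac s2$. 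Young's inequality with $r = 2/s$ then applies because $\|K\|_{L^{2/s}} = (sp'/4)^{s/2} < \infty$ whenever $p>1$ and $s>0$, and you correctly observed that the hypothesis $p>1$ forces $s>0$ (since $\tfrac1q = \tfrac1p + \tfrac s2 - 1 \geq 0$ would otherwise fail), so the two cases you treat are exhaustive. The Fubini-based adjoint identity for the dual estimate is also correct, with the only small point worth spelling out being that the pairing argument must use positivity (test with $|f|,|g|$) to justify the endpoint cases $p=\infty$ or $q=\infty$ where $L^{p'}$ or $L^{q'}$ is not the full dual; since the operator is positivity-preserving, this is immediate.
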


\begin{rem}
\label{rem:decreasing-weight}If $w:(0,\infty)\to\bbR_{+}$ is a decreasing
function, then we have pointwise bounds 
\begin{align*}
|w(r)\tint 0rf(r')dr'| & \leq\tint 0rw(r')|f(r')|dr',\\
|\tint r{\infty}w(r')f(r')dr'| & \leq w(r)\tint r{\infty}|f(r')|dr'.
\end{align*}
These bounds, combined with Lemma~\ref{lem:mapping-integral-op},
will be used frequently in later sections without referring to this
remark. 
\end{rem}

\subsection{Duality estimates}

\label{subsec:duality-ests}

In this subsection, we recall the estimates for the nonlinearity of
\eqref{eq:CSS-m-equiv} from \cite{KimKwon2019arXiv,KimKwonOh2022arXiv1},
which we shall use in the blow-up construction. 

Recall the definition of $\calN_{\ast}$ and $\calM_{\ast}$ from
Section~\ref{subsec:nonlinearity}. The following Hölder- and weighted
$L^{1}$-type estimates were proved in \cite{KimKwonOh2022arXiv1}. 
\begin{lem}[{Duality estimates (Hölder-type) \cite[Lemma 2.3]{KimKwonOh2022arXiv1}}]
\label{lem:duality-estimates-Holder}The following estimates hold. 
\begin{itemize}
\item (For $\calM_{4,\ast}$) Let $1\leq p,q\leq\infty$ be such that $\frac{1}{p}+\frac{1}{q}=1$.
Then, we have 
\begin{align*}
|\calM_{4,0}(\psi_{1},\psi_{2},\psi_{3},\psi_{4})| & \aleq\|\psi_{1}\psi_{2}\|_{L^{p}}\|\psi_{3}\psi_{4}\|_{L^{q}},\\
|\calM_{4,1}(\psi_{1},\psi_{2},\psi_{3},\psi_{4})| & \aleq\|\psi_{1}\psi_{2}\|_{L^{p}}\|\psi_{3}\psi_{4}\|_{L^{q}},\quad\text{if }(p,q)\neq(1,\infty).
\end{align*}
\item (For $\calM_{6}$) Let $1\leq p,q,r\leq\infty$ be such that $\frac{1}{p}+\frac{1}{q}+\frac{1}{r}=2$
and $(p,q,r)\neq(1,1,\infty)$. Then, we have 
\[
|\calM_{6}(\psi_{1},\dots,\psi_{6})|\aleq\|\psi_{1}\psi_{2}\|_{L^{p}}\|\psi_{3}\psi_{4}\|_{L^{q}}\|\psi_{5}\psi_{6}\|_{L^{r}}.
\]
\end{itemize}
\end{lem}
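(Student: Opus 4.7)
The plan is to reduce each of the three bounds to a combination of Hölder's inequality on $\bbR^{2}$ and the integral operator boundedness from Lemma~\ref{lem:mapping-integral-op} -- this is exactly how these estimates are established in \cite{KimKwonOh2022arXiv1}. I outline the scheme below.

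For $\calM_{4,0}$, the expression is simply the $L^{1}(\bbR^{2})$-integral of the pointwise product of $\Re(\br{\psi_{1}}\psi_{2})$ and $\Re(\br{\psi_{3}}\psi_{4})$. Using $|\Re(\br{\psi_{i}}\psi_{j})|\leq|\psi_{i}\psi_{j}|$, Hölder's inequality with $\frac{1}{p}+\frac{1}{q}=1$ gives the bound immediately.

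For $\calM_{4,1}$, I would apply Hölder with exponents $(p,q)$ to the product $\bigl(\frac{1}{r^{2}}A_{\tht}[\psi_{1},\psi_{2}]\bigr)\cdot\Re(\br{\psi_{3}}\psi_{4})$, and then control the $L^{p}$-norm of the first factor by $\|\psi_{1}\psi_{2}\|_{L^{p}}$ using \eqref{eq:integral-operator-bdd1} with $s=2$. This invokes only the case $p>1$ of Lemma~\ref{lem:mapping-integral-op}, matching exactly the exclusion $(p,q)\neq(1,\infty)$. The obstruction at this endpoint is the genuine failure of $\|r^{-2}\int_{0}^{r}f\,r'\,dr'\|_{L^{1}}\lesssim\|f\|_{L^{1}}$, visible by Fubini.

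For $\calM_{6}$, I would split the weight $\frac{1}{r^{2}}=\frac{1}{r^{s_{1}}}\cdot\frac{1}{r^{s_{2}}}$ with $s_{1}+s_{2}=2$, $s_{1},s_{2}\in[0,2]$, and rewrite
\[
\calM_{6}=\tfrac{1}{2}\int\bigl(\tfrac{1}{r^{s_{1}}}A_{\tht}[\psi_{1},\psi_{2}]\bigr)\bigl(\tfrac{1}{r^{s_{2}}}A_{\tht}[\psi_{3},\psi_{4}]\bigr)\Re(\br{\psi_{5}}\psi_{6}).
\]
Given target exponents with $\frac{1}{p}+\frac{1}{q}+\frac{1}{r}=2$, I would select intermediate exponents $\alpha,\beta$ so that $\frac{1}{\alpha}+1=\frac{1}{p}+\frac{s_{1}}{2}$ and $\frac{1}{\beta}+1=\frac{1}{q}+\frac{s_{2}}{2}$; the constraint $s_{1}+s_{2}=2$ then forces $\frac{1}{\alpha}+\frac{1}{\beta}+\frac{1}{r}=1$, making Hölder applicable after Lemma~\ref{lem:mapping-integral-op} is used on each weighted $A_{\tht}$. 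The main book-keeping is to choose $(s_{1},s_{2})$ so that each application of Lemma~\ref{lem:mapping-integral-op} stays within its allowed range ($p>1$, or the endpoint $(p,q,s)=(1,\infty,0)$). I expect the only delicate point to be the case check showing this is possible for every admissible $(p,q,r)$ except $(1,1,\infty)$: if $p=q=1$, both factors force $s_{1}=s_{2}=0$ via the endpoint case, contradicting $s_{1}+s_{2}=2$. This is precisely the excluded configuration.
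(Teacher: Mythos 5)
Your outline is correct and is essentially the argument used in the cited reference \cite{KimKwonOh2022arXiv1}: each bound reduces to Hölder on $\bbR^{2}$ after distributing the weight $r^{-2}$ across the $A_{\tht}$-factors and invoking Lemma~\ref{lem:mapping-integral-op} for each weighted integral operator, with the endpoint exclusions arising precisely from the restriction $p>1$ (or the $(1,\infty,0)$ endpoint) in that lemma. The exponent book-keeping and the identification of the obstruction at $(1,1,\infty)$ check out.
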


\begin{lem}[{Duality estimates (weighted $L^{1}$-type) \cite[Lemma 2.4]{KimKwonOh2022arXiv1}}]
\label{lem:duality-estimates-weighted-L1}The following estimates
hold. 
\begin{itemize}
\item (For $\calM_{4,1}$) Let $w_{12},w_{34}:(0,\infty)\to\bbR_{+}$ be
decreasing functions such that $w_{12}(r)w_{34}(r)=\frac{1}{r^{2}}$.
Then, we have 
\begin{align*}
|\calM_{4,1}(\psi_{1},\psi_{2},\psi_{3},\psi_{4})| & \aleq\|w_{12}\psi_{1}\psi_{2}\|_{L^{1}}\|w_{34}\psi_{3}\psi_{4}\|_{L^{1}}.
\end{align*}
\item (For $\calM_{6}$) Let $w_{12},w_{34},w_{56}:(0,\infty)\to\bbR_{+}$
be decreasing functions such that $w_{12}(r)w_{34}(r)w_{56}(r)=\frac{1}{r^{2}}$.
Then, we have 
\[
|\calM_{6}(\psi_{1},\dots,\psi_{6})|\aleq\|w_{12}\psi_{1}\psi_{2}\|_{L^{1}}\|w_{34}\psi_{3}\psi_{4}\|_{L^{1}}\|w_{56}\psi_{5}\psi_{6}\|_{L^{1}}.
\]
\end{itemize}
\end{lem}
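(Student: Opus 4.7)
The plan is to reduce both bounds to a single pointwise control of the nonlocal quantity $A_{\theta}[\psi_a, \psi_b]$ by a weighted $L^{1}$-norm, valid for any positive decreasing weight $w$. Since $w$ is decreasing, $w(r') \geq w(r)$ for every $r' \leq r$, so multiplying and dividing by $w(r')$ gives
\begin{equation*}
\int_{0}^{r} |\psi_a \psi_b|(r') \, r' \, dr' \leq \frac{1}{w(r)} \int_{0}^{r} w(r') |\psi_a \psi_b|(r') \, r' \, dr' \lesssim \frac{1}{w(r)} \|w \psi_a \psi_b\|_{L^{1}}.
\end{equation*}
Combined with the definition of $A_{\theta}[\psi_a,\psi_b]$ in \eqref{eq:A-tht-def} — i.e., $|A_{\theta}[\psi_a,\psi_b](r)| \leq \tfrac{1}{2} \int_{0}^{r} |\psi_a \psi_b| r' dr'$ — this yields the key pointwise estimate
\begin{equation*}
|A_{\theta}[\psi_a, \psi_b]|(r) \lesssim \frac{1}{w(r)} \|w \psi_a \psi_b\|_{L^{1}}.
\end{equation*}
This is essentially the only substantive step; the rest is bookkeeping with the multiplicative relations on the weights.

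For the $\calM_{4,1}$ inequality, I apply this pointwise bound with $w = w_{12}$ inside the definition of $\calM_{4,1}$ from Section~\ref{subsec:nonlinearity}, obtaining
\begin{equation*}
|\calM_{4,1}(\psi_1, \dots, \psi_4)| \lesssim \|w_{12} \psi_1 \psi_2\|_{L^{1}} \int \frac{1}{r^{2} w_{12}(r)} |\psi_3 \psi_4|(r) \, r \, dr.
\end{equation*}
Using the hypothesis $w_{12}(r) w_{34}(r) = r^{-2}$, the integrand rewrites as $w_{34}(r) |\psi_3 \psi_4|(r) r$, and the remaining integral is exactly (a constant multiple of) $\|w_{34} \psi_3 \psi_4\|_{L^{1}}$, proving the claim.

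For the $\calM_{6}$ inequality, I apply the pointwise control to both $A_{\theta}$ factors, using the weight $w_{12}$ for the first and $w_{34}$ for the second. After pulling the two weighted $L^{1}$-norms outside, I am left with
\begin{equation*}
|\calM_{6}(\psi_1, \dots, \psi_6)| \lesssim \|w_{12}\psi_1\psi_2\|_{L^{1}} \|w_{34}\psi_3\psi_4\|_{L^{1}} \int \frac{1}{r^{2} w_{12}(r) w_{34}(r)} |\psi_5 \psi_6|(r) \, r \, dr.
\end{equation*}
The triple-product identity $w_{12}(r) w_{34}(r) w_{56}(r) = r^{-2}$ converts the integrand to $w_{56}(r)|\psi_5 \psi_6|(r) r$, and the remaining integral bounds by $\|w_{56}\psi_5\psi_6\|_{L^{1}}$ up to constants.

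The only subtle point — and in this sense the main obstacle — is monotonicity: without $w$ being decreasing, the exchange $|\psi_a \psi_b|(r') \leadsto w(r') |\psi_a \psi_b|(r') / w(r)$ fails, so one cannot dispose of the inner integral for free. This is also precisely the feature that makes the lemma flexible in the blow-up analysis: one is free to distribute the singular weight $r^{-2}$ among the factors as $w_{ab}(r) = r^{-\sigma_{ab}}$ with $\sigma_{ab} \geq 0$ summing to $2$, and then choose $\sigma$'s adapted to the decay/size of each pairing $\psi_a \psi_b$. No special structure of $Q$, $\eps$, or the cutoffs enters; the argument is purely about the weighted structure of $A_{\theta}$.
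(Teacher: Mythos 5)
Your argument is correct, and since the paper simply cites this lemma from \cite{KimKwonOh2022arXiv1} without reproducing a proof, there is no internal proof to compare against; your reduction to the pointwise bound $|A_{\tht}[\psi_a,\psi_b](r)| \aleq w(r)^{-1}\|w\psi_a\psi_b\|_{L^{1}}$ via monotonicity of $w$ on $(0,r)$ is exactly the standard and expected argument for this statement, and the bookkeeping with the multiplicative constraints $w_{12}w_{34}=r^{-2}$ (resp. $w_{12}w_{34}w_{56}=r^{-2}$) converting the residual integrand into $w_{34}|\psi_3\psi_4|$ (resp. $w_{56}|\psi_5\psi_6|$) is correctly carried out. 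Your closing remark accurately identifies why the decreasing hypothesis is essential and not merely a convenience.
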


The following nonlinear estimates follow from the duality relations
\eqref{eq:duality-relations} and the two lemmas mentioned above. 
\begin{cor}[{Nonlinear estimates (Hölder-type) \cite[Corollary 2.5]{KimKwonOh2022arXiv1}}]
\label{cor:nonlinear-estimates-holder}The following estimates hold. 
\begin{itemize}
\item (For $\calN_{3,k}$) For any $1\leq p_{1},\dots,p_{4}\leq\infty$
with $\sum_{j=1}^{4}\frac{1}{p_{j}}=1$ and $\#\{j:p_{j}=\infty\}\leq1$,
we have 
\[
\|\calN_{3,k}(\psi_{1},\psi_{2},\psi_{3})\|_{L^{p_{4}'}}\aleq\|\psi_{1}\|_{L^{p_{1}}}\|\psi_{2}\|_{L^{p_{2}}}\|\psi_{3}\|_{L^{p_{3}}}.
\]
\item (For $\calN_{5,k}$) For any $1\leq p_{1},\dots,p_{6}\leq\infty$
with $\sum_{j=1}^{6}\frac{1}{p_{j}}=2$ and $\#\{j:p_{j}=\infty\}\leq1$,
we have 
\[
\|\calN_{5,k}(\psi_{1},\dots,\psi_{5})\|_{L^{p_{6}'}}\aleq\prod_{j=1}^{5}\|\psi_{j}\|_{L^{p_{j}}}.
\]
\end{itemize}
\end{cor}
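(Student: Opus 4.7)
The plan is to reduce Corollary~\ref{cor:nonlinear-estimates-holder} to the multilinear form estimates of Lemma~\ref{lem:duality-estimates-Holder} via the duality relations \eqref{eq:duality-relations}, which identify each nonlinearity $\calN_{\ast}$, when paired against a test function through the real inner product $(\cdot,\cdot)_{r}$, with a specific multilinear form $\calM_{\ast}$ (possibly with the arguments permuted). Concretely, $L^{p_{k+1}}$-$L^{p_{k+1}'}$ duality gives
\[
\|\calN_{3,k}(\psi_{1},\psi_{2},\psi_{3})\|_{L^{p_{4}'}} = \sup_{\|\psi_{4}\|_{L^{p_{4}}} = 1} |(\calN_{3,k}(\psi_{1},\psi_{2},\psi_{3}), \psi_{4})_{r}|,
\]
and analogously for $\calN_{5,k}$ paired against $\psi_{6}$. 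Each right-hand inner product is, by \eqref{eq:duality-relations}, a constant multiple of $\calM_{4,k}$ or $\calM_{6}$ evaluated at a permutation of $(\psi_{1},\dots,\psi_{k+1})$ determined by the specific relation in use.

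I would then invoke the corresponding estimate from Lemma~\ref{lem:duality-estimates-Holder} with the fixed bilinear pairings coming from the structure of $\calM_{\ast}$, followed by an elementary Hölder's inequality on each bilinear factor, $\|\psi_{i}\psi_{j}\|_{L^{p_{ij}}} \le \|\psi_{i}\|_{L^{p_{i}}}\|\psi_{j}\|_{L^{p_{j}}}$ with $\tfrac{1}{p_{ij}} = \tfrac{1}{p_{i}} + \tfrac{1}{p_{j}}$. The normalization $\sum_{j}\tfrac{1}{p_{j}} = 1$ (respectively $2$) then automatically yields $\tfrac{1}{p}+\tfrac{1}{q}=1$ (respectively $\tfrac{1}{p}+\tfrac{1}{q}+\tfrac{1}{r}=2$), so the hypotheses of Lemma~\ref{lem:duality-estimates-Holder} are satisfied.

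The one delicate point, and the place where the constraint $\#\{j:p_{j}=\infty\}\le 1$ is used, is to verify that the excluded Hölder configurations -- namely $(p,q)=(1,\infty)$ for $\calM_{4,1}$ and $(p,q,r)=(1,1,\infty)$ for $\calM_{6}$ -- never arise. For example, in the case of $\calN_{3,1}$ paired against $\psi_{4}$, the relation $(\calN_{3,1}(u_{1},u_{2},u_{3}),u_{4})_{r}=2\calM_{4,1}(u_{1},u_{2},u_{3},u_{4})$ uses the pairing $\{\psi_{1},\psi_{2}\},\{\psi_{3},\psi_{4}\}$, and $q = \infty$ would force $p_{3}=p_{4}=\infty$, which is ruled out by the hypothesis. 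The other cases ($\calN_{3,2}$, $\calN_{5,1}$, $\calN_{5,2}$) are analogous once the permutations in \eqref{eq:duality-relations} are accounted for, and each excluded configuration requires two of the $p_{j}$'s to be $\infty$, which the hypothesis forbids. I anticipate no substantial obstacle: the argument is essentially a bookkeeping exercise matching exponents with each duality relation.
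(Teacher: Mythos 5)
Your proposal follows exactly the route the paper itself indicates: pair $\calN_{\ast}$ against a test function via $L^{p'}$--$L^{p}$ duality for the real inner product, pass through the duality relations \eqref{eq:duality-relations} to reduce to $\calM_{4,k}$ or $\calM_{6}$, apply Lemma~\ref{lem:duality-estimates-Holder}, and split each bilinear factor by H\"older. Your handling of the excluded configurations is also correct, and in fact under $\#\{j:p_{j}=\infty\}\leq 1$ none of the intermediate exponents $p,q$ (or $p,q,r$) can ever equal $\infty$, so the restrictions in Lemma~\ref{lem:duality-estimates-Holder} are satisfied automatically; this is the same bookkeeping the cited reference performs.
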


\begin{cor}[{Nonlinear estimates (weighted $L^{2}$-type) \cite[Corollary 2.6]{KimKwonOh2022arXiv1}}]
\label{cor:nonlinear-estimates-weightedL2}The following estimates
hold. 
\begin{itemize}
\item (For $\calN_{3,1}$ and $\calN_{3,2}$) Let $w_{1},\dots,w_{3}:(0,\infty)\to\bbR_{+}$
be decreasing functions such that $\prod_{j=1}^{3}w_{3}(r)=\frac{1}{r^{2}}$.
Then, for any $k\in\{1,2\}$, we have 
\[
\|\calN_{3,k}(\psi_{1},\psi_{2},\psi_{3})\|_{L^{2}}\aleq\prod_{j=1}^{3}\|w_{j}\psi_{j}\|_{L^{2}}.
\]
\item (For $\calN_{5,1}$ and $\calN_{5,2}$) Let $w_{1},\dots,w_{5}:(0,\infty)\to\bbR_{+}$
be decreasing functions such that $\prod_{j=1}^{5}w_{j}(r)=\frac{1}{r^{2}}$.
Then, for any $k\in\{1,2\}$, we have 
\[
\|\calN_{5,k}(\psi_{1},\psi_{2},\psi_{3})\|_{L^{2}}\aleq\prod_{j=1}^{5}\|w_{j}\psi_{j}\|_{L^{2}}.
\]
\end{itemize}
\end{cor}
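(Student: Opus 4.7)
The plan is to deduce these weighted $L^{2}$-type estimates from the corresponding weighted $L^{1}$-type estimates for $\calM_{4,1}$ and $\calM_{6}$ (Lemma~\ref{lem:duality-estimates-weighted-L1}) by the standard duality argument enabled by the pairing identities \eqref{eq:duality-relations}. First, I would dualize: for the cubic statement,
\[
\|\calN_{3,k}(\psi_{1},\psi_{2},\psi_{3})\|_{L^{2}}=\sup_{\|\psi_{4}\|_{L^{2}}=1}\bigl|(\calN_{3,k}(\psi_{1},\psi_{2},\psi_{3}),\psi_{4})_{r}\bigr|,
\]
and invoke the identities $(\calN_{3,1}(\psi_{1},\psi_{2},\psi_{3}),\psi_{4})_{r}=2\calM_{4,1}(\psi_{1},\psi_{2},\psi_{3},\psi_{4})$ and $(\calN_{3,2}(\psi_{1},\psi_{2},\psi_{3}),\psi_{4})_{r}=2\calM_{4,1}(\psi_{3},\psi_{4},\psi_{1},\psi_{2})$ from \eqref{eq:duality-relations} (after cancelling the $m$ on both sides). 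The same dualization in the quintic case reduces matters to controlling $\calM_{6}$ evaluated with the permutations of $(\psi_{1},\ldots,\psi_{6})$ read off from \eqref{eq:duality-relations}.

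Next, I would choose the weights in Lemma~\ref{lem:duality-estimates-weighted-L1} so that Cauchy--Schwarz in each pair gives back the prescribed $w_{j}$'s. For $\calN_{3,1}$, apply Lemma~\ref{lem:duality-estimates-weighted-L1} to $\calM_{4,1}$ with $w_{12}\coloneqq w_{1}w_{2}$ and $w_{34}\coloneqq w_{3}$; these are decreasing (a product of positive decreasing functions is decreasing), and $w_{12}w_{34}=w_{1}w_{2}w_{3}=\frac{1}{r^{2}}$ by hypothesis. Then
\[
\|w_{12}\psi_{1}\psi_{2}\|_{L^{1}}\le\|w_{1}\psi_{1}\|_{L^{2}}\|w_{2}\psi_{2}\|_{L^{2}},\qquad\|w_{34}\psi_{3}\psi_{4}\|_{L^{1}}\le\|w_{3}\psi_{3}\|_{L^{2}}\|\psi_{4}\|_{L^{2}},
\]
and taking the supremum over $\psi_{4}$ produces the claimed bound. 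For $\calN_{3,2}$, the slot permutation in \eqref{eq:duality-relations} means the test function $\psi_{4}$ is paired with $\psi_{3}$ in the first pair of $\calM_{4,1}$; choose $w_{12}\coloneqq w_{3}$ and $w_{34}\coloneqq w_{1}w_{2}$ instead, and the same argument closes. For $\calN_{5,1}$ (pairs $(\psi_{1},\psi_{2})$, $(\psi_{3},\psi_{4})$, $(\psi_{5},\psi_{6})$) take $w_{12}=w_{1}w_{2}$, $w_{34}=w_{3}w_{4}$, $w_{56}=w_{5}$, so that $w_{12}w_{34}w_{56}=\prod_{j=1}^{5}w_{j}=\frac{1}{r^{2}}$; for $\calN_{5,2}$ (pairs $(\psi_{1},\psi_{2})$, $(\psi_{5},\psi_{6})$, $(\psi_{3},\psi_{4})$) take $w_{12}=w_{1}w_{2}$, $w_{34}=w_{5}$, $w_{56}=w_{3}w_{4}$. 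In every case, Cauchy--Schwarz on each of the three $L^{1}$-factors yields $\prod_{j=1}^{5}\|w_{j}\psi_{j}\|_{L^{2}}\cdot\|\psi_{6}\|_{L^{2}}$, and the supremum over $\psi_{6}$ gives the quintic bound.

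The only points to verify are combinatorial: that each choice of weight pairing (i)~keeps all weights decreasing, and (ii)~multiplies to $\frac{1}{r^{2}}$ in accordance with the hypothesis of Lemma~\ref{lem:duality-estimates-weighted-L1}, and that the slot assignments match the permutations dictated by \eqref{eq:duality-relations}. Both are routine bookkeeping; no new analytic input beyond Lemma~\ref{lem:duality-estimates-weighted-L1} and Cauchy--Schwarz is required. Consequently, the main (and essentially only) obstacle to be careful about is organizing the case-by-case permutation of arguments coming from the two different forms of $\calN_{3,2}$ and $\calN_{5,2}$, where the test function is not paired with the ``last'' input of $\calN_{k,2}$ but is interchanged with an earlier slot --- a feature reflecting the nonlocality of the connection-form nonlinearity.
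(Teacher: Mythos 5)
Your proof is correct and follows essentially the same route the paper indicates: dualize $\|\calN_{\ast,k}\|_{L^{2}}$ against a test function $\psi_{4}$ (resp.\ $\psi_{6}$), rewrite the pairing via \eqref{eq:duality-relations} as a permuted $\calM_{4,1}$ (resp.\ $\calM_{6}$), apply the weighted $L^{1}$-type duality estimate (Lemma~\ref{lem:duality-estimates-weighted-L1}) with pair weights built as products of the given $w_{j}$'s, and finish by Cauchy--Schwarz on each $L^{1}$ factor; your slot bookkeeping for the $\calN_{3,2}$ and $\calN_{5,2}$ permutations is correct, and the weight-pair choices indeed remain decreasing and multiply to $1/r^{2}$.
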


\begin{rem}
$\calM_{4,1}$, $\calN_{3,1}$, $\calN_{3,2}$ do not appear in the
$m=0$ case. 
\end{rem}

\begin{rem}
The multilinear form $\calM_{4,0}$ and the nonlinearity $\calN_{3,0}$
are always estimated by Hölder's inequality. Note that the restriction
of the form $\#\{j:p_{j}=\infty\}\leq1$ (cf. Corollary~\ref{cor:nonlinear-estimates-holder})
do not apply to $\calM_{4,0}$ and $\calN_{3,0}$. 
\end{rem}

\subsection{Decomposition near modulated soliton}

We will need a standard decomposition lemma for a function $u$ near
a modulated soliton $Q_{\lmb,\gmm}$. 
\begin{lem}[Decomposition near $Q_{\lmb,\gmm}$]
\label{lem:dec-near-Q}For $\delta>0$, let us denote by $\calT_{\delta}$
the set of $w\in L^{2}$ satisfying 
\begin{equation}
\inf_{\lmb'\in\bbR_{+},\,\gmm'\in\bbR/2\pi\bbZ}\|w-Q_{\lmb',\gmm'}\|_{L^{2}}<\delta.\label{eq:dec-tmp}
\end{equation}
Then, there exists\footnote{Indeed, one can prove that, for any sufficiently small $\eta_{\mathrm{dec}}>0$,
there exists $0<\delta_{\mathrm{dec}}<1$ satisfying the properties
in this lemma.} $0<\delta_{\mathrm{dec}},\eta_{\mathrm{dec}}\ll1$ such that any
$w\in\calT_{\delta_{\mathrm{dec}}}$ admits the unique decomposition
\[
w=[Q+\eps]_{\lmb,\gmm}
\]
satisfying the orthogonality conditions 
\[
(\eps,\calZ_{1})_{r}=(\eps,\calZ_{2})_{r}=0
\]
and smallness 
\[
\|\eps\|_{L^{2}}<\eta_{\mathrm{dec}}.
\]
\end{lem}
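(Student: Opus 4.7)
The plan is to apply the implicit function theorem (IFT) to a map encoding the orthogonality conditions, use the transversality condition \eqref{eq:Z1Z2-transversality} to invert its Jacobian, and then exploit the scaling/phase-rotation invariance of the $L^{2}$-norm to globalize the local decomposition near $Q$. Concretely, I would define $\Phi: L^{2}\times\bbR_{+}\times(\bbR/2\pi\bbZ)\to\bbR^{2}$ by
$$\Phi_{k}(w,\lmb,\gmm):=(w^{\flat}-Q,\calZ_{k})_{r},\qquad k=1,2,$$
where $w^{\flat}(y)=\lmb e^{-i\gmm}w(\lmb y)$. Since $\calZ_{k}\in C_{c,m}^{\infty}$, duality gives $(w^{\flat},\calZ_{k})_{r}=(w,(\calZ_{k})^{\sharp})_{r}$ with $(\calZ_{k})^{\sharp}$ depending smoothly on $(\lmb,\gmm)$ in $L^{2}$, so $\Phi$ is smooth in $(\lmb,\gmm)$ and Lipschitz in $w$ uniformly in $(\lmb,\gmm)$ on any bounded set.

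At the base point $(w,\lmb,\gmm)=(Q,1,0)$ we have $w^{\flat}=Q$ and $\Phi=0$; moreover $\partial_{\lmb}w^{\flat}|_{(1,0)}=\Lmb Q$ and $\partial_{\gmm}w^{\flat}|_{(1,0)}=-iQ$, so the Jacobian of $\Phi$ in $(\lmb,\gmm)$ is
$$J=\begin{pmatrix}(\Lmb Q,\calZ_{1})_{r} & (-iQ,\calZ_{1})_{r}\\ (\Lmb Q,\calZ_{2})_{r} & (-iQ,\calZ_{2})_{r}\end{pmatrix},$$
which has nonzero determinant by \eqref{eq:Z1Z2-transversality}. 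The IFT then yields $\delta_{0},\eta_{0}>0$ and a smooth map $w\mapsto(\lmb(w),\gmm(w))$ defined on $\{\|w-Q\|_{L^{2}}<\delta_{0}\}$ solving $\Phi(w,\lmb(w),\gmm(w))=0$, unique among $(\lmb,\gmm)$ with $|\lmb-1|+|\gmm|<\eta_{0}$. To globalize, given $w\in\calT_{\delta_{\mathrm{dec}}}$, fix $(\lmb_{0}',\gmm_{0}')$ with $\|w-Q_{\lmb_{0}',\gmm_{0}'}\|_{L^{2}}<\delta_{\mathrm{dec}}$; then the lowered function $\td w:=w^{\flat}$ (with parameters $(\lmb_{0}',\gmm_{0}')$) satisfies $\|\td w-Q\|_{L^{2}}<\delta_{\mathrm{dec}}$ by the $L^{2}$-invariance of $\flat$. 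Choosing $\delta_{\mathrm{dec}}\leq\delta_{0}$, applying the local result to $\td w$ and pulling back through the symmetry gives the desired decomposition $w=[Q+\eps]_{\lmb,\gmm}$ with parameters $(\lmb_{0}'\td\lmb,\gmm_{0}'+\td\gmm)$ and $\|\eps\|_{L^{2}}\lesssim\delta_{\mathrm{dec}}$; we then fix $\delta_{\mathrm{dec}}$ so small that $\|\eps\|_{L^{2}}<\eta_{\mathrm{dec}}$.

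For uniqueness, suppose $w=[Q+\eps]_{\lmb,\gmm}=[Q+\eps']_{\lmb',\gmm'}$ with $\|\eps\|_{L^{2}},\|\eps'\|_{L^{2}}<\eta_{\mathrm{dec}}$. Then $\|Q_{\lmb,\gmm}-Q_{\lmb',\gmm'}\|_{L^{2}}\leq\|\eps\|_{L^{2}}+\|\eps'\|_{L^{2}}<2\eta_{\mathrm{dec}}$, so by the $L^{2}$-invariance of $\flat$ we get $\|Q_{\lmb/\lmb',\gmm-\gmm'}-Q\|_{L^{2}}<2\eta_{\mathrm{dec}}$. The one mild point to check is that $(\sgm,\alp)\mapsto Q_{\sgm,\alp}$ is a proper injection into $L^{2}$ in the sense that $\|Q_{\sgm,\alp}-Q\|_{L^{2}}\to0$ forces $(\sgm,\alp)\to(1,0)$; this follows directly from the explicit formula \eqref{eq:Q-formula} (using the behavior of $Q_{\sgm,\alp}$ as $\sgm\to0,\infty$ and the non-vanishing of $(Q,iQ)_{r}$-type quantities). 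Consequently, for $\eta_{\mathrm{dec}}$ small enough, $(\lmb/\lmb',\gmm-\gmm')$ lies in the IFT uniqueness region around $(1,0)$, and the local uniqueness from the IFT step, transported via the symmetry, concludes $(\lmb,\gmm)=(\lmb',\gmm')$ and hence $\eps=\eps'$. The only mildly nontrivial ingredient is the Jacobian nondegeneracy, which is precisely \eqref{eq:Z1Z2-transversality}; everything else is routine.
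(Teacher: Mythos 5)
Your proof is correct and takes precisely the approach the paper has in mind: the paper itself states that Lemma~\ref{lem:dec-near-Q} is a standard consequence of the implicit function theorem (citing Lemma~4.3 of \cite{KimKwonOh2022arXiv1} for a related statement) and omits the details. Your argument — inverting the finite-dimensional Jacobian via the transversality condition \eqref{eq:Z1Z2-transversality}, globalizing by the $L^{2}$-invariance of the $\sharp/\flat$-operations, and establishing uniqueness via the properness of the orbit map $(\sgm,\alp)\mapsto Q_{\sgm,\alp}$ — is a complete and correct execution of that standard argument.
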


This is a standard consequence of the implicit function theorem, and
we omit the proof. See Lemma~4.3 of \cite{KimKwonOh2022arXiv1} for
a related statement. We note that Lemma~4.3 of \cite{KimKwonOh2022arXiv1}
is, in some sense, stronger than Lemma~\ref{lem:dec-near-Q} above,
because it can deal with solutions $u$ which are far from $Q$ in
the $L^{2}$-topology.

\section{\label{sec:Blow-up-Const}Blow-up construction of strongly interacting
regime}

In this section, we prove Theorem~\ref{thm:MainThm}. \textbf{We
always assume $m=0$} (and hence $\frkm=-2$). Given the asymptotic
profile $z^{\ast}(r)=qr^{\nu}\chi(r)$ with $q\in\bbC\setminus\{0\}$
and $\Re(\nu)>0$, we have constructed in Section~\ref{sec:ApprxRadiation}
the approximate radiation $z(t,r)$ to the $\frkm$-equivariant phase-rotated
CSS; see \eqref{eq:eqn-aprx-radiation}. Here, we construct a finite-time
blow-up solution $u(t)$, which blows up due to the strong interaction
between the modulated soliton $Q_{\lmb(t),\gmm(t)}$ (the scaling
parameter $\lmb(t)$ and the phase rotation parameter $\gmm(t)$) and
the radiation $z(t)$. We perform modulation analysis with backward
construction.

We begin with a formal derivation of the blow-up regime given in Theorem~\ref{thm:MainThm}.
Next, in view of the method of backward construction, we reduce the
proof of Theorem~\ref{thm:MainThm} to the proof of the main bootstrap
Proposition~\ref{prop:bootstrap} described in terms of modulation
parameters and the remainder. Then, the rest of this section is devoted
to the proof of Proposition~\ref{prop:bootstrap} using modulation
analysis.

In our analysis, we do not introduce additional (blow-up) profile
modifications. We only use $Q_{\lmb(t),\gmm(t)}$ as the blow-up part.
Instead, we will keep track of two leading directions of the remainder
part $\eps$ of the solution $u$, which provide refined controls on
the modulation parameters $\lmb$ and $\gmm$ and justify the blow-up
dynamics. Our approach is inspired by the work \cite{JendrejLawrieRodriguez2019arXiv}
of Jendrej, Lawrie, and Rodriguez in the context of corotational wave
maps.

\subsection{\label{subsec:Formal-derivation-of-blow-up}Formal derivation of
the blow-up regime}

In this subsection, we formally derive the blow-up regime described
in Theorem~\ref{thm:MainThm}. The asymptotic profile $z^{\ast}(r)=qr^{\nu}\chi(r)$
is prescribed and the approximate radiation $z(t,r)$ is constructed
in Section~\ref{sec:ApprxRadiation}. We are interested in the \emph{strongly
interacting regime}, where the interaction between the radiation $z(t)$
and the modulated soliton $Q_{\lmb(t),\gmm(t)}$ plays a nontrivial
role in the blow-up dynamics.

Consider a finite-time blow-up solution $u(t)$ that admits the decomposition
\[
u(t)=e^{i\gmm_{z}(t)}[(Q+\eps)^{\sharp}+z](t),
\]
where $\gmm_{z}(t)$ is the extra phase rotation defined by \eqref{eq:Def-tht_z}
and the $\sharp$-operation is a notation for dynamically rescaled functions, defined in Section~\ref{subsec:Notations}, through some scaling and phase
rotation parameters $\lmb(t)$ and $\gmm(t)$.

We derive evolution equations for $\eps^{\sharp}$ and $\eps$. Rewrite
\eqref{eq:CSS-m-equiv-ham} and \eqref{eq:eqn-aprx-radiation} as
\begin{align}
\rd_{t}(e^{-i\gmm_{z}}u) & =-i\nabla E[e^{-i\gmm_{z}}u]+i\tht_{z}e^{-i\gmm_{z}}u,\label{eq:u-phase-rotated-eqn}\\
\rd_{t}z & =-i\nabla\td E[z]+i\tht_{z}z-i\Psi_{z},\label{eq:z-eqn}
\end{align}
where $\td E[z]$ denotes the energy functional of the $\frkm$-equivariant
CSS (recall $\frkm=-2$) and $\tht_{z}=-\rd_{t}\gmm_{z}$ from \eqref{eq:Def-tht_z}.
Subtracting \eqref{eq:z-eqn} from \eqref{eq:u-phase-rotated-eqn},
we obtain 
\[
\rd_{t}(Q+\eps)^{\sharp}=-i(\nabla E[(Q+\eps)^{\sharp}+z]-\nabla\td E[z]-\tht_{z}(Q+\eps)^{\sharp})+i\Psi_{z}.
\]
Using the decomposition \eqref{eq:grad-energy-lin-nonlin}, the above
display yields the equation for $\eps^{\sharp}$: 
\begin{align}
\rd_{t}\eps^{\sharp} & =-\rd_{t}Q^{\sharp}-i(\calL_{Q^{\sharp}+z}\eps^{\sharp}+R_{Q^{\sharp}+z}(\eps^{\sharp})-\tht_{z}\epsilon^{\sharp})-iR_{Q^{\sharp},z}+i\Psi_{z},\label{eq:eps-sharp-eq}\\
R_{Q^{\sharp},z} & \coloneqq\nabla E[Q^{\sharp}+z]-\nabla\td E[z]-\tht_{z}Q^{\sharp}.\label{eq:def-RQsharp,z}
\end{align}
Here, $R_{Q^{\sharp},z}$ is the interaction term between the modulated
soliton $Q^{\sharp}$ and the radiation $z$.\footnote{Compared to that of \cite{KimKwon2019arXiv}, our $R_{Q^{\sharp},z}$
already subtracted out the extra phase rotation on $Q^{\sharp}$ coming
from $z$; it corresponds to $\td R_{Q^{\sharp},z}$ in \cite{KimKwon2019arXiv}.} Taking the $\flat$-operation, we also obtain the equation for $\epsilon$:
\begin{equation}
(\rd_{s}-\tfrac{\lmb_{s}}{\lmb}\Lmb+\gmm_{s}i)\epsilon=(\tfrac{\lmb_{s}}{\lmb}\Lmb Q-\gmm_{s}iQ)-i(\calL_{Q+z^{\flat}}\epsilon+R_{Q+z^{\flat}}(\epsilon)-\tht_{z^{\flat}}\epsilon)-iR_{Q,z^{\flat}}+i\Psi_{z^{\flat}},\label{eq:eps-eq-s,y}
\end{equation}
where 
\begin{align}
R_{Q,z^{\flat}} & \coloneqq\nabla E[Q+z^{\flat}]-\nabla\td E[z^{\flat}]-\tht_{z^{\flat}}Q=\lmb^{2}[R_{Q^{\sharp},z}]^{\flat},\label{eq:def-RQ,zflat}\\
\tht_{z^{\flat}} & \coloneqq\lmb^{2}\tht_{z}\aleq\lmb^{2}\|\tfrac{1}{r}z\|_{L^{2}}\|z_{1}\|_{L^{2}}\aleq\lmb^{2},\label{eq:def-tht-z-flat}\\
\Psi_{z^{\flat}} & \coloneqq\lmb^{2}[\Psi_{z}]^{\flat}.\label{eq:def-Psi-zflat}
\end{align}
Note that 
\begin{equation}
\tht_{z^{\flat}}=\lmb^{2}\tht_{z}\aleq\lmb^{2}(1+\|z\|_{L^{2}}^{2})\|\tfrac{1}{r}z\|_{L^{2}}^{2}\aleq\lmb^{2}.\label{eq:tht-z-flat-bound}
\end{equation}

In order to formally derive the blow-up, we first assume the \emph{adiabatic
ansatz}\footnote{We flipped the sign of $\eta$ compared to the previous works \cite{KimKwon2019arXiv,KimKwon2020arXiv,KimKwonOh2020arXiv}
of the authors. Correspondingly, the sign of $\eta(s)\rho$ in \eqref{eq:eps-ansatz}
is also flipped.} 
\begin{equation}
\frac{\lmb_{s}}{\lmb}+b=0,\qquad\gmm_{s}+\eta=0.\label{eq:adiabatic-ansatz}
\end{equation}
Motivated by the generalized kernel relations \eqref{eq:gen-kernel-rel},
we assume that $\eps$ takes the form 
\begin{equation}
\eps(s,y)=b(s)(-i\tfrac{y^{2}}{4}Q)+\eta(s)\rho+\text{(higher order)}.\label{eq:eps-ansatz}
\end{equation}
The formal blow-up dynamics is now encoded in the evolution laws for
$b$ and $\eta$. We substitute the adiabatic ansatz \eqref{eq:adiabatic-ansatz}
into the $\eps$-equation \eqref{eq:eps-eq-s,y}, ignore $\tht_{z^{\flat}}$
and $\Psi_{z^{\flat}}$, and make an approximation $\nabla E[(Q+z^{\flat})+\eps]\approx\nabla E[Q+\eps]$
to simplify the $\eps$-equation as 
\begin{equation}
\rd_{s}\eps=-b\Lmb(Q+\eps)-\eta i(Q+\eps)-i\nabla E[Q+\eps]-iR_{Q,z^{\flat}}.\label{eq:formal-eps-eq}
\end{equation}

We derive formal evolution laws for $b$ and $\eta$. Motivated by
the invariant subspace decomposition \eqref{eq:invariant-subspace-decomp},
we can detect the $b_{s}$-equation and $\eta_{s}$-equation by testing
the formal $\eps$-equation \eqref{eq:formal-eps-eq} against $i\Lmb Q$
and $Q$, respectively. However, a cleverer way is to take an inner
product with $i\Lmb(Q+\eps)$ and $Q+\eps$, in the spirit of Pohozaev-type
computations (see, for example, \cite[Section 3]{RaphaelRodnianski2012Publ.Math.}
and also \cite[Section 4]{KimKwon2019arXiv} for a related computation
for \eqref{eq:CSS-m-equiv}). Using the formal ansatz \eqref{eq:eps-ansatz},
we formally have\footnote{Strictly speaking, the inner product $(-i\tfrac{y^{2}}{4}Q,i\Lmb Q)_{r}$
is not well-defined because $yQ\notin L^{2}$. We are proceeding formally
here, and any rigorous derivation should include truncations (see the
later sections).} 
\begin{align*}
(\rd_{s}\eps,i\Lmb(Q+\eps))_{r} & \approx b_{s}(-i\tfrac{y^{2}}{4}Q,i\Lmb Q)_{r}.
\end{align*}
On the other hand, we formally have 
\begin{align*}
(-b\Lmb(Q+\eps)-\eta i(Q+\eps),i\Lmb(Q+\eps))_{r} & =0,\\
(-i\nabla E[Q+\eps],i\Lmb(Q+\eps))_{r} & =-2E[Q+\eps]\approx-\|L_{Q}\eps\|_{L^{2}}^{2},\\
(-iR_{Q,z^{\flat}},i\Lmb(Q+\eps))_{r} & \approx-(R_{Q,z^{\flat}},\Lmb Q)_{r},
\end{align*}
where in the second row, we used the scaling property of the energy,
which is equivalent to the virial identity \eqref{eq:virial-2}. Formally
applying the relations \eqref{eq:gen-kernel-rel-LQ}, we have 
\begin{align*}
(-i\tfrac{y^{2}}{4}Q,i\Lmb Q)_{r} & =(i\tfrac{y^{2}}{4}Q,L_{Q}^{\ast}(i\tfrac{1}{2}yQ))_{r}=(L_{Q}(i\tfrac{y^{2}}{4}Q),i\tfrac{1}{2}yQ)_{r}=\|\tfrac{1}{2}yQ\|_{L^{2}}^{2},\\
\|L_{Q}\eps\|_{L^{2}}^{2} & \approx\|(\eta-ib)(\tfrac{1}{2}yQ)\|_{L^{2}}^{2}=(b^{2}+\eta^{2})\|\tfrac{1}{2}yQ\|_{L^{2}}^{2}.
\end{align*}
Therefore, we have arrived at 
\begin{equation}
b_{s}+b^{2}+\eta^{2}+\frac{(R_{Q,z^{\flat}},\Lmb Q)_{r}}{\|\tfrac{1}{2}yQ\|_{L^{2}}^{2}}=0.\label{eq:formal-b-without-trunc}
\end{equation}
A similar procedure using (the first row being equivalent to mass
conservation)
\begin{align*}
(-i\nabla E[Q+\eps],Q+\eps)_{r} & =0,\\
(\rho,Q)_{r}=(\rho,L_{Q}^{\ast}(\tfrac{1}{2}yQ))_{r} & =\|\tfrac{1}{2}yQ\|_{L^{2}}^{2}
\end{align*}
yields 
\begin{equation}
\eta_{s}-\frac{(R_{Q,z^{\flat}},iQ)_{r}}{\|\tfrac{1}{2}yQ\|_{L^{2}}^{2}}=0.\label{eq:formal-eta-without-trunc}
\end{equation}

Notice that the denominators of \eqref{eq:formal-b-without-trunc}
and \eqref{eq:formal-eta-without-trunc}, i.e., $\|\tfrac{1}{2}yQ\|_{L^{2}}^{2}$,
are not well-defined due to $yQ\notin L^{2}$. A careful and rigorous
analysis in later sections requires a truncation near the self-similar
scale $y=B_{0}$, where 
\begin{equation}
B_{0}(t)\coloneqq\frac{|t|^{1/2}}{\lmb(t)}.\label{eq:def-B0}
\end{equation}
Note by \eqref{eq:yQ-not-in-L2} that 
\begin{equation}
\|\tfrac{1}{2}yQ\chi_{B_{0}}\|_{L^{2}}^{2}\approx4\pi\log B_{0}.\label{eq:B0-1}
\end{equation}

As we will consider the strongly interacting regime, the inner products
$(R_{Q,z^{\flat}},\Lmb Q)_{r}$ and $(R_{Q,z^{\flat}},iQ)_{r}$ should
be among the dominant terms in \eqref{eq:formal-b-without-trunc} and \eqref{eq:formal-eta-without-trunc}. The two inner products determine the modulation laws for $b$ and $\eta$. This is in contrast to the pseudoconformal blow-up
studied in \cite{KimKwon2019arXiv}, where the soliton-radiation interaction
is weak and does not affect the blow-up speed. In the next lemma,
we extract the leading order contributions of these inner products
to the modulation equations \eqref{eq:formal-b-without-trunc} and
\eqref{eq:formal-eta-without-trunc}. These are derived from the asymptotics
of $z$ in the self-similar region \eqref{eq:z-ss-leading}-\eqref{eq:z1-ss-leading}.
\begin{lem}[Interaction term $R_{Q,z^{\flat}}$]
\label{lem:inn-prod-interaction}Assume $\lmb(t)\aleq|t|$. Then,
there exists $\delta>0$ such that for all small negative $t$ 
\begin{align}
(R_{Q,z^{\flat}},\Lmb Q)_{r} & =8\sqrt{8}\pi\lmb^{3}\Re(e^{-i\gmm}pq(4it)^{\frac{\nu-2}{2}})+O(\lmb^{4-}+\lmb^{3}|t|^{\frac{\Re(\nu)-2}{2}+\delta}),\label{eq:RQ,z-1}\\
(R_{Q,z^{\flat}},iQ)_{r} & =-8\sqrt{8}\pi\lmb^{3}\Im(e^{-i\gmm}pq(4it)^{\frac{\nu-2}{2}})+O(\lmb^{4-}+\lmb^{3}|t|^{\frac{\Re(\nu)-2}{2}+\delta}).\label{eq:RQ,z-2}
\end{align}
Here, we recall $p=\frac{1}{2}\Gmm(\frac{\nu}{2}+2)$ from Proposition~\ref{prop:ApprxRadiation}.
\end{lem}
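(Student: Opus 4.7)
The plan is to expand $R_{Q,z^{\flat}}$ using the Taylor formula for $\nabla E$ around $Q$, exploit the generalized kernel relations to eliminate the dominant Hessian contribution, and then reduce the computation to a single integration by parts that isolates the leading interaction. Using $\nabla E[Q] = 0$ (Bogomol'nyi), expanding the $\frkm$-equivariant energy gradient as $\nabla \td{E}[z^{\flat}] = -\Dlt^{(\frkm)} z^{\flat} + (\text{cubic and higher})$, and recalling $\calL_{Q}$ is the Hessian of $E$ at $Q$, we may write
\[
R_{Q,z^{\flat}} = \calL_{Q} z^{\flat} + \Dlt^{(\frkm)} z^{\flat} + \calR(z^{\flat}) - \tht_{z^{\flat}} Q,
\]
where $\calR(z^{\flat})$ collects all quadratic-and-higher-in-$z^{\flat}$ contributions. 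By the generalized kernel relations of Section~\ref{subsec:Invariant-subspace-decomposition}, $\calL_{Q} \Lmb Q = 0 = \calL_{Q}(iQ)$, so the $\calL_{Q} z^{\flat}$ piece vanishes in both pairings by the symmetry $(\calL_{Q} f,g)_{r} = (f, \calL_{Q} g)_{r}$. Direct computation gives $(Q, \Lmb Q)_{r} = 0 = (Q, iQ)_{r}$, eliminating the $\tht_{z^{\flat}} Q$ term as well. The task thus reduces to computing $(\Dlt^{(\frkm)} z^{\flat}, \Lmb Q)_{r}$ and $(\Dlt^{(\frkm)} z^{\flat}, iQ)_{r}$ modulo the control of $\calR(z^{\flat})$.

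For the main contribution, I would integrate by parts to move the Laplacian onto $\Lmb Q$ (resp.\ $iQ$); the boundary terms vanish at $y=0$ thanks to the $y^{2}$-vanishing of the leading part of $z^{\flat}$ from \eqref{eq:z-ss-leading} together with the regularity of $\Lmb Q$ at the origin, and at infinity by the compact support of $z^{\flat}$. A direct computation using $\Dlt^{(0)} \Lmb Q = (\Lmb + 2) \Dlt^{(0)} Q$ and $\Dlt^{(0)} Q = 4\sqrt{8}(y^{2}-1)/(1+y^{2})^{3}$ yields
\[
\Dlt^{(\frkm)} \Lmb Q = \tfrac{4\sqrt{8}(9y^{4} - 4y^{2} - 1)}{y^{2}(1+y^{2})^{4}}, \qquad \Dlt^{(\frkm)} Q = -\tfrac{4\sqrt{8}(1+3y^{2})}{y^{2}(1+y^{2})^{3}},
\]
both exhibiting the crucial $\aeq 1/y^{2}$ singularity at the origin that matches the $y^{2}$-growth of $z^{\flat}$. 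Substituting the leading inner asymptotic $z^{\flat}(y) = \lmb^{3} e^{-i\gmm} pq(4it)^{(\nu-2)/2} y^{2} + O(\lmb^{5} |t|^{(\Re(\nu)-4)/2} y^{4})$ (a rescaling of \eqref{eq:z-ss-leading}) and evaluating the elementary integrals
\[
\int_{0}^{\infty} \tfrac{y(9y^{4}-4y^{2}-1)}{(1+y^{2})^{4}}\, dy = 1 = \int_{0}^{\infty} \tfrac{y(1+3y^{2})}{(1+y^{2})^{3}}\, dy
\]
via the substitution $u = 1+y^{2}$ produces the asserted leading coefficient $8\sqrt{8}\pi$. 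The $\Re$-vs.-$\Im$ asymmetry between \eqref{eq:RQ,z-1} and \eqref{eq:RQ,z-2} reflects the fact that $\Dlt^{(\frkm)} \Lmb Q$ is real while $\Dlt^{(\frkm)}(iQ) = i\Dlt^{(\frkm)} Q$ is purely imaginary, so the real inner product extracts $\Re(z^{\flat})$ in the former and $\Im(z^{\flat})$ (with the sign from $(f, ig)_{r} = -(if,g)_{r}$) in the latter.

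To control the errors, I would estimate three contributions separately: (i) the subleading $y^{4}$ correction to $z^{\flat}$ in the self-similar zone $y \leq B_{0}$, which upon pairing against $|\Dlt^{(\frkm)} \Lmb Q| \aleq y^{-2} \langle y\rangle^{-4}$ yields $O(\lmb^{5} |t|^{(\Re(\nu)-4)/2} \log B_{0})$, absorbed into $O(\lmb^{3} |t|^{(\Re(\nu)-2)/2 + \dlt})$ using $\lmb \aleq |t|$ and $\log B_{0} \aleq |\log|t||$; (ii) the exterior contribution $y > B_{0}$, controlled via \eqref{eq:z-ext-bound}; and (iii) the quadratic-and-higher-in-$z^{\flat}$ remainder $\calR(z^{\flat})$, paired with $\Lmb Q$ or $iQ$, yielding $O(\lmb^{4-})$. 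The main obstacle is item (iii), particularly the nonlocal contributions from $\calN_{5,1}$, $\calN_{5,2}$ and from the linearization $A_{\tht}[Q + z^{\flat}] = A_{\tht}[Q] + 2A_{\tht}[Q, z^{\flat}] + A_{\tht}[z^{\flat}]$; these require careful application of the weighted $L^{1}$- and Hölder-type duality estimates of Section~\ref{subsec:duality-ests} (together with Lemma~\ref{lem:L1-log-bound}) to balance $Q$'s slow $1/y^{2}$ decay -- specific to the $m=0$ case -- against the support and growth profile of $z^{\flat}$ in $y \leq 2/\lmb$.
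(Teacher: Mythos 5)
Your proposal is correct and follows a genuinely different route from the paper's. The paper's proof uses the self-dual factorization $\nabla E[u] = L_u^{\ast}\bfD_u u$: it isolates $\wh R_{Q,z^{\flat}} = (L_Q - \rd_y - \tfrac{2}{y})^{\ast} z_1^{\flat_{-1}}$, exploits $L_Q \Lmb Q = L_Q(iQ) = 0$ to drop the $L_Q$-part of the pairing, and extracts the main term from the leading asymptotic of the Bogomol'nyi derivative $z_1^{\flat_{-1}} \approx 4 q p (4it)^{(\nu-2)/2}\lmb^3 y$ together with the first-order integration by parts $(-\rd_y - \tfrac{2}{y})^{\ast} y = 0$. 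Your proof instead works directly with the Hamiltonian (Hessian) decomposition $R_{Q,z^{\flat}} = \calL_Q z^{\flat} + \Dlt^{(\frkm)} z^{\flat} + \calR(z^{\flat}) - \tht_{z^{\flat}} Q$, uses $\calL_Q \Lmb Q = \calL_Q(iQ) = 0$ and $(Q,\Lmb Q)_r = (Q,iQ)_r = 0$ to kill the Hessian and phase-correction pieces, and reduces the main term to an explicit second-order computation; I have checked that your formulas $\Dlt^{(\frkm)}\Lmb Q = \tfrac{4\sqrt{8}(9y^4 - 4y^2 - 1)}{y^2(1+y^2)^4}$, $\Dlt^{(\frkm)} Q = -\tfrac{4\sqrt{8}(1+3y^2)}{y^2(1+y^2)^3}$, and the integrals equal to $1$ are all correct, and that they reproduce the coefficient $8\sqrt{8}\pi$. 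What your approach buys is elementariness in the leading-order algebra (no need to invoke the self-dual factorization of $\calL_Q$); what you lose is the automatic smallness that the paper gets from pairing with $L_{Q+z^{\flat}}\psi$ ($\aleq \lmb$ thanks to $L_Q\psi = 0$). In particular your step (iii) is where the extra care is hidden: e.g.\ for the quadratic piece $\tfrac{A_{\tht}[Q]A_{\tht}[z^{\flat}]}{y^2} Q$ paired with $\Lmb Q$, the crude bound $|A_{\tht}[z^{\flat}]| \aleq 1$ in the exterior region $y > B_0$ gives only $O(B_0^{-4}) = O(\lmb^4/|t|^2)$, which is not $O(\lmb^{4-})$ under the sole hypothesis $\lmb \aleq |t|$; one must either integrate $A_{\tht}[z^{\flat}]$ by parts or use the $L^2$-estimate on $y^{-2} z^{\flat}$ of \eqref{eq:RQ,z-claim4-4} to close it. These are the same tools as in the paper's Step 4, so the error analysis is doable but not lighter than the paper's. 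One small imprecision in (i): $\log B_0 \not\aleq |\log|t||$ uniformly when $\lmb \ll |t|$; the needed bound $B_0^{-2}\log B_0 \aleq |t|^{\dlt}$ follows instead from the monotonicity of $x \mapsto x^2|\log x|$ together with $\lmb \aleq |t|$.
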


Note that the assumption $\lmb(t)\aleq|t|$ is natural in view of
Theorem~\ref{thm:asymptotic-description}. The proof of Lemma~\ref{lem:inn-prod-interaction}
is postponed to the end of this subsection.

Let us briefly discuss why the term $\lmb^{3}e^{-i\gmm}\cdot8\sqrt{8}\pi pq(4it)^{\frac{\nu-2}{2}}$
in Lemma~\ref{lem:inn-prod-interaction} is the dominating term.
Applying Lemma~\ref{lem:inn-prod-interaction} to \eqref{eq:formal-b-without-trunc},
the modulation equations \eqref{eq:formal-b-without-trunc}-\eqref{eq:formal-eta-without-trunc}
have a schematic form (where we ignored $\gmm$, $\eta$, any logarithmic
factors, and assumed $\nu>0$ and $t>0$)
\[
\frac{\lmb_{s}}{\lmb}+b=0,\qquad b_{s}+b^{2}+\lmb^{3}t^{\frac{\nu-2}{2}}+\lmb^{4-}=0,
\]
where $\lmb^{3}t^{\frac{\nu-2}{2}}$ and $\lmb^{4-}$ come from $R_{Q,z^{\flat}}$.
Suppose that $\lmb^{4-}$ is a dominating term, say $b^{2}+\lmb^{3}t^{\frac{\nu-2}{2}}\aleq\lmb^{4-}$.
Let us substitute the ansatz $\lmb(t)\sim t{}^{p}$, so that $b\sim\lmb\lmb_{t}\sim t^{2p-1}$.
On one hand, $b_{t}\sim\rd_{t}t^{2p-1}\sim t^{2p-2}$. On the other
hand, as $\lmb^{4-}$ is the dominating term, we have $b_{t}\sim\lmb^{-2}b_{s}\sim\lmb^{2-}\sim t{}^{2p-}$.
This says $2p-2=2p-$, i.e., $p$ must be arbitrarily large. This
contradicts to $\lmb^{3}t^{\frac{\nu-2}{2}}\aleq\lmb^{4-}$. Therefore,
$\lmb^{3}t^{\frac{\nu-2}{2}}$ must be the dominating term in the
$b_{s}$-equation.

With the help of Lemma~\ref{lem:inn-prod-interaction} and \eqref{eq:B0-1},
our formal modulation equations become: 
\[
\left\{ \begin{aligned}\frac{\lmb_{s}}{\lmb}+b=0,\qquad\gmm_{s}+\eta=0,\\
b_{s}+b^{2}+\eta^{2}+\Re\Big(\lmb^{3}e^{-i\gmm}\cdot\frac{8\sqrt{8}\pi pq(4it)^{\frac{\nu-2}{2}}}{4\pi\log B_{0}}\Big) & =0,\\
\eta_{s}+\Im\Big(\lmb^{3}e^{-i\gmm}\cdot\frac{8\sqrt{8}\pi pq(4it)^{\frac{\nu-2}{2}}}{4\pi\log B_{0}}\Big) & =0.
\end{aligned}
\right.
\]
Integrating these together with the boundary conditions $\lmb(t),\frac{b(t)}{\lmb(t)},\frac{\eta(t)}{\lmb(t)}\to0$
as $t\to0^{-}$, (see Section~\ref{subsec:Closing-bootstrap} for
more details), one obtains a solution $(\lmb,\gmm,b,\eta)$ that is
equal to $(\lmb_{q,\nu},\gmm_{q,\nu},b_{q,\nu},\eta_{q,\nu})$ at
the leading order, where 
\begin{align}
\bm{\lmb}_{q,\nu}(t)\coloneqq\lmb_{q,\nu}(t)e^{i\gmm_{q,\nu}(t)} & \coloneqq-\frac{\sqrt{2}}{4}\frac{\Gmm(\tfrac{\nu}{2})}{\Re(\nu)+1}\cdot q\frac{(4it)^{\frac{\nu}{2}+1}}{|\log|t||},\label{eq:def-lmb-gmm-exact-Sect5}\\
\bm{b}_{q,\nu}(t)\coloneqq b_{q,\nu}(t)+i\eta_{q,\nu}(t) & \coloneqq\frac{1}{2}\Big(\frac{\nu}{2}+1\Big)\Big|\frac{\Gmm(\tfrac{\nu}{2})}{\Re(\nu)+1}\Big|^{2}\cdot|q|^{2}\frac{|4it|^{\Re(\nu)+1}}{|\log|t||^{2}}.\label{eq:def-b-eta-exact-Sect5}
\end{align}
We end this subsection with the proof of Lemma~\ref{lem:inn-prod-interaction}.
\begin{proof}[Proof of Lemma~\ref{lem:inn-prod-interaction}]
The heart of the proof is to extract the main terms of $R_{Q,z^{\flat}}$
that contribute to the inner products \eqref{eq:RQ,z-1} and \eqref{eq:RQ,z-2}.

\smallskip
\textbf{Step 1.} Decomposition of $R_{Q,z^{\flat}}$.

We begin by expanding $R_{Q,z^{\flat}}$ using the self-dual form
\eqref{eq:CSS-self-dual-form}: 
\begin{align*}
R_{Q,z^{\flat}} & =\nabla E[Q+z^{\flat}]-\nabla\td E[z^{\flat}]-\tht_{z^{\flat}}Q\\
 & =L_{Q+z^{\flat}}^{\ast}\bfD_{Q+z^{\flat}}(Q+z^{\flat})-L_{z^{\flat}}^{(\frkm)\ast}z_{1}^{\flat_{-1}}-\tht_{z^{\flat}}Q,
\end{align*}
where $z_{1}^{\flat_{-1}}=\bfD_{z^{\flat}}^{(\frkm)}z^{\flat}=\lmb^{2}e^{-i\gmm}z_{1}(\lmb y)$
is an $\dot{H}^{-1}$-scaling of $z_{1}$. We rearrange this as
\[
R_{Q,z^{\flat}}=\{(L_{Q+z^{\flat}}^{\ast}-L_{z^{\flat}}^{(\frkm)\ast})z_{1}^{\flat_{-1}}-\tht_{z^{\flat}}Q\}+L_{Q+z^{\flat}}^{\ast}(\bfD_{Q+z^{\flat}}(Q+z^{\flat})-z_{1}^{\flat_{-1}}).
\]
We expand the definitions of $L_{w}^{\ast}$ and $L_{z^{\flat}}^{(\frkm)\ast}$
and use $\tht_{z^{\flat}}=\int_{0}^{\infty}\Re(\br{z^{\flat}}z_{1}^{\flat_{-1}})dy$
to obtain 
\begin{equation}
\begin{aligned}R_{Q,z^{\flat}} & =-\tfrac{1}{y}(2+A_{\tht}[Q])z_{1}^{\flat_{-1}}+(\tint y{\infty}\Re(Qz_{1}^{\flat_{-1}})dy')Q\\
 & \quad-\tfrac{2}{y}A_{\tht}[Q,z^{\flat}]z_{1}^{\flat_{-1}}+(\tint y{\infty}\Re(Qz_{1}^{\flat_{-1}})dy')z^{\flat}-(\tint 0y\Re(\br{z^{\flat}}z_{1}^{\flat_{-1}})dy')Q\\
 & \quad+L_{Q+z^{\flat}}^{\ast}(\bfD_{Q+z^{\flat}}(Q+z^{\flat})-z_{1}^{\flat_{-1}}).
\end{aligned}
\label{eq:RQ,z-expansion}
\end{equation}
Notice that we have the $\tint 0y\cdot dy'$ integral for the fifth
term, which exploits the phase correction term $\tht_{z^{\flat}}Q$.\footnote{In fact, the phase correction term $\tht_{z^{\flat}}Q$ has no effect
when computing the inner products with $\Lmb Q$ and $iQ$. However,
this fact will be crucially used in the $L^{2}$-estimate of $R_{Q,z^{\flat}}$
in Lemma~\ref{lem:RQ,z}.}

Next, write 
\[
R_{Q,z^{\flat}}=\wh R_{Q,z^{\flat}}+\td R_{Q,z^{\flat}},
\]
where $\wh R_{Q,z^{\flat}}$ denotes the first line of RHS\eqref{eq:RQ,z-expansion}
and $\td R_{Q,z^{\flat}}$ collects the remaining terms. We will show
that $\wh R_{Q,z^{\flat}}$ contains the main contribution to the
inner products and $\td R_{Q,z^{\flat}}$ is an error. More precisely,
we claim: 
\begin{align}
(\wh R_{Q,z^{\flat}},\Lmb Q)_{r} & =8\sqrt{8}\pi\lmb^{3}\Re(e^{-i\gmm}pq(4it)^{\frac{\nu-2}{2}})+O(\lmb^{4-}+\lmb^{3}|t|^{\frac{\Re(\nu)-2}{2}+\delta})\label{eq:RQ,z-claim1}\\
(\wh R_{Q,z^{\flat}},iQ)_{r} & =-8\sqrt{8}\pi\lmb^{3}\Im(e^{-i\gmm}pq(4it)^{\frac{\nu-2}{2}})+O(\lmb^{4-}+\lmb^{3}|t|^{\frac{\Re(\nu)-2}{2}+\delta})\label{eq:RQ,z-claim2}
\end{align}
and 
\begin{equation}
|(\td R_{Q,z^{\flat}},\Lmb Q)_{r}|+|(\td R_{Q,z^{\flat}},iQ)_{r}|\aleq\lmb^{4-}+\lmb^{3}|t|^{\frac{\Re(\nu)-2}{2}+\delta}.\label{eq:RQ,z-claim3}
\end{equation}
These claims immediately imply our lemma. It remains to show \eqref{eq:RQ,z-claim1}-\eqref{eq:RQ,z-claim3}.

\smallskip
\textbf{Step 2.} Preliminary claims.

Before we get into the proof of \eqref{eq:RQ,z-claim1}-\eqref{eq:RQ,z-claim3},
we claim the following estimates for $z^{\flat}$ and $z_{1}^{\flat_{-1}}$:
\begin{align}
\chf_{y\leq B_{0}}|z^{\flat}|_{1} & \aleq\lmb^{3}|t|^{\frac{\Re(\nu)-2}{2}}y^{2},\label{eq:RQ,z-claim4-1}\\
\chf_{y\leq B_{0}}|z_{1}^{\flat_{-1}}-\lmb^{3}\cdot4qp(4it)^{\frac{\nu-2}{2}}y|_{1} & \aleq\lmb^{3}|t|^{\frac{\Re(\nu)-2}{2}}y^{3}B_{0}^{-2},\label{eq:RQ,z-claim4-2}\\
\|y^{-1}|z^{\flat}|_{-1}\|_{L^{2}} & \aleq\lmb^{2}|\log|t||^{\frac{1}{2}}+\lmb^{3}|t|^{\frac{\Re(\nu)-2}{2}}B_{0},\label{eq:RQ,z-claim4-3}\\
\|\chf_{y\geq B_{0}}y^{-2}|z^{\flat}|_{-1}\|_{L^{2}} & \aleq\lmb^{3}|\log|t||^{\frac{1}{2}}+\lmb^{3}|t|^{\frac{\Re(\nu)-2}{2}},\label{eq:RQ,z-claim4-4}\\
\|y^{-s}|z^{\flat}|_{1}\|_{L^{2}\cap yL^{1}\cap y^{-1}L^{\infty}} & \aleq\lmb^{s},\qquad\forall s\in[0,1].\label{eq:RQ,z-claim4-5}
\end{align}
The estimates \eqref{eq:RQ,z-claim4-1} and \eqref{eq:RQ,z-claim4-2}
easily follow from rescaling \eqref{eq:z-ss-bound} and \eqref{eq:z1-ss-leading}
together with $|t|^{-\frac{1}{2}}r=B_{0}^{-1}y$. The estimates \eqref{eq:RQ,z-claim4-3}
and \eqref{eq:RQ,z-claim4-4} follow from \eqref{eq:RQ,z-claim4-1}
and rescaling \eqref{eq:z-ext-bound}: 
\begin{align*}
\|\chf_{y\leq B_{0}}y^{-1}|z^{\flat}|_{-1}\|_{L^{2}} & \aleq\lmb^{3}|t|^{\frac{\Re(\nu)-2}{2}}B_{0},\\
\|\chf_{y\geq B_{0}}y^{-1}|z^{\flat}|_{-1}\|_{L^{2}} & \aleq\lmb^{2}\|\chf_{|t|^{\frac{1}{2}}\leq r\leq1}r^{\Re(\nu)-2}\|_{L^{2}}\aleq\lmb^{2}(|\log|t||^{\frac{1}{2}}+|t|^{\frac{\Re(\nu)-1}{2}}),\\
\|\chf_{y\geq B_{0}}y^{-2}|z^{\flat}|_{-1}\|_{L^{2}} & \aleq\lmb^{3}\|\chf_{|t|^{\frac{1}{2}}\leq r\leq1}r^{\Re(\nu)-3}\|_{L^{2}}\aleq\lmb^{3}(|\log|t||^{\frac{1}{2}}+|t|^{\frac{\Re(\nu)-2}{2}}).
\end{align*}
Finally, the estimate \eqref{eq:RQ,z-claim4-5} follows from 
\begin{align*}
\|\chf_{y\leq B_{0}}y^{-s}|z^{\flat}|_{1}\|_{L^{2}\cap yL^{1}\cap y^{-1}L^{\infty}} & \aleq\lmb^{3}|t|^{\frac{\Re(\nu)-2}{2}}B_{0}^{3-s}\aeq\lmb^{s}|t|^{\frac{\Re(\nu)+1-s}{2}},\\
\|\chf_{y\geq B_{0}}y^{-s}|z^{\flat}|_{1}\|_{L^{2}\cap yL^{1}\cap y^{-1}L^{\infty}} & \aleq\lmb^{s}\|\chf_{|t|^{\frac{1}{2}}\leq r\leq1}r^{\Re(\nu)-s}\|_{L^{2}\cap yL^{1}\cap y^{-1}L^{\infty}}\aleq\lmb^{s}.
\end{align*}

\smallskip
\textbf{Step 3.} The main contribution from $\wh R_{Q,z^{\flat}}$.

In this step, we prove \eqref{eq:RQ,z-claim1} and \eqref{eq:RQ,z-claim2}.
We begin by noticing that 
\[
\wh R_{Q,z^{\flat}}=(L_{Q}^{\ast}+\rd_{y}-\tfrac{1}{y})z_{1}^{\flat_{-1}}=(L_{Q}-\rd_{y}-\tfrac{2}{y})^{\ast}z_{1}^{\flat_{-1}}.
\]
Since $\Lmb Q$ and $iQ$ belong to the kernel of $L_{Q}$, we have\footnote{We note that $L_{Q}-\rd_{y}$ does not contain any derivatives. One
only needs to use Fubini to justify the computation $((L_{Q}-\rd_{y}-\tfrac{2}{y})^{\ast}f,g)_{r}=(f,(L_{Q}-\rd_{y}-\tfrac{2}{y})g)_{r}$.} 
\begin{align*}
(\wh R_{Q,z^{\flat}},\Lmb Q)_{r} & =(z_{1}^{\flat_{-1}},(L_{Q}-\rd_{y}-\tfrac{2}{y})\Lmb Q)_{r}=(z_{1}^{\flat_{-1}},(-\rd_{y}-\tfrac{2}{y})\Lmb Q)_{r},\\
(\wh R_{Q,z^{\flat}},iQ)_{r} & =(z_{1}^{\flat_{-1}},(L_{Q}-\rd_{y}-\tfrac{2}{y})iQ)_{r}=(z_{1}^{\flat_{-1}},(-\rd_{y}-\tfrac{2}{y})iQ)_{r}.
\end{align*}
We then observe that $(-\rd_{y}-\tfrac{2}{y})$ taken on $\Lmb Q$
or $iQ$ enjoys extra \emph{spatial decay}: 
\[
|(-\rd_{y}-\tfrac{2}{y})\Lmb Q|+|(-\rd_{y}-\tfrac{2}{y})iQ|\aleq y^{-1}\langle y\rangle^{-4}.
\]
Thanks to this decay, we are able to replace $z_{1}^{\flat_{-1}}$
by its leading term; we observe for any $\psi$ with $|\psi|\aleq y^{-1}\langle y\rangle^{-4}$
that 
\begin{align*}
 & |(z_{1}^{\flat_{-1}}-\chf_{y\leq B_{0}}4qp(4it)^{\frac{\nu-2}{2}}\lmb^{3}y,\psi)_{r}|\\
 & \aleq\|\chf_{y\leq B_{0}}|z_{1}^{\flat_{-1}}-4qp(4it)^{\frac{\nu-2}{2}}\lmb^{3}y|\cdot y^{-1}\lan y\ran^{-4}\|_{L^{1}}+\|\chf_{y\geq B_{0}}|z^{\flat}|_{-1}\cdot y^{-5}\|_{L^{1}}\\
 & \aleq\|\chf_{y\leq B_{0}}|z_{1}^{\flat_{-1}}-4qp(4it)^{\frac{\nu-2}{2}}\lmb^{3}y|\cdot y^{-1}\lan y\ran^{-4}\|_{L^{1}}+\|\chf_{y\geq B_{0}}y^{-2}|z^{\flat}|_{-1}\|_{L^{2}}B_{0}^{-2},
\end{align*}
and we then apply \eqref{eq:RQ,z-claim4-2} and \eqref{eq:RQ,z-claim4-4}
to continue the previous display as 
\begin{align*}
 & |(z_{1}^{\flat_{-1}}-\chf_{y\leq B_{0}}4qp(4it)^{\frac{\nu-2}{2}}\lmb^{3}y,\psi)_{r}|\\
 & \aleq\lmb^{3}|t|^{\frac{\Re(\nu)-2}{2}}B_{0}^{-2}\log B_{0}+(\lmb^{3}|\log|t||^{\frac{1}{2}}+\lmb^{3}|t|^{\frac{\Re(\nu)-2}{2}})B_{0}^{-2}\\
 & \aleq\lmb^{5}|t|^{-1}|\log|t||^{\frac{1}{2}}+\lmb^{3}|t|^{\frac{\Re(\nu)-2}{2}+\dlt}\\
 & \aleq\lmb^{4-}+\lmb^{3}|t|^{\frac{\Re(\nu)-2}{2}+\dlt},
\end{align*}
where we used $\lmb\aleq|t|$ (which also implies $B_{0}^{-1}\aleq|t|^{\frac{1}{2}}$).
Therefore, we have shown that 
\begin{align*}
(\wh R_{Q,z^{\flat}},\Lmb Q)_{r} & =(\chf_{y\leq B_{0}}4qp(4it)^{\frac{\nu-2}{2}}\lmb^{3}y,(-\rd_{y}-\tfrac{2}{y})\Lmb Q)_{r}+O(\lmb^{4-}+\lmb^{3}|t|^{\frac{\Re(\nu)-2}{2}+\delta}),\\
(\wh R_{Q,z^{\flat}},iQ)_{r} & =(\chf_{y\leq B_{0}}4qp(4it)^{\frac{\nu-2}{2}}\lmb^{3}y,(-\rd_{y}-\tfrac{2}{y})iQ)_{r}+O(\lmb^{4-}+\lmb^{3}|t|^{\frac{\Re(\nu)-2}{2}+\delta}).
\end{align*}
Additional integration by parts using $(-\rd_{y}-\tfrac{2}{y})^{\ast}y=(\rd_{y}-\tfrac{1}{y})y=0$
then shows that, we have for any $c\in\bbC$ 
\begin{align*}
(\chf_{y\leq B_{0}}cy,(-\rd_{y}-\tfrac{2}{y})\Lmb Q)_{r} & =-2\pi\cdot\Re(c)y^{2}\Lmb Q\Big|_{y=0}^{B_{0}},\\
(\chf_{y\leq B_{0}}cy,(-\rd_{y}-\tfrac{2}{y})iQ)_{r} & =-2\pi\cdot\Im(c)y^{2}Q\Big|_{y=0}^{B_{0}}.
\end{align*}
The proof of \eqref{eq:RQ,z-claim1} and \eqref{eq:RQ,z-claim2} then
follows from substituting into the above the asymptotics 
\[
|y^{2}Q-\sqrt{8}|+|y^{2}\Lmb Q+\sqrt{8}|\aleq y^{-2}\qquad\text{as }y\to\infty
\]
and $c=4qp(4it)^{\frac{\nu-2}{2}}\lmb^{3}$.

\smallskip
\textbf{Step 4.} Error estimates for $\td R_{Q,z^{\flat}}$.

In this step, we prove \eqref{eq:RQ,z-claim3}. We will often use
Lemma~\ref{lem:mapping-integral-op} and Remark~\ref{rem:decreasing-weight}
without mentioning.

First, we claim that all the contributions of the second line of \eqref{eq:RQ,z-expansion}
to the inner products are bounded by 
\[
\aleq\|y^{-1}|z^{\flat}|_{-1}\|_{L^{2}}^{2}.
\]
This bound is admissible due to \eqref{eq:RQ,z-claim4-3}: 
\begin{align*}
\|y^{-1}|z^{\flat}|_{-1}\|_{L^{2}}^{2} & \aleq(\lmb^{2}|\log|t||^{\frac{1}{2}}+\lmb^{3}|t|^{\frac{\Re(\nu)-2}{2}}B_{0})^{2}\\
 & \aleq\lmb^{4}|\log|t||+\lmb^{4}|t|^{\Re(\nu)-1}\aleq\lmb^{4-}+\lmb^{3}|t|^{\frac{\Re(\nu)-2}{2}+\delta}.
\end{align*}
For the proof of this claim, we observe that 
\begin{align*}
 & |(-\tfrac{2A_{\tht}[Q,z^{\flat}]}{y}z_{1}^{\flat_{-1}},\Lmb Q)_{r}|+|(-\tfrac{2A_{\tht}[Q,z^{\flat}]}{y}z_{1}^{\flat_{-1}},iQ)_{r}|\\
 & \quad\aleq\|\tfrac{1}{y^{2}}A_{\tht}[Q,z^{\flat}]\|_{L^{2}}\|\langle y\rangle^{-1}z_{1}^{\flat_{-1}}\|_{L^{2}}\aleq\|y^{-1}|z^{\flat}|_{-1}\|_{L^{2}}^{2},
\end{align*}
and 
\begin{align*}
 & |((\tint y{\infty}\Re(Qz_{1}^{\flat_{-1}})dy')z^{\flat},\Lmb Q)_{r}|+|((\tint y{\infty}\Re(Qz_{1}^{\flat_{-1}})dy')z^{\flat},iQ)_{r}|\\
 & \quad\aleq\|\tint y{\infty}|y'Qz_{1}^{\flat_{-1}}|\tfrac{dy'}{y'}\|_{L^{2}}\|Qz^{\flat}\|_{L^{2}}\aleq\|y^{-1}|z^{\flat}|_{-1}\|_{L^{2}}^{2},
\end{align*}
and by integration by parts 
\begin{align*}
 & |((\tint 0y\Re(\br{z^{\flat}}z_{1}^{\flat_{-1}})dy')Q,\Lmb Q)_{r}|+|((\tint 0y\Re(\br{z^{\flat}}z_{1}^{\flat_{-1}})dy')Q,iQ)_{r}|\\
 & \quad=|-\tfrac{1}{2}(Q\Re(\br{z^{\flat}}z_{1}^{\flat_{-1}}),yQ)_{r}|+0\aleq\|y^{-1}|z^{\flat}|_{-1}\|_{L^{2}}^{2}.
\end{align*}

Now, it suffices to estimate the contribution of the last term of
\eqref{eq:RQ,z-expansion}. We prove the following claims 
\begin{align}
\||\bfD_{Q+z^{\flat}}(Q+z^{\flat})-z_{1}^{\flat_{-1}}|_{-1}\|_{L^{2}} & \aleq\lmb^{3-}+\lmb^{3}|t|^{\frac{\Re(\nu)-2}{2}},\label{eq:RQ,z-claim6}\\
\|yL_{Q+z^{\flat}}\Lmb Q\|_{L^{2}}+\|yL_{Q+z^{\flat}}iQ\|_{L^{2}} & \aleq\lmb,\label{eq:RQ,z-claim7}
\end{align}
in the next paragraph. Assuming these claims, we obtain for $\psi\in\{\Lmb Q,iQ\}$
\begin{align*}
 & (L_{Q+z^{\flat}}^{\ast}(\bfD_{Q+z^{\flat}}(Q+z^{\flat})-z_{1}^{\flat_{-1}}),\psi)_{r}=(\bfD_{Q+z^{\flat}}(Q+z^{\flat})-z_{1}^{\flat_{-1}},L_{Q+z^{\flat}}\psi)_{r}\\
 & \qquad\aleq\|\tfrac{1}{y}(\bfD_{Q+z^{\flat}}(Q+z^{\flat})-z_{1}^{\flat_{-1}})\|_{L^{2}}\|yL_{Q+z^{\flat}}\psi\|_{L^{2}}\aleq(\lmb^{3-}+\lmb^{3}|t|^{\frac{\Re(\nu)-2}{2}})\cdot\lmb.
\end{align*}
This completes the proof of \eqref{eq:RQ,z-claim3}, assuming \eqref{eq:RQ,z-claim6}
and \eqref{eq:RQ,z-claim7}.

\emph{Proof of \eqref{eq:RQ,z-claim6}.} We estimate 
\begin{align*}
 & \||\bfD_{Q+z^{\flat}}(Q+z^{\flat})-z_{1}^{\flat_{-1}}|_{-1}\|_{L^{2}}\\
 & \leq\||\tfrac{2A_{\tht}[Q,z^{\flat}]+A_{\tht}[z^{\flat}]}{y}Q|_{-1}\|_{L^{2}}+\||\tfrac{2+A_{\tht}[Q]+A_{\tht}[Q,z^{\flat}]}{y}z^{\flat}|_{-1}\|_{L^{2}}\\
 & \aleq\big(\|Q^{2}z^{\flat}\|_{L^{2}}+\|\tfrac{2A_{\tht}[Q,z^{\flat}]}{y^{2}}Q\|_{L^{2}}+\|\tfrac{2+A_{\tht}[Q]}{y}|z^{\flat}|_{-1}\|_{L^{2}}\big)\\
 & \peq+\big(\|Q(z^{\flat})^{2}\|_{L^{2}}+\|\tfrac{A_{\tht}[z^{\flat}]}{y^{2}}Q\|_{L^{2}}+\|\tfrac{2A_{\tht}[Q,z^{\flat}]}{y}|z^{\flat}|_{-1}\|_{L^{2}}\big)\\
 & \aleq\|y^{-1}\langle y\rangle^{-2}|z^{\flat}|_{-1}\|_{L^{2}}+\|\langle y\rangle^{-2}z^{\flat}\|_{L^{2}}(\|z^{\flat}\|_{L^{\infty}}+\||z^{\flat}|_{-1}\|_{L^{2}}).
\end{align*}
We estimate the first term of the RHS of the above display using \eqref{eq:RQ,z-claim4-1}
and \eqref{eq:RQ,z-claim4-4}: 
\begin{align*}
\|y^{-1}\langle y\rangle^{-2}|z^{\flat}|_{-1}\|_{L^{2}} & \aleq\|\chf_{y\leq B_{0}}y^{-1}\langle y\rangle^{-2}|z^{\flat}|_{-1}\|_{L^{2}}+\|\chf_{y\geq B_{0}}y^{-2}|z^{\flat}|_{-1}\|_{L^{2}}\cdot B_{0}^{-1}\\
 & \aleq\lmb^{3}|t|^{\frac{\Re(\nu)-2}{2}}+\lmb^{3-}B_{0}^{-1}\aleq\lmb^{3}|t|^{\frac{\Re(\nu)-2}{2}}+\lmb^{3}.
\end{align*}
For the second term, we use \eqref{eq:RQ,z-claim4-3} to have 
\[
\|\langle y\rangle^{-2}z^{\flat}\|_{L^{2}}(\|z^{\flat}\|_{L^{\infty}}+\||z^{\flat}|_{-1}\|_{L^{2}})\aleq(\lmb^{2-}+\lmb^{3}|t|^{\frac{\Re(\nu)-2}{2}}B_{0})\cdot\lmb.
\]
Collecting the above bounds yields 
\[
\||\bfD_{Q+z^{\flat}}(Q+z^{\flat})-z_{1}^{\flat_{-1}}|_{-1}\|_{L^{2}}\aleq\lmb^{3-}+\lmb^{3}|t|^{\frac{\Re(\nu)-2}{2}},
\]
completing the proof of \eqref{eq:RQ,z-claim6}.

\emph{Proof of \eqref{eq:RQ,z-claim7}.} We only show \eqref{eq:RQ,z-claim7}
for $\psi=\Lmb Q$ and leave the case $\psi=iQ$ to the readers. Using
$L_{Q}\Lmb Q=0$ and $\tint 0yQ\Lmb Qy'dy'=\frac{1}{2}y^{2}Q^{2}\aleq\langle y\rangle^{-2}$,
we have 
\begin{align*}
 & \|yL_{Q+z^{\flat}}\Lmb Q\|_{L^{2}}=\|y(L_{Q+z^{\flat}}-L_{Q})\Lmb Q\|_{L^{2}}\\
 & \leq\|-2A_{\tht}[Q,z^{\flat}]\Lmb Q+\tfrac{1}{2}y^{2}Q^{2}z^{\flat}+(\tint 0y\Re(z^{\flat}\Lmb Q)y'dy')Q\|_{L^{2}}\\
 & \peq+\|-A_{\tht}[z^{\flat}]\Lmb Q\|_{L^{2}}+\|(\tint 0y\Re(z^{\flat}\Lmb Q)y'dy')z^{\flat}\|_{L^{2}}\\
 & \aleq\|\langle y\rangle^{-2}z^{\flat}\|_{L^{2}}+\||z^{\flat}|^{2}\|_{L^{2}}+\|\langle y\rangle^{-2}z^{\flat}\|_{L^{1}}\|z^{\flat}\|_{L^{2}}\aleq\lmb,
\end{align*}
where in the last inequality, we used \eqref{eq:RQ,z-claim4-5}. This
completes the proof of \eqref{eq:RQ,z-claim7}.
\end{proof}

\subsection{\label{subsec:reduction}Reduction of the proof of Theorem~\ref{thm:MainThm}
and Corollary~\ref{cor:InfiniteTimeBlowUp}}

For the proof of Theorem~\ref{thm:MainThm}, we follow a general
strategy of the backward construction with modulation analysis. By
a limiting argument, the construction of the desired blow-up solution
$u(t)$ reduces to the construction of a sequence of solutions $u^{(\tau)}(t)$
on some time interval $[t_{0}^{\ast},\tau]$ for all small $\tau<0$
that follow the blow-up regime uniformly in $\tau$.

To analyze each $u^{(\tau)}=u$ (we suppress the dependence on $\tau$
at this moment), we prescribe suitable initial data at time $t=\tau$
and evolve it backward in time. We then introduce a scaling parameter
$\lmb(t)$, phase parameter $\gmm(t)$, and a small remainder term
$\eps(t)$ to decompose $u(t)$ in the form 
\begin{equation}
u(t)=e^{i\gmm_{z}(t)}[(Q+\eps(t))_{\lmb(t),\gmm(t)}+z(t)],\label{eq:u-decomp}
\end{equation}
where $z(t)$ is the approximate radiation in Proposition~\ref{prop:ApprxRadiation}
and $\gmm_{z}(t)$ is the extra phase correction defined by \eqref{eq:Def-tht_z},
so that the dynamics of $u(t)$ can be read off from those of $\lmb(t)$,
$\gmm(t)$, and $\eps(t)$. Note that $\gmm_{z}(t)$ captures the
extra phase rotation caused by the nonlocal interaction between $Q_{\lmb(t),\gmm(t)}$
and $z(t)$ through $A_{t}[u]$, which was observed in \cite{KimKwon2019arXiv}.

We will not modify the blow-up profile into, for example, $Q_{b,\eta}=Q+b(-i\tfrac{y^{2}}{4}Q)+\eta\rho+\cdots$.
We only use $Q$ itself as a blow-up profile. Instead, we will read
off these parameters $b$ and $\eta$ from the inner products $(\eps,i\Lmb Q)_{r}$
and $(\eps,Q)_{r}$ together with suitable corrections. This approach
is inspired by the work \cite{JendrejLawrieRodriguez2019arXiv} and
we view our approach as its adaptation to the Schrödinger case.

The goal of this subsection is to reduce the proof of Theorem~\ref{thm:MainThm}
to the proof of main bootstrap Proposition~\ref{prop:bootstrap},
which quantifies the blow-up regime in terms of $\lmb(t)$, $\gmm(t)$,
and $\eps(t)$.
\begin{proof}[Proof of Theorem~\ref{thm:MainThm} assuming Proposition~\ref{prop:bootstrap}]
Let $m=0$ and $\frkm=-2$; let $q,\nu\in\bbC$ with $\Re(\nu)>0$
and $q\neq0$; let $z^{\ast}(r)=qr^{\nu}\chi(r)$ be the asymptotic
profile. Let $z(t,r)$ be the approximate radiation constructed in
Proposition~\ref{prop:ApprxRadiation}; it, in particular, satisfies
$z(t)\to z^{\ast}$ in $L^{2}$ as $t\to0$ and approximately solves
the $\frkm$-equivariant phase-rotated CSS as \eqref{eq:eqn-aprx-radiation}. 

We will consider a negative time $-1<t_{0}^{\ast}<0$ that will be
chosen sufficiently small during the course of the proof. 

\smallskip
\textbf{Step 1.} Family of initial data at time $t=\tau$.

For each $\tau\in(t_{0}^{\ast},0)$, we fix the initial data for $u^{(\tau)}$
at time $t=\tau$ by 
\[
u^{(\tau)}(\tau)=e^{i\gmm_{z}(\tau)}[(Q+\eps_{q,\nu}(\tau))_{\lmb_{q,\nu}(\tau),\gmm_{q,\nu}(\tau)}+z(\tau)],
\]
where $\lmb_{q,\nu}(\tau)$ and $\gmm_{q,\nu}(\tau)$ are defined
via the formula \eqref{eq:lmb-gmm-formula}, $\eps_{q,\nu}(\tau)$
is defined by 
\[
\eps_{q,\nu}(\tau)\coloneqq b_{q,\nu}(\tau)\cdot(-i\tfrac{y^{2}}{4}Q\chi_{B_{0}})+\eta_{q,\nu}(\tau)\cdot\rho\chi_{B_{0}}-(1-\chi_{B_{0}})Q
\]
with $B_{0}=|\tau|^{\frac{1}{2}}/\lmb_{q,\nu}(\tau)$ at time $t=\tau$
(see \eqref{eq:def-B0}) and $b_{q,\nu}(\tau)$ and $\eta_{q,\nu}(\tau)$
defined by \eqref{eq:def-b-eta-exact-Sect5}. As $t_{0}^{\ast}$ is
small, $B_{0}$ is sufficiently large so that \eqref{eq:Z1Z2-gauge-condition}
ensures the orthogonality conditions at time $t=\tau$
\[
(\eps_{q,\nu}(\tau),\calZ_{1})_{r}=(\eps_{q,\nu}(\tau),\calZ_{2})_{r}=0.
\]
We remark that the last term $-(1-\chi_{B_{0}})Q$ is introduced to
ensure that $Q+\eps_{q,\nu}(\tau)$ is compactly supported and, hence
, $ru^{(\tau)}(\tau)\in L^{2}$. 

For later purposes, we note using \eqref{eq:transversality-radial-case}
and $(B_{0})^{-2}\sim b_{q,\nu}\sim|\bm{b}_{q,\nu}|$ that 
\begin{equation}
\left\{ \begin{aligned}(\eps_{q,\nu}(\tau),i\Lmb_{B_{0}}Q)_{r} & =(4\pi\log B_{0})b_{q,\nu}(\tau)+O(b_{q,\nu}(\tau)),\\
(\eps_{q,\nu}(\tau),\chi_{B_{0}}Q)_{r} & =(4\pi\log B_{0})\eta_{q,\nu}(\tau)+O(b_{q,\nu}(\tau)),\\
\|L_{Q}\eps_{q,\nu}(\tau)\|_{L^{2}}^{2} & =(4\pi\log B_{0}+O(1))|\bm{b}_{q,\nu}(\tau)|^{2},\\
\|\eps_{q,\nu}(\tau)\|_{L^{2}}^{2} & \aleq b_{q,\nu}(\tau).
\end{aligned}
\right.\label{eq:initial-data-prop}
\end{equation}
We also note using $z^{\ast}\in H_{\frkm}^{1,1}$ (use \eqref{eq:linearized-energy-expn},
the coercivity estimate \eqref{eq:coercivity} for $\|\eps_{q,\nu}(\tau)\|_{\dot{\calH}_{0}^{1}}$,
and the duality estimate to estimate $E[u^{(\tau)}(\tau)]$) that
\begin{equation}
\left\{ \begin{aligned}\|ru^{(\tau)}(\tau)\|_{L^{2}} & \aleq(\lmb_{q,\nu}(\tau))^{2}\|y(Q+\eps_{q,\nu}(\tau))\|_{L^{2}}+1\aleq1,\\
|(u^{(\tau)}(\tau),i\Lmb u^{(\tau)})_{r}| & \aleq\||Q+\eps_{q,\nu}(\tau)|_{1}\|_{L^{2}}^{2}+1\aleq1,\\
E[u^{(\tau)}(\tau)] & \aleq(\lmb_{q,\nu}(\tau))^{-2}\|\eps_{q,\nu}(\tau)+z^{\flat}(\tau)\|_{\dot{\calH}_{0}^{1}}^{2}\aleq1.
\end{aligned}
\right.\label{eq:initial-data-prop-1}
\end{equation}

\smallskip
\textbf{Step 2.} Backward-in-time evolution and main bootstrap.

We evolve each $u^{(\tau)}$ backward in time. Let $(T_{-}^{(\tau)},\tau]$
be its backward-in-time maximal interval of existence and set $t_{-}^{(\tau)}\coloneqq\max(T_{-}^{(\tau)},t_{0}^{\ast})$.
Next, we apply Lemma~\ref{lem:dec-near-Q} to define the time $t_{\mathrm{dec}}^{(\tau)}\in[t_{-}^{(\tau)},\tau)$
by 
\[
t_{\mathrm{dec}}^{(\tau)}\coloneqq\inf\{t\in(t_{-}^{(\tau)},\tau]:e^{-i\gmm_{z}(t)}u^{(\tau)}(t)-z(t)\in\calT_{\delta_{\mathrm{dec}}}\},
\]
where $\calT_{\delta}$ and $\delta_{\mathrm{dec}}$ are defined in
Lemma~\ref{lem:dec-near-Q}. If $|\tau|>0$ is sufficiently small,
then the set in the above display is nonempty and $t_{\mathrm{dec}}^{(\tau)}\in[t_{-}^{(\tau)},\tau)$
is well-defined. For each $t\in(t_{\mathrm{dec}}^{(\tau)},\tau]$,
we can uniquely decompose $e^{-i\gmm_{z}(t)}u^{(\tau)}(t)-z(t)=[Q+\eps]_{\lmb^{(\tau)}(t),\gmm^{(\tau)}(t)}$
according to Lemma~\ref{lem:dec-near-Q}. Equivalently, we have uniquely
decomposed $u^{(\tau)}(t)$ as 
\begin{equation}
u^{(\tau)}(t)=e^{i\gmm_{z}(t)}[(Q+\eps^{(\tau)}(t))_{\lmb^{(\tau)}(t),\gmm^{(\tau)}(t)}+z(t)]\label{eq:u^tau-decomp}
\end{equation}
with the orthogonality conditions 
\begin{equation}
(\eps^{(\tau)}(t),\calZ_{1})_{r}=(\eps^{(\tau)}(t),\calZ_{2})_{r}=0\label{eq:eps^tau-orthog}
\end{equation}
for each $t\in(t_{\mathrm{dec}}^{(\tau)},\tau]$. We note that $\lmb^{(\tau)}(\tau)=\lmb_{q,\nu}(\tau)$,
$\gmm^{(\tau)}(\tau)=\gmm_{q,\nu}(\tau)$, and $\eps^{(\tau)}(\tau)=\eps_{q,\nu}(\tau)$,
thanks to the uniqueness of the decomposition.

Having defined the modulation parameters, we are now ready to state
the main bootstrap proposition. We will assume this proposition and
finish the proof of Theorem~\ref{thm:MainThm}.
\begin{prop}[Bootstrap]
\label{prop:bootstrap}There exists a small negative time $t_{0}^{\ast}\in(-1,0)$
with the following property. For any $\tau\in(t_{0}^{\ast},0)$, consider
$u^{(\tau)}$ that admits the decomposition \eqref{eq:u^tau-decomp}
with \eqref{eq:eps^tau-orthog} on $(t_{\mathrm{dec}}^{(\tau)},\tau]$
as described above. If $u^{(\tau)}$ satisfies the bootstrap hypothesis
\begin{equation}
\left\{ \begin{aligned}|\lmb^{(\tau)}(t)-\lmb_{q,\nu}(t)| & \leq\tfrac{1}{2}\lmb_{q,\nu}(t),\\
\Big|\frac{(\eps^{(\tau)},i\Lmb_{B_{0}^{(\tau)}}Q)_{r}}{4\pi\log B_{0}^{(\tau)}}-b_{q,\nu}(t)\Big| & \leq\tfrac{1}{2}b_{q,\nu}(t),\\
\|L_{Q}\eps^{(\tau)}(t)\|_{L^{2}} & \leq2\cdot\{2\pi(\Re(\nu)+1)\}^{\frac{1}{2}}|\bm{b}_{q,\nu}(t)|,\\
\|\eps^{(\tau)}(t)\|_{L^{2}} & \leq(b_{q,\nu}(t))^{\frac{1}{2}}|\log b_{q,\nu}(t)|
\end{aligned}
\right.\label{eq:bootstrap-hypothesis}
\end{equation}
on the time interval $[s,\tau]$ for some $s>\max\{t_{\mathrm{dec}}^{(\tau)},t_{0}^{\ast}\}$,
then the bootstrap conclusion 
\begin{equation}
\left\{ \begin{aligned}|\lmb^{(\tau)}(t)e^{i\gmm^{(\tau)}(t)}-\lmb_{q,\nu}(t)e^{i\gmm_{q,\nu}(t)}| & \leq o_{t\to0}(\lmb_{q,\nu}(t)),\\
\Big|\frac{(\eps^{(\tau)},i\Lmb_{B_{0}^{(\tau)}}Q)_{r}+i(\eps^{(\tau)},\chi_{B_{0}^{(\tau)}}Q)_{r}}{4\pi\log B_{0}^{(\tau)}}-\bm{b}_{q,\nu}(t)\Big| & \leq o_{t\to0}(b_{q,\nu}(t)),\\
\Big|\|L_{Q}\eps^{(\tau)}(t)\|_{L^{2}}-\{2\pi(\Re(\nu)+1)\}^{\frac{1}{2}}|\bm{b}_{q,\nu}(t)|\Big| & \leq o_{t\to0}(b_{q,\nu}(t)),\\
\|\eps^{(\tau)}(t)\|_{L^{2}} & \leq o_{t\to0}((b_{q,\nu}(t))^{\frac{1}{2}}|\log b_{q,\nu}(t)|)
\end{aligned}
\right.\label{eq:bootstrap-conclusion}
\end{equation}
hold on $[s,\tau]$. Furthermore, we may shrink $t_{0}^{\ast}$ such
that the bootstrap conclusion implies $\|\eps^{(\tau)}(t)\|_{\dot{\calH}_{0}^{1}}<\tfrac{1}{2}\delta_{\mathrm{dec}}$.
\end{prop}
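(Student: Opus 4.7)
\medskip
\noindent\textbf{Proof proposal.} The plan is to differentiate the four bootstrap quantities in $t$ and show that each one obeys an evolution equation that, when integrated backwards from the terminal time $\tau$ with the explicit initial values \eqref{eq:initial-data-prop}, yields the improvement \eqref{eq:bootstrap-conclusion}. I will suppress the superscript $(\tau)$. The scheme has four layers built on top of one another.

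\emph{Layer 1 (modulation ODEs).} Starting from the $\eps$-equation \eqref{eq:eps-eq-s,y}, I would test against $\calZ_{1}$ and $\calZ_{2}$ and use the orthogonality conditions \eqref{eq:eps^tau-orthog} together with the transversality \eqref{eq:Z1Z2-transversality} and the gauge condition \eqref{eq:Z1Z2-gauge-condition} to solve for $\tfrac{\lmb_{s}}{\lmb}$ and $\gmm_{s}$. The gauge condition \eqref{eq:Z1Z2-gauge-condition} is designed to eliminate the contribution of the $(-i\tfrac{y^{2}}{4}Q)\chi_{B_{0}}$ and $\rho\chi_{B_{0}}$ components of $\eps$ at leading order, so that, under the bootstrap hypothesis, one obtains the adiabatic identity $\tfrac{\lmb_{s}}{\lmb}+b=O(\lmb^{2-})$ and $\gmm_{s}+\eta=O(\lmb^{2-})$, where $b$ and $\eta$ are defined from $\eps$ through the truncated inner products appearing in \eqref{eq:bootstrap-hypothesis}. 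I would then introduce the refined complex parameters $\bm\zt$ and $\bm b$ as in the introduction (with $\bm\zt$ corresponding to the invariant-subspace-decomposition correction of $\bm\lmb$, and $\bm b$ defined nonlinearly in $\eps$ via the virial-type quantities $(\eps,i\Lmb_{B_{0}}Q)_{r}+i(\eps,\chi_{B_{0}}Q)_{r}$ augmented by quadratic corrections dictated by the virial/mass identities \eqref{eq:virial-1}--\eqref{eq:virial-2}). The $\bm b_{s}$-equation will then read
\begin{equation*}
\rd_{s}\bm b+\abs{\bm b}^{2}+\lmb^{2}\frac{\br{\bm\lmb}\cdot 8\sqrt{8}\pi pq(4it)^{\frac{\nu-2}{2}}}{4\pi\log B_{0}}=\mathrm{Err}_{\bm b},
\end{equation*}
where the main cubic/interaction term arises from Lemma~\ref{lem:inn-prod-interaction} together with the formal computation in Section~\ref{subsec:Formal-derivation-of-blow-up}, and $\mathrm{Err}_{\bm b}$ collects higher-order terms that will be bounded using the duality estimates of Section~\ref{subsec:duality-ests} and the radiation bounds of Proposition~\ref{prop:ApprxRadiation}.

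\emph{Layer 2 (nonlinear energy estimate).} Following the strategy outlined after the introduction of refined parameters, I would define the nonlinear energy functional $\calE$ obtained from $E[Q+\eps+\lmb z^{\flat}]-E[Q+\lmb z^{\flat}]$ with an appropriate subtraction of linear terms and of the $N_{g}(i\calL_{Q})$-contributions via the truncated duals. The coercivity Lemma~\ref{lem:linear-coercivity} applied under the orthogonality conditions \eqref{eq:eps^tau-orthog} will give $\calE\sim\|\eps\|_{\dot\calH_{0}^{1}}^{2}$ up to acceptable errors. Using the self-dual factorization \eqref{eq:calLQ} and the expansion \eqref{eq:linearized-energy-expn}, I would compute
\begin{equation*}
\rd_{t}\calE=\underbrace{(\text{explicit interaction term})}_{\text{matches the }\bm b\text{-ODE}}+\mathrm{Err}_{\calE},
\end{equation*}
where the explicit term is extracted from $(iR_{Q,z^{\flat}},L_{Q}^{\ast}L_{Q}\eps)_{r}$ via the same integration-by-parts computation used in the proof of Lemma~\ref{lem:inn-prod-interaction}, and $\mathrm{Err}_{\calE}$ collects terms involving $\Psi_{z}$ (controlled by \eqref{eq:Psi_z-L2}--\eqref{eq:Psi_z-Hdot1}), genuinely cubic/quintic terms in $\eps$ (controlled by Corollaries~\ref{cor:nonlinear-estimates-holder}--\ref{cor:nonlinear-estimates-weightedL2}), and the nonlinear interactions $R_{w}(\eps)$ from \eqref{eq:grad-energy-lin-nonlin}.

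\emph{Layer 3 (backward integration and matching of exact profile).} The bootstrap hypothesis \eqref{eq:bootstrap-hypothesis} together with the explicit formulas \eqref{eq:def-lmb-gmm-exact-Sect5}--\eqref{eq:def-b-eta-exact-Sect5} provide the size $|\bm b|\sim|t|^{\Re(\nu)+1}/\abs{\log\abs t}^{2}$ and $\lmb\sim|t|^{\Re(\nu)/2+1}/\abs{\log\abs t}$, hence $B_{0}\sim|t|^{-\Re(\nu)/2}\abs{\log\abs t}$. Integrating the $\bm b$-ODE backwards from $t=\tau$ with the initial data \eqref{eq:initial-data-prop}, the explicit interaction term produces exactly $\bm b_{q,\nu}$ at leading order, while $\mathrm{Err}_{\bm b}$ integrated in time gains a factor of $|t|$ and is absorbed into $o_{t\to 0}(b_{q,\nu})$. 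Similarly, backward integration of $\calE$ produces the identity $\calE=2\pi\log B_{0}\cdot|\bm b/\bm\zt|^{2}+(\text{smaller})$, which yields the $\|L_{Q}\eps\|_{L^{2}}$ bound. The $L^{2}$-bound on $\eps$ is obtained by backward integration of the mass identity $\rd_{t}\|\eps\|_{L^{2}}^{2}$, picking up only the interaction with $\Psi_{z}$ and $R_{Q,z^{\flat}}$, which by \eqref{eq:Psi_z-L2} and Lemma~\ref{lem:inn-prod-interaction} give the improved bound $o_{t\to 0}(b_{q,\nu}^{1/2}|\log b_{q,\nu}|)$. Finally, $\bm\lmb$ is recovered from $\bm b$ via the adiabatic relation $\rd_{s}\bm\lmb=-\bm\lmb\bm b$, and integration gives the claimed asymptotics \eqref{eq:def-lmb-gmm-exact-Sect5}.

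\emph{Main obstacle.} The hardest step will be Layer~2: producing a nonlinear energy functional $\calE$ whose time derivative has an explicit leading term that \emph{precisely} matches the $\bm b$-ODE of Layer~1 up to errors $o_{t\to0}(b_{q,\nu}^{2}/|t|)$ (the size needed to close the $\bm b$ bootstrap after time integration). This requires a delicate choice of the quadratic (in $\eps$) correction terms in both $\bm b$ and $\calE$: they must jointly cancel all $O(|\bm b|^{2}\lmb^{-2})$ error terms that do not come from the genuine $\abs{\bm b}^{2}$ nonlinearity, while respecting the nonlocal structure of \eqref{eq:CSS-m-equiv} through the duality relations \eqref{eq:duality-relations}. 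The complex-valued nature of $\bm b$ (when $\nu\notin\bbR$) forces us to track real and imaginary parts separately in the energy identity rather than rely on one-sided inequalities, as noted in the introduction. A secondary technical difficulty will be ensuring that the truncation at the self-similar scale $B_{0}$ is consistent across all commutators arising in the modulation and energy computations; the truncated kernel relations \eqref{eq:truncated-est-1}--\eqref{eq:truncated-est-2} and the logarithmic divergence \eqref{eq:yQ-not-in-L2} will need to be used systematically, with the $O(1)$ remainders controlled by the weighted $L^{1}$-estimate \eqref{eq:weighted-L1} applied to $Qz^{\flat}$ and $Q\eps$.
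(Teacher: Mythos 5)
Your overall architecture (modulation estimates for refined parameters $\bm{\zt},\bm{b}$; nonlinear energy functional $\calE$ coercive via Lemma~\ref{lem:linear-coercivity}; backward integration from the prescribed data at $t=\tau$) is the paper's architecture, and your identification of the virial/mass-identity-based quadratic corrections for $\bm{b}$ as a key point is correct. However, there is a genuine gap in Layer~3 that prevents the bootstrap from closing.

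\smallskip
\textbf{The missing step: controlling the positive-definite error $P$.} Your Layer~3 asserts that ``$\mathrm{Err}_{\bm b}$ integrated in time gains a factor of $|t|$ and is absorbed into $o_{t\to0}(b_{q,\nu})$.'' But the evolution equation for $\bm b$ (Lemma~\ref{lem:Control-of-b-and-eta}, \eqref{eq:b-diff-equality}) contains, beyond the explicit interaction and $|\bm b|^{2}$ terms, a quantity $P$ coming from $\int\chi_{B}|(L_{Q}\eps)^{\perp}|^{2}$ (the part of the linearized kinetic energy not aligned with $\chi_{B}yQ$, $\chi_{B}iyQ$). The coercivity constraints \eqref{eq:P-positivity} show $P$ is almost nonnegative, but under the bootstrap hypothesis one only has a priori $|P|\lesssim \|L_{Q}\eps\|_{L^{2}}^{2}/\log B_{0}\lesssim b^{2}$. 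That is of \emph{critical size}: $P/(\lmb^{2}\br{\bm\zt})\sim b^{2}/\lmb^{3}$ integrates in time to a contribution comparable to $\bm b_{q,\nu}/\br{\bm\lmb}_{q,\nu}$ itself. So the bootstrap for $\bm b$ cannot close without upgrading $|P|\lesssim b^{2}$ to $|P|= o(b^{2})$, and this requires a mechanism you do not provide. The paper obtains it through the differential \emph{inequality} \eqref{eq:energy-differential-inequality}, combining $\rd_{t}\calE$ with the $\bm b_{s}$-equation and the one-sided bound $P\geq -o(b^{2})$: integrating backwards from $\tau$ (where $P(\tau)$ is small by the prescribed data \eqref{eq:initial-data-prop}) yields \eqref{eq:closing-bootstrap-step1-4}, whence $P=o(b^{2})$ at all earlier times.

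\smallskip
\textbf{Your proposed resolution is not the right one.} You write that the complex-valued nature of $\bm b$ ``forces us to track real and imaginary parts separately in the energy identity rather than rely on one-sided inequalities.'' The paper does the opposite: it still relies on a one-sided inequality (precisely because $P$ only has a sign, not a size), but for the \emph{combined} quantity $\calE-2\pi\log B_{0}\cdot|\bm b/\br{\bm\zt}|^{2}$. The sign of $P$ enters through the $\bm b_{s}$-equation multiplied by $\br{\bm b}/\bm\zt$; this would have an indeterminate sign by itself, but after adding $\rd_{t}\calE$ (which also produces a $-8\sqrt{8}\pi\Re(\rd_{t}\br{\bm\zt}\cdot pq(4it)^{(\nu-2)/2})$ piece), the two explicit terms cancel via \eqref{eq:mod-est-rd-zeta}, and the remaining $\Re\{4\pi\log B_{0}\,\br{\bm b}P/(\lmb^{2}|\bm\zt|^{2})\}$ is nonnegative since $\Re(\br{\bm b}/|\bm\zt|^{2})=b/|\bm\zt|^{2}\geq 0$ by the bootstrap hypothesis. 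Tracking real and imaginary parts separately would not reproduce this.

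\smallskip
\textbf{A smaller inaccuracy.} Your Layer~1 claims the adiabatic identity $\lmb_{s}/\lmb+b=O(\lmb^{2-})$ directly from testing against $\calZ_{1},\calZ_{2}$ and the gauge condition \eqref{eq:Z1Z2-gauge-condition}. This is too optimistic: the paper's rough control \eqref{eq:rough-control-lmb-gmm} gives only $|\lmb_{s}/\lmb|+|\gmm_{s}|\lesssim b|\log b|^{1/2}$, with the logarithmic loss being unavoidable because the orthogonality conditions are not adapted to the generalized kernel. The adiabatic behavior is only recovered at the level of the corrected parameter $\bm\zt$ (Lemma~\ref{lem:control-zeta}); the gauge condition \eqref{eq:Z1Z2-gauge-condition} serves a different purpose, namely ensuring the prescribed initial data $\eps_{q,\nu}(\tau)$ satisfies the orthogonality conditions at $t=\tau$.
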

Here, the notation $o_{t \to 0}(f(t))$ means an $o_{t \to 0}(1)$ function that is independent of $\tau$ times $f(t)$. In many cases, the function $o_{t \to 0}(1)$ can be made explicit, but we choose to avoid it for the simplicity of the exposition.

Since the bootstrap conclusion implies $\|\eps^{(\tau)}(t)\|_{\dot{\calH}_{0}^{1}}<\tfrac{1}{2}\delta_{\mathrm{dec}}$,
we have $e^{-i\gmm_{z}(t)}u^{(\tau)}(t)-z(t)=[Q+\eps^{(\tau)}(t)]_{\lmb^{(\tau)}(t),\gmm^{(\tau)}(t)}\in\calT_{\delta_{\mathrm{dec}}/2}$.
Then the above bootstrap implies that, for any $\tau\in(t_{0}^{\ast},0)$,
we have $t_{\mathrm{dec}}^{(\tau)}<t_{0}^{\ast}$ and the bootstrap
conclusion holds on the interval $[t_{0}^{\ast},\tau]$.

\smallskip
\textbf{Step 3.} Construction of the blow-up solution by a limiting
argument.

In the previous step, we constructed a family of solutions $\{u^{(\tau)}\}_{\tau\in(t_{0}^{\ast},0)}$
(each $u^{(\tau)}$ defined on $[t_{0}^{\ast},\tau]$) satisfying
the bootstrap conclusion, uniformly in $\tau$. The main goal of this
step is to construct a desired blow-up solution $u$ defined on $[t_{0}^{\ast},0)$
by taking a weak limit as $\tau\to0^{-}$.

For a limiting argument, we need compactness properties of $\lmb^{(\tau)}$,
$\gmm^{(\tau)}$, and $u^{(\tau)}(t_{0}^{\ast})$ in various topologies.
First, we claim that $(\lmb^{(\tau)},\gmm^{(\tau)})$ is precompact
in the topology $C_{\mathrm{loc}}^{0}([t_{0}^{\ast},0)\to\bbR_{+}\times\bbR/2\pi\bbZ)$.
To see this, by the bootstrap conclusion, $|\log\lmb^{(\tau)}|$ is
uniformly bounded on $[t_{0}^{\ast},c]$ for any $c\in(t_{0}^{\ast},0)$.
On the other hand, by \eqref{eq:rough-control-lmb-gmm} with $\rd_{s}=(\lmb^{(\tau)})^{2}\rd_{t}$
and \eqref{eq:proximity-b-bqnu}, we see that $|\rd_{t}\log\lmb^{(\tau)}|$
and $|\rd_{t}\gmm^{(\tau)}|$ are also uniformly bounded on $[t_{0}^{\ast},c]$.
Now the claim follows by the Arzelà--Ascoli theorem.

Next, we claim that $\{u^{(\tau)}(t_{0}^{\ast})\}$ is $H_{0}^{1,1}$-bounded.
The $H_{0}^{1}$-boundedness is immediate from \eqref{eq:bootstrap-conclusion}
and the coercivity estimate \eqref{eq:coercivity}, so it suffices
to show that $\{\|ru^{(\tau)}(t_{0}^{\ast})\|_{L^{2}}\}_{\tau}$ is
bounded. This follows from \eqref{eq:initial-data-prop-1}, the nonlinear
virial identities \eqref{eq:virial-1}--\eqref{eq:virial-2}, and
energy conservation. Note in particular that the $H_{0}^{1,1}$-boundedness
implies the $L^{2}$-precompactness.

Therefore, there exist a sequence $\tau_{n}\to0^{-}$, continuous
parameter functions $\lmb(t)$ and $\gmm(t)$ on $[t_{0}^{\ast},0)$,
and a profile $u(t_{0}^{\ast})\in H_{0}^{1,1}$ such that $(\lmb^{(\tau_{n})},\gmm^{(\tau_{n})})\to(\lmb,\gmm)$
in $C_{\mathrm{loc}}^{0}([t_{0}^{\ast},0)\to\bbR_{+}\times\bbR/2\pi\bbZ)$,
$u^{(\tau_{n})}(t_{0}^{\ast})\weakto u(t_{0}^{\ast})$ in $H_{0}^{1,1}$,
and $u^{(\tau_{n})}(t_{0}^{\ast})\to u(t_{0}^{\ast})$ in $H_{0}^{1-}$.
In particular, $\lmb(t)$ and $\gmm(t)$ satisfies \eqref{eq:bootstrap-conclusion}
on $[t_{0}^{\ast},0)$.

Let $u(t)$ be the forward-in-time maximal evolution of $u(t_{0}^{\ast})$
on $[t_{0}^{\ast},T_{+})$. We first claim that $T_{+}\geq0$. Suppose
not, i.e., $T_{+}<0$. By the standard $H_{0}^{1-}$-Cauchy theory,
$u^{(\tau_{n})}\to u$ in $C_{\mathrm{loc}}^{0}([t_{0}^{\ast},T_{+})\to H_{0}^{1-})$.
Since $\{u^{(\tau_{n})}(t)\}$ is $H_{0}^{1}$-bounded uniformly in
$n$ and $t\in[t_{0}^{\ast},T_{+}]$, Fatou property of weak limits
implies that $\{u(t)\}$ is also $H_{0}^{1}$-bounded as $t\to T_{+}$.
This contradicts to the blow-up criterion, i.e., $\|u(t)\|_{H_{0}^{1}}\to\infty$
as $t\to T_{+}$. Therefore, $T_{+}\geq0$.

For each $t\in[t_{0}^{\ast},0)$, since $u(t),\lmb(t),\gmm(t)$ are
defined, we can define $\eps(t)$ via the formula \eqref{eq:u-decomp}.
Due to the convergences of $\lmb^{(\tau_{n})}$, $\gmm^{(\tau_{n})}$,
and $u^{(\tau_{n})}$, we have $\eps^{(\tau_{n})}(t)\to\eps(t)$ in
$H_{0}^{1-}$ for any $t\in[t_{0}^{\ast},0)$. It is then clear that
$\lmb$, $\gmm$, and $\eps$ satisfy \eqref{eq:bootstrap-conclusion}
on $[t_{0}^{\ast},0)$. In particular, $\lmb(t)\to0$ as $t\to0^{-}$
and $T_{+}=0$. Moreover, as $u(t_{0}^{\ast})\in H_{0}^{1,1}$, $u$
is a $H_{0}^{1,1}$-solution. Therefore, the properties stated in
Theorem~\ref{thm:MainThm} hold.
\end{proof}
\begin{proof}[Proof of Corollary~\ref{cor:InfiniteTimeBlowUp}]
Let $v$ be a $H_{0}^{1,1}$-solution on the time interval $[-\frac{1}{t_{0}},0)$
constructed in Theorem~\ref{thm:MainThm}; let us rewrite \eqref{eq:thm-decomp}
as 
\[
v(t)-Q_{\lmb_{q,\nu}(t),\gmm_{q,\nu}(t)}-z_{\mathrm{lin}}(t)\to0\text{ in }L^{2}\text{ as }t\to0^{-},
\]
where $z_{\mathrm{lin}}(t)\coloneqq e^{it\Delta^{(-2)}}z^{\ast}$ with $\Dlt^{(-2)} = \rd_{r}^{2} + \frac{1}{r} \rd_{r} - \frac{4}{r^{2}}$.
Now, let $u$ be the pseudoconformal transform of $v$, i.e., 
\[
u(t,r)\coloneqq[\calC v](t,r)=\frac{e^{i\frac{r^{2}}{4t}}}{t}v\Big(-\frac{1}{t},\frac{r}{t}\Big),\qquad\forall t\in[t_{0},\infty),
\]
which satisfies 
\[
u(t,r)-e^{i\frac{r^{2}}{4t}}Q_{\wh{\lmb}_{q,\nu}(t),\wh{\gmm}_{q,\nu}(t)}(r)-\calC z_{\mathrm{lin}}(t,r)\to0\text{ in }L^{2}\text{ as }t\to\infty.
\]
To complete the proof, we show that the pseudoconformal phase $e^{i\frac{r^{2}}{4t}}$
can be dropped and $\calC z_{\mathrm{lin}}=e^{it\Delta^{(-2)}}u^{\ast}$ with $u^{\ast} \in H^{1, 1}_{0} \cap r L^{2}$.
For the first assertion, we note that 
\[
\|(e^{i\frac{r^{2}}{4t}}-1)Q_{\wh{\lmb}_{q,\nu}(t),\wh{\gmm}_{q,\nu}(t)}(r)\|_{L^{2}}=\|(e^{i(\wh{\lmb}_{q,\nu}(t)r)^{2}/4t}-1)Q\|_{L^{2}}\to0,
\]
where we used $(\wh{\lmb}_{q,\nu}(t))^{2}/t\to0$ as $t\to\infty$
and the DCT. 

For the second assertion, we formally view $z^{\ast}(r)$ and $z_{\mathrm{lin}}(t, r)$ as the
radial part of $(-2)$-equivariant functions $\bfz^{\ast}(x) = z^{\ast}(r) e^{-2 i \tht}$ and $\bfz_{\mathrm{lin}}(t, x) = z_{\mathrm{lin}}(t, r) e^{- 2 i \tht}$, respectively, so that $\lap_{\bbR^{2}} \bfz^{\ast}(x) = (\lap^{(-2)} z^{\ast}(r)) e^{- 2 i \tht}$ and similarly for $z_{\mathrm{lin}}$, $\bfz_{\mathrm{lin}}$.
Therefore, by the Fourier representation of $\bfz_{\mathrm{lin}}$,
\begin{equation*}
\bfz_{\mathrm{lin}}(t, x) = \frac{1}{(2 \pi)^{2}} \int_{\bbR^{2}} e^{- i t \abs{\xi}^{2} + i \xi \cdot x} \wh{\bfz^{\ast}}(\xi) \, d \xi
\end{equation*}
where we denoted $\wh \bff(\xi)=\int_{\bbR^{2}}e^{-i\xi\cdot x} \bff(x)dx$. Applying the pseudoconformal transform, we have 
\begin{align*}
[\calC \bfz_{\mathrm{lin}}](t,x) & =\frac{1}{(2\pi)^{2}}\frac{1}{t}\int_{\bbR^{2}}e^{i(\frac{1}{t})|\xi|^{2}+i\xi\cdot(\frac{x}{t})+i\frac{|x|^{2}}{4t}}\wh{\bfz^{\ast}}(\xi)d\xi\\
 & =\frac{1}{(2\pi)^{2}}\frac{1}{t}\int_{\bbR^{2}}e^{\frac{i}{4t}|x+2\xi|^{2}}\wh{\bfz^{\ast}}(\xi)d\xi\\
 & =\frac{1}{4\pi it}\int_{\bbR^{2}}e^{i\frac{|x-y|^{2}}{4t}}\cdot\frac{i}{4\pi}\wh{\bfz^{\ast}}(-y/2)dy.
\end{align*}
Using the formula of the fundamental solution to the propagator $e^{it\Delta_{\bbR^{2}}}$
in 2D, we obtain 
\[
\calC \bfz_{\mathrm{lin}}=e^{it\Delta_{\bbR^{2}}}[\frac{i}{4\pi}\wh{\bfz^{\ast}}(-\frac{\cdot}{2})].
\]
Finally using the definition of the Bessel functions, it follows that
\[
\frac{i}{4\pi}\wh{\bfz^{\ast}}(-\frac{x}{2}) e^{2 i \tht} = \left[ - \frac{i}{2}\int_{0}^{\infty}J_{-2}(|\frac{x}{2}|r)z^{\ast}(r)rdr\right] e^{- 2 i \tht} = u^{\ast}(r) e^{- 2 i \tht}.
\]
Define $\bfu^{\ast}(x) := u^{\ast}(r) e^{- 2 i \tht}$. Since $\bfz^{\ast} \in H^{1, 1}(\bbR^{2})$, we also have $\bfu^{\ast} \in H^{1, 1}$. Clearly, $\rd_{r} u^{\ast}(r) \in L^{2}$ and $r u^{\ast}(r) \in L^{2}$. Moreover, by Hardy's inequality for $(-2)$-equivariant functions, $r^{-1} u^{\ast}(r) \in L^{2}$; hence $u^{\ast} \in H^{1, 1}_{0} \cap r L^{2}$. Finally,  
\begin{equation*}
\calC z_{\lin}(t, r) = e^{2 i \tht} \calC \bfz_{\mathrm{lin}}(x) = e^{2 i \tht} \left(e^{i t \lap_{\bbR^{2}}} \bfu^{\ast}(\cdot)\right)(x) = e^{i t \lap^{(-2)}} u^{\ast}(r),
\end{equation*}
which completes the proof. \qedhere
\end{proof}

\subsection{Bounds for $z^{\flat}$, $\Psi_{z^{\flat}}$, and $R_{Q,z^{\flat}}$
under bootstrap}

From now on, we prove the main bootstrap Proposition~\ref{prop:bootstrap}.
For the sake of notational convenience, we will drop the superscripts
$(\tau)$. We recall that $u$ admits the decomposition 
\[
u(t)=e^{i\gmm_{z}(t)}[(Q+\eps)^{\sharp}+z](t),
\]
where $\lmb(t)$, $\gmm(t)$, and $\eps(t)$ are determined by the
orthogonality condition 
\begin{equation}
(\eps,\calZ_{1})_{r}=(\eps,\calZ_{2})_{r}=0\label{eq:eps-orthog}
\end{equation}
and satisfy the bootstrap hypothesis \eqref{eq:bootstrap-hypothesis}.
Note that the $\sharp$-operation is defined through these $\lmb$
and $\gmm$. The extra phase rotation $\gmm_{z}(t)$ is defined by
\eqref{eq:Def-tht_z}. \textbf{We will always assume the bootstrap
hypothesis \eqref{eq:bootstrap-hypothesis} from now on, unless otherwise
stated.}

In this section, assuming the bootstrap hypothesis, we record several
convenient (though not sharp) bounds for $z^{\flat}$ as well as for
the interaction term $R_{Q,z^{\flat}}$, in terms of the parameter
$b$. In our context, we define $b$ by 
\begin{equation}
b\coloneqq\frac{(\eps,i\Lmb_{B}^{\mathrm{(avg)}}Q)_{r}+\tfrac{1}{2}(\eps,i\Lmb_{B}^{\mathrm{(avg)}}\eps)_{r}}{4\pi\log B_{0}},\label{eq:def-b}
\end{equation}
where $B_{0}=|t|^{\frac{1}{2}}/\lmb(t)$ as in \eqref{eq:def-B0},
$\Lmb_{B}$ is a truncated $L^{2}$-scaling generator defined by \eqref{eq:def-truncated-scaling-gen},
and $T_{B}^{\avg}$ denotes the average of a family of operators (or
functions) $\{T_{B}\}_{B}$: 
\begin{equation}
T_{B}^{\avg}\coloneqq\frac{10}{\log|\log|t||}\int_{B_{-1/5}}^{B_{-1/10}}T_{B}\frac{dB}{B}\label{eq:def-averaging}
\end{equation}
with 
\[
B_{-s}\coloneqq\frac{B_{0}}{|\log|t||^{s}}=\frac{1}{\lmb(t)}\cdot\frac{|t|^{\frac{1}{2}}}{|\log|t||^{s}}.
\]
One can view $b$ as a modification of $\frac{(\eps,i\Lmb_{B_{0}}Q)_{r}}{4\pi\log B_{0}}$
or the ``$b$-parameter'' in our formal derivation of blow-up (Section~\ref{subsec:Formal-derivation-of-blow-up}).
Notice that we truncated strictly inside the self-similar scale $B_{0}$.
This is because we want to regard $(\eps,i\Lmb_{B}\eps)_{r}$ as an
error (or, a correction term). The choices of $B_{-1/5}$ and $B_{-1/10}$,
and averaging over $B$ are introduced to gain an additional small
parameter in the modulation estimate of $b$; see Remark~\ref{rem:mod-est-cutoff-radius}.

The fact that we need at this stage is just the proximity of $b$
to $b_{q,\nu}$:
\begin{lem}[Proximity of $b$ to $b_{q,\nu}$]
We have 
\begin{equation}
|b(t)-b_{q,\nu}(t)|\aleq|\log|t||^{-\frac{1}{5}}\cdot b_{q,\nu}\aleq o_{t\to0}(b_{q,\nu}).\label{eq:proximity-b-bqnu}
\end{equation}
\end{lem}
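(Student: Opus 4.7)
The idea is to decompose
\begin{equation*}
b - b_{q,\nu} = \mathrm{(I)} + \mathrm{(II)} + \mathrm{(III)},
\end{equation*}
where
\begin{align*}
\mathrm{(I)} &\coloneqq \frac{(\eps, i\Lmb_{B_{0}}Q)_{r}}{4\pi\log B_{0}} - b_{q,\nu}, \\
\mathrm{(II)} &\coloneqq \frac{(\eps, i(\Lmb_{B}^{\avg} - \Lmb_{B_{0}})Q)_{r}}{4\pi\log B_{0}}, \qquad
\mathrm{(III)} \coloneqq \frac{(\eps, i\Lmb_{B}^{\avg}\eps)_{r}}{8\pi\log B_{0}}.
\end{align*}
The term $\mathrm{(I)}$ is directly controlled by the second line of the bootstrap hypothesis \eqref{eq:bootstrap-hypothesis}. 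It remains to show $\mathrm{(II)}$ and $\mathrm{(III)}$ are $o(b_{q,\nu}|\log|t||^{-1/5})$, which together with an improvement on $\mathrm{(I)}$ yields the claim.

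For $\mathrm{(II)}$, the point is that $\Lmb Q = (yQ)_{y}$ decays pointwise as $|\Lmb Q| \aleq \langle y\rangle^{-2}$, so the difference $\Lmb_{B}^{\avg}Q - \Lmb_{B_{0}}Q$ is supported in the annulus $\{B_{-1/5} \aleq r \aleq 2B_{0}\}$ and a direct computation (using the definition of the average \eqref{eq:def-averaging}) gives $\|\Lmb_{B}^{\avg}Q - \Lmb_{B_{0}}Q\|_{L^{1}} \aleq \log\log|t|$. Combined with $\|\eps\|_{L^{\infty}(r \geq 1)} \aleq \|\eps\|_{\dot\calH_{0}^{1}} \aleq |\bm{b}_{q,\nu}|$, obtained from the coercivity of $L_{Q}$ (Lemma~\ref{lem:linear-coercivity}), the log-loss Hardy--Sobolev bound \eqref{eq:m=00003D0-HardySobolev}, and the third bootstrap bound on $\|L_{Q}\eps\|_{L^{2}}$, this yields $|\mathrm{(II)}| \aleq b_{q,\nu}\log\log|t|/|\log|t||$, which is $o(b_{q,\nu}|\log|t||^{-1/5})$.

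For $\mathrm{(III)}$, using self-adjointness of $i\Lmb_{B}^{\avg}$ (from the antisymmetry of $\Lmb_{B}$) and a direct calculation, one obtains the representation $(\eps, i\Lmb_{B}^{\avg}\eps)_{r} = -\int\chi_{B}^{\avg}\,r^{2}\,\Im(\bar\eps\,\partial_{r}\eps)\,dr$ (modulo harmless $\chi_{B}^{\avg}{}'$ contributions). Split at $r = 1$. On $\{r \leq 1\}$, Cauchy--Schwarz with $\|\eps\|_{L^{2}} \aleq b_{q,\nu}^{1/2}|\log|t||$ and $\|\partial_{r}\eps\|_{L^{2}} \aleq |\bm{b}_{q,\nu}|$ yields a bound of $b_{q,\nu}^{3/2}|\log|t||$. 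On $\{r > 1\}$, using $\|\eps\|_{L^{\infty}(r > 1)} \aleq |\bm{b}_{q,\nu}|$ together with the weighted Cauchy--Schwarz estimate $\|\chi_{B}^{\avg}r^{2}\partial_{r}\eps\|_{L^{1}} \aleq B_{0}^{2}\|\partial_{r}\eps\|_{L^{2}}$ and $B_{0}^{2} \aeq 1/b_{q,\nu}$, one obtains a bound of $b_{q,\nu}$. Dividing by $\log B_{0} \aeq |\log|t||$, both pieces of $\mathrm{(III)}$ are $o(b_{q,\nu}|\log|t||^{-1/5})$.

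The main obstacle lies with $\mathrm{(I)}$: the raw bootstrap hypothesis gives only $|\mathrm{(I)}| \leq \tfrac12 b_{q,\nu}$, which is $O(b_{q,\nu})$ rather than the claimed $|\log|t||^{-1/5}b_{q,\nu}$. The strategy to close this gap is to exploit the fact that the initial data satisfies the much tighter bound $|\mathrm{(I)}|_{t = \tau} \aleq b_{q,\nu}/\log B_{0}$ (from the first line of \eqref{eq:initial-data-prop}), and to combine this with a dedicated ODE-type estimate controlling the time evolution of $(\eps, i\Lmb_{B_{0}}Q)_{r}/(4\pi\log B_{0})$ under the backward flow. In this step, the averaging over $B \in [B_{-1/5}, B_{-1/10}]$ becomes essential, as it yields a small factor of $(\log\log|t|)^{-1}$ from differentiation in $B$ that precisely matches the $|\log|t||^{-1/5}$ rate in the conclusion; see Remark~\ref{rem:mod-est-cutoff-radius}. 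This refinement is the technical heart of the argument.
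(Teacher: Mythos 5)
The decomposition $b - b_{q,\nu} = \mathrm{(I)} + \mathrm{(II)} + \mathrm{(III)}$ is exactly the paper's, and the broad strategy for (II) and (III) is in the right spirit, but there are two substantive problems.

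First, in your treatment of (III) you use $\|\chi_{B}^{\avg}r^{2}\rd_{r}\eps\|_{L^{1}} \aleq B_{0}^{2}\|\rd_{r}\eps\|_{L^{2}}$ and $B_{0}^{2} \sim 1/b_{q,\nu}$. The paper instead exploits $B \leq B_{-1/10} = B_{0}/|\log|t||^{1/10}$, so that $B^{2}\|\eps\|_{\dot\calH_{0}^{1}}^{2} \aleq |\log|t||^{-1/5}\cdot b_{q,\nu}|\log b_{q,\nu}|$ (using $\|\eps\|_{\dot\calH_{0}^{1}} \aleq b_{q,\nu}|\log b_{q,\nu}|^{1/2}$, which is what the paper's proof invokes). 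Replacing $B$ by $B_{0}$ loses exactly the factor $|\log|t||^{-1/5}$ that the lemma claims: under that reading of the bootstrap hypothesis your bound on (III) is $O(b_{q,\nu})$, not $o(b_{q,\nu}|\log|t||^{-1/5})$. The gain here comes purely from the choice of cutoff radius strictly inside $B_{0}$, not from the averaging; your closing remark that the averaging "yields a small factor of $(\log\log|t|)^{-1}$ [\ldots] that precisely matches the $|\log|t||^{-1/5}$ rate" is not correct (and indeed those two quantities have different orders). The role of the averaging shows up later, in the boundary-term localization of Lemma~\ref{lem:Control-of-b-and-eta} (see Remark~\ref{rem:mod-est-cutoff-radius}), not in this lemma.

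Second, and more importantly, the paper does \emph{not} carry out the dedicated ODE argument you propose for (I), and it would be circular to do so here: the improved control on $\frac{(\eps,i\Lmb_{B_{0}}Q)_{r}}{4\pi\log B_{0}} - b_{q,\nu}$ that you are after is precisely what Proposition~\ref{prop:bootstrap}'s closing step establishes (the second line of \eqref{eq:bootstrap-conclusion}), and this lemma is an \emph{input} to that argument. What the paper's proof actually establishes is the decay bound on (II)$+$(III) alone, i.e.\ $|b - \frac{(\eps,i\Lmb_{B_{0}}Q)_{r}}{4\pi\log B_{0}}| \aleq |\log|t||^{-1/5} b_{q,\nu}$; this is what is invoked later as the ``real part of \eqref{eq:b-bound}.'' For (I), the paper simply plugs in the second line of the bootstrap hypothesis, which gives $\leq \tfrac12 b_{q,\nu}$, and the only consequence extracted from the lemma at the level of $|b - b_{q,\nu}|$ is $b \sim b_{q,\nu}$ (Remark~\ref{rem:convenient-rel}). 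So your identification of a ``gap'' at (I) is a misreading of what the lemma's proof is supposed to deliver, and the ODE-type refinement you sketch is not needed at this stage.
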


\begin{proof}
From \eqref{eq:bootstrap-hypothesis} and \eqref{eq:coercivity},
we note that 
\[
\|\eps\|_{\dot{\calH}_{0}^{1}}\sim\|L_{Q}\eps\|_{L^{2}}\aleq b_{q,\nu}|\log b_{q,\nu}|^{\frac{1}{2}}.
\]
Now we write 
\[
b-b_{q,\nu}=\Big(\frac{(\eps,i\Lmb_{B_{0}}^{\avg}Q)_{r}}{4\pi\log B_{0}}-b_{q,\nu}\Big)+\frac{\big(\eps,i(\Lmb_{B}^{\avg}-\Lmb_{B_{0}})Q\big)_{r}+\tfrac{1}{2}(\eps,i\Lmb_{B}^{\avg}\eps)_{r}}{4\pi\log B_{0}}.
\]
For $B\in[B_{-1/5},B_{-1/10}]$, we observe that 
\begin{align*}
|(\eps,i(\Lmb_{B}-\Lmb_{B_{0}})Q)_{r}| & \aleq\|\eps\cdot\chf_{[B,2B_{0}]}\langle y\rangle^{-2}\|_{L^{1}}\\
 & \aleq(\log(B_{0}/B))^{\frac{1}{2}}\|\eps\|_{\dot{\calH}_{0}^{1}}\aleq(\log|\log|t||)^{\frac{1}{2}}b_{q,\nu}|\log b_{q,\nu}|^{\frac{1}{2}}
\end{align*}
and 
\begin{align*}
|(\eps,i\Lmb_{B}\eps)_{r}| & \aleq\|\chf_{(0,2B]}|\eps|\cdot|\eps|_{1}\|_{L^{1}}\\
\aleq B^{2} & \|\eps\|_{\dot{\calH}_{0}^{1}}^{2}\aleq(B_{-1/10})^{2}\cdot b_{q,\nu}^{2}|\log b_{q,\nu}|\aleq|\log|t||^{-\frac{1}{5}}\cdot b_{q,\nu}|\log b_{q,\nu}|,
\end{align*}
where we used $(B_{0})^{-2}\sim b_{q,\nu}$ and the bootstrap hypothesis
\eqref{eq:bootstrap-hypothesis}. Substituting $4\pi\log B_{0}\sim|\log b_{q,\nu}|$
and the above two displays into the formula of $b-b_{q,\nu}$ completes
the proof.
\end{proof}
\begin{rem}[Convenient relations under the bootstrap hypothesis]
\label{rem:convenient-rel}Under the bootstrap hypothesis \eqref{eq:bootstrap-hypothesis},
we have 
\begin{equation}
\lmb(t)\sim\frac{|t|^{\frac{\Re(\nu)}{2}+1}}{|\log|t||}\quad\text{and}\quad b(t)\sim\frac{|t|^{\Re(\nu)+1}}{|\log|t||^{2}}\label{eq:conv-rel-1}
\end{equation}
by \eqref{eq:proximity-b-bqnu}. Thus we have the following convenient
relations: 
\begin{equation}
\left\{ \begin{aligned}\lmb & \sim b^{\frac{1}{2}}\cdot|t|^{\frac{1}{2}},\\
B_{0} & \sim b^{-\frac{1}{2}},\\
\lmb^{3}|t|^{\frac{\Re(\nu)-2}{2}} & \sim b^{2}|\log b|.
\end{aligned}
\right.\label{eq:conv-rel-2}
\end{equation}
Moreover, the bootstrap hypothesis \eqref{eq:bootstrap-hypothesis}
combined with \eqref{eq:coercivity} and \eqref{eq:proximity-b-bqnu}
says that 
\begin{equation}
\left\{ \begin{aligned}\|\eps\|_{L^{2}} & \aleq b^{\frac{1}{2}}|\log b|,\\
\|\eps\|_{\dot{\calH}_{0}^{1}} & \aleq b|\log b|^{\frac{1}{2}}.
\end{aligned}
\right.\label{eq:conv-rel-3}
\end{equation}
Keeping in mind these relations will help us identify the main terms
and error terms in our analysis.
\end{rem}

Combining Proposition~\ref{prop:ApprxRadiation} and Remark~\ref{rem:convenient-rel},
we are able to obtain the following simple (but not sharp) bounds
for $z^{\flat}$ in terms of $b$.
\begin{cor}[Energy bounds for $z^{\flat}$ and $\Psi_{z^{\flat}}$]
\label{cor:bounds-z-bootstrap}There exists\footnote{See Proposition~\ref{prop:ApprxRadiation} for $\delta_{z}$.}
$0<\delta<\min(\frac{1}{2},\delta_{z})$ such that the following estimates
hold.
\begin{itemize}
\item (Energy bounds at $L^{2}$ and $\dot{H}^{1}$-scalings) We have 
\begin{align}
\|\langle\log_{-}y\rangle^{2}z^{\flat}\|_{L^{2}} & \aleq1,\label{eq:z-flat-L2-bound}\\
\|\langle\log_{-}y\rangle^{2}|z^{\flat}|_{-1}\|_{L^{2}}+\|\langle\log_{-}y\rangle^{2}z^{\flat}\|_{L^{\infty}} & \aleq\lmb,\label{eq:z-flat-H1-bound-1}\\
\|Qz^{\flat}\|_{L^{1}} & \aleq b^{\frac{1}{2}}\cdot|t|^{\delta},\label{eq:z-flat-H1-bound-2}
\end{align}
\item (Energy bounds at $\dot{H}^{2}$ and $\dot{H}^{3}$-scalings) We have
\begin{align}
\|\langle\log_{-}y\rangle^{2}\tfrac{1}{y}|z^{\flat}|_{-1}\|_{L^{2}\cap y^{-1}L^{1}\cap yL^{\infty}} & \aleq b\cdot|t|^{\delta},\label{eq:z-flat-H2-bound}\\
\|\tfrac{1}{y^{2}\langle\log_{-}y\rangle^{2}}|z^{\flat}|_{-1}\|_{L^{2}\cap y^{-1}L^{1}\cap yL^{\infty}} & \aleq b^{\frac{3}{2}}\cdot|t|^{\delta}.\label{eq:z-flat-H3-bound}
\end{align}
\item (Energy bound for $\rd_{-}^{(\frkm)}z^{\flat}$ at $\dot{H}^{4}$-scalings)
We have 
\begin{align}
\|\langle\log_{-}y\rangle^{2}\tfrac{1}{\langle y\rangle y}|\rd_{-}^{(\frkm)}z^{\flat}|_{-1}\|_{L^{2}\cap yL^{1}} & \aleq b^{2}\cdot|t|^{\delta}.\label{eq:z-flat-degenerate-H4-bound}
\end{align}
\item (Energy bounds for $\Psi_{z^{\flat}}$) We have 
\begin{align}
\|\Psi_{z^{\flat}}\|_{L^{2}} & \aleq b^{\frac{3}{2}}\cdot|t|^{\delta},\label{eq:Psi_zflat-L2}\\
\|\langle\log_{-}y\rangle|\Psi_{z^{\flat}}|_{-1}\|_{L^{2}} & \aleq b^{2}\cdot|t|^{\delta}.\label{eq:Psi_zflat-Hdot1}
\end{align}
\end{itemize}
\end{cor}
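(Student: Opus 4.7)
The approach is straightforward rescaling combined with the pointwise estimates from Proposition~\ref{prop:ApprxRadiation}. Recall $z^{\flat}(y) = \lmb e^{-i\gmm} z(\lmb y)$, so the self-similar zone $r \leq |t|^{1/2}$ for $z$ corresponds to $y \leq B_{0}$ for $z^{\flat}$, while the exterior zone $|t|^{1/2} \leq r \leq 2$ corresponds to $B_{0} \leq y \leq 2/\lmb$. The plan is to estimate each norm by splitting into these two zones. In the interior zone, \eqref{eq:z-ss-bound} gives $|z^{\flat}(y)|_{k} \aleq \lmb \cdot |t|^{(\Re(\nu)-2)/2} (\lmb y)^{2} \sim b^{2}|\log b| \cdot y^{2}$ by \eqref{eq:conv-rel-2}, and similarly \eqref{eq:rd-z-ss-degen} shows $|\rd_{-}^{(\frkm)} z^{\flat}|_{k}$ has an extra $y$-factor. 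In the exterior zone, \eqref{eq:z-ext-bound} rescales to $|z^{\flat}|_{k} \aleq \lmb^{k+1}\{(\lmb y)^{\Re(\nu)-k} + |t|^{\Re(\nu)+1-k}(\lmb y)^{-(\Re(\nu)+2-k)}\}$ on $B_{0} \leq y \leq 2/\lmb$.

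First, I would verify the $L^{2}$ and $\dot{H}^{1}$-scale bounds \eqref{eq:z-flat-L2-bound}--\eqref{eq:z-flat-H1-bound-1}. In the interior zone, the bound $|y|^{2}$ growth combines with weights $\langle\log_{-}y\rangle^{2}$ to give a convergent integral dominated by $b^{4}(\log b)^{2} B_{0}^{6+2k} \aleq b$ (a positive power of $b$), which is an acceptable error. In the exterior zone, the $L^{2}$-integrability is driven by the $r^{\Re(\nu)}$ piece giving $\aleq 1$, and the $\lmb^{k+1}$ prefactor then produces powers of $\lmb$ matching the scaling. For the weighted $L^{1}$-bound \eqref{eq:z-flat-H1-bound-2} on $Q z^{\flat}$, I would apply Lemma~\ref{lem:L1-log-bound} to the already-established $L^{2}$ bounds: since $\|\lan y\ran^{-1} z^{\flat}\|_{L^{2}} \aleq \lmb = b^{1/2}|t|^{1/2}$, the weighted $L^{1}$ estimate gives $\|Q z^{\flat}\|_{L^{1}} \aleq b^{1/2}|t|^{1/2} \cdot |\log b|^{1/2}$, and $|t|^{1/2}$ can be split into $|t|^{\delta}$ and a positive power absorbing the log.

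Next, for the higher-scaling bounds \eqref{eq:z-flat-H2-bound}--\eqref{eq:z-flat-degenerate-H4-bound}, the presence of the extra $y^{-1}$ or $y^{-2}$ weights directly compensates for the growth in the interior zone: after rescaling the pointwise bounds, the interior contribution gives $\aleq b^{2}|\log b| \cdot B_{0}^{2k'} \sim b^{k'+O(1)}$ for appropriate $k'$, and these bounds again produce a positive power of $|t|$ via the identity $\lmb^{3}|t|^{(\Re(\nu)-2)/2} \sim b^{2}|\log b|$. In the exterior zone, the $y^{-j}$ weight integrates $r^{\Re(\nu)-j}$ against $r \, dr$ over $[|t|^{1/2}, 2]$, producing at worst a logarithmic loss in $|t|$ balanced by the $\lmb^{k+1}$ prefactor. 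Crucially, since $\Re(\nu) > 0$, one always has strict gain over the borderline scalings, which is what yields the extra $|t|^{\delta}$; one selects $\delta < \min(\tfrac{1}{2}, \tfrac{\Re(\nu)}{2}, \dlt_{z})$ to make the book-keeping uniform.

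Finally, the bounds \eqref{eq:Psi_zflat-L2}--\eqref{eq:Psi_zflat-Hdot1} on $\Psi_{z^{\flat}} = \lmb^{2} [\Psi_{z}]^{\flat}$ follow directly from rescaling \eqref{eq:Psi_z-L2}--\eqref{eq:Psi_z-Hdot1} and invoking \eqref{eq:conv-rel-2}: namely, $\|\Psi_{z^{\flat}}\|_{L^{2}} = \lmb^{2}\|\Psi_{z}\|_{L^{2}} \aleq \lmb^{2} |t|^{(\Re(\nu)-1)/2 + \dlt_{z}} \sim b \cdot |t|^{1/2 + \dlt_{z}} \aleq b^{3/2} \cdot |t|^{\delta}$ once one absorbs the excess $|t|^{1/2-\delta}$, and analogously for \eqref{eq:Psi_zflat-Hdot1}, where the weight $\langle \log_{-} y \rangle$ is harmless since $\Psi_{z}$ is supported away from the origin (away from the self-similar zone). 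The main obstacle throughout is purely bookkeeping, i.e., tracking powers of $\lmb$, $|t|$, and $b$ and verifying that every inequality has a genuine positive margin to absorb into $|t|^{\delta}$, but the identity $\lmb^{3}|t|^{(\Re(\nu)-2)/2} \sim b^{2}|\log b|$ consistently gives this margin.
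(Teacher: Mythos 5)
Your proposal takes essentially the same route as the paper: rescale the pointwise bounds from Proposition~\ref{prop:ApprxRadiation}, split into the interior self-similar zone $y\leq B_{0}$ and the exterior zone, and bookkeep using the relations in \eqref{eq:conv-rel-2} (in particular $\lmb^{3}|t|^{(\Re(\nu)-2)/2}\sim b^{2}|\log b|$ and $\lmb\sim b^{1/2}|t|^{1/2}$). Your alternative derivation of \eqref{eq:z-flat-H1-bound-2} via Lemma~\ref{lem:L1-log-bound} applied to $\|\langle y\rangle^{-1}z^{\flat}\|_{L^{2}}\aleq\lmb$ is a valid variant of the paper's direct $L^{1}$ estimate from \eqref{eq:RQ,z-claim4-5}; both give $\lmb\cdot\text{(log)}\aleq b^{1/2}|t|^{\delta}$.

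Two small corrections. First, the exponent in the $\Psi_{z^{\flat}}$ bound: $\lmb^{2}|t|^{(\Re(\nu)-1)/2+\dlt_{z}}\sim b\,|t|^{(\Re(\nu)+1)/2+\dlt_{z}}$, not $b\,|t|^{1/2+\dlt_{z}}$ (you dropped the $\Re(\nu)/2$ in the exponent); the stated bound is weaker than the truth, so the conclusion $\aleq b^{3/2}|t|^{\delta}$ survives, but the justification as written does not quite close without $\Re(\nu)>2(\dlt_{z}-\delta)$. Second, your reason for the harmlessness of the weight $\langle\log_{-}y\rangle$ in \eqref{eq:Psi_zflat-Hdot1} is incorrect: $\Psi_{z}$ is \emph{not} supported away from the origin, since the piece $\chi_{r\aleq t^{1/2}}\rN(z)$ sits right on top of it. What actually saves the day is the rapid vanishing of $\rN(z)$ at $r=0$, which is precisely what the range $s\in[0,\dlt_{z}]$ in \eqref{eq:Psi_z-Hdot1} encodes; one bounds $\langle\log_{-}y\rangle\aleq y^{-\dlt_{z}}+1$ and rescales, exactly as the paper does. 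These are fixable bookkeeping slips; the core argument is the paper's.
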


\begin{rem}
In general, $z^{\flat}$ shows an $y^{2}$-behavior near the origin;
see \eqref{eq:z-ss-leading}. Thus, the logarithmic factor in \eqref{eq:z-flat-H3-bound}
is necessary to make the LHS finite.
\end{rem}

\begin{proof}
We first show the estimates for $z^{\flat}$, namely, \eqref{eq:z-flat-L2-bound}--\eqref{eq:z-flat-degenerate-H4-bound}.

\smallskip
\textbf{Step 1.} Proof of \eqref{eq:z-flat-L2-bound}--\eqref{eq:z-flat-degenerate-H4-bound}
in the self-similar region $y\leq B_{0}$.

We use the rescaled versions of \eqref{eq:z-ss-bound} and \eqref{eq:rd-z-ss-degen}
and apply \eqref{eq:conv-rel-2} to obtain 
\begin{align*}
\chf_{y\leq B_{0}}|z^{\flat}|_{1} & \aleq\chf_{y\leq B_{0}}\lmb^{3}|t|^{\frac{\Re(\nu)-2}{2}}y^{2}\aleq b^{2}|\log b|\cdot\chf_{y\leq B_{0}}y^{2},\\
\chf_{y\leq B_{0}}|\rd_{-}^{(\frkm)}z^{\flat}|_{1} & \aleq\chf_{y\leq B_{0}}\lmb^{3}|t|^{\frac{\Re(\nu)-2}{2}}y^{3}B_{0}^{-2}\aleq b^{3}|\log b|\cdot\chf_{y\leq B_{0}}y^{3}.
\end{align*}
These estimates immediately imply all the estimates \eqref{eq:z-flat-L2-bound}-\eqref{eq:z-flat-degenerate-H4-bound}
in the self-similar region $y\leq B_{0}$.

\smallskip
\textbf{Step 2.} Proof of \eqref{eq:z-flat-L2-bound}--\eqref{eq:z-flat-degenerate-H4-bound}
outside the self-similar region, i.e., $y\geq B_{0}$.

We restrict all the estimates outside the self-similar region. The
bounds \eqref{eq:z-flat-L2-bound}, \eqref{eq:z-flat-H1-bound-1},
and \eqref{eq:z-flat-H1-bound-2} (outside the self-similar region)
are already shown in \eqref{eq:RQ,z-claim4-5}. Next, following a
similar line of the proof of \eqref{eq:RQ,z-claim4-3} and \eqref{eq:RQ,z-claim4-4}
yields the bounds 
\begin{align*}
\|\chf_{y\geq B_{0}}y^{-1}|z^{\flat}|_{-1}\|_{L^{2}\cap y^{-1}L^{1}\cap yL^{\infty}} & \aleq\lmb^{2}|\log|t||+\lmb^{3}|t|^{\frac{\Re(\nu)-2}{2}}B_{0},\\
\|\chf_{y\geq B_{0}}y^{-2}|z^{\flat}|_{-1}\|_{L^{2}\cap y^{-1}L^{1}\cap yL^{\infty}} & \aleq\lmb^{3}|\log|t||+\lmb^{3}|t|^{\frac{\Re(\nu)-2}{2}},
\end{align*}
and applying \eqref{eq:conv-rel-2} to the above display gives \eqref{eq:z-flat-H2-bound}
and \eqref{eq:z-flat-H3-bound}. Finally, the estimate \eqref{eq:z-flat-degenerate-H4-bound}
follows from rescaling, \eqref{eq:z-ext-bound}, and \eqref{eq:conv-rel-2}:
\begin{align*}
 & \|\chf_{y\geq B_{0}}y^{-2}|\rd_{-}^{(\frkm)}z^{\flat}|_{-1}\|_{L^{2}\cap yL^{1}}\aleq\lmb^{4}\|\chf_{r\geq|t|^{\frac{1}{2}}}r^{-2}|z|_{-2}\|_{L^{2}\cap rL^{1}}\\
 & \qquad\aleq\lmb^{4}\|\chf_{|t|^{\frac{1}{2}}\leq r\leq2}\{r^{\Re(\nu)-4}+|t|^{\Re(\nu)-1}r^{-\Re(\nu)-2}\}\|_{L^{2}\cap rL^{1}}\\
 & \qquad\aleq\lmb^{4}\{|\log|t||+|t|^{\frac{1}{2}(\Re(\nu)-3)}+|t|^{\Re(\nu)-1}\}\\
 & \qquad\aleq\lmb^{4-}+\lmb|t|^{-\frac{1}{2}}\cdot\lmb^{3}|t|^{\frac{\Re(\nu)-2}{2}}\aleq b^{2}\cdot|t|^{\dlt}.
\end{align*}
This completes the proof of \eqref{eq:z-flat-L2-bound}--\eqref{eq:z-flat-degenerate-H4-bound}.

\smallskip
\textbf{Step 3.} Proof of \eqref{eq:Psi_zflat-L2}--\eqref{eq:Psi_zflat-Hdot1}.

The estimates \eqref{eq:Psi_zflat-L2} and \eqref{eq:Psi_zflat-Hdot1}
follow from \eqref{eq:Psi_z-L2}, \eqref{eq:Psi_z-Hdot1}, and \eqref{eq:conv-rel-2}:
\[
\|\Psi_{z^{\flat}}\|_{L^{2}}=\lmb^{2}\|\Psi_{z}\|_{L^{2}}\aleq\lmb^{2}|t|^{\frac{1}{2}(\Re(\nu)-1)+\dlt_{z}}\aleq b^{\frac{3}{2}}\cdot|t|^{\delta},
\]
and 
\begin{align*}
 & \|\langle\log_{-}y\rangle|\Psi_{z^{\flat}}|_{-1}\|_{L^{2}}\aleq\|(y^{-\dlt_{z}}+1)|\Psi_{z^{\flat}}|_{-1}\|_{L^{2}}\\
 & \qquad\aleq\lmb^{3+\dlt_{z}}\|r^{-\dlt_{z}}|\Psi_{z}|_{-1}\|_{L^{2}}+\lmb^{3}\||\Psi_{z}|_{-1}\|_{L^{2}}\aleq\lmb^{3}|t|^{\frac{1}{2}(\Re(\nu)-2)+\dlt_{z}}\aleq b^{2}\cdot|t|^{\delta}.
\end{align*}
This completes the proof.
\end{proof}
We turn our attention to the interaction term $R_{Q,z^{\flat}}$.
We have already computed the leading terms of $(R_{Q,z^{\flat}},\Lmb Q)_{r}$
and $(R_{Q,z^{\flat}},iQ)_{r}$ in Lemma~\ref{lem:inn-prod-interaction}.
Here, we further simplify the errors of Lemma~\ref{lem:inn-prod-interaction}
under the bootstrap \eqref{eq:bootstrap-hypothesis}, using Corollary~\ref{cor:bounds-z-bootstrap}.
Moreover, we also estimate the $L^{2}$-norm of $R_{Q,z^{\flat}}$
for later purposes.
\begin{lem}[Interaction term $R_{Q,z^{\flat}}$ under bootstrap hypothesis]
\label{lem:RQ,z}We have 
\begin{align}
\|R_{Q,z^{\flat}}\|_{L^{2}} & \aleq b^{\frac{3}{2}}\cdot|t|^{\delta},\label{eq:RQ,z-L2-est}\\
(R_{Q,z^{\flat}},\Lmb Q)_{r} & =8\sqrt{8}\pi\lmb^{3}\Re(e^{-i\gmm}pq(4it)^{\frac{\nu-2}{2}})+O(b^{2}\cdot|t|^{\delta}),\label{eq:RQ,z-inner-prod-1}\\
(R_{Q,z^{\flat}},iQ)_{r} & =-8\sqrt{8}\pi\lmb^{3}\Im(e^{-i\gmm}pq(4it)^{\frac{\nu-2}{2}})+O(b^{2}\cdot|t|^{\delta}).\label{eq:RQ,z-inner-prod-2}
\end{align}
\end{lem}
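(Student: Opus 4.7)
The plan is to prove the three statements of Lemma~\ref{lem:RQ,z} by combining the results already established in Lemma~\ref{lem:inn-prod-interaction} (for the inner product identities) and Corollary~\ref{cor:bounds-z-bootstrap} (for the $L^{2}$ bound), together with the bootstrap conversion rules \eqref{eq:conv-rel-1}--\eqref{eq:conv-rel-2}.

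For the refined inner product identities \eqref{eq:RQ,z-inner-prod-1} and \eqref{eq:RQ,z-inner-prod-2}, I first note that the bootstrap hypothesis ensures the assumption $\lmb(t) \lesssim |t|$ of Lemma~\ref{lem:inn-prod-interaction}, which is immediate from \eqref{eq:conv-rel-1} since $\Re(\nu) > 0$. The error term $\lmb^{4-} + \lmb^{3} |t|^{(\Re(\nu)-2)/2 + \delta}$ from Lemma~\ref{lem:inn-prod-interaction} can then be recast as $O(b^{2} |t|^{\delta'})$ for some $\delta' > 0$. Indeed, using \eqref{eq:conv-rel-2}, $\lmb^{4-} \sim b^{2-} |t|^{2-}$ and $\lmb^{3} |t|^{(\Re(\nu)-2)/2} \sim b^{2} |\log b|$, so both are dominated by $b^{2} |t|^{\delta'}$ after absorbing the logarithmic factors into a slightly smaller exponent via $|\log|t||^{-C} \lesssim |t|^{-\delta''}$.

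For the $L^{2}$ bound \eqref{eq:RQ,z-L2-est}, I will start from the explicit decomposition \eqref{eq:RQ,z-expansion} of $R_{Q, z^{\flat}}$ and estimate each of its six terms in $L^{2}$. The dominant contributions come from the first line of \eqref{eq:RQ,z-expansion}, linear in $z^{\flat}$: using $(2 + A_{\tht}[Q])/y \sim 1/(y \langle y \rangle^{2})$ and the pointwise relation $|z_{1}^{\flat_{-1}}| \lesssim |z^{\flat}|_{-1} + |A_{\tht}[z^{\flat}]| \cdot |z^{\flat}|/y$, the main task is to bound $\|y^{-1} \langle y \rangle^{-2} |z^{\flat}|_{-1}\|_{L^{2}}$. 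I will split this into the self-similar zone $y \leq 1$, where the pointwise bound $|z^{\flat}|_{-1} \lesssim \lmb^{3} |t|^{(\Re(\nu)-2)/2} y$ from (the rescaled) \eqref{eq:z-ss-bound} yields $\lmb^{3} |t|^{(\Re(\nu)-2)/2} \sim b^{2} |\log b| \lesssim b^{3/2} |t|^{\delta}$, and the exterior zone $y \geq 1$, where the $\dot{H}^{3}$-scaling bound \eqref{eq:z-flat-H3-bound} directly yields $b^{3/2} |t|^{\delta}$. The three cubic-in-$z^{\flat}$ terms on the second line of \eqref{eq:RQ,z-expansion} are strictly smaller and are controlled using the bounds \eqref{eq:z-flat-L2-bound}--\eqref{eq:z-flat-H2-bound}, Lemma~\ref{lem:mapping-integral-op}, and H\"older's inequality. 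Finally, for the sixth term $L_{Q+z^{\flat}}^{\ast}(\bfD_{Q+z^{\flat}}(Q+z^{\flat}) - z_{1}^{\flat_{-1}})$, I will reuse the bound \eqref{eq:RQ,z-claim6} already proved in Step~4 of Lemma~\ref{lem:inn-prod-interaction}'s proof, combined with the continuity of $L_{Q+z^{\flat}}^{\ast}$ from the $|\cdot|_{-1}$-controlled space into $L^{2}$ (using the formula $L_{w}^{\ast} v = \bfD_{w}^{\ast} v + B_{w}^{\ast}(\br{w} v)$ and Lemma~\ref{lem:mapping-integral-op} to handle the nonlocal piece), to obtain a bound of order $\lmb^{3-} + \lmb^{3} |t|^{(\Re(\nu)-2)/2} \lesssim b^{3/2} |t|^{\delta}$.

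The main obstacle is extracting the sharp exponent $b^{3/2}$ (as opposed to only $b$) in the $L^{2}$ bound for the first term: the $\dot{H}^{2}$-scaling bound \eqref{eq:z-flat-H2-bound} alone is insufficient, so I must exploit the $\dot{H}^{3}$-scaling bound \eqref{eq:z-flat-H3-bound} in the exterior region while falling back on the sharp self-similar pointwise behavior in the interior region. Keeping track of logarithmic factors $|\log b|^{C}$ and absorbing them into the small remainder $|t|^{\delta}$ via the relation $b \sim |t|^{\Re(\nu)+1}/|\log|t||^{2}$ is the most delicate aspect of the bookkeeping.
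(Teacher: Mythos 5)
Your proposal matches the paper's proof in both structure and the specific tools used: the inner-product identities \eqref{eq:RQ,z-inner-prod-1}--\eqref{eq:RQ,z-inner-prod-2} are obtained by feeding the bootstrap-hypothesis conversions \eqref{eq:conv-rel-1}--\eqref{eq:conv-rel-2} into Lemma~\ref{lem:inn-prod-interaction}, and the $L^{2}$ bound \eqref{eq:RQ,z-L2-est} is obtained by estimating each term of the decomposition \eqref{eq:RQ,z-expansion} using Corollary~\ref{cor:bounds-z-bootstrap} (chiefly \eqref{eq:z-flat-H1-bound-1}--\eqref{eq:z-flat-H3-bound}) together with the previously established \eqref{eq:RQ,z-claim6} for the last term. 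Two small points of imprecision worth noting: (i) calling $\{y\le 1\}$ the ``self-similar zone'' is misleading — the self-similar zone is $\{y\le B_{0}\}$ with $B_{0}\sim b^{-1/2}\gg 1$; the split at $y=1$ is forced by the weight $\langle y\rangle^{-2}$, and the pointwise bound from \eqref{eq:z-ss-bound} applies on $\{y\le 1\}$ only because $1\le B_{0}$; (ii) the inequality you invoke for absorbing logarithms is stated backwards — what is actually used is that the error in Lemma~\ref{lem:inn-prod-interaction} already carries a $|t|^{\delta}$ factor, so $b^{2}|\log b|\cdot|t|^{\delta}\aleq b^{2}|t|^{\delta'}$ follows from $|\log|t||\cdot|t|^{\delta-\delta'}\aleq 1$, i.e., $|\log|t||\aleq|t|^{-(\delta-\delta')}$ (not $|\log|t||^{-C}\aleq|t|^{-\delta''}$, which is vacuous). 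Neither of these affects the soundness of the argument.
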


\begin{proof}
Note that \eqref{eq:RQ,z-inner-prod-1} and \eqref{eq:RQ,z-inner-prod-2}
are immediate consequences of \eqref{eq:RQ,z-1}, \eqref{eq:RQ,z-2},
and Remark~\ref{rem:convenient-rel}. It remains to show \eqref{eq:RQ,z-L2-est}.
Recall the expansion \eqref{eq:RQ,z-expansion} of $R_{Q,z^{\flat}}$:
\[
\begin{aligned}R_{Q,z^{\flat}} & =-\tfrac{2+A_{\tht}[Q]}{y}z_{1}^{\flat_{-1}}+(\tint y{\infty}\Re(Qz_{1}^{\flat_{-1}})dy')Q\\
 & \quad-\tfrac{2A_{\tht}[Q,z^{\flat}]}{y}z_{1}^{\flat_{-1}}+(\tint y{\infty}\Re(Qz_{1}^{\flat_{-1}})dy')z^{\flat}-(\tint 0y\Re(\br{z^{\flat}}z_{1}^{\flat_{-1}})dy')Q\\
 & \quad+L_{Q+z^{\flat}}^{\ast}(\bfD_{Q+z^{\flat}}(Q+z^{\flat})-z_{1}^{\flat_{-1}}).
\end{aligned}
\tag{\ref{eq:RQ,z-expansion}}
\]
The $L^{2}$-norm of the first line of RHS\eqref{eq:RQ,z-expansion}
is estimated by 
\begin{align*}
 & \aleq\|-\tfrac{2+A_{\tht}[Q]}{y}z_{1}^{\flat_{-1}}+(\tint y{\infty}\Re(Qz_{1}^{\flat_{-1}})dy')Q\|_{L^{2}}\\
 & \quad\aleq\|y^{-1}\langle y\rangle^{-2}|z^{\flat}|_{-1}\|_{L^{2}}+\|y^{-1}\langle y\rangle^{-2}|z^{\flat}|_{-1}\|_{L^{1}}\aleq b^{3/2}\cdot|t|^{\delta}.
\end{align*}
where in the last inequality we used \eqref{eq:z-flat-H3-bound}.
For the second line of \eqref{eq:RQ,z-expansion}, we use \eqref{eq:z-flat-H1-bound-1}
and \eqref{eq:z-flat-H2-bound} to have 
\[
\|-\tfrac{2A_{\tht}[Q,z^{\flat}]}{y}z_{1}^{\flat_{-1}}\|_{L^{2}}\aleq\|Qz^{\flat}\|_{L^{2}}\|z_{1}^{\flat_{-1}}\|_{L^{2}}\aleq b|t|^{\delta}\cdot\lmb\aleq b^{3/2}\cdot|t|^{\delta},
\]
and 
\[
\|(\tint y{\infty}\Re(Qz_{1}^{\flat_{-1}})dy')z^{\flat}\|_{L^{2}}\aleq\|Qz_{1}^{\flat_{-1}}\|_{L^{1}}\|\tfrac{1}{y}z^{\flat}\|_{L^{2}}\aleq b|t|^{\delta}\cdot\lmb\aleq b^{3/2}\cdot|t|^{\delta},
\]
and 
\begin{align*}
\|(\tint 0y\Re(\br{z^{\flat}}z_{1}^{\flat_{-1}})dy')Q\|_{L^{2}} & \aleq\|\tfrac{1}{y}\br{z^{\flat}}z_{1}^{\flat_{-1}}\|_{L^{2}}\aleq\|\tfrac{1}{y}z^{\flat}\|_{L^{\infty}}\||z|_{-1}\|_{L^{2}}\aleq b^{3/2}\cdot|t|^{\delta}.
\end{align*}
The last term of \eqref{eq:RQ,z-expansion} can be estimated using
\eqref{eq:RQ,z-claim6} in the proof of Lemma~\ref{lem:inn-prod-interaction}:
\begin{align*}
 & \|L_{Q+z^{\flat}}^{\ast}(\bfD_{Q+z^{\flat}}(Q+z^{\flat})-z_{1}^{\flat_{-1}})\|_{L^{2}}\\
 & \quad\aleq\||\bfD_{Q+z^{\flat}}(Q+z^{\flat})-z_{1}^{\flat_{-1}}|_{-1}\|_{L^{2}}\aleq(\lmb^{3-}+\lmb^{3}|t|^{\frac{\Re(\nu)-2}{2}})\aleq b^{3/2}\cdot|t|^{\delta}.
\end{align*}
This completes the proof.
\end{proof}

\subsection{\label{subsec:Modulation-estimates}Modulation estimates}

In this subsection, we control the modulation parameters $\lmb$ and
$\gmm$. For this purpose, we study their evolution laws. Our $\lmb$
and $\gmm$ are fixed via the orthogonality conditions \eqref{eq:eps-orthog},
from which the governing equations of $\lmb$ and $\gmm$ are derived.
However, as our choice of orthogonality conditions is quite arbitrary,
the equations for $\frac{\lmb_{s}}{\lmb}$ and $\gmm_{s}$ are too
rough to be analyzed.

In view of the invariant subspace decomposition \eqref{eq:invariant-subspace-decomp},
the inner products of $\eps$ with $y^{2}Q$, $i\rho$, $i\Lmb Q$,
and $Q$ (after suitable truncations) will play key roles. The dynamics
of the former two inner products, combined with differentiations by
parts, represent the dynamics of a new complex-valued parameter $\bm{\zeta}$
satisfying $\bm{\zeta}\approx\bm{\lmb}=\lmb e^{i\gmm}$. The latter
two inner products describe the velocity of $\bm{\zeta}$ and their
evolutions are affected by the soliton-radiation interaction ($R_{Q,z^{\flat}}$
affects the acceleration for $\zeta$). Moreover, we will make further
corrections on the inner products $(\eps,i\Lmb Q)_{r}$ and $(\eps,Q)_{r}$
in the spirit of \cite{RaphaelSzeftel2011JAMS,JendrejLawrieRodriguez2019arXiv}.
These corrections will reflect the nonlinear virial identity and mass
conservation of \eqref{eq:CSS-m-equiv}; their technical advantages
will be discussed on the way.

We begin with the following rough control of $\lmb$ and $\gmm$,
obtained by differentiating the orthogonality conditions.
\begin{lem}[Rough control of $\lmb$ and $\gmm$]
We have 
\begin{equation}
\Big|\frac{\lmb_{s}}{\lmb}\Big|+|\gmm_{s}|\aleq\|\eps\|_{\dot{\calH}_{0}^{1}}+b^{\frac{3}{2}}\cdot|t|^{\delta}\aleq b|\log b|^{\frac{1}{2}}.\label{eq:rough-control-lmb-gmm}
\end{equation}
\end{lem}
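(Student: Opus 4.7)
The plan is to differentiate the orthogonality conditions $(\eps,\calZ_{k})_{r}=0$ in $s$ to produce a $2\times 2$ linear system for $(\lmb_{s}/\lmb,\gmm_{s})$ whose leading coefficient matrix is precisely the transversality matrix \eqref{eq:Z1Z2-transversality}, hence invertible. Substituting the equation \eqref{eq:eps-eq-s,y} for $\rd_{s}\eps$ and pairing against $\calZ_{k}$ gives
\[
\tfrac{\lmb_{s}}{\lmb}(\Lmb Q,\calZ_{k})_{r}-\gmm_{s}(iQ,\calZ_{k})_{r}=-\tfrac{\lmb_{s}}{\lmb}(\Lmb\eps,\calZ_{k})_{r}+\gmm_{s}(i\eps,\calZ_{k})_{r}+\mathrm{RHS}_{k},
\]
where $\mathrm{RHS}_{k}$ collects the inner products of $\calZ_{k}$ with $i\calL_{Q+z^{\flat}}\eps$, $iR_{Q+z^{\flat}}(\eps)$, $i\tht_{z^{\flat}}\eps$, $iR_{Q,z^{\flat}}$, and $i\Psi_{z^{\flat}}$.

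The two ``correction'' terms involving $(\Lmb\eps,\calZ_{k})_{r}$ and $(i\eps,\calZ_{k})_{r}$ are bounded by $\|\eps\|_{L^{2}}\|\calZ_{k}\|_{W^{1,\infty}}\aleq b^{\frac{1}{2}}|\log b|$ via \eqref{eq:conv-rel-3}; since this is $o(1)$, these terms may be moved to the left and absorbed into the transversality matrix, which remains invertible. For the terms in $\mathrm{RHS}_{k}$, I would use that $\calZ_{k}\in C_{c,m}^{\infty}$ so that one can pair (and integrate by parts) freely:
\begin{itemize}
\item Writing $\calL_{Q}=L_{Q}^{\ast}L_{Q}$ via \eqref{eq:calLQ} gives $|(i\calL_{Q}\eps,\calZ_{k})_{r}|\aleq\|L_{Q}\eps\|_{L^{2}}\|L_{Q}\calZ_{k}\|_{L^{2}}\aleq\|\eps\|_{\dot\calH_{0}^{1}}$ by coercivity \eqref{eq:coercivity}. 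The $z^{\flat}$-correction $(\calL_{Q+z^{\flat}}-\calL_{Q})\eps$ is linear in $z^{\flat}$ and $\eps$, so by the duality estimates of Section~\ref{subsec:duality-ests} together with the $z^{\flat}$-bounds from Corollary~\ref{cor:bounds-z-bootstrap}, its contribution is $\aleq\|\eps\|_{\dot\calH_{0}^{1}}\cdot o_{t\to 0}(1)$.
\item $R_{Q+z^{\flat}}(\eps)$ is at least quadratic in $\eps$; its pairing with $\calZ_{k}$ is $\aleq\|\eps\|_{\dot\calH_{0}^{1}}^{2}\aleq b^{3/2}|t|^{\delta}$.
\item $|(i\tht_{z^{\flat}}\eps,\calZ_{k})_{r}|\aleq\lmb^{2}\|\eps\|_{L^{2}}\aleq b^{3/2}|t|^{\delta}$ using \eqref{eq:tht-z-flat-bound} and \eqref{eq:conv-rel-2}.
\item $|(iR_{Q,z^{\flat}},\calZ_{k})_{r}|\aleq\|R_{Q,z^{\flat}}\|_{L^{2}}\aleq b^{3/2}|t|^{\delta}$ by \eqref{eq:RQ,z-L2-est}, and similarly $|(i\Psi_{z^{\flat}},\calZ_{k})_{r}|\aleq b^{3/2}|t|^{\delta}$ by \eqref{eq:Psi_zflat-L2}.
\end{itemize}

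Inverting the (slightly perturbed) transversality matrix then yields the first inequality $|\lmb_{s}/\lmb|+|\gmm_{s}|\aleq\|\eps\|_{\dot\calH_{0}^{1}}+b^{3/2}|t|^{\delta}$. For the second inequality, note that \eqref{eq:conv-rel-3} gives $\|\eps\|_{\dot\calH_{0}^{1}}\aleq b|\log b|^{1/2}$ and, for $t$ sufficiently small, $b^{3/2}|t|^{\delta}\ll b|\log b|^{1/2}$ since $b\aleq 1$. No step here is really an obstacle; the most delicate bookkeeping will be for the $\calL_{Q+z^{\flat}}$ term, where one must avoid paying two derivatives on $\eps$ and instead route the derivatives onto $\calZ_{k}$ via the self-adjointness of the Hessian and (for the $z^{\flat}$-piece) the duality estimates of Section~\ref{subsec:duality-ests}.
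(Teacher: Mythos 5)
Your proposal is correct and follows essentially the same route as the paper's proof: differentiate the orthogonality conditions, substitute the $\eps$-equation \eqref{eq:eps-eq-s,y}, invert the (perturbed) transversality matrix \eqref{eq:Z1Z2-transversality}, and bound the remaining inner products using $\calL_{Q}=L_{Q}^{\ast}L_{Q}$ together with the smallness of $z^{\flat}$, $R_{Q+z^{\flat}}(\eps)$, $R_{Q,z^{\flat}}$, and $\Psi_{z^{\flat}}$. The only cosmetic difference is that you explicitly move $(\Lmb\eps,\calZ_k)_r$ and $(i\eps,\calZ_k)_r$ into the coefficient matrix as $o(1)$ perturbations, whereas the paper keeps them on the left-hand side of \eqref{eq:rough-control-tmp1} and notes they are small; these are equivalent, and the paper's quantitative versions of your claimed error bounds are \eqref{eq:rough-control-tmp2} and \eqref{eq:rough-control-tmp3}.
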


\begin{proof}
The proof follows from differentiating the orthogonality conditions
\eqref{eq:eps-orthog}. Indeed, using the $\epsilon$-equation \eqref{eq:eps-eq-s,y},
we have 
\begin{align*}
0=\rd_{s}(\epsilon,\calZ_{k})_{r} & =\tfrac{\lmb_{s}}{\lmb}\{(\Lmb Q,\calZ_{k})_{r}+(\Lmb\epsilon,\calZ_{k})_{r}\}-\gmm_{s}\{(iQ,\calZ_{k})_{r}+(i\epsilon,\calZ_{k})_{r}\}\\
 & \quad+(\calL_{Q+z^{\flat}}\epsilon+R_{Q+z^{\flat}}(\epsilon)-\tht_{z^{\flat}}\epsilon,i\calZ_{k})_{r}+(R_{Q,z^{\flat}},i\calZ_{k})_{r}-(\Psi_{z^{\flat}},i\calZ_{k})_{r}.
\end{align*}
Using the antisymmetry of $\Lmb$, $i$, and $\calL_{Q}=L_{Q}^{\ast}L_{Q}$,
we rewrite the above as 
\begin{align}
 & -\tfrac{\lmb_{s}}{\lmb}\{(\Lmb Q,\calZ_{k})_{r}-(\epsilon,\Lmb\calZ_{k})_{r}\}+\gmm_{s}\{(iQ,\calZ_{k})_{r}-(\epsilon,i\calZ_{k})_{r}\}\nonumber \\
 & =(L_{Q}\epsilon,L_{Q}i\calZ_{k})_{r}-\tht_{z^{\flat}}(\epsilon,i\calZ_{k})_{r}+((\calL_{Q+z^{\flat}}-\calL_{Q})\epsilon,i\calZ_{k})_{r}\label{eq:rough-control-tmp1}\\
 & \quad+(R_{Q+z^{\flat}}(\epsilon),i\calZ_{k})_{r}+(R_{Q,z^{\flat}},i\calZ_{k})_{r}-(\Psi_{z^{\flat}},i\calZ_{k})_{r}.\nonumber 
\end{align}
By \eqref{eq:Z1Z2-transversality}, we have 
\begin{align*}
\Big|\frac{\lmb_{s}}{\lmb}\Big|+|\gmm_{s}| & \aleq\sup_{k\in\{1,2\}}|{\rm RHS}\eqref{eq:rough-control-tmp1}|\\
 & \aleq\|L_{Q}\eps\|_{L^{2}}+|\tht_{z^{\flat}}|\|\eps\|_{\dot{\calH}_{0}^{1}}+\|(\calL_{Q+z^{\flat}}-\calL_{Q})\eps+R_{Q+z^{\flat}}(\eps)+R_{Q,z^{\flat}}-\Psi_{z^{\flat}}\|_{L^{2}}.
\end{align*}
Applying \eqref{eq:tht-z-flat-bound} for $\tht_{z^{\flat}}$, \eqref{eq:conv-rel-3},
\eqref{eq:RQ,z-L2-est} for $R_{Q,z^{\flat}}$, and \eqref{eq:Psi_zflat-L2}
for $\Psi_{z^{\flat}}$, the proof of \eqref{eq:rough-control-lmb-gmm}
is completed once we show that 
\begin{align}
\|(\calL_{Q+z^{\flat}}-\calL_{Q})\epsilon\|_{L^{2}} & \aleq\lmb\|\epsilon\|_{\dot{\calH}_{0}^{1}},\label{eq:rough-control-tmp2}\\
\|R_{Q+z^{\flat}}(\epsilon)\|_{L^{2}} & \aleq\|\epsilon\|_{\dot{\calH}_{0}^{1}}^{2}.\label{eq:rough-control-tmp3}
\end{align}

\emph{Proof of \eqref{eq:rough-control-tmp2}.} We note that $(\calL_{Q+z^{\flat}}-\calL_{Q})\epsilon$
is a linear combination of $\calN_{\ast}(\psi_{1},\dots,\psi_{\ast})$
such that $\psi_{j}\in\{Q,z^{\flat},\eps\}$ and $\psi_{j}$' contain
exactly one $\eps$ and at least one $z^{\flat}$. Applying the duality
estimates (Corollary~\ref{cor:nonlinear-estimates-weightedL2}),
\eqref{eq:z-flat-H1-bound-1}, and \eqref{eq:m=00003D0-HardySobolev},
we obtain 
\begin{align*}
\|(\calL_{Q+z^{\flat}}\eps-\calL_{Q})\epsilon\|_{L^{2}} & \aleq\|z^{\flat}\eps\|_{L^{\infty}}+\|\tfrac{\langle\log_{-}y\rangle}{y}z^{\flat}\|_{L^{2}}\|\tfrac{1}{y\langle\log_{-}y\rangle}\eps\|_{L^{2}}\\
 & \aleq\|\langle\log_{-}y\rangle z^{\flat}\|_{L^{\infty}\cap yL^{2}}\|\tfrac{1}{\langle\log_{-}y\rangle}\eps\|_{L^{\infty}\cap yL^{2}}\aleq\lmb\|\eps\|_{\dot{\calH}_{0}^{1}}.
\end{align*}
This completes the proof of \eqref{eq:rough-control-tmp2}.

\emph{Proof of \eqref{eq:rough-control-tmp3}.} We note that $R_{Q+z^{\flat}}(\epsilon)$
is a linear combination of $\calN_{\ast}(\psi_{1},\dots,\psi_{\ast})$
such that $\psi_{j}\in\{Q+z^{\flat},\eps\}$ and $\psi_{j}$'s contain
at least two $\epsilon$'s. If there is at least one $Q+z^{\flat}$
in $\psi_{j}$'s, such a contribution is estimated by (using Corollary~\ref{cor:nonlinear-estimates-weightedL2},
\eqref{eq:z-flat-H1-bound-1}, and \eqref{eq:m=00003D0-HardySobolev})
\begin{align*}
 & \aleq\|(Q+z^{\flat})\eps^{2}\|_{L^{2}}+\|\langle\log_{-}y\rangle^{2}(Q+z^{\flat})\|_{L^{2}}\|\tfrac{1}{y\langle\log_{-}y\rangle}\eps\|_{L^{2}}^{2}\\
 & \aleq\|\langle\log_{-}y\rangle^{2}(Q+z^{\flat})\|_{L^{2}}\|\tfrac{1}{\langle\log_{-}y\rangle}\eps\|_{L^{\infty}\cap yL^{2}}^{2}\aleq\|\eps\|_{\dot{\calH}_{0}^{1}}^{2}.
\end{align*}
If all $\psi_{j}$'s are $\eps$, then this contribution is simply
$\calN(\eps)$ and is estimated (using Corollary~\eqref{cor:nonlinear-estimates-holder})
\[
\|\calN(\eps)\|_{L^{2}}\aleq\|\eps\|_{L^{6}}^{3}(1+\|\eps\|_{L^{2}}^{2})\aleq\|\eps\|_{\dot{H}_{0}^{1}}^{2}\|\eps\|_{L^{2}}\aleq\|\eps\|_{\dot{\calH}_{0}^{1}}^{2}.
\]
This completes the proof of \eqref{eq:rough-control-tmp3}.
\end{proof}
Next, we study the evolution of the inner products 
\[
(\eps,\tfrac{y^{2}}{4}Q\chi_{B_{0}})_{r}\quad\text{and}\quad(\eps,i\rho\chi_{B_{0}})_{r}.
\]
Computing their time derivatives and differentiating by parts will
yield good control over the dynamics of the complex-valued quantity
\begin{equation}
\bm{\zeta}\coloneqq\bm{\lmb}\Big\{1+\frac{(\eps,\tfrac{y^{2}}{4}Q\chi_{B_{0}})_{r}+i(\eps,i\rho\chi_{B_{0}})_{r}}{4\pi\log B_{0}}\Big\},\label{eq:def-zeta}
\end{equation}
where $\bm{\lmb}\coloneqq\lmb e^{i\gmm}$. Indeed, we have the following.
\begin{lem}[Control of $\bm{\zeta}$]
\label{lem:control-zeta}We have 
\begin{align}
\Big|\frac{\bm{\zeta}}{\bm{\lmb}}-1\Big| & \aleq\frac{1}{|\log b|^{\frac{1}{2}}},\label{eq:control-zeta-bound}\\
\Big|\frac{\rd_{s}\bm{\zeta}}{\bm{\lmb}}+\frac{(\eps,i\Lmb_{B_{0}}Q)_{r}+i(\eps,Q\chi_{B_{0}})_{r}}{4\pi\log B_{0}}\Big| & \aleq\frac{b}{|\log b|^{\frac{1}{2}}}.\label{eq:control-zeta-diff-eq}
\end{align}
\end{lem}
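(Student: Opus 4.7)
The plan is to treat $X:=(\eps,\tfrac{y^{2}}{4}Q\chi_{B_{0}})_{r}$ and $Y:=(\eps,i\rho\chi_{B_{0}})_{r}$ as dynamical quantities, reading off the main terms of $\rd_{s}X$ and $\rd_{s}Y$ from the $\eps$-equation \eqref{eq:eps-eq-s,y} via the truncated transversality \eqref{eq:transversality-radial-case} and generalized kernel relations \eqref{eq:truncated-est-1}--\eqref{eq:truncated-est-2}. For \eqref{eq:control-zeta-bound}, I would use the pointwise bounds $|\tfrac{y^{2}}{4}Q\chi_{B_{0}}(y)|\lesssim\chf_{\{y\leq 2B_{0}\}}\langle y\rangle^{-2}$ and, via \eqref{eq:matched-spatial-asymp}, $|\rho\chi_{B_{0}}(y)|\lesssim\chf_{\{y\leq 2B_{0}\}}\langle y\rangle^{-2}\langle\log y\rangle^{2}$, combined with the embedding $|\eps(y)|\lesssim\langle\log_{-}y\rangle\|\eps\|_{\dot\calH_{0}^{1}}$ from \eqref{eq:m=00003D0-HardySobolev}. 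Direct integration gives $|X|+|Y|\lesssim(\log B_{0})\|\eps\|_{\dot\calH_{0}^{1}}$, so by \eqref{eq:conv-rel-3} and $b|\log b|\leq 1$ for $b$ small we obtain $|\bm\zeta/\bm\lmb-1|\lesssim\|\eps\|_{\dot\calH_{0}^{1}}\lesssim b|\log b|^{1/2}\lesssim|\log b|^{-1/2}$.

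For \eqref{eq:control-zeta-diff-eq}, applying the product rule to $\bm\zeta=\bm\lmb[1+(X+iY)/(4\pi\log B_{0})]$ yields
\[
\frac{\rd_{s}\bm\zeta}{\bm\lmb}=\left(\tfrac{\lmb_{s}}{\lmb}+i\gmm_{s}\right)\frac{\bm\zeta}{\bm\lmb}+\rd_{s}\!\left[\frac{X+iY}{4\pi\log B_{0}}\right],
\]
so the goal is to show that $\rd_{s}[(X+iY)/(4\pi\log B_{0})]$ equals $-(\tfrac{\lmb_{s}}{\lmb}+i\gmm_{s})-[(\eps,i\Lmb_{B_{0}}Q)_{r}+i(\eps,Q\chi_{B_{0}})_{r}]/(4\pi\log B_{0})$ plus errors of size $O(b/|\log b|^{1/2})$; the $(\tfrac{\lmb_{s}}{\lmb}+i\gmm_{s})$ contributions then cancel to leading order, while the residual $(\tfrac{\lmb_{s}}{\lmb}+i\gmm_{s})(X+iY)/(4\pi\log B_{0})$ is already $O(b^{2}|\log b|)$ by part (1) and the rough control \eqref{eq:rough-control-lmb-gmm}.

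To compute $\rd_{s}X$ (with $\rd_{s}Y$ entirely analogous), I would expand $(\rd_{s}\eps,\tfrac{y^{2}}{4}Q\chi_{B_{0}})_{r}$ using \eqref{eq:eps-eq-s,y}. The modulation term $\tfrac{\lmb_{s}}{\lmb}(\Lmb Q,\tfrac{y^{2}}{4}Q\chi_{B_{0}})_{r}$ equals $-4\pi(\log B_{0})\tfrac{\lmb_{s}}{\lmb}+O(|\lmb_{s}/\lmb|)$ by \eqref{eq:transversality-radial-case}, while $-\gmm_{s}(iQ,\tfrac{y^{2}}{4}Q\chi_{B_{0}})_{r}$ vanishes because $iQ$ is imaginary and $\tfrac{y^{2}}{4}Q\chi_{B_{0}}$ is real. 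For the linear term I would use $(if,g)_{r}=-(f,ig)_{r}$ together with the factorization $\calL_{Q}=L_{Q}^{\ast}L_{Q}$ to rewrite
\[
(-i\calL_{Q}\eps,\tfrac{y^{2}}{4}Q\chi_{B_{0}})_{r}=(L_{Q}\eps,L_{Q}(i\tfrac{y^{2}}{4}Q\chi_{B_{0}}))_{r};
\]
applying \eqref{eq:truncated-est-1} to replace $L_{Q}(i\tfrac{y^{2}}{4}Q\chi_{B_{0}})$ by $i\tfrac{y}{2}Q\chi_{B_{0}}$ modulo an $L^{2}$-error of size $\lesssim 1$, then the exact identity $L_{Q}^{\ast}(i\tfrac{y}{2}Q\chi_{B_{0}})=-i\Lmb_{B_{0}}Q$ displayed in the proof of \eqref{eq:truncated-est-2}, converts the above into $-(\eps,i\Lmb_{B_{0}}Q)_{r}+O(\|\eps\|_{\dot\calH_{0}^{1}})$ via the coercivity $\|L_{Q}\eps\|_{L^{2}}\sim\|\eps\|_{\dot\calH_{0}^{1}}$. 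The remaining contributions from $(\calL_{Q+z^{\flat}}-\calL_{Q})\eps$, $R_{Q+z^{\flat}}(\eps)$, $\tht_{z^{\flat}}\eps$, $R_{Q,z^{\flat}}$, and $\Psi_{z^{\flat}}$ would be bounded by Cauchy--Schwarz against $\|\tfrac{y^{2}}{4}Q\chi_{B_{0}}\|_{L^{2}}\lesssim B_{0}$, using \eqref{eq:rough-control-tmp2}--\eqref{eq:rough-control-tmp3}, Corollary~\ref{cor:bounds-z-bootstrap}, and Lemma~\ref{lem:RQ,z}; each contributes $\lesssim b|t|^{\delta}$, which is harmless after dividing by $\log B_{0}$. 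The boundary term $(\eps,\rd_{s}(\tfrac{y^{2}}{4}Q\chi_{B_{0}}))_{r}$ is controlled using $|\rd_{s}\log B_{0}|\lesssim b|\log b|^{1/2}$, the annular support of $\rd_{s}\chi_{B_{0}}$, and $\|\eps\|_{L^{\infty}(y\geq B_{0})}\lesssim\|\eps\|_{\dot\calH_{0}^{1}}$.

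The main obstacle is that the target bound $O(b/|\log b|^{1/2})$ is saturated simultaneously by two independent sources: the $O(1)$ corrections in \eqref{eq:transversality-radial-case} multiplied by $|\lmb_{s}/\lmb|\lesssim b|\log b|^{1/2}$, and the $O(\|\eps\|_{\dot\calH_{0}^{1}})$ errors from \eqref{eq:truncated-est-1}--\eqref{eq:truncated-est-2}, both producing $b/|\log b|^{1/2}$ after dividing by $\log B_{0}\sim|\log b|$. Crucial to avoiding a prohibitive factor $B_{0}\sim b^{-1/2}$ in the second source is the use of $\calL_{Q}=L_{Q}^{\ast}L_{Q}$ together with duality to transfer one $L_{Q}$ onto $\eps$; this replaces naive $L^{2}\times L^{2}$ pairings against $\tfrac{y^{2}}{4}Q\chi_{B_{0}}$ and $\rho\chi_{B_{0}}$ (which have $L^{2}$-norm $\sim B_{0}$) by coercivity-based $\|\eps\|_{\dot\calH_{0}^{1}}$ estimates. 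The essential difficulty is therefore not any single inequality but rather the careful bookkeeping ensuring that every error term, after the cancellation of the leading $(\tfrac{\lmb_{s}}{\lmb}+i\gmm_{s})$ contributions, respects this sharp threshold.
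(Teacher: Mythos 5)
Your Step~1 reasoning for \eqref{eq:control-zeta-bound} is fine, and your handling of the linear term $(-i\calL_{Q}\eps,\psi\chi_{B_{0}})_{r}$ via $\calL_{Q}=L_{Q}^{\ast}L_{Q}$, \eqref{eq:truncated-est-1}, and the exact identity $L_{Q}^{\ast}(\chi_{B_{0}}i\tfrac{y}{2}Q)=-i\Lmb_{B_{0}}Q$ is correct and matches the paper. However, your treatment of the modulation terms contains a genuine gap, and the error accounting in the product-rule reduction does not close.

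First, when you expand $(\rd_{s}\eps,\psi\chi_{B_{0}})_{r}$ via \eqref{eq:eps-eq-s,y}, you list the term $\tfrac{\lmb_{s}}{\lmb}(\Lmb Q,\psi\chi_{B_{0}})_{r}$ but never mention $\tfrac{\lmb_{s}}{\lmb}(\Lmb\eps,\psi\chi_{B_{0}})_{r}$ or $-\gmm_{s}(i\eps,\psi\chi_{B_{0}})_{r}$, which come from the left side of \eqref{eq:eps-eq-s,y}. After integrating $\Lmb$ by parts, the former produces $-\tfrac{\lmb_{s}}{\lmb}(\eps,(\Lmb\psi)\chi_{B_{0}})_{r}-\tfrac{\lmb_{s}}{\lmb}(\eps,\psi(y\rd_{y}\chi_{B_{0}}))_{r}$, and both pieces are of size $O(b|\log b|)$ in the unnormalized ledger, hence $O(b)$ after dividing by $4\pi\log B_{0}$ --- a factor $|\log b|^{1/2}$ over the target $O(b/|\log b|^{1/2})$. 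These terms cannot simply be dropped.

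Second, and related: you claim that the product-rule residual $(\tfrac{\lmb_{s}}{\lmb}+i\gmm_{s})(X+iY)/(4\pi\log B_{0})$ is $O(b^{2}|\log b|)$, but by \eqref{eq:control-zeta-bound} it is only $O(|\log b|^{-1/2})$, and together with $|\lmb_{s}/\lmb|+|\gmm_{s}|\lesssim b|\log b|^{1/2}$ the best you get is $O(b)$, not $O(b^{2}|\log b|)$. So your reduction strategy leaves a residual that overshoots the target by the same $|\log b|^{1/2}$. The paper resolves this by \emph{not} treating this residual as an error: it is combined with the ``missing'' term $-\tfrac{\lmb_{s}}{\lmb}(\eps,(\Lmb\psi)\chi_{B_{0}})_{r}$ (and its $\gmm_{s}$ sibling) using the \emph{matched spatial asymptotics} of Lemma~\ref{lem:rho}, specifically $|\Lmb(\tfrac{y^{2}}{4}Q)-\tfrac{y^{2}}{4}Q|+|\Lmb\rho-\rho|+|\tfrac{y^{2}}{4}Q-\rho|\lesssim\langle y\rangle^{-2}\langle\log_{+}y\rangle^{2}$, so that the two cancel up to a genuinely negligible $O(b^{2-})$. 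Without this cancellation the argument fails.

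Third, your bound on the cutoff term using $|\rd_{s}\log B_{0}|\lesssim b|\log b|^{1/2}$ is too crude for the same reason. The oscillatory $-\lmb_{s}/\lmb$ part of $\rd_{s}\log B_{0}$ cancels against the $-\tfrac{\lmb_{s}}{\lmb}(\eps,\psi(y\rd_{y}\chi_{B_{0}}))_{r}$ piece produced above, leaving only the tame coefficient $\tfrac{1}{2B_{0}^{2}}\sim b$, which gives the boundary term a bound $O(\|\eps\|_{\dot{\calH}_{0}^{1}})\lesssim b|\log b|^{1/2}$ before dividing. The paper sidesteps all of this bookkeeping by computing the $s$-derivative in the original $(t,r)$ variables, where $\rd_{t}\chi_{|t|^{1/2}}$ carries no $\lmb_{s}/\lmb$ and the $\Lmb\eps$ term never appears; see the discussion at the start of Step~2 of its proof and identity \eqref{eq:control-zeta-step2-1}. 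Your proposal is missing both the matched-asymptotics cancellation and the $(t,r)$-variable bookkeeping that together make the $b/|\log b|^{1/2}$ threshold attainable.
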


\begin{proof}
\textbf{Step 1.} Bound of $\bm{\zeta}$.

We prove \eqref{eq:control-zeta-bound}. By \eqref{eq:def-zeta} and
\eqref{eq:conv-rel-3}, we have 
\[
\frac{\bm{\zeta}}{\bm{\lmb}}-1=\frac{(\eps,\tfrac{y^{2}}{4}Q\chi_{B_{0}})_{r}+i(\eps,i\rho\chi_{B_{0}})_{r}}{4\pi\log B_{0}}\aleq\frac{\|\chf_{(0,2B_{0}]}\eps\|_{L^{1}}}{|\log b|}\aleq\frac{(B_{0})^{2}}{|\log b|}\|\eps\|_{\dot{\calH}_{0}^{1}}\aleq|\log b|^{\frac{1}{2}}.
\]
This completes the proof of \eqref{eq:control-zeta-bound}.

\smallskip
\textbf{Step 2.} Computation of $\rd_{s}(\eps,\psi\chi_{B_{0}})_{r}$
and treatment of easy error terms.

In Steps 2-4, we prove \eqref{eq:control-zeta-diff-eq}. In this
step, we compute $\rd_{s}(\eps,\psi\chi_{B_{0}})_{r}$ for a fixed
time-independent profile $\psi$ and estimate some easy error terms.
Later, we will substitute $\psi\in\{\tfrac{y^{2}}{4}Q,i\rho\}$, whose
choice is motivated by the formal invariant subspace decomposition
for $i\calL_{Q}$; see Subsection \ref{subsec:Invariant-subspace-decomposition}.

We note that $\frac{(B_{0})_{s}}{B_{0}}=-\frac{\lmb_{s}}{\lmb}-\frac{\lmb^{2}}{2|t|}$
has the oscillatory term $-\frac{\lmb_{s}}{\lmb}$, whose bound \eqref{eq:rough-control-lmb-gmm}
is logarithmically worse than $b$. This logarithmic loss might be
dangerous because the term $(\eps,\psi\rd_{s}\chi_{B_{0}})_{r}$ which
possibly appears in the computation of $\rd_{s}(\eps,\psi\chi_{B_{0}})_{r}$
cannot be treated as an error. However, such a term is an artifact of
the computation in the $(s,y)$-variables, and we can get around this
problem by computing instead in the original $(t,r)$-variables: 
\begin{align*}
\rd_{s}(\eps,\psi\chi_{B_{0}})_{r} & =\lmb^{2}\rd_{t}(\eps^{\sharp},\psi^{\sharp}\chi_{|t|^{1/2}})_{r}\\
 & =\lmb^{2}\{(\rd_{t}\eps^{\sharp},\psi^{\sharp}\chi_{|t|^{1/2}})_{r}+(\eps^{\sharp},(\rd_{t}\psi^{\sharp})\chi_{|t|^{1/2}})_{r}\\
 & \qquad+\tfrac{1}{2|t|}(\eps^{\sharp},\psi^{\sharp}(r\rd_{r}\chi_{|t|^{1/2}}))_{r}\}.
\end{align*}
We substitute the $\eps^{\sharp}$-equation \eqref{eq:eps-sharp-eq}
into the above to continue it as 
\begin{align*}
 & =\lmb^{2}\big\{(-\rd_{t}Q^{\sharp},\psi^{\sharp}\chi_{|t|^{1/2}})_{r}+(\eps^{\sharp},(\rd_{t}\psi^{\sharp})\chi_{|t|^{1/2}})_{r}\\
 & \qquad\quad+(\calL_{Q^{\sharp}+z}\eps^{\sharp}+R_{Q^{\sharp}+z}(\eps^{\sharp})-\tht_{z}\epsilon^{\sharp}+R_{Q^{\sharp},z}-\Psi_{z},i\psi^{\sharp}\chi_{|t|^{1/2}})_{r}\big\}\\
 & \quad+\tfrac{1}{2(B_{0})^{2}}(\eps^{\sharp},\psi^{\sharp}(r\rd_{r}\chi_{|t|^{1/2}}))_{r}.
\end{align*}
Going back to the $(s,y)$-variables, we have arrived at 
\begin{equation}
\begin{aligned} & \rd_{s}(\eps,\psi\chi_{B_{0}})_{r}\\
 & =\tfrac{\lmb_{s}}{\lmb}\{(\Lmb Q,\psi\chi_{B_{0}})_{r}-(\eps,(\Lmb\psi)\chi_{B_{0}})_{r}\}-\gmm_{s}\{(iQ,\psi\chi_{B_{0}})_{r}-(\eps,i\psi\chi_{B_{0}})_{r}\}\\
 & \quad+(\calL_{Q+z^{\flat}}\eps+R_{Q+z^{\flat}}(\eps)-\tht_{z^{\flat}}\eps+R_{Q,z^{\flat}}-\Psi_{z^{\flat}},i\psi\chi_{B_{0}})_{r}+\tfrac{1}{2(B_{0})^{2}}(\eps,\psi(y\rd_{y}\chi_{B_{0}}))_{r}.
\end{aligned}
\label{eq:control-zeta-step2-1}
\end{equation}

Next, we assume 
\[
|\psi|\aleq1,
\]
which is the case for $\psi\in\{\tfrac{y^{2}}{4}Q,i\rho\}$, and estimate
some easy error terms in RHS\eqref{eq:control-zeta-step2-1}. First,
by \eqref{eq:rough-control-tmp2}, \eqref{eq:rough-control-tmp3},
\eqref{eq:RQ,z-L2-est}, and \eqref{eq:Psi_zflat-L2}, we have 
\begin{align*}
|((\calL_{Q+z^{\flat}}-\calL_{Q})\eps,i\psi\chi_{B_{0}})_{r}| & \aleq\lmb\|\eps\|_{\dot{\calH}_{0}^{1}}\cdot B_{0}\aleq|t|^{\frac{1}{2}}b|\log b|^{\frac{1}{2}}\aleq b\cdot|t|^{\frac{1}{2}-},\\
|(R_{Q+z^{\flat}}(\eps),i\psi\chi_{B_{0}})_{r}| & \aleq\|\eps\|_{\dot{\calH}_{0}^{1}}^{2}\cdot B_{0}\aleq b^{\frac{3}{2}-},\\
|(R_{Q,z^{\flat}}-\Psi_{z^{\flat}},i\psi\chi_{B_{0}})_{r}| & \aleq(\|R_{Q,z^{\flat}}\|_{L^{2}}+\|\Psi_{z^{\flat}}\|_{L^{2}})\cdot B_{0}\aleq b\cdot|t|^{\delta}.
\end{align*}
Next, by \eqref{eq:tht-z-flat-bound}, we have 
\[
|\tht_{z^{\flat}}(\eps,i\psi\chi_{B_{0}})_{r}|\aleq\lmb^{2}\|\eps\|_{\dot{\calH}_{0}^{1}}(B_{0})^{2}\aleq b|\log b|^{\frac{1}{2}}|t|\aleq b\cdot|t|^{1-}.
\]
Finally, 
\[
|\tfrac{1}{2(B_{0})^{2}}(\eps,\psi(y\rd_{y}\chi_{B_{0}}))_{r}|\aleq\|\eps\|_{\dot{\calH}_{0}^{1}}\aleq b|\log b|^{\frac{1}{2}}.
\]
Substituting the above error estimates into \eqref{eq:control-zeta-step2-1}
and using $\calL_{Q}=L_{Q}^{\ast}L_{Q}$, we have shown that 
\begin{align}
\rd_{s}(\eps, & \psi\chi_{B_{0}})_{r}-\tfrac{\lmb_{s}}{\lmb}\{(\Lmb Q,\psi\chi_{B_{0}})_{r}-(\eps,(\Lmb\psi)\chi_{B_{0}})_{r}\}\label{eq:control-zeta-step2-2}\\
+\gmm_{s} & \{(iQ,\psi\chi_{B_{0}})_{r}-(\eps,i\psi\chi_{B_{0}})_{r}\}=(L_{Q}\eps,L_{Q}i\psi\chi_{B_{0}})_{r}+O(b|\log b|^{\frac{1}{2}}).\nonumber 
\end{align}
We remark that $\frac{\lmb_{s}}{\lmb}(\eps,(\Lmb\psi)\chi_{B_{0}})_{r}$
and $\gmm_{s}(\eps,i\psi\chi_{B_{0}})_{r}$ cannot be viewed as error
terms because the control \eqref{eq:rough-control-lmb-gmm} is rough.
However, we will see in Step 4 that these terms can be integrated
into a correction term when $\psi\in\{\tfrac{y^{2}}{4}Q,i\rho\}$
motivating the definition of $\bm{\zeta}$.

\smallskip
\textbf{Step 3.} Substitution of $\psi\in\{\tfrac{y^{2}}{4}Q,i\rho\}$.

We substitute $\psi\in\{\tfrac{y^{2}}{4}Q,i\rho\}$ into \eqref{eq:control-zeta-step2-2}.
We use \eqref{eq:transversality-radial-case} and \eqref{eq:rough-control-lmb-gmm}
for LHS\eqref{eq:control-zeta-step2-2}, and we use \eqref{eq:truncated-est-1}
and \eqref{eq:truncated-est-2} for RHS\eqref{eq:control-zeta-step2-2}
to have the following two displays: 
\begin{align}
\rd_{s}(\eps,\tfrac{y^{2}}{4}Q\chi_{B_{0}})_{r} & +\tfrac{\lmb_{s}}{\lmb}\{4\pi\log B_{0}+(\eps,\Lmb(\tfrac{y^{2}}{4}Q)\chi_{B_{0}})_{r}\}\label{eq:control-zeta-step3-1}\\
 & -\gmm_{s}(\eps,i\tfrac{y^{2}}{4}Q\chi_{B_{0}})_{r}=-(\eps,i\Lmb_{B_{0}}Q)_{r}+O(b|\log b|^{\frac{1}{2}})\nonumber 
\end{align}
and 
\begin{align}
\rd_{s}(\eps,-i\rho\chi_{B_{0}})_{r} & +\tfrac{\lmb_{s}}{\lmb}(\eps,i(\Lmb\rho)\chi_{B_{0}})_{r}\label{eq:control-zeta-step3-2}\\
 & +\gmm_{s}\{4\pi\log B_{0}+(\eps,\rho\chi_{B_{0}})_{r}\}=-(\eps,\chi_{B_{0}}Q)_{r}+O(b|\log b|^{\frac{1}{2}}).\nonumber 
\end{align}

\smallskip
\textbf{Step 4.} Differentiation by parts and conclusion.

We can merge the displays \eqref{eq:control-zeta-step3-1} and \eqref{eq:control-zeta-step3-2}
into 
\begin{equation}
\begin{aligned} & (\rd_{s}+\tfrac{\lmb_{s}}{\lmb}+\gmm_{s}i)4\pi\log B_{0}+\rd_{s}\{(\eps,\tfrac{y^{2}}{4}Q\chi_{B_{0}})_{r}+i(\eps,i\rho\chi_{B_{0}})_{r}\}\\
 & +\tfrac{\lmb_{s}}{\lmb}\{(\eps,\Lmb(\tfrac{y^{2}}{4}Q)\chi_{B_{0}})_{r}+i(\eps,i(\Lmb\rho)\chi_{B_{0}})_{r}\}\\
 & +\gmm_{s}\{-(\eps,i\tfrac{y^{2}}{4}Q\chi_{B_{0}})_{r}+i(\eps,\rho\chi_{B_{0}})_{r}\}\\
 & =-\{(\eps,i\Lmb_{B_{0}}Q)_{r}+i(\eps,\chi_{B_{0}}Q)_{r}\}+O(b|\log b|^{\frac{1}{2}}),
\end{aligned}
\label{eq:control-zeta-step3-3}
\end{equation}
where we used 
\begin{equation}
|\rd_{s}(4\pi\log B_{0})|\aleq|\tfrac{\lmb_{s}}{\lmb}|+\tfrac{\lmb^{2}}{|t|}\aleq b|\log b|^{\frac{1}{2}}.\label{eq:B0-deriv-est}
\end{equation}

We mention again that the last two terms of LHS\eqref{eq:control-zeta-step3-3}
cannot be treated as errors. However, a crucial observation is that
these terms can be integrated into a correction term under the flow
of $\rd_{s}+\frac{\lmb_{s}}{\lmb}+\gmm_{s}i=\bm{\lmb}^{-1}\rd_{s}(\bm{\lmb}\cdot)$.
More precisely, we use the following \emph{matched spatial asymptotics}
(see \eqref{eq:matched-spatial-asymp})
\[
|\Lmb(\tfrac{y^{2}}{4}Q)-\tfrac{y^{2}}{4}Q|+|\Lmb\rho-\rho|+|\tfrac{y^{2}}{4}Q-\rho|\aleq\langle y\rangle^{-2}\langle\log_{+}y\rangle^{2}
\]
to have 
\begin{align*}
 & \tfrac{\lmb_{s}}{\lmb}\{(\eps,\Lmb(\tfrac{y^{2}}{4}Q)\chi_{B_{0}})_{r}+i(\eps,i(\Lmb\rho)\chi_{B_{0}})_{r}\}\\
 & =\tfrac{\lmb_{s}}{\lmb}\{(\eps,\tfrac{y^{2}}{4}Q\chi_{B_{0}})_{r}+i(\eps,i\rho\chi_{B_{0}})_{r}\}+O(|\tfrac{\lmb_{s}}{\lmb}|\cdot\|\eps\cdot\chf_{(0,2B_{0}]}\langle y\rangle^{-2}\langle\log_{+}y\rangle^{2}\|_{L^{1}})
\end{align*}
and 
\begin{align*}
 & \gmm_{s}\{-(\eps,i\tfrac{y^{2}}{4}Q\chi_{B_{0}})_{r}+i(\eps,\rho\chi_{B_{0}})_{r}\}\\
 & =\gmm_{s}i\{(\eps,\tfrac{y^{2}}{4}Q\chi_{B_{0}})_{r}+i(\eps,i\rho\chi_{B_{0}})_{r}\}+O(|\gmm_{s}|\cdot\|\eps\cdot\chf_{(0,2B_{0}]}\langle y\rangle^{-2}\langle\log_{+}y\rangle^{2}\|_{L^{1}}).
\end{align*}
We also note that 
\[
(|\tfrac{\lmb_{s}}{\lmb}|+|\gmm_{s}|)\cdot\|\eps\cdot\chf_{(0,2B_{0}]}\langle y\rangle^{-2}\langle\log_{+}y\rangle^{2}\|_{L^{1}}\aleq b|\log b|^{\frac{1}{2}}\cdot\|\eps\|_{\dot{\calH}_{0}^{1}}|\log B_{0}|^{\frac{3}{2}}\aleq b^{2-}.
\]
Therefore, 
\begin{align*}
\text{LHS}\eqref{eq:control-zeta-step3-3} & =(\rd_{s}+\tfrac{\lmb_{s}}{\lmb}+\gmm_{s}i)\{4\pi\log B_{0}+(\eps,\tfrac{y^{2}}{4}Q\chi_{B_{0}})_{r}+i(\eps,i\rho\chi_{B_{0}})_{r}\}+O(b^{2-})\\
 & =\bm{\lmb}^{-1}\rd_{s}(4\pi\log B_{0}\cdot\bm{\zeta})+O(b^{2-}).
\end{align*}
Substituting this into \eqref{eq:control-zeta-step3-3} gives 
\[
\bm{\lmb}^{-1}\rd_{s}(4\pi\log B_{0}\cdot\bm{\zeta})=-\{(\eps,i\Lmb_{B_{0}}Q)_{r}+i(\eps,\chi_{B_{0}}Q)_{r}\}+O(b|\log b|^{\frac{1}{2}}).
\]
Dividing both sides by $4\pi\log B_{0}$ and using \eqref{eq:B0-deriv-est}
again, we obtain \eqref{eq:control-zeta-diff-eq}. This completes
the proof.
\end{proof}
From the previous lemma, we have seen that the dynamics of the complex-valued
scaling parameter $\bm{\zeta}$ is governed by the inner products
$(\eps,i\Lmb_{B_{0}}Q)_{r}$ and $(\eps,Q\chi_{B_{0}})_{r}$. Our
next goal is to track the dynamics of these inner products. As in
the definition of $\bm{\zeta}$, we need proper modifications of these
inner products. More precisely, we will track the following quantities:
\begin{align}
b & \coloneqq\frac{(\eps,i\Lmb_{B}^{\mathrm{(avg)}}Q)_{r}+\tfrac{1}{2}(\eps,i\Lmb_{B}^{\mathrm{(avg)}}\eps)_{r}}{4\pi\log B_{0}},\tag{\ref{eq:def-b}}\nonumber \\
\eta & \coloneqq\frac{(\eps,\chi_{B}^{\mathrm{(avg)}}Q)_{r}+\tfrac{1}{2}(\eps,\chi_{B}^{\mathrm{(avg)}}\eps)_{r}}{4\pi\log B_{0}}.\label{eq:def-eta}
\end{align}
Note that we have already seen the definition of $b$ in \eqref{eq:def-b}.
In view of \eqref{eq:control-zeta-diff-eq}, it is natural to consider
also the complex-valued scalar 
\begin{equation}
\bm{b}\coloneqq b+i\eta.\label{eq:def-bm-b}
\end{equation}

Recall that, in Section~\ref{subsec:Formal-derivation-of-blow-up},
we tested the $\rd_{s}\eps$-equation against $i\Lmb(Q+\eps)$ and
$(Q+\eps)$ to derive the formal parameter equations \eqref{eq:formal-b-without-trunc}-\eqref{eq:formal-eta-without-trunc}.
This motivates the definitions of \eqref{eq:def-b} and \eqref{eq:def-eta},
thanks to the formal identities 
\begin{align*}
(\rd_{s}\eps,i\Lmb(Q+\eps))_{r} & =\rd_{s}\{(\eps,i\Lmb Q)_{r}+\tfrac{1}{2}(\eps,i\Lmb\eps)_{r}\},\\
(\rd_{s}\eps,(Q+\eps))_{r} & =\rd_{s}\{(\eps,Q)_{r}+\tfrac{1}{2}(\eps,\eps)_{r}\},
\end{align*}
and \eqref{eq:transversality-radial-case}. We remark that $b$ and
$\eta$ can be viewed as truncated nonlinear virial and mass functionals
at $Q+\eps$. As we have already seen in Section~\ref{subsec:Formal-derivation-of-blow-up},
we can expect ``nice'' structures in the computations of $b_{s}$
and $\eta_{s}$ (i.e., the appearance of $-\|L_{Q}\eps\|_{L^{2}}^{2}$
in $b_{s}$ and the absence of quadratic terms in $\eta_{s}$) in
the spirit of the nonlinear virial identity and mass conservation.

In addition, we averaged over the cutoff parameter $B$. As in \cite{KimKwon2019arXiv,KimKwon2020arXiv},
this is introduced to deal with errors of (possibly having) critical
size but \emph{localized} in annuli. We note that the admissible range
of $B$ seems to be very limited; see Remark~\ref{rem:mod-est-cutoff-radius}.
\begin{lem}[Control of $\bm{b}$]
\label{lem:Control-of-b-and-eta}We have 
\begin{align}
\Big|\bm{b}-\frac{(\eps,i\Lmb_{B_{0}}Q)_{r}+i(\eps,Q\chi_{B_{0}})_{r}}{4\pi\log B_{0}}\Big| & \aleq\frac{b}{|\log b|^{\frac{1}{5}}},\label{eq:b-bound}\\
\Big|\rd_{s}\bm{b}+|\bm{b}|^{2}+\lmb^{2}\frac{\br{\bm{\lmb}}\cdot8\sqrt{8}\pi pq(4it)^{\frac{\nu-2}{2}}}{4\pi\log B_{0}}+P\Big| & \aleq o_{t\to0}(b^{2})\label{eq:b-diff-equality}
\end{align}
for some almost positive $P=P(t)$ satisfying 
\begin{equation}
-o_{t\to0}(b^{2})\leq P\leq\Big(\frac{\|L_{Q}\eps\|_{L^{2}}^{2}}{4\pi\log B_{0}}-|\bm{b}|^{2}\Big)+o_{t\to0}(b^{2}).\label{eq:P-positivity}
\end{equation}
\end{lem}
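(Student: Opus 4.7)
The proof of \eqref{eq:b-bound} is by direct estimation of the four discrepancies obtained by expanding $\bm{b}$: the linear-in-$\eps$ differences $(\eps, i(\Lmb_B^{\avg} - \Lmb_{B_0})Q)_r$ and $(\eps, (\chi_B^{\avg} - \chi_{B_0})Q)_r$ are supported in the annulus $B_{-1/5} \leq y \leq 2 B_0$ of logarithmic width $\log(B_0/B_{-1/5}) \sim \log|\log|t||$, so together with $\|\eps\|_{\dot{\calH}_0^1} \lesssim b|\log b|^{1/2}$ from \eqref{eq:conv-rel-3} and Cauchy--Schwarz, they contribute $O(b \sqrt{\log|\log b|}/|\log b|^{1/2})$ after division by $\log B_0$. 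For the quadratic pieces, the restriction $B \leq B_{-1/10}$ gives $B^2 \lesssim B_0^2 / |\log|t||^{1/5} \sim 1/(b|\log b|^{1/5})$, and since $|(\eps, i\Lmb_B^{\avg}\eps)_r| + |(\eps, \chi_B^{\avg}\eps)_r| \lesssim B^2 \|\eps\|_{\dot{\calH}_0^1}^2 \lesssim b|\log b|/|\log b|^{1/5}$, dividing by $\log B_0 \sim |\log b|$ yields the claimed $O(b/|\log b|^{1/5})$.

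For \eqref{eq:b-diff-equality}, I decompose $\rd_s\bm{b}$ into contributions from the linear-in-$\eps$ part $\frac{(\eps,i\Lmb_B^{\avg}Q)_r + i(\eps,\chi_B^{\avg}Q)_r}{4\pi \log B_0}$ and the quadratic-in-$\eps$ part $\frac{\tfrac{1}{2}(\eps,i\Lmb_B^{\avg}\eps)_r + \tfrac{i}{2}(\eps,\chi_B^{\avg}\eps)_r}{4\pi\log B_0}$. For the linear piece, I proceed as in Lemma~\ref{lem:control-zeta}: pass to the $(t,r)$-variables to cleanly handle $\rd_s \chi_{B_0}$, substitute the $\eps$-equation \eqref{eq:eps-eq-s,y}, and use the truncated generalized kernel relations \eqref{eq:truncated-est-1}--\eqref{eq:truncated-est-2} together with $L_Q \Lmb Q = L_Q iQ = 0$ to reduce the Hamiltonian cross terms to $o(b^2)$. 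The interaction term then gives the main driving force $\frac{(R_{Q,z^{\flat}},\Lmb_B^{\avg}Q)_r - i(R_{Q,z^{\flat}},\chi_B^{\avg}Q)_r}{4\pi\log B_0}$, which by Lemma~\ref{lem:RQ,z} and $\lmb^3 e^{-i\gmm} = \lmb^2 \br{\bm{\lmb}}$ equals $\lmb^2\br{\bm{\lmb}}\cdot 8\sqrt{8}\pi pq(4it)^{\frac{\nu-2}{2}}/(4\pi\log B_0) + o(b^2)$, matching \eqref{eq:b-diff-equality}.

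For the quadratic piece, I use the antisymmetry of $i\Lmb_B^{\avg}$ and $\chi_B^{\avg}$ to write $\rd_s (\eps, i\Lmb_B^{\avg}\eps)_r = 2(\rd_s\eps, i\Lmb_B^{\avg}\eps)_r$ and similarly for $\chi_B^{\avg}$, then substitute $\rd_s \eps = -i\calL_Q \eps + (\text{modulation} + \text{higher order})$. The leading Hamiltonian contribution, via the linearized scaling Pohozaev identity (reflecting the $L^2$-criticality of $E$, cf.~\eqref{eq:virial-2} and the formal computation of Section~\ref{subsec:Formal-derivation-of-blow-up} which produces $-2E[Q+\eps]=-\|L_Q\eps\|_{L^2}^2 + \text{h.o.t.}$), yields $\rd_s \frac{\tfrac{1}{2}(\eps,i\Lmb_B^{\avg}\eps)_r}{4\pi\log B_0} = -\frac{\|L_Q\eps\|_{L^2}^2}{4\pi\log B_0} + o(b^2)$; analogously the mass-type identity (mirroring \eqref{eq:virial-1} and $L^{2}$-conservation of $Q+\eps$ under the linearized flow) shows the $\chi_B^{\avg}$-quadratic piece contributes only $o(b^2)$. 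Combining, I obtain
\[
\rd_s \bm{b} + \frac{\|L_Q\eps\|_{L^2}^2}{4\pi\log B_0} + \lmb^2 \frac{\br{\bm{\lmb}} \cdot 8\sqrt{8}\pi pq(4it)^{\frac{\nu-2}{2}}}{4\pi \log B_0} = o_{t\to 0}(b^2),
\]
which rearranges into \eqref{eq:b-diff-equality} upon setting $P \coloneqq \frac{\|L_Q\eps\|_{L^2}^2}{4\pi\log B_0} - |\bm{b}|^2$. The upper bound for $P$ in \eqref{eq:P-positivity} then holds by definition, while the lower bound $P \geq -o(b^2)$ follows from \eqref{eq:b-bound} together with the coercivity \eqref{eq:coercivity} after subtracting the projection of $\eps$ onto $\mathrm{span}\{-i\tfrac{y^2}{4}Q\chi_{B_0}, \rho\chi_{B_0}\}$, whose $L_Q$-image has squared $L^2$-norm exactly $4\pi\log B_0 \cdot |\bm{b}|^2 + o(b^2 \log B_0)$ thanks to \eqref{eq:gen-kernel-rel-LQ}.

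The main obstacle is the careful bookkeeping of error terms. Every nonlinear piece produced by the $\eps$-equation---$(\calL_{Q+z^{\flat}} - \calL_Q)\eps$, $R_{Q+z^{\flat}}(\eps)$, $\tht_{z^{\flat}}\eps$, $\Psi_{z^{\flat}}$---must be shown to contribute $o_{t\to 0}(b^2)$ when paired against $\Lmb_B^{\avg}Q$, $\chi_B^{\avg}Q$, $i\Lmb_B^{\avg}\eps$, or $\chi_B^{\avg}\eps$; these estimates rely on Corollary~\ref{cor:bounds-z-bootstrap}, Lemma~\ref{lem:RQ,z}, the convenient relations \eqref{eq:conv-rel-2}--\eqref{eq:conv-rel-3}, and the duality estimates of Section~\ref{subsec:duality-ests}. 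The most delicate errors are logarithmically borderline and localized in annuli of size $\sim B_0$; these are precisely where the $B$-averaging \eqref{eq:def-averaging} over $[B_{-1/5},B_{-1/10}]$ is invoked to gain the factor $1/\log|\log|t||$, in the spirit of \cite{KimKwon2019arXiv}. A further technical subtlety is the derivation of the truncated Pohozaev-type identity $(\calL_Q \eps, \Lmb_B^{\avg}\eps)_r = \|L_Q\eps\|_{L^2}^2 + o(b^2 \log B_0)$: commutator errors $[\Lmb_B^{\avg}, L_Q^{\ast}L_Q]\eps$ from the truncation are likewise handled by the $B$-averaging gain combined with the coercivity \eqref{eq:coercivity}.
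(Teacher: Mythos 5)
Your estimate for \eqref{eq:b-bound} and your handling of the $B$-averaging and $(t,r)$-variable trick are broadly in the spirit of the paper. However, the central computation for \eqref{eq:b-diff-equality} has a genuine gap: you split $\bm{b}$ into its linear-in-$\eps$ and quadratic-in-$\eps$ parts and assert that each can be treated separately, with the linear piece producing the interaction term up to $o(b^{2})$ and the quadratic piece producing $-\|L_{Q}\eps\|_{L^{2}}^{2}/(4\pi\log B_{0}) + o(b^{2})$ ``via the linearized scaling Pohozaev identity.'' This does not work. The formal computation in Section~\ref{subsec:Formal-derivation-of-blow-up} that you invoke --- $(-i\nb E[Q+\eps], i\Lmb(Q+\eps))_{r}=-2E[Q+\eps]$ --- uses the \emph{combined} multiplier $i\Lmb(Q+\eps)$, and the paper's Step~4 correspondingly exploits the nonlinear self-dual identity $\nb E[Q+\eps]=L_{Q+\eps}^{\ast}\bfD_{Q+\eps}(Q+\eps)$ tested against $\Lmb_{B}(Q+\eps)$. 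At the quadratic-in-$\eps$ level, the contributions $(\calL_{Q}\eps,\Lmb_{B}\eps)_{r}$ (your quadratic piece) and $(R_{Q}(\eps),\Lmb_{B}Q)_{r}$ (sitting in your linear piece) must be combined to produce $\int\chi_{B}|L_{Q}\eps|^{2}$; $(\calL_{Q}\eps,\Lmb_{B}\eps)_{r}=(L_{Q}\eps,L_{Q}\Lmb_{B}\eps)_{r}$ alone is not $\|L_{Q}\eps\|_{L^{2}}^{2}$ up to admissible error, because $L_{Q}$ does not commute with $\Lmb_{B}$ in a way that reproduces $\Lmb_{-1,B}L_{Q}$ --- the discrepancy is exactly the $R_{Q}(\eps)$-term you have discarded to the other piece.

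A second, related issue concerns the modulation cross-terms: working in $(t,r)$-variables, the linear piece produces $\lmb^{2}(\eps^{\sharp},i\Lmb_{\lmb B}\rd_{t}Q^{\sharp})_{r}$ and the quadratic piece (after substituting $\rd_{t}\eps^{\sharp}\ni -\rd_{t}Q^{\sharp}$) produces $-\lmb^{2}(\rd_{t}Q^{\sharp},i\Lmb_{\lmb B}\eps^{\sharp})_{r}$. Each is of size $\sim\tfrac{\lmb_{s}}{\lmb}\cdot b|\log b|\sim b^{2}|\log b|^{3/2}$, hence after dividing by $4\pi\log B_{0}\sim |\log b|$ of size $\sim b^{2}|\log b|^{1/2}$, which is \emph{not} $o(b^{2})$. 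These two terms cancel exactly by the symmetry of $i\Lmb_{\lmb B}$, which is visible only when the linear and quadratic pieces are kept together (cf.\ the derivation of \eqref{eq:control-b-eta-step1-3}). Your proposal neither exhibits this cancellation nor gives a reason why each term is individually negligible. Without joining the two pieces back together (which effectively reproduces the paper's computation), the argument as written does not close.
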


\begin{rem}
Roughly speaking, $P$ is an averaged version of $\|(L_{Q}\eps)^{\perp}\|_{L^{2}}^{2}/4\pi\log B_{0}$,
where $\perp$ is the orthogonal projection onto the orthogonal complement
of $\chi_{B}yQ$ and $\chi_{B}iyQ$. Thus, the almost positivity of
$P$ is natural.

The upper bound \eqref{eq:P-positivity} for $P$ might have a critical
size $b^{2}$ at first glance. However, our expectation $\eps\approx b\cdot(-i\tfrac{y^{2}}{4}Q\chi_{B_{0}})+\eta\cdot\rho\chi_{B_{0}}$
says that we may hope to have an improved bound $P\ll b^{2}$, which
holds at the initial time $t=\tau$. In the past times, this indeed
happens and will be key to close our bootstrap. Later in the energy
estimates (Lemma~\ref{lem:H1-energy-est}), we will introduce a monotonicity
that propagates the initial smallness of $P$ backward in time, justifying
$P\ll b^{2}$.
\end{rem}

\begin{rem}
\label{rem:mod-est-cutoff-radius}It seems that the range of admissible
cutoff radii $B$ is very limited; $B$ must be an almost self-similar
scale. We have chosen the radii $B_{-\delta}$ for $\delta\in[\frac{1}{10},\frac{1}{5}]$
in this work, but inspecting its proof, it suffices to use $\delta\in[\delta_{1},\delta_{2}]\subset(0,\frac{1}{4})$.
The restriction $\delta>0$ is made to ensure that the size of the
correction term of $\bm{b}$ is less than the size of its main term.
The restriction $\delta<\frac{1}{4}$ comes from dealing with cutoff
errors when computing time derivatives of $\bm{b}$; see, for example
, \eqref{eq:control-b-eta-step1-5} and \eqref{eq:control-b-eta-step4-3}.
\end{rem}

\begin{proof}
We have already shown the real part of \eqref{eq:b-bound}, i.e., the
estimate for $b$, in \eqref{eq:proximity-b-bqnu}. The bound for
$\eta$ can be proved in a similar manner, so we omit its proof. Henceforth,
we focus on the proof of \eqref{eq:b-diff-equality}. We first prove
analogous statements for the \emph{unaveraged} versions (and multiplied
by $4\pi\log B_{0}$) of $\bm{b}$, i.e., for the quantities 
\begin{equation}
\left\{ \begin{aligned}\td b & \coloneqq(\eps,i\Lmb_{B}Q)_{r}+\tfrac{1}{2}(\eps,i\Lmb_{B}\eps)_{r},\\
\td{\eta} & \coloneqq(\eps,\chi_{B}Q)_{r}+\tfrac{1}{2}(\eps,\chi_{B}\eps)_{r},\\
\td{\bm{b}} & \coloneqq\td b+i\td{\eta}.
\end{aligned}
\right.\label{control-b-eta-step0-1}
\end{equation}
where $B=B_{0}c$ with $c\in[\frac{1}{|\log|t||^{1/5}},\frac{1}{|\log|t||^{1/10}}]$
 \emph{fixed} (time-independent). After that, we finish the proof
using an averaging argument.

\smallskip
\textbf{Step 1.} Computation of $\rd_{s}\td b$ and $\rd_{s}\td{\eta}$
and estimates for easy error terms.

In this step, we show that 
\begin{equation}
\begin{aligned}\rd_{s}\td b=-(\calL_{Q+z^{\flat}}\epsilon+R_{Q+z^{\flat}}(\epsilon),\Lmb_{B}(Q+\epsilon))_{r} & -(R_{Q,z^{\flat}},\Lmb_{B}(Q+\epsilon))_{r}\\
 & +O(|\log|t||^{-\frac{1}{10}}\cdot b^{2}|\log b|),
\end{aligned}
\label{eq:control-b-eta-step1-1}
\end{equation}
and 
\begin{equation}
\begin{aligned}\rd_{s}\td{\eta}=(\calL_{Q+z^{\flat}}\epsilon+R_{Q+z^{\flat}}(\epsilon),\chi_{B}(iQ+i\epsilon))_{r} & +(R_{Q,z^{\flat}},\chi_{B}(iQ+i\epsilon))_{r}\\
 & +O(|\log|t||^{-\frac{1}{10}}\cdot b^{2}|\log b|),
\end{aligned}
\label{eq:control-b-eta-step1-2}
\end{equation}
In fact, we will only show \eqref{eq:control-b-eta-step1-1} because
the proof of \eqref{eq:control-b-eta-step1-2} is essentially the
same.

As in the proof of Lemma~\ref{lem:control-zeta}, we begin by computing
in the original $(t,r)$-variables to avoid the oscillatory term $-\frac{\lmb_{s}}{\lmb}$
in $\frac{B_{s}}{B}$: 
\begin{align*}
 & \rd_{s}\{(\eps,i\Lmb_{B}Q)_{r}+\tfrac{1}{2}(\eps,i\Lmb_{B}\eps)_{r}\}\\
 & =\lmb^{2}\rd_{t}\{(\eps^{\sharp},i\Lmb_{\lmb B}Q^{\sharp})_{r}+\tfrac{1}{2}(\eps^{\sharp},i\Lmb_{\lmb B}\eps^{\sharp})_{r}\}\\
 & =\lmb^{2}\{(\rd_{t}\eps^{\sharp},i\Lmb_{\lmb B}(Q^{\sharp}+\eps^{\sharp}))_{r}+(\eps^{\sharp},i\Lmb_{\lmb B}\rd_{t}Q^{\sharp})_{r}\}\\
 & \peq+\lmb^{2}\{(\eps^{\sharp},i(\rd_{t}\Lmb_{\lmb B})Q^{\sharp})_{r}+\tfrac{1}{2}(\eps^{\sharp},i(\rd_{t}\Lmb_{\lmb B})\eps^{\sharp})_{r}\}.
\end{align*}
Notice that we avoided time derivatives falling directly on $B$.
Applying \eqref{eq:eps-sharp-eq}, the above display continues as
\begin{align*}
 & =-\lmb^{2}\{(\calL_{Q^{\sharp}+z}\eps^{\sharp}+R_{Q^{\sharp}+z}(\eps^{\sharp})-\tht_{z}\eps^{\sharp}+R_{Q^{\sharp},z}-\Psi_{z},\Lmb_{\lmb B}(Q^{\sharp}+\eps^{\sharp}))_{r}\}\\
 & \peq+\lmb^{2}\{-(\rd_{t}Q^{\sharp},i\Lmb_{\lmb B}(Q^{\sharp}+\eps^{\sharp}))_{r}+(\eps^{\sharp},i\Lmb_{\lmb B}\rd_{t}Q^{\sharp})_{r}\}\\
 & \peq+\lmb^{2}\{(\eps^{\sharp},i(\rd_{t}\Lmb_{\lmb B})Q^{\sharp})_{r}+\tfrac{1}{2}(\eps^{\sharp},i(\rd_{t}\Lmb_{\lmb B})\eps^{\sharp})_{r}\}.
\end{align*}
We rescale the first term and exploit the antisymmetricity of $\Lmb_{\lmb B}$
in the above display. As a result, we have arrived at 
\begin{equation}
\begin{aligned} & \rd_{s}\{(\eps,i\Lmb_{B}Q)_{r}+\tfrac{1}{2}(\eps,i\Lmb_{B}\eps)_{r}\}\\
 & =-(\calL_{Q+z^{\flat}}\epsilon+R_{Q+z^{\flat}}(\epsilon),\Lmb_{B}(Q+\epsilon))_{r}-(R_{Q,z^{\flat}},\Lmb_{B}(Q+\epsilon))_{r}\\
 & \peq+(\Psi_{z^{\flat}},\Lmb_{B}(Q+\eps))_{r}\\
 & \peq+\lmb^{2}\{-(\rd_{t}Q^{\sharp},i\Lmb_{\lmb B}Q^{\sharp})_{r}+\tht_{z}(\eps^{\sharp},\Lmb_{\lmb B}Q^{\sharp})_{r}\}\\
 & \peq+\lmb^{2}\{(\eps^{\sharp},i(\rd_{t}\Lmb_{\lmb B})Q^{\sharp})_{r}+\tfrac{1}{2}(\eps^{\sharp},i(\rd_{t}\Lmb_{\lmb B})\eps^{\sharp})_{r}\}.
\end{aligned}
\label{eq:control-b-eta-step1-3}
\end{equation}

It suffices to show that the second to the fourth lines of RHS\eqref{eq:control-b-eta-step1-3}
are errors. First, we apply \eqref{eq:Psi_zflat-Hdot1} to have 
\begin{align*}
(\Psi_{z^{\flat}},\Lmb_{B}(Q+\eps))_{r} & \aleq\|\tfrac{1}{y}\Psi_{z^{\flat}}\|_{L^{2}}\|y\Lmb_{B}(Q+\eps)\|_{L^{2}}\\
 & \aleq b^{2}|t|^{\delta}\cdot(|\log B|+B^{2}\|\eps\|_{\dot{\calH}_{0}^{1}})\aleq b^{2}|t|^{\delta-}.
\end{align*}
Next, we have 
\begin{equation}
\lmb^{2}(\rd_{t}Q^{\sharp},i\Lmb_{\lmb B}Q^{\sharp})_{r}=0.\label{eq:control-b-eta-step1-4}
\end{equation}
Next, we have 
\[
\lmb^{2}\tht_{z}(\eps^{\sharp},\Lmb_{\lmb B}Q^{\sharp})_{r}\aleq\lmb^{2}\|\eps\cdot\Lmb_{B}Q\|_{L^{1}}\aleq\lmb^{2}\|\eps\|_{\dot{\calH}_{0}^{1}}(\log B)^{\frac{1}{2}}\aleq b^{2}\cdot|t|^{1-}.
\]
Next, we use $|(A\rd_{A}\Lmb_{A})f|\aleq\chf_{[A,2A]}|f|_{1}$ and
$\frac{(\lmb B)_{t}}{\lmb B}=\frac{(|t|^{1/2}c)_{t}}{|t|^{1/2}c}=-\frac{1}{2|t|}$
to have 
\begin{align*}
 & \lmb^{2}|(\eps^{\sharp},i(\rd_{t}\Lmb_{\lmb B})Q^{\sharp})_{r}+\tfrac{1}{2}(\eps^{\sharp},i(\rd_{t}\Lmb_{\lmb B})\eps^{\sharp})_{r}|\\
 & \aleq|\tfrac{(\lmb B)_{s}}{\lmb B}|\cdot\|\chf_{[\lmb B,2\lmb B]}|\eps^{\sharp}|\cdot(|Q^{\sharp}|_{1}+|\eps^{\sharp}|_{1})\|_{L^{1}}\\
 & \aleq\tfrac{\lmb^{2}}{|t|}\cdot\|\chf_{[B,2B]}|\eps|\cdot(|Q|_{1}+|\eps|_{1})\|_{L^{1}}\\
 & \aleq b\cdot\|\eps\|_{\dot{\calH}_{0}^{1}}(1+B^{2}\|\eps\|_{\dot{\calH}_{0}^{1}})\aleq|\log|t||^{-\frac{1}{5}}\cdot b^{2}|\log b|.
\end{align*}
This completes the proof of \eqref{eq:control-b-eta-step1-1}.

As mentioned earlier, the proof of \eqref{eq:control-b-eta-step1-2}
is very similar, but this time \eqref{eq:control-b-eta-step1-4} should
be replaced by 
\begin{align}
|\lmb^{2}(\rd_{t}Q^{\sharp},\chi_{\lmb B}Q^{\sharp})_{r}| & =|\gmm_{s}|\cdot|(\Lmb Q,\chi_{B}Q)_{r}|\label{eq:control-b-eta-step1-5}\\
 & =|\gmm_{s}|\cdot|(Q,\tfrac{1}{2}[\chi_{B},\Lmb]Q)_{r}|\nonumber \\
 & \aleq b|\log b|^{\frac{1}{2}}\cdot B^{-2}\aleq|\log|t||^{-\frac{1}{10}}\cdot b^{2}|\log b|.\nonumber 
\end{align}
We leave the details to the readers. 

\smallskip
\textbf{Step 2.} Contribution of the interaction term $R_{Q,z^{\flat}}$.

In this step, we show that 
\begin{align}
-(R_{Q,z^{\flat}},\Lmb_{B}(Q+\epsilon))_{r} & =-8\sqrt{8}\pi\Re(\lmb^{3}e^{-i\gmm}pq(4it)^{\frac{\nu-2}{2}})+O(b^{2}|t|^{\delta-}),\label{eq:control-b-eta-step2-1}\\
(R_{Q,z^{\flat}},\chi_{B}(iQ+i\eps))_{r} & =-8\sqrt{8}\pi\Im(\lmb^{3}e^{-i\gmm}pq(4it)^{\frac{\nu-2}{2}})+O(b^{2}|t|^{\delta-}).\label{eq:control-b-eta-step2-2}
\end{align}
For the proof of \eqref{eq:control-b-eta-step2-1}, we use Lemma~\ref{lem:RQ,z}
to observe that 
\begin{align*}
 & -(R_{Q,z^{\flat}},\Lmb_{B}(Q+\epsilon))_{r}\\
 & =-(R_{Q,z^{\flat}},\Lmb Q)_{r}+O(\|R_{Q,z^{\flat}}\|_{L^{2}}\|(\Lmb Q-\Lmb_{B}Q)+\Lmb_{B}\eps\|_{L^{2}})\\
 & =-(R_{Q,z^{\flat}},\Lmb Q)_{r}+O(b^{\frac{3}{2}}|t|^{\delta}\cdot(B^{-1}+B\|\eps\|_{\dot{\calH}_{0}^{1}}))\\
 & =-8\sqrt{8}\pi\Re(\lmb^{3}e^{-i\gmm}\cdot pq(4it)^{\frac{\nu-2}{2}})+O(b^{2}|t|^{\delta-}).
\end{align*}
For the proof of \eqref{eq:control-b-eta-step2-2}, one proceeds similarly
and uses \eqref{eq:RQ,z-inner-prod-2} instead. We omit the proof.

\smallskip
\textbf{Step 3.} Nonlinear structure of $\calL_{Q+z^{\flat}}\epsilon+R_{Q+z^{\flat}}(\epsilon)$.

In this step, we claim for $\varphi\in\{\Lmb_{B}(Q+\eps),\chi_{B}(iQ+i\eps)\}$
that 
\begin{equation}
(\calL_{Q+z^{\flat}}\epsilon+R_{Q+z^{\flat}}(\epsilon),\varphi)_{r}=(\calL_{Q}\epsilon+R_{Q}(\epsilon),\varphi)_{r}+O(b^{2}|t|^{\delta-}).\label{eq:control-b-eta-step3-1}
\end{equation}
Let us begin by dealing with easy error terms. At first, we show that
we can replace $R_{Q+z^{\flat}}(\eps)$ with $R_{Q}(\eps)$. Indeed,
since $R_{Q+z^{\flat}}(\eps)-R_{Q}(\eps)$ is a linear combination
of $\calN_{\ast}(\psi_{1},\dots,\psi_{\ast})$ such that $\#\{j:\psi_{j}=\eps\}\geq2$
and $\#\{j:\psi_{j}=z^{\flat}\}\geq1$, we apply the duality estimates
(Lemma~\ref{lem:duality-estimates-weighted-L1}) to obtain 
\begin{align*}
 & |(R_{Q+z^{\flat}}(\eps)-R_{Q}(\eps),\varphi)_{r}|\\
 & \aleq\|\langle\log_{-}y\rangle z^{\flat}\|_{yL^{2}\cap L^{\infty}}\|\tfrac{1}{\langle\log_{-}y\rangle}\eps\|_{yL^{2}\cap L^{\infty}}\|\eps\|_{L^{2}}\|\varphi\|_{L^{2}}\\
 & \aleq\lmb\|\eps\|_{\dot{\calH}_{0}^{1}}\|\eps\|_{L^{2}}\cdot(\|\Lmb_{B}(Q+\epsilon)\|_{L^{2}}+\|\chi_{B}(iQ+i\epsilon)\|_{L^{2}})\\
 & \aleq\lmb\|\eps\|_{\dot{\calH}_{0}^{1}}\|\eps\|_{L^{2}}\cdot(1+B\|\eps\|_{\dot{\calH}_{0}^{1}})\aleq b^{2}|t|^{\frac{1}{2}-}.
\end{align*}
Next, we show that we can replace $\calL_{Q+z^{\flat}}\eps$ by $\calL_{Q}\eps$
when we take inner products with $\Lmb_{B}\eps$ or $\chi_{B}i\eps$.
Indeed, we use the duality estimates again to obtain 
\begin{align*}
 & |(\calL_{Q+z^{\flat}}\eps-\calL_{Q}\eps,\Lmb_{B}\eps)_{r}|+|(\calL_{Q+z^{\flat}}\eps-\calL_{Q}\eps,\chi_{B}i\eps)_{r}|\\
 & \aleq\|\langle\log_{-}y\rangle z^{\flat}\|_{yL^{2}\cap L^{\infty}}\|_{yL^{2}\cap L^{\infty}}\|\tfrac{1}{\langle\log_{-}y\rangle}\eps\|_{yL^{2}\cap L^{\infty}}(\|\Lmb_{B}\eps\|_{L^{2}}+\|\chi_{B}i\eps\|_{L^{2}})\\
 & \aleq\lmb\|\eps\|_{\dot{\calH}_{0}^{1}}\cdot B\|\eps\|_{\dot{\calH}_{0}^{1}}\aleq b^{2}|t|^{\frac{1}{2}-}.
\end{align*}

By the previous two displays, it suffices to show that 
\begin{equation}
|((\calL_{Q+z^{\flat}}-\calL_{Q})\eps,\td{\varphi})_{r}|\aleq b^{2}|t|^{\delta-},\qquad\td{\varphi}\in\{\Lmb_{B}Q,\chi_{B}iQ\}.\label{eq:control-b-eta-step3-2}
\end{equation}
These inner products require more careful analysis. We use the duality
formulation that $((\calL_{Q+z^{\flat}}-\calL_{Q})\eps,\td{\varphi})_{r}$
is a linear combination of $\calM_{\ast}(\psi_{1},\dots,\psi_{\ast})$,
where $\#\{j:\psi_{j}=\eps\}=1=\#\{j:\psi_{j}=\td{\varphi}\}$ and
$\#\{j:\psi_{j}=z^{\flat}\}\geq1$. We note that there is no $\calM_{4,1}$-term
since $m=0$. It suffices to consider the following three cases: for
some $j\in\{1,3,5\}$, (Case A) $\Re(\br{\psi_{j}}\psi_{j+1})=\Re(\br{\td{\varphi}}\eps)$,
(Case B) $\Re(\br{\psi_{j}}\psi_{j+1})=\Re(\br{\td{\varphi}}z^{\flat})$,
(Case C) $\Re(\br{\psi_{j}}\psi_{j+1})=\Re(\br{\td{\varphi}}Q)$.

\uline{Case A}. We use the duality estimates ($L^{\infty}$-$L^{1}$
Hölder for $\calM_{4,0}$ and Lemma~\ref{lem:duality-estimates-weighted-L1}
with putting the weight $\frac{1}{y^{2}}$ to some $\psi_{j}=z^{\flat}$
for $\calM_{6}$) and \eqref{eq:z-flat-H2-bound} to see that this
contribution can be bounded by 
\begin{align*}
 & \aleq(\||z^{\flat}|^{2}\|_{y^{2}L^{1}\cap L^{\infty}}+\|Qz^{\flat}\|_{y^{2}L^{1}\cap L^{\infty}})\|\td{\varphi}\eps\|_{L^{1}}\\
 & \aleq(\|z^{\flat}\|_{yL^{2}\cap L^{\infty}}^{2}+\|\tfrac{1}{y}z^{\flat}\|_{L^{\infty}})\|\td{\varphi}\eps\|_{L^{1}}\aleq b|t|^{\delta}\cdot\|\eps\|_{\dot{\calH}_{0}^{1}}|\log B_{0}|^{\frac{1}{2}}\aleq b^{2}|t|^{\delta-}.
\end{align*}

\uline{Case B}. We use the duality estimates (Lemma~\ref{lem:duality-estimates-Holder})
and \eqref{eq:z-flat-H2-bound} to see that this contribution is bounded
by 
\begin{align*}
 & \aleq(\|Q\eps\|_{L^{2}}+\|z^{\flat}\eps\|_{L^{2}})(\|(\Lmb_{B'}Q)z^{\flat}\|_{L^{2}}+\|(\chi_{B'}iQ)z^{\flat}\|_{L^{2}})\\
 & \aleq\|\eps\|_{\dot{\calH}_{0}^{1}}\|Qz^{\flat}\|_{L^{2}}\aleq b^{2}|t|^{\delta-}.
\end{align*}

\uline{Case C}. We note the following pointwise bound:
\begin{equation}
|\Re(\br{\td{\varphi}}Q)|+\tfrac{1}{y^{2}}|\tint 0y\Re(\br{\td{\varphi}}Q)y'dy'|\aleq\langle y\rangle^{-4},\label{eq:control-b-eta-step3-3}
\end{equation}
where the integral bound comes from integration by parts when $\td{\varphi}=\Lmb_{B}Q$.
Therefore, we have (there is no $\calM_{4,1}$-term since $m=0$)
\begin{align*}
|\calM_{4,0}(\eps,z^{\flat},\td{\varphi},Q)| & \aleq\|\tfrac{1}{\langle y\rangle^{3}}z^{\flat}\|_{L^{2}}\|\tfrac{1}{\langle y\rangle}\eps\|_{L^{2}},\\
|\calM_{6}(\td{\varphi},Q,\psi_{3},\psi_{4},\psi_{5},\psi_{6})| & \aleq\|\tfrac{1}{\langle y\rangle^{3}}z^{\flat}\|_{L^{2}}\|\tfrac{1}{\langle y\rangle}\eps\|_{L^{2}},\\
|\calM_{6}(\psi_{1},\psi_{2},\psi_{3},\psi_{4},\td{\varphi},Q)| & \aleq\|\tfrac{1}{y^{2}}z^{\flat}\|_{L^{2}}\|\tfrac{1}{\langle y\rangle}\eps\|_{L^{2}},
\end{align*}
where in the second row we used \eqref{eq:control-b-eta-step3-3}
and distributed the weight $\frac{1}{\langle y\rangle^{4}}$ in the
spirit of Lemma~\ref{lem:duality-estimates-weighted-L1}, and in
the third row we simply put $\langle y\rangle\td{\varphi}Q\in L^{1}$
and distributed the remaining weight $\frac{1}{y^{2}\langle y\rangle}$
into the integrals $\int_{0}^{y}\cdot y'dy'$. Further, using \eqref{eq:z-flat-H2-bound},
we see that the contribution of Case C is bounded by 
\[
\aleq b|t|^{\delta}\cdot\|\eps\|_{\dot{\calH}_{0}^{1}}\aleq b^{2}|t|^{\delta-}.
\]
This completes the proof of \eqref{eq:control-b-eta-step3-2} and
hence the claim \eqref{eq:control-b-eta-step3-1}.

\smallskip
\textbf{Step 4.} Truncated nonlinear virial/mass identities.

In this step, we claim that 
\begin{equation}
\begin{aligned}(\calL_{Q}\epsilon+R_{Q}(\epsilon),\Lmb_{B}(Q+\epsilon))_{r} & =\int\chi_{B}|L_{Q}\eps|^{2}\\
 & \peq+O\Big(b^{2}|\log b|^{\frac{4}{5}}+\int_{B}^{2B}|\eps|_{-1}^{2}ydy\Big)
\end{aligned}
\label{eq:control-b-eta-step4-1}
\end{equation}
and 
\begin{equation}
(\calL_{Q}\epsilon+R_{Q}(\epsilon),\chi_{B}(iQ+i\epsilon))_{r}=O\Big(b^{2}|\log b|^{\frac{4}{5}}+\int_{B}^{2B}|\eps|_{-1}^{2}ydy\Big).\label{eq:control-b-eta-step4-2}
\end{equation}
If there were no truncations for $\Lmb$ and $i$, then a formal application
of the scaling and phase rotation symmetries yields the identities
\begin{align*}
L_{Q+\eps}(\Lmb Q+\Lmb\epsilon) & =-\tfrac{d}{d\lmb}|_{\lmb=1}\bfD_{(Q+\eps)_{\lmb}}(Q+\eps)_{\lmb}=\Lmb_{-1}\bfD_{Q+\eps}(Q+\eps),\\
L_{Q+\eps}(iQ+i\epsilon) & =\tfrac{d}{d\tht}|_{\tht=0}\bfD_{e^{i\tht}(Q+\eps)}e^{i\tht}(Q+\eps)=i\bfD_{Q+\eps}(Q+\eps),
\end{align*}
which, in turn, gives 
\begin{align*}
(\bfD_{Q+\eps}(Q+\eps),L_{Q+\eps}(\Lmb Q+\Lmb\epsilon))_{r} & =2E[Q+\eps]\approx\|L_{Q}\eps\|_{L^{2}}^{2},\\
(\bfD_{Q+\eps}(Q+\eps),L_{Q+\eps}(iQ+i\epsilon))_{r} & =0.
\end{align*}
In this step we will derive truncated versions of the above.

For this purpose, we use the self-dual expression
\[
\calL_{Q}\eps+R_{Q}(\eps)=\nabla E[Q+\eps]=L_{Q+\eps}^{\ast}\bfD_{Q+\eps}(Q+\eps)
\]
to write 
\begin{align*}
(\calL_{Q}\epsilon+R_{Q}(\epsilon),\Lmb_{B}(Q+\epsilon))_{r} & =(\bfD_{Q+\eps}(Q+\eps),L_{Q+\eps}\Lmb_{B}(Q+\epsilon))_{r},\\
(\calL_{Q}\epsilon+R_{Q}(\epsilon),\chi_{B}(iQ+i\epsilon))_{r} & =(\bfD_{Q+\eps}(Q+\eps),L_{Q+\eps}\chi_{B}(iQ+i\epsilon))_{r}.
\end{align*}
With the help of the identities 
\begin{align*}
\bfD_{w}\Lmb_{A} & =(\Lmb_{A}+\chi_{A})\bfD_{w}+(y\chi_{A}'\rd_{y}+(\chi_{A}+\tfrac{1}{2}y\chi_{A}')')-\tfrac{1}{2}y\chi_{A}|w|^{2},\\
wB_{w}\Lmb_{A}w & =\tfrac{w}{2}y\chi_{A}|w|^{2},
\end{align*}
we have 
\begin{align*}
L_{Q+\eps}\Lmb_{B}(Q+\epsilon) & =(\Lmb_{B}+\chi_{B})\bfD_{Q+\eps}(Q+\eps)+(y\chi_{B}'\rd_{y}+(\chi_{B}+\tfrac{1}{2}y\chi_{B}')')(Q+\eps)\\
 & =(\Lmb_{B}+\chi_{B})\bfD_{Q+\eps}(Q+\eps)+O(\chf_{[B,2B]}|Q+\eps|_{-1}),\\
L_{Q+\eps}\chi_{B}(iQ+i\eps) & =i\chi_{B}\bfD_{Q+\eps}(Q+\eps)+\chi_{B}'(iQ+i\eps)\\
 & =i\chi_{B}\bfD_{Q+\eps}(Q+\eps)+O(\chf_{[B,2B]}|Q+\eps|_{-1}).
\end{align*}
We then take the inner product with $\bfD_{Q+\eps}(Q+\eps)$ and use
the antisymmetricity of $\Lmb_{B}$ and $\chi_{B}i$ to have 
\begin{align}
 & (\bfD_{Q+\eps}(Q+\eps),L_{Q+\eps}\Lmb_{B}(Q+\eps))_{r}\label{eq:control-b-eta-step4-3}\\
 & ={\textstyle \int}\chi_{B}|\bfD_{Q+\eps}(Q+\eps)|^{2}+O(\tint B{2B}|Q+\eps|_{-1}^{2}ydy)\nonumber \\
 & ={\textstyle \int}\chi_{B}|\bfD_{Q+\eps}(Q+\eps)|^{2}+O(B^{-4}+\tint B{2B}|\eps|_{-1}^{2}ydy)\nonumber 
\end{align}
and similarly 
\[
(\bfD_{Q+\eps}(Q+\eps),L_{Q+\eps}\chi_{B}(iQ+i\eps))_{r}=O(B^{-4}+\tint B{2B}|\eps|_{-1}^{2}ydy).
\]
The error term localized in the region $B\leq y\leq2B$ requires an
averaging argument and will be treated in Step 6. Finally, in view
of \eqref{lem:duality-estimates-Holder}, we have 
\[
\int\chi_{B}|\bfD_{Q+\eps}(Q+\eps)|^{2}=\int\chi_{B}|L_{Q}\eps|^{2}+O(\|\eps\|_{L^{2}}\|\eps\|_{\dot{\calH}_{0}^{1}}^{2})=\int\chi_{B}|L_{Q}\eps|^{2}+O(b^{\frac{5}{2}-}).
\]
This completes the proof of \eqref{eq:control-b-eta-step4-1} and
\eqref{eq:control-b-eta-step4-2}.

\smallskip
\textbf{Step 5.} Orthogonal decomposition of $L_{Q}\eps$.

In this step, we show that the main term of \eqref{eq:control-b-eta-step4-1}
can be written as 
\begin{equation}
\int\chi_{B}|L_{Q}\eps|^{2}=\frac{|\td{\bm{b}}|^{2}}{4\pi\log B_{0}}+\td P,\label{eq:control-b-eta-step5-1}
\end{equation}
where $\td P$ satisfies 
\begin{equation}
-O(b^{2}|\log b|^{\frac{4}{5}})\leq\td P\leq\Big\{\|L_{Q}\eps\|_{L^{2}}^{2}-\frac{|\td{\bm{b}}|^{2}}{4\pi\log B_{0}}\Big\}+O(b^{2}|\log b|^{\frac{4}{5}}).\label{eq:control-b-eta-step5-2}
\end{equation}
To see this, we perform an orthogonal decomposition of $L_{Q}\eps$
as 
\[
L_{Q}\eps=\frac{(L_{Q}\eps,\chi_{B/2}\tfrac{y}{2}Q)_{r}\cdot\chi_{B/2}\tfrac{y}{2}Q+(L_{Q}\eps,\chi_{B/2}i\tfrac{y}{2}Q)_{r}\cdot\chi_{B/2}i\tfrac{y}{2}Q}{\|\chi_{B/2}\tfrac{y}{2}Q\|_{L^{2}}^{2}}+(L_{Q}\eps)^{\perp}.
\]
Note that 
\begin{align*}
\|\chi_{B/2}\tfrac{y}{2}Q\|_{L^{2}}^{2} & =4\pi\log B+O(1)=4\pi\log B_{0}+O(\log|\log|t||),\\
(L_{Q}\eps,\chi_{B/2}i\tfrac{y}{2}Q)_{r} & =-\td b+O(\|\eps\|_{\dot{\calH}_{0}^{1}}+B^{2}\|\eps\|_{\dot{\calH}_{0}^{1}}^{2})=-\td b+O(b|\log b|^{\frac{4}{5}}),\\
(L_{Q}\eps,\chi_{B/2}\tfrac{y}{2}Q)_{r} & =\td{\eta}+O(\|\eps\|_{\dot{\calH}_{0}^{1}}+B^{2}\|\eps\|_{\dot{\calH}_{0}^{1}}^{2})=\td{\eta}+O(b|\log b|^{\frac{4}{5}}),
\end{align*}
where the second and third rows can be easily proved by combining
\eqref{eq:truncated-est-2} and the proof of \eqref{eq:proximity-b-bqnu}.
Thus, we have arrived at 
\[
\int\chi_{B}|L_{Q}\eps|^{2}=\frac{|\td{\bm{b}}|^{2}}{4\pi\log B_{0}}+\int\chi_{B}|(L_{Q}\eps)^{\perp}|^{2}+O(b^{2}|\log b|^{\frac{4}{5}}).
\]
Letting $\td P$ be the sum of the last two terms of the above, \eqref{eq:control-b-eta-step5-1}
and \eqref{eq:control-b-eta-step5-2} follow.

\smallskip
\textbf{Step 6.} Conclusion via averaging argument.

So far, we have shown that 
\begin{equation}
\Big|\td{\bm{b}}_{s}+\frac{|\td{\bm{b}}|^{2}}{4\pi\log B_{0}}+\lmb^{2}\br{\bm{\lmb}}\cdot8\sqrt{8}\pi pq(4it)^{\frac{\nu-2}{2}}+\td P\Big|\aleq\frac{b^{2}|\log b|}{|\log|t||^{\frac{1}{10}}}+\int_{B}^{2B}|\eps|_{-1}^{2}ydy,\label{eq:mod-est-step6-1}
\end{equation}
where $\td{\bm{b}}$ is defined by \eqref{control-b-eta-step0-1}
and $\td P$ was defined in Step 5.

In this step, we finish the proof of \eqref{eq:b-diff-equality} and
\eqref{eq:P-positivity} by averaging over $B$ and dividing by $4\pi\log B_{0}$.
For the sake of notational convenience, we denote $c_{1}(t)=|\log|t||^{-\frac{1}{5}}$
and $c_{2}(t)=|\log|t||^{-\frac{1}{10}}$. By changing variables,
we note that 
\begin{align*}
\bm{b} & =\frac{10}{\log|\log|t||}\int_{B_{-1/5}}^{B_{-1/10}}\frac{\td{\bm{b}}}{4\pi\log B_{0}}\frac{dB}{B}=\frac{10}{\log|\log|t||}\int_{c_{1}(t)}^{c_{2}(t)}\frac{\td{\bm{b}}|_{B=B_{0}c}}{4\pi\log B_{0}}\frac{dc}{c}.
\end{align*}
Thus, we have 
\begin{align}
\rd_{s}\bm{b} & =\frac{10}{\log|\log|t||}\int_{c_{1}(t)}^{c_{2}(t)}\frac{\rd_{s}\{\td{\bm{b}}|_{B=B_{0}c}\}}{4\pi\log B_{0}}\frac{dc}{c}-\Big(\frac{\rd_{s}(\log|\log|t||)}{\log|\log|t||}+\frac{\rd_{s}\log B_{0}}{\log B_{0}}\Big)\bm{b}\label{eq:control-b-eta-step6-2}\\
 & \quad+\frac{10}{\log|\log|t||}\cdot\frac{1}{4\pi\log B_{0}}\Big(\frac{\rd_{s}c_{2}(t)}{c_{2}(t)}\td{\bm{b}}|_{B=B_{0}c_{2}(t)}-\frac{\rd_{s}c_{1}(t)}{c_{1}(t)}\td{\bm{b}}|_{B=B_{0}c_{1}(t)}\Big).\nonumber 
\end{align}

Next, we show that all the terms except the first term of RHS\eqref{eq:control-b-eta-step6-2}
are errors: 
\begin{equation}
\rd_{s}\bm{b}=\frac{10}{\log|\log|t||}\int_{c_{1}(t)}^{c_{2}(t)}\frac{\rd_{s}\{\td{\bm{b}}|_{B=B_{0}c}\}}{4\pi\log B_{0}}\frac{dc}{c}+O\Big(\frac{b^{2}}{|\log b|^{\frac{1}{2}}}\Big).\label{eq:control-b-eta-step6-3}
\end{equation}
Indeed, from 
\begin{align*}
\Big|\frac{\rd_{s}(\log|\log|t||)}{\log|\log|t||}\Big| & =\Big|\frac{\rd_{s}|\log|t||}{|\log|t||\cdot\log|\log|t||}\Big|\aleq\frac{\lmb^{2}}{|t|}\frac{1}{|\log|t||}\sim\frac{b}{|\log b|},\\
\Big|\frac{\rd_{s}\log B_{0}}{\log B_{0}}\Big| & \aleq\frac{1}{|\log B_{0}|}\Big|\frac{(B_{0})_{s}}{B_{0}}\Big|\aleq\frac{b}{|\log b|^{\frac{1}{2}}},
\end{align*}
we have 
\[
\Big|\Big(\frac{\rd_{s}(\log|\log|t||)}{\log|\log|t||}+\frac{\rd_{s}\log B_{0}}{\log B_{0}}\Big)\bm{b}\Big|\aleq\frac{b^{2}}{|\log b|^{\frac{1}{2}}}.
\]
Next, from 
\begin{align*}
\Big|\frac{\rd_{s}c_{1}}{c_{1}}\Big|+\Big|\frac{\rd_{s}c_{2}}{c_{2}}\Big| & \aleq\frac{\lmb^{2}}{|t|}\frac{1}{|\log|t||}\sim\frac{b}{|\log b|},\\
|\td{\bm{b}}| & \aleq b|\log b|,
\end{align*}
the last line of RHS\eqref{eq:control-b-eta-step6-2} is bounded by
\[
\aleq\frac{1}{\log|\log|t||}\cdot\frac{b}{|\log b|^{2}}|\td{\bm{b}}|\aleq\frac{b^{2}}{|\log b|}.
\]
This completes the proof of \eqref{eq:control-b-eta-step6-3}.

To finish the proof of \eqref{eq:b-diff-equality} and \eqref{eq:P-positivity},
we substitute \eqref{eq:mod-est-step6-1} into \eqref{eq:control-b-eta-step6-3}
to obtain 
\begin{align*}
 & \Big|\rd_{s}\bm{b}+|\bm{b}|^{2}+\lmb^{2}\frac{\br{\bm{\lmb}}\cdot8\sqrt{8}\pi pq(4it)^{\frac{\nu-2}{2}}}{4\pi\log B_{0}}+P\Big|\\
 & \quad\aleq\frac{b^{2}}{|\log|t||^{\frac{1}{10}}}+\frac{1}{\log|\log|t||}\frac{1}{|\log b|}\int_{c_{1}(t)}^{c_{2}(t)}\Big(\int_{B}^{2B}|\eps|_{-1}^{2}ydy\Big)\frac{dc}{c},
\end{align*}
where 
\[
P\coloneqq\frac{10}{\log|\log|t||}\int_{c_{1}(t)}^{c_{2}(t)}\frac{\td P}{4\pi\log B_{0}}\frac{dc}{c}.
\]
The proof of \eqref{eq:P-positivity} now follows from \eqref{eq:control-b-eta-step5-2}.
The proof of \eqref{eq:b-diff-equality} follows from 
\begin{align*}
\frac{1}{\log|\log|t||}\frac{1}{|\log b|}\int_{c_{1}(t)}^{c_{2}(t)}\int_{B}^{2B}|\eps|_{-1}^{2}ydy\frac{dc}{c} & \aleq\frac{1}{\log|\log|t||}\frac{1}{|\log b|}\int_{B_{0}c_{1}(t)}^{2B_{0}c_{2}(t)}|\eps|_{-1}^{2}ydy\\
 & \aleq\frac{1}{\log|\log|t||}\frac{\|\eps\|_{\dot{\calH}_{0}^{1}}^{2}}{|\log b|}\aleq\frac{b^{2}}{\log|\log|t||},
\end{align*}
where we used Fubini in the first inequality. This completes the proof.
\end{proof}
From Lemma~\ref{lem:control-zeta} and Lemma~\ref{lem:Control-of-b-and-eta},
we obtained the modulation equations \eqref{eq:control-zeta-diff-eq}
and \eqref{eq:b-diff-equality} of the dynamical parameters $\bm{\zeta}$
and $\bm{b}$. In order to actually integrate them, we rewrite them
in terms of $\bm{\zeta}$ and $\bm{b}/\br{\bm{\zeta}}$, and in the
original time variable $t$.
\begin{cor}[Modulation estimates]
\label{cor:mod-est}We have 
\begin{align}
|\rd_{t}\bm{\zeta}+(\bm{b}/\br{\bm{\zeta}})| & \aleq\frac{1}{|\log|t||^{\frac{1}{2}}}\cdot\frac{b}{\lmb},\label{eq:mod-est-rd-zeta}\\
\Big|\rd_{t}(\bm{b}/\br{\bm{\zeta}})+\frac{8\sqrt{8}\pi pq(4it)^{\frac{\nu-2}{2}}}{4\pi\log B_{0}}+\frac{P}{\lmb^{2}\br{\bm{\zeta}}}\Big| & \aleq o_{t\to0}(1)\cdot\frac{b^{2}}{\lmb^{3}}.\label{eq:mod-est-rd-b/zeta}
\end{align}
\end{cor}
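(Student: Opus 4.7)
The plan is to treat Corollary~\ref{cor:mod-est} as a reformulation of Lemma~\ref{lem:control-zeta} and Lemma~\ref{lem:Control-of-b-and-eta} in the complexified variables $(\bm{\zeta}, \bm{b}/\br{\bm{\zeta}})$ and the original time $t$, obtained by converting time derivatives via $\rd_{s} = \lmb^{2}\rd_{t}$ and exploiting the elementary identities $\bm{\lmb}\br{\bm{\lmb}} = \lmb^{2}$ and $\bm{\lmb}/\lmb^{2} = 1/\br{\bm{\lmb}}$. Since the corollary is essentially a bookkeeping rearrangement, no genuinely new estimate is required; the only subtlety is checking that the relative errors of size $|\log|t||^{-1/2}$ generated by the approximation $\bm{\zeta} \approx \bm{\lmb}$ fit into the target error $o_{t \to 0}(b^{2}/\lmb^{3})$.

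For \eqref{eq:mod-est-rd-zeta}, I would first combine \eqref{eq:control-zeta-diff-eq} with \eqref{eq:b-bound} to eliminate the truncated inner products and obtain
\[
\Big|\frac{\rd_{s}\bm{\zeta}}{\bm{\lmb}}+\bm{b}\Big| \aleq o_{t\to 0}(b),
\]
then multiply through by $\bm{\lmb}/\lmb^{2} = 1/\br{\bm{\lmb}}$ (using $\rd_{s}\bm{\zeta}/\lmb^{2} = \rd_{t}\bm{\zeta}$) to get $|\rd_{t}\bm{\zeta} + \bm{b}/\br{\bm{\lmb}}| \aleq o_{t \to 0}(b/\lmb)$. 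Finally, \eqref{eq:control-zeta-bound} in the form $|\br{\bm{\zeta}} - \br{\bm{\lmb}}| \aleq \lmb/|\log|t||^{1/2}$, together with $|\br{\bm{\zeta}}|, |\br{\bm{\lmb}}| \sim \lmb$, gives
\[
\Big|\frac{\bm{b}}{\br{\bm{\lmb}}} - \frac{\bm{b}}{\br{\bm{\zeta}}}\Big| = \Big|\frac{\bm{b}(\br{\bm{\zeta}} - \br{\bm{\lmb}})}{\br{\bm{\lmb}}\br{\bm{\zeta}}}\Big| \aleq \frac{b}{\lmb|\log|t||^{1/2}},
\]
which combines with the previous bound to yield \eqref{eq:mod-est-rd-zeta}.

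For \eqref{eq:mod-est-rd-b/zeta}, I would apply the product rule
\[
\rd_{t}(\bm{b}/\br{\bm{\zeta}}) = \frac{\rd_{t}\bm{b}}{\br{\bm{\zeta}}} - \frac{\bm{b}\,\rd_{t}\br{\bm{\zeta}}}{\br{\bm{\zeta}}^{2}}.
\]
For the first piece, dividing \eqref{eq:b-diff-equality} by $\lmb^{2}\br{\bm{\zeta}}$ yields
\[
\frac{\rd_{t}\bm{b}}{\br{\bm{\zeta}}} = -\frac{|\bm{b}|^{2}}{\lmb^{2}\br{\bm{\zeta}}} - \frac{\br{\bm{\lmb}}}{\br{\bm{\zeta}}}\cdot\frac{8\sqrt{8}\pi pq(4it)^{(\nu-2)/2}}{4\pi\log B_{0}} - \frac{P}{\lmb^{2}\br{\bm{\zeta}}} + o_{t\to 0}(b^{2}/\lmb^{3}),
\]
and the ratio $\br{\bm{\lmb}}/\br{\bm{\zeta}}$ may be replaced by $1$ at the cost of an error that is $|\log|t||^{-1/2}$ times the main interaction term. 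For the second piece, \eqref{eq:mod-est-rd-zeta} (just established) in conjugate form gives $\rd_{t}\br{\bm{\zeta}} = -\br{\bm{b}}/\bm{\zeta} + O(b/(\lmb|\log|t||^{1/2}))$, whence
\[
-\frac{\bm{b}\,\rd_{t}\br{\bm{\zeta}}}{\br{\bm{\zeta}}^{2}} = \frac{|\bm{b}|^{2}}{|\bm{\zeta}|^{2}\br{\bm{\zeta}}} + o_{t\to 0}(b^{2}/\lmb^{3}) = \frac{|\bm{b}|^{2}}{\lmb^{2}\br{\bm{\zeta}}} + o_{t\to 0}(b^{2}/\lmb^{3}),
\]
where the final step uses $|\bm{\zeta}|^{2} = \lmb^{2}(1 + O(|\log|t||^{-1/2}))$ from \eqref{eq:control-zeta-bound}. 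The two $\pm|\bm{b}|^{2}/(\lmb^{2}\br{\bm{\zeta}})$ contributions cancel, and collecting the remaining terms yields \eqref{eq:mod-est-rd-b/zeta}.

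The only point requiring genuine attention, and hence the closest thing to an obstacle, is verifying that the $|\log|t||^{-1/2}$-relative errors generated by $\br{\bm{\lmb}}/\br{\bm{\zeta}} \approx 1$ and $|\bm{\zeta}|^{2} \approx \lmb^{2}$, when multiplied by the main interaction term, fit inside $o_{t\to 0}(b^{2}/\lmb^{3})$. By Remark~\ref{rem:convenient-rel} one computes $b^{2}/\lmb^{3} \sim |t|^{(\Re(\nu)-2)/2}/|\log|t||$, which is precisely the size of the main interaction term $(4it)^{(\nu-2)/2}/\log B_{0}$. Thus multiplying that main term by the extra factor $|\log|t||^{-1/2}$ produces a gain of $o_{t\to 0}(1) \cdot b^{2}/\lmb^{3}$, exactly as required.
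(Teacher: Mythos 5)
Your proposal is correct and follows essentially the same route as the paper's proof: decompose $\rd_{t}(\bm{b}/\br{\bm{\zeta}})$ via the product rule, substitute the modulation equations \eqref{eq:control-zeta-diff-eq}, \eqref{eq:b-bound}, \eqref{eq:b-diff-equality}, exploit the cancellation of the $|\bm{b}|^{2}/(\lmb^{2}\br{\bm{\zeta}})$ terms, and verify that the relative $|\log|t||^{-1/2}$ errors times the main interaction term (which has size $\sim b^{2}/\lmb^{3}$) remain subleading. The paper packages the cancellation into a slightly cleaner exact algebraic identity before estimating, whereas you cancel up to an $o_{t\to 0}(b^{2}/\lmb^{3})$ error coming from $|\bm{\zeta}|^{2}/\lmb^{2}\to 1$; both arguments are valid and rely on the identical size check, which you correctly perform at the end.
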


\begin{proof}
The proof of \eqref{eq:mod-est-rd-zeta} follows from writing 
\begin{align*}
\rd_{t}\bm{\zeta}+(\bm{b}/\br{\bm{\zeta}}) & =\frac{1}{\br{\bm{\lmb}}}\Big\{\Big(\frac{\rd_{s}\bm{\zeta}}{\bm{\lmb}}+\frac{(\eps,i\Lmb_{B_{0}}Q)_{r}+i(\eps,Q\chi_{B_{0}})_{r}}{4\pi\log B_{0}}\Big)\\
 & \qquad\qquad+\Big(\bm{b}-\frac{(\eps,i\Lmb_{B_{0}}Q)_{r}+i(\eps,Q\chi_{B_{0}})_{r}}{4\pi\log B_{0}}\Big)+\Big(\frac{\br{\bm{\lmb}}}{\br{\bm{\zeta}}}-1\Big)\bm{b}\Big\}
\end{align*}
and applying \eqref{eq:control-zeta-diff-eq}, \eqref{eq:control-zeta-bound},
and \eqref{eq:b-bound}.

For the proof of \eqref{eq:mod-est-rd-b/zeta}, we use \eqref{eq:control-zeta-bound}
and \eqref{eq:control-zeta-diff-eq} to have 
\begin{align*}
\rd_{t}(\bm{b}/\br{\bm{\zeta}}) & =\frac{1}{\br{\bm{\zeta}}}\Big\{\Big(\rd_{t}\bm{b}+\frac{|\bm{b}|^{2}}{\lmb^{2}}\Big)-\Big(\rd_{t}\br{\bm{\zeta}}+\frac{\br{\bm{b}}}{\bm{\zeta}}\Big)\frac{\bm{b}}{\br{\bm{\zeta}}}-\Big(\Big|\frac{\bm{\zeta}}{\bm{\lmb}}\Big|^{2}-1\Big)\Big|\frac{\bm{b}}{\br{\bm{\zeta}}}\Big|^{2}\Big\}\\
 & =\frac{1}{\br{\bm{\zeta}}}\Big\{\Big(\rd_{t}\bm{b}+\frac{|\bm{b}|^{2}}{\lmb^{2}}\Big)+O\Big(\frac{1}{|\log|t||^{\frac{1}{2}}}\cdot\frac{b^{2}}{\lmb^{2}}\Big)\Big\}.
\end{align*}
Further using \eqref{eq:b-diff-equality}, we have 
\begin{align*}
 & \Big|\rd_{t}(\bm{b}/\br{\bm{\zeta}})+\frac{8\sqrt{8}\pi pq(4it)^{\frac{\nu-2}{2}}}{4\pi\log B_{0}}+\frac{P}{\lmb^{2}\br{\bm{\zeta}}}\Big|\\
 & \aleq\frac{1}{\lmb}\Big|\rd_{t}\bm{b}+\frac{|\bm{b}|^{2}}{\lmb^{2}}+\frac{\br{\bm{\lmb}}\cdot8\sqrt{8}\pi pq(4it)^{\frac{\nu-2}{2}}}{4\pi\log B_{0}}+\frac{P}{\lmb^{2}\br{\bm{\zeta}}}\Big|+\Big|\frac{\bm{\zeta}}{\bm{\lmb}}-1\Big|\cdot\frac{|t|^{\frac{\Re(\nu)-2}{2}}}{|\log|t||}+\frac{1}{|\log|t||^{\frac{1}{2}}}\cdot\frac{b^{2}}{\lmb^{3}}\\
 & \aleq o_{t\to0}(1)\cdot\frac{b^{2}}{\lmb^{3}}.
\end{align*}
This completes the proof of \eqref{eq:mod-est-rd-b/zeta}.
\end{proof}

\subsection{$\dot{\protect\calH}_{0}^{1}$ energy estimates}

In this subsection, we control the $\dot{\calH}_{0}^{1}$-norm of
$\eps$. We will control it via a suitable energy functional that
is coercive and adapted to the nonlinear flow.

Our discussion here is formal and rough. We will ignore the phase
correction terms, assume that the radiation $z$ is smooth, and will
not even distinguish the $\dot{H}^{1}$-norms and $\dot{\calH}^{1}$-norms.
Moreover, we will use (or, \emph{assume}) the following set of simpler
equations: denoting $w=Q^{\sharp}+z$, 
\begin{equation}
\left\{ \begin{aligned}\rd_{t}z & =-i\nabla\td E[z],\\
\rd_{t}(Q^{\sharp}+\eps^{\sharp}) & =-i(\nabla E[w+\eps^{\sharp}]-\nabla E[w])-iR_{Q^{\sharp},z},\\
R_{Q^{\sharp},z} & =\nabla E[w]-\nabla\td E[z].
\end{aligned}
\right.\label{eq:formal-eqn}
\end{equation}

We hope to find some energy functional, say $\calE$, satisfying 
\[
\calE\sim\|\eps^{\sharp}\|_{\dot{H}^{1}}^{2}
\]
with a good propagation property (to be able to close the bootstrap).
Usually, such $\calE$ satisfies 
\[
|\rd_{t}\calE|\aleq\tfrac{1}{|t|}\calE,
\]
with the additional structure for the terms of size $\sim\tfrac{1}{|t|}\calE$
(which we will call \emph{the} \emph{critical size}) that they are
explicitly integrable.\footnote{This is equivalent to saying that there is some explicit functional
$\calF$ in terms of $\lmb,\gmm,t,\dots$ (but not with $\eps$) that
satisfies $|\rd_{t}(\calE-\calF)|\ll\tfrac{1}{|t|}\calE$. Integrating
this will yield $\calE\approx\calF$.} Such an additional structure is necessary because a general error
of size $\sim\tfrac{1}{|t|}\calE$ cannot close the bootstrap. Thus,
we hope to find $\calE$ such that we can explicitly know all terms
of critical size $\sim\frac{1}{|t|}\calE$ in $\rd_{t}\calE$. In
other words, 
\[
\rd_{t}\calE=\text{(explicit)}+o_{t\to0}(\tfrac{1}{|t|}\calE),
\]
where the term $\text{(explicit)}$ is of size $\sim\frac{1}{|t|}\calE$,
contains the leading order contribution from the interaction term
$R_{Q^{\sharp},z}$, and is explicitly integrable in time. The term
$\text{(explicit)}$ must be present because we expect $\calE\sim\frac{|\bm{b}|^{2}\log b}{\lmb^{2}}$
(or $\eps\approx b(-i\tfrac{y^{2}}{4}Q)+\eta\rho$ at leading 
order) and thus $\rd_{t}\calE$ should detect the strong interaction
from $R_{Q^{\sharp},z}$. The goal here is to find a candidate for
such $\calE$. The discussion in the following is inspired by the
work of Jendrej--Lawrie--Rodriguez \cite{JendrejLawrieRodriguez2019arXiv}.

A naive candidate for $\calE$ would be to consider 
\[
\tfrac{1}{2}(\eps^{\sharp},\calL_{Q^{\sharp}}\eps^{\sharp})_{r}=\tfrac{1}{2\lmb^{2}}\|L_{Q}\eps\|_{L^{2}}^{2}\sim\|\eps^{\sharp}\|_{\dot{H}^{1}}^{2}.
\]
This is adapted to the linear flow $i\rd_{t}-\calL_{Q^{\sharp}}$.
In the computation of $\rd_{t}(\eps^{\sharp},\calL_{Q^{\sharp}}\eps^{\sharp})_{r}$,
when one replaces $\rd_{t}\eps^{\sharp}$ in $(\rd_{t}\eps^{\sharp},\calL_{Q^{\sharp}}\eps^{\sharp})_{r}$
by \eqref{eq:eps-sharp-eq}, it generates a lot of error terms (e.g.,
$(iR_{w^{\flat}}(\eps),\calL_{Q}^{\sharp}\eps^{\sharp})_{r}$) of
critical sizes.\footnote{Compared to the previous work \cite{KimKwon2019arXiv}, our size assumption
on $\eps$ here is very weak because we did not make a profile modification.
In fact, $R_{w^{\flat}}(\eps)$ possibly contains terms of size $O(b^{2})$
(this is of critical size) if we substitute $\eps\approx b(-i\tfrac{y^{2}}{4}Q)+\eta\rho$.} It even contains terms that cannot be estimated in our energy method,
e.g., $\|(\nabla\eps^{\sharp})^{2}(\eps^{\sharp})^{2}\|_{L^{1}}$.

Instead, as in \cite{RaphaelSzeftel2011JAMS} and \cite{KimKwon2019arXiv},
we consider the functional 
\[
E_{w}^{\mathrm{(qd)}}[\eps^{\sharp}]\coloneqq E[w+\eps^{\sharp}]-E[w]-(\nabla E[w],\eps^{\sharp})_{r}.
\]
The functional $E_{w}^{\mathrm{(qd)}}[f]$ is adapted to the \emph{nonlinear
flow} $i\rd_{t}f=\nabla E[w+f]-\nabla E[w]$ in the sense that 
\begin{align*}
 & \rd_{t}E_{w}^{\mathrm{(qd)}}[f]\\
 & =(\nabla E[w+f]-\nabla E[w],\rd_{t}f)_{r}+(\nabla E[w+f]-\nabla E[w]-\nabla^{2}E[w]f,\rd_{t}w)_{r}\\
 & =(\nabla E[w+f]-\nabla E[w]-\nabla^{2}E[w]f,\rd_{t}w)_{r}.
\end{align*}
Notice the full cancellation of the first inner product. For the second
inner product, we can exploit the known structure of $\rd_{t}w$.

In our case, we have 
\begin{align}
 & \rd_{t}E_{w}^{\mathrm{(qd)}}[\eps^{\sharp}]\nonumber \\
 & =(\nabla E[w+\eps^{\sharp}]-\nabla E[w],\rd_{t}\eps^{\sharp})_{r}\nonumber \\
 & \peq+(\nabla E[w+\eps^{\sharp}]-\nabla E[w]-\nabla^{2}E[w]\eps^{\sharp},\rd_{t}w)_{r}\nonumber \\
 & =(\nabla E[w+\eps^{\sharp}]-\nabla E[w],\rd_{t}(Q^{\sharp}+\eps^{\sharp}))_{r}-(\nabla^{2}E[w]\eps^{\sharp},\rd_{t}Q^{\sharp})_{r}\label{eq:discussion-1}\\
 & \quad+(\nabla E[w+\eps^{\sharp}]-\nabla E[w]-\nabla^{2}E[w]\eps^{\sharp},\rd_{t}z)_{r}.\nonumber 
\end{align}
The second line of RHS\eqref{eq:discussion-1} is safe because we
roughly have 
\begin{align*}
 & (\nabla E[w+\eps^{\sharp}]-\nabla E[w]-\nabla^{2}E[w]\eps^{\sharp},\rd_{t}z)_{r}\\
 & \quad=(R_{w}(\eps^{\sharp}),-i\nabla\td E[z])_{r}\aleq\|\eps^{\sharp}\|_{\dot{H}^{1}}^{2}\|\nabla\td E[z]\|_{L^{2}}\aleq\|\eps^{\sharp}\|_{\dot{H}^{1}}^{2}\ll\frac{1}{|t|}\|\eps^{\sharp}\|_{\dot{H}^{1}}^{2}.
\end{align*}
The first line of RHS\eqref{eq:discussion-1} has terms with critical
size. Using \eqref{eq:formal-eqn}, we have 
\[
(\nabla E[w+\eps^{\sharp}]-\nabla E[w],\rd_{t}(Q^{\sharp}+\eps^{\sharp}))_{r}=(\nabla E[w+\eps^{\sharp}]-\nabla E[w],-iR_{Q^{\sharp},z})_{r}.
\]
Since $\nabla E[w+\eps^{\sharp}]-\nabla E[w]$ contains the linear
term $\nabla^{2}E[w]\eps^{\sharp}$ and $\eps$ is of size $O(b)$,
this term is of critical size. We can deal with this term, by noticing
that 
\begin{align}
 & (\nabla E[w+\eps^{\sharp}]-\nabla E[w],-iR_{Q^{\sharp},z})_{r}\nonumber \\
 & =(i\rd_{t}(Q^{\sharp}+\eps^{\sharp}),-iR_{Q^{\sharp},z})_{r}\nonumber \\
 & =-(\rd_{t}Q^{\sharp},R_{Q^{\sharp},z})_{r}-\rd_{t}(\eps^{\sharp},R_{Q^{\sharp},z})_{r}+(\eps^{\sharp},\rd_{t}R_{Q^{\sharp},z})_{r}.\label{eq:discussion-2}
\end{align}
The first term of \eqref{eq:discussion-2} is now \emph{explicit}
in terms of $\frac{\lmb_{t}}{\lmb}$, $\gmm_{t}$, and $R_{Q^{\sharp},z}$.
The second term of \eqref{eq:discussion-2} is in a time derivative,
so we absorb it as a \emph{correction} to $E_{w}^{\mathrm{(qd)}}[\eps^{\sharp}]$.
Finally, the last term of \eqref{eq:discussion-2} can be written
as 
\[
(\eps^{\sharp},\rd_{t}R_{Q^{\sharp},z})_{r}=((\nabla^{2}E[w]-\nabla^{2}\td E[z])\eps^{\sharp},\rd_{t}z)_{r}+(\eps^{\sharp},\nabla^{2}E[w]\rd_{t}Q^{\sharp})_{r},
\]
where the latter term \emph{cancels} with the second term of RHS\eqref{eq:discussion-1}
and the former term can be shown to be an admissible error.

The above discussion motivates us to consider the new functional 
\begin{align}
\calE & \coloneqq E_{Q^{\sharp}+z}^{{\rm (qd)}}[\epsilon^{\sharp}]+(\nabla E[Q^{\sharp}+z]-\nabla\td E[z],\epsilon^{\sharp})_{r}\label{eq:def-calE}\\
 & =E[u]-E[Q^{\sharp}+z]-(\nabla\td E[z],\epsilon^{\sharp})_{r},\nonumber 
\end{align}
from which we expect 
\[
\rd_{t}\calE=-(\rd_{t}Q^{\sharp},R_{Q^{\sharp},z})_{r}+\text{(error)}.
\]
In the next lemma, we justify the above heuristics.
\begin{lem}[$\dot{\calH}_{0}^{1}$ energy estimates]
\label{lem:H1-energy-est}We have 
\begin{align}
|\calE-\tfrac{1}{2\lmb^{2}}\|L_{Q}\epsilon\|_{L^{2}}^{2}| & \aleq\tfrac{1}{\lmb^{2}}\cdot b^{2}|\log b|\cdot|t|^{\delta}\label{eq:energy-coercivity}\\
\Big|\rd_{t}\calE-8\sqrt{8}\pi\Re(\rd_{t}(\lmb e^{-i\gmm})\cdot pq(4it)^{\frac{\nu-2}{2}})\Big| & \aleq\tfrac{b}{\lmb^{4}}\cdot b^{2}|\log b|\cdot|t|^{\delta}.\label{eq:energy-differential-equality}
\end{align}
Moreover, using the sign condition $b\geq0$ (from the bootstrap hypothesis),
we have a differential inequality 
\begin{equation}
\rd_{t}\{\calE-2\pi\log B_{0}\cdot|\bm{b}/\br{\bm{\zeta}}|^{2}+o_{t\to0}(\tfrac{1}{\lmb^{2}}b^{2}|\log b|)\}\geq-o_{t\to0}(\tfrac{b}{\lmb^{4}}\cdot b^{2}|\log b|).\label{eq:energy-differential-inequality}
\end{equation}
\end{lem}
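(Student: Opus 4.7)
My plan is to treat the three estimates in order, relying on the heuristic framework sketched before the lemma statement together with the bounds from Corollary~\ref{cor:bounds-z-bootstrap}, Lemma~\ref{lem:RQ,z}, and Corollary~\ref{cor:mod-est}.

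For the coercivity \eqref{eq:energy-coercivity}, I would use the gauge-invariance of $E$ to write $E[u] = E[(Q+\eps)^\sharp + z]$, so that $\calE = E^{(\mathrm{qd})}_{w}[\eps^\sharp] + (R_{Q^\sharp, z} + \tht_{z} Q^\sharp, \eps^\sharp)_r$ with $w = Q^\sharp + z$. Expanding the adapted quadratic functional $E^{(\mathrm{qd})}_w$ in powers of $\eps^\sharp$, its quadratic piece $\tfrac{1}{2}(\calL_w \eps^\sharp, \eps^\sharp)_r$ equals $\tfrac{1}{2\lmb^2}\|L_Q \eps\|_{L^2}^{2}$ up to an error of the form $((\calL_w - \calL_{Q^\sharp})\eps^\sharp, \eps^\sharp)_r$, which is handled exactly as in Step~3 of the proof of Lemma~\ref{lem:Control-of-b-and-eta}. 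The cubic through quintic remainders in $\eps^\sharp$ are controlled by the duality estimates of Section~\ref{subsec:duality-ests}, the bootstrap bound $\|\eps\|_{\dot{\calH}_0^1}\aleq b|\log b|^{1/2}$, and Corollary~\ref{cor:bounds-z-bootstrap}. The linear corrections $(R_{Q^\sharp,z},\eps^\sharp)_r$ and $\tht_z(Q^\sharp,\eps^\sharp)_r$ are bounded after rescaling using \eqref{eq:RQ,z-L2-est} and \eqref{eq:tht-z-flat-bound}, respectively.

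For the differential equality \eqref{eq:energy-differential-equality}, I would exploit energy conservation and gauge-invariance to write $\rd_t\calE = -\rd_t E[w] - \rd_t(\nabla\td E[z],\eps^\sharp)_r$. Equivalently, differentiating $\calE = E^{(\mathrm{qd})}_w[\eps^\sharp] + (R_{Q^\sharp,z} + \tht_z Q^\sharp, \eps^\sharp)_r$ directly, the adapted-functional identity
\begin{equation*}
\rd_t E^{(\mathrm{qd})}_w[\eps^\sharp] = (\nabla E[w+\eps^\sharp] - \nabla E[w] - \nabla^2 E[w]\eps^\sharp, \rd_t w)_r + (\nabla E[w+\eps^\sharp] - \nabla E[w], \rd_t \eps^\sharp)_r
\end{equation*}
holds, and substituting the $\eps^\sharp$-equation \eqref{eq:eps-sharp-eq} into its second term kills the diagonal piece by the antisymmetry $(f, if)_r = 0$. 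The critical-size contribution is then $-(\nabla E[w+\eps^\sharp] - \nabla E[w], i R_{Q^\sharp,z})_r$, which using $i\rd_t(Q^\sharp + \eps^\sharp) = \nabla E[w+\eps^\sharp] - \nabla E[w] + \tht_z\eps^\sharp - R_{Q^\sharp,z} + \Psi_z$ I rewrite as
\begin{equation*}
-(\rd_t Q^\sharp, R_{Q^\sharp,z})_r - \rd_t(\eps^\sharp, R_{Q^\sharp,z})_r + (\eps^\sharp, \rd_t R_{Q^\sharp,z})_r + (\text{errors from } \tht_z, \Psi_z).
\end{equation*}
The middle term cancels the time derivative of the linear correction $(R_{Q^\sharp,z},\eps^\sharp)_r$ in the definition of $\calE$; the last pairs with the first term of the $\rd_t E^{(\mathrm{qd})}_w$ identity (evaluated on $\rd_t w = \rd_t Q^\sharp + \rd_t z$, with $\rd_t z$ governed by \eqref{eq:z-eqn}) to form admissible errors. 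Rescaling the surviving term via $\rd_t Q^\sharp = -(\lmb_t/\lmb)(\Lmb Q)^\sharp + i\gmm_t Q^\sharp$ and invoking \eqref{eq:RQ,z-inner-prod-1}--\eqref{eq:RQ,z-inner-prod-2} converts it into $8\sqrt{8}\pi\Re(\rd_t(\lmb e^{-i\gmm})\cdot pq(4it)^{(\nu-2)/2})$, up to errors of the required size $O(\tfrac{b}{\lmb^4}\cdot b^2|\log b|\cdot|t|^\delta)$.

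For the differential inequality \eqref{eq:energy-differential-inequality}, I compute
\begin{equation*}
\rd_t\!\bigl(2\pi\log B_0\cdot |\bm b/\br{\bm\zeta}|^{2}\bigr) = 4\pi\log B_0\cdot \Re\!\bigl[(\br{\bm b}/\bm \zeta)\rd_t(\bm b/\br{\bm\zeta})\bigr] + 2\pi(\rd_t\log B_0)|\bm b/\br{\bm \zeta}|^{2}
\end{equation*}
and substitute \eqref{eq:mod-est-rd-b/zeta}. Using the adiabatic relations $\bm b/\br{\bm\zeta} \approx -\rd_t \bm\zeta \approx -\rd_t \bm\lmb$ from \eqref{eq:mod-est-rd-zeta} and \eqref{eq:control-zeta-bound}, the explicit driving term in \eqref{eq:mod-est-rd-b/zeta} reproduces precisely $8\sqrt{8}\pi\Re(\rd_t(\lmb e^{-i\gmm})\cdot pq(4it)^{(\nu-2)/2})$, cancelling the explicit term in \eqref{eq:energy-differential-equality}. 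What remains is $4\pi\log B_0\cdot P\cdot\Re(\br{\bm b})/\lmb^4 = 4\pi\log B_0\cdot Pb/\lmb^4$ plus sub-critical errors, which are absorbed into the auxiliary $o_{t\to 0}(\tfrac{1}{\lmb^2}b^2|\log b|)$-correction appearing inside the time derivative. Since $b \geq 0$ by bootstrap and $P \geq -o_{t\to 0}(b^2)$ by \eqref{eq:P-positivity}, the resulting lower bound is $-o_{t\to 0}(\tfrac{b}{\lmb^4}\cdot b^2|\log b|)$, giving \eqref{eq:energy-differential-inequality}. As in the preceding modulation lemmas, the main obstacle lies in the detailed bookkeeping required in \eqref{eq:energy-differential-equality} to certify that every term stemming from the nonlocal nonlinearity, the phase correction $\tht_z$, and the mismatch between formal quadratic-form identities and the actual flow is strictly below the critical size $b^3|\log b|/\lmb^4$; this hinges on the $|t|^\delta$-gains collected in Corollary~\ref{cor:bounds-z-bootstrap} and sharp distribution of weights in the duality bounds of Section~\ref{subsec:duality-ests}.
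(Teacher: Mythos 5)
Your plan mirrors the paper's proof step for step: the same decomposition of $\calE$ into the adapted quadratic functional $E^{(\mathrm{qd})}_{w}$ plus the linear correction $(R_{Q^{\sharp},z}+\tht_{z}Q^{\sharp},\eps^{\sharp})_{r}$, the same antisymmetry-driven cancellation of the diagonal piece of $\rd_{t}E^{(\mathrm{qd})}_{w}$, the same rewriting of the critical term $(\nabla E[w+\eps^{\sharp}]-\nabla E[w],-iR_{Q^{\sharp},z})_{r}$ through the equation of motion, and the same use of $b\geq 0$ together with the almost-positivity of $P$ (combined with the modulation estimates of Corollary~\ref{cor:mod-est}) for the differential inequality. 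Two small remarks that do not affect correctness but should be fixed or made explicit: your displayed identity for $i\rd_{t}(Q^{\sharp}+\eps^{\sharp})$ has the signs of $\tht_{z}\eps^{\sharp}$, $R_{Q^{\sharp},z}$, $\Psi_{z}$ reversed (the correct version reads $i\rd_{t}(Q^{\sharp}+\eps^{\sharp})=\nabla E[w+\eps^{\sharp}]-\nabla E[w]-\tht_{z}\eps^{\sharp}+R_{Q^{\sharp},z}-\Psi_{z}$; since the $R_{Q^{\sharp},z}$-diagonal vanishes and the remaining pieces are error terms measured in absolute value, this is harmless), and the ``bookkeeping'' you delegate to Corollary~\ref{cor:bounds-z-bootstrap} conceals the one genuinely non-routine idea of that part: because $\nabla\td{E}[z]\notin L^{2}$ when $\Re(\nu)\leq 1$, the $-\Delta^{(\frkm)}z^{\flat}$-contribution in the $\rd_{t}z$-error terms must be integrated by parts and controlled through the degeneracy bound \eqref{eq:z-flat-degenerate-H4-bound} for $\rd_{-}^{(\frkm)}z^{\flat}$, which is the reason the proof works in the full range $\Re(\nu)>0$ and is worth naming rather than folding into ``sharp distribution of weights.''
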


\begin{rem} \label{rem:complex-valued}
One of the main diffculties compared to the case of corotational wave
maps is the complex-valued nature of the modulation parameters $\bm{\zeta}$
and $\bm{b}$.

If $\nu$ is real, (under the bootstrap hypothesis) the imaginary
part $\eta$ of $\bm{b}$ is negligible and $\gmm$ can be treated
as a constant. Hence one essentially considers the dynamics of real-valued
parameters $\lmb$ and $\frac{b}{\lmb}$ (in the spirit of Corollary~\ref{cor:mod-est})
and in particular the term $\frac{P}{\lmb}$ of $(\frac{b}{\lmb})_{t}$
has a good sign. Then the approach of \cite{JendrejLawrieRodriguez2019arXiv}
using only the first two estimates of Lemma~\ref{lem:H1-energy-est},
i.e., \eqref{eq:energy-coercivity}-\eqref{eq:energy-differential-equality},
seems to be sufficient to close the bootstrap.

For a complex-valued $\nu$, the phase $\gmm$ rotates infinitely
many times and $\eta$ is no longer negligible. In particular, the
contribution of $P/\br{\bm{\zeta}}$ to the integration of $\bm{b}/\br{\bm{\zeta}}$
cannot be treated easily using only the sign condition of $P$. To
overcome this issue, we introduce a more refined \emph{differential
inequality} for $\calE$ and $|\bm{b}/\br{\bm{\zeta}}|^{2}$, which
is \eqref{eq:energy-differential-inequality}. This propagates the
initial smallness of $P$ backwards in time, and directly shows that
$P/\br{\bm{\zeta}}$ in \eqref{eq:mod-est-rd-b/zeta} is in fact \emph{negligible}.
\end{rem}

\begin{proof}
\textbf{Step 1.} Coercivity of $\calE$.

In this step, we prove \eqref{eq:energy-coercivity}. We recall that
$\calE$ can be written as 
\[
\calE=\{E[u]-E[Q^{\sharp}+z]-(\nabla E[Q^{\sharp}+z],\eps^{\sharp})_{r}\}+(\nabla E[Q^{\sharp}+z]-\nabla\td E[z],\epsilon^{\sharp})_{r}.
\]
The terms in the curly bracket collects the quadratic and higher order
terms of $\eps^{\sharp}$ in the expression $E[(Q^{\sharp}+z)+\eps^{\sharp}]$;
we have 
\begin{align*}
 & E[u]-E[Q^{\sharp}+z]-(\nabla E[Q^{\sharp}+z],\eps^{\sharp})_{r}\\
 & =\tfrac{1}{\lmb^{2}}\{E[u^{\flat}]-E[Q+z^{\flat}]-(\nabla E[Q+z^{\flat}],\eps)_{r}\}\\
 & =\frac{1}{\lmb^{2}}\Big\{\frac{1}{2}\|L_{Q}\eps\|_{L^{2}}^{2}+\sum_{\#\{j:\psi_{j}=\eps\}\geq3}\calM_{\ast}(\psi_{1},\psi_{2},\dots)\Big\},
\end{align*}
where each $\psi_{j}\in\{Q+z^{\flat},\eps\}$ and $\sum\calM_{\ast}$
denotes a linear combination of $\calM_{\ast}$'s. Thus we have proved
that 
\[
\calE=\frac{1}{2\lmb^{2}}\|L_{Q}\eps\|_{L^{2}}^{2}+\frac{1}{\lmb^{2}}\sum_{\#\{j:\psi_{j}=\eps\}\geq3}\calM_{\ast}(\psi_{1},\dots,\psi_{\ast})+(\nabla E[Q^{\sharp}+z]-\nabla\td E[z],\epsilon^{\sharp})_{r}.
\]
Now it suffices to show that the last two terms of the above display
are errors. First, the higher order terms of $\eps$ can be easily
estimated by the duality estimates (Lemma~\ref{lem:duality-estimates-Holder}):
\[
\sum_{\#\{j:\psi_{j}=\eps\}\geq3}|\calM_{\ast}(\psi_{1},\dots,\psi_{\ast})|\aleq\|\eps\|_{L^{6}}^{3}\aleq\|\eps\|_{\dot{\calH}_{0}^{1}}^{2}\|\eps\|_{L^{2}}\aleq b^{2}|\log b|\cdot|t|^{\delta}.
\]
Next, by scaling, \eqref{eq:def-RQ,zflat}, \eqref{eq:RQ,z-L2-est},
$|\tht_{z^{\flat}}|\aleq\lmb^{2}$, and \eqref{eq:weighted-L1}, we
have 
\begin{align*}
|(\nabla E[Q^{\sharp}+z]-\nabla\td E[z],\epsilon^{\sharp})_{r}| & =\tfrac{1}{\lmb^{2}}|(\nabla E[Q+z^{\flat}]-\nabla\td E[z^{\flat}],\epsilon)_{r}|\\
 & =\tfrac{1}{\lmb^{2}}|(R_{Q,z^{\flat}}+\tht_{z^{\flat}}Q,\eps)_{r}|\\
 & \aleq\tfrac{1}{\lmb^{2}}(\|R_{Q,z^{\flat}}\|_{L^{2}}\|\eps\|_{L^{2}}+\lmb^{2}\|Q\eps\|_{L^{1}})\\
 & \aleq\tfrac{1}{\lmb^{2}}(b^{\frac{3}{2}}|t|^{\delta}\cdot b^{\frac{1}{2}}|\log b|+b|t|\cdot b|\log b|)\\
 & \aleq\tfrac{1}{\lmb^{2}}b^{2}|\log b|\cdot|t|^{\delta}.
\end{align*}
This completes the proof of \eqref{eq:energy-coercivity}.

\smallskip
\textbf{Step 2.} Computation of $\rd_{t}\calE$.

In this step, we show that 
\begin{align}
\rd_{t}\calE & =-(R_{Q^{\sharp},z},(\rd_{t}+\tht_{z}i)Q^{\sharp})_{r}-\tfrac{1}{\lmb^{4}}(i\nabla\td E[z^{\flat}],(\calL_{Q+z^{\flat}}-\calL_{z^{\flat}}^{(\frkm)})\eps+R_{Q+z^{\flat}}(\eps))_{r}\label{eq:energy-identity-step2-claim}\\
 & \peq+\tfrac{1}{\lmb^{4}}(\nabla E[Q+z^{\flat}]-\nabla\td E[z^{\flat}]+\nabla^{2}\td E[z^{\flat}]\eps,i\Psi_{z^{\flat}})_{r}.\nonumber 
\end{align}
Our computation will be based on the Hamiltonian structure of \eqref{eq:CSS-m-equiv},
so it is convenient to consider the variables 
\[
\wh Q^{\sharp}\coloneqq e^{i\gmm_{z}}Q^{\sharp},\quad\wh{\eps}^{\sharp}=e^{i\gmm_{z}}\eps^{\sharp},\quad\wh z\coloneqq e^{i\gmm_{z}}z
\]
so that $u=\wh Q^{\sharp}+\wh{\eps}^{\sharp}+\wh z$. Note that we
have $\rd_{t}u=-i\nabla E[u]$ and $\rd_{t}\wh z=-i\nabla\td E[\wh z]-i\Psi_{\wh z}$,
where $\Psi_{\wh z}=e^{i\gmm_{z}}\Psi_{z}$. Thus $\rd_{t}(\wh Q^{\sharp}+\wh{\eps}^{\sharp})=-i(\nabla E[u]-\nabla\td E[\wh z])+i\Psi_{\wh z}$.

We start computing $\rd_{t}\calE$. First, by energy conservation,
we have 
\[
\rd_{t}E[u]=0.
\]
Next, by the definition of the functional derivative, we have 
\[
\rd_{t}E[Q^{\sharp}+z]=\rd_{t}E[\wh Q^{\sharp}+\wh z]=(\nabla E[\wh Q^{\sharp}+\wh z],\rd_{t}\wh Q^{\sharp}+\rd_{t}\wh z)_{r}.
\]
Next, as $\nabla^{2}\td E[\wh z]$ is symmetric, we have 
\begin{align*}
\rd_{t}(\nabla\td E[z],\eps^{\sharp})_{r} & =\rd_{t}(\nabla\td E[\wh z],\wh{\eps}^{\sharp})_{r}\\
 & =(\nabla^{2}\td E[\wh z]\rd_{t}\wh z,\wh{\eps}^{\sharp})_{r}+(\nabla\td E[\wh z],\rd_{t}\wh{\eps}^{\sharp})_{r}\\
 & =(\rd_{t}\wh z,\nabla^{2}\td E[\wh z]\wh{\eps}^{\sharp})_{r}-(\nabla\td E[\wh z],\rd_{t}\wh Q^{\sharp})_{r}+(\nabla\td E[\wh z],\rd_{t}(\wh Q^{\sharp}+\wh{\eps}^{\sharp}))_{r}.
\end{align*}
Hence, we have 
\begin{align*}
\rd_{t}\calE & =-(\nabla E[\wh Q^{\sharp}+\wh z]-\nabla\td E[\wh z],\rd_{t}\wh Q^{\sharp})_{r}\\
 & \peq-(\nabla E[\wh Q^{\sharp}+\wh z]+\nabla^{2}\td E[\wh z]\wh{\eps}^{\sharp},\rd_{t}\wh z)_{r}-(\nabla\td E[\wh z],\rd_{t}(\wh Q^{\sharp}+\wh{\eps}^{\sharp}))_{r}.
\end{align*}
Next, we substitute $\nabla E[\wh Q^{\sharp}+\wh z]-\nabla\td E[\wh z]=R_{\wh Q^{\sharp},\wh z}+\tht_{\wh z}\wh Q^{\sharp}$
with $(\tht_{\wh z}\wh Q^{\sharp},\rd_{t}\wh Q^{\sharp})_{r}=0$,
$\rd_{t}\wh z=-i\nabla\td E[\wh z]-i\Psi_{\wh z}$, and $\rd_{t}(\wh Q^{\sharp}+\wh{\eps}^{\sharp})=-i(\nabla E[u]-\nabla\td E[\wh z])+i\Psi_{\wh z}$
into the above display to obtain 
\begin{align*}
\rd_{t}\calE & =-(R_{\wh Q^{\sharp},\wh z},\rd_{t}\wh Q^{\sharp})_{r}-(i\nabla\td E[\wh z],\nabla E[u]-\nabla E[\wh Q^{\sharp}+\wh z]-\nabla^{2}\td E[\wh z]\wh{\eps}^{\sharp})_{r}\\
 & \peq+(\nabla E[\wh Q^{\sharp}+\wh z]-\nabla\td E[\wh z]+\nabla^{2}\td E[\wh z]\wh{\eps}^{\sharp},i\Psi_{\wh z})_{r}.
\end{align*}
This can be rearranged as 
\begin{align*}
\rd_{t}\calE & =-(R_{Q^{\sharp},z},(\rd_{t}+\tht_{z}i)Q^{\sharp})_{r}-(i\nabla\td E[z],(\calL_{Q^{\sharp}+z}-\calL_{z}^{(\frkm)})\eps^{\sharp}+R_{Q^{\sharp}+z}(\eps^{\sharp}))_{r}\\
 & \peq+(\nabla E[Q^{\sharp}+z]-\nabla\td E[z]+\nabla^{2}\td E[z]\eps^{\sharp},i\Psi_{z})_{r}.
\end{align*}
Using the $\sharp/\flat$-operations, \eqref{eq:energy-identity-step2-claim}
follows.

The proof of \eqref{eq:energy-differential-equality} is divided into
four steps. In Step 3, we estimate the error term involving $\Psi_{z^{\flat}}$.
In Step 4, we deal with the term $R_{Q^{\sharp},z}$ and extract the
main term of \eqref{eq:energy-differential-equality}. Steps 5 and
6, which take up the bulk of the proof, are devoted to the error
estimates for $R_{Q+z^{\flat}}(\eps)$ and $(\calL_{Q+z^{\flat}}-\calL_{z^{\flat}}^{(\frkm)})\eps$.

\smallskip
\textbf{Step 3. }Estimates of the error term involving $\Psi_{z}$.

In this step, we claim that 
\begin{equation}
(\nabla E[Q+z^{\flat}]-\nabla\td E[z^{\flat}]+\nabla^{2}\td E[z^{\flat}]\eps,i\Psi_{z^{\flat}})_{r}\aleq b^{3}|\log b|^{\frac{1}{2}}\cdot|t|^{\delta}.\label{eq:energy-identity-step3-claim-0}
\end{equation}
To see this, we rewrite 
\[
\text{LHS\eqref{eq:energy-identity-step3-claim-0}}=(R_{Q,z^{\flat}}+\tht_{z^{\flat}}Q+\nabla^{2}\td E[z^{\flat}]\eps,i\Psi_{z^{\flat}})_{r}
\]
and estimate some easy error terms; we use \eqref{eq:RQ,z-L2-est}
and \eqref{eq:Psi_zflat-L2} to have 
\[
(R_{Q,z^{\flat}},i\Psi_{z^{\flat}})_{r}\aleq\|R_{Q,z^{\flat}}\|_{L^{2}}\|\Psi_{z^{\flat}}\|_{L^{2}}\aleq b^{3}|t|^{2\delta}
\]
and we use \eqref{eq:tht-z-flat-bound} and \eqref{eq:Psi_zflat-Hdot1}
to have 
\[
(\tht_{z^{\flat}}Q,i\Psi_{z^{\flat}})_{r}\aleq|\tht_{z^{\flat}}|\|y^{1-}Q\|_{L^{2}}\|\tfrac{1}{y^{1-}}\Psi_{z^{\flat}}\|_{L^{2}}\aleq\lmb^{2}\cdot b^{2-}|t|^{\delta}\aleq b^{3}|t|^{1+\delta-}.
\]
To estimate the last term, we recall that 
\[
\nabla^{2}\td E[z^{\flat}]\eps=\calL_{z^{\flat}}^{(\frkm)}\eps=-\Delta^{(\frkm)}\eps+\sum_{\#\{j:\psi_{j}=\eps\}=1}\calN_{\ast}^{(\frkm)}(\psi_{1},\psi_{2},\dots),
\]
where each $\psi_{j}\in\{z^{\flat},\eps\}$ and $\sum\calN_{\ast}^{(\frkm)}$
denote a linear combination of $\calN_{\ast}$'s. By the duality
relation \eqref{eq:duality-relations}, we have 
\[
(\nabla^{2}\td E[z^{\flat}]\eps,i\Psi_{z^{\flat}})_{r}=(-\Delta^{(\frkm)}\eps,i\Psi_{z^{\flat}})_{r}+\sum_{\substack{\#\{j:\psi_{j}=\eps\}=1\\
\#\{j:\psi_{j}=i\Psi_{z^{\flat}}\}=1
}
}\calM_{\ast}^{(\frkm)}(\psi_{1},\psi_{2},\dots),
\]
where each $\psi_{j}\in\{z^{\flat},\eps,i\Psi_{z^{\flat}}\}$ and
$\sum\calM_{\ast}^{(\frkm)}$ denotes a linear combination of $\calM_{\ast}$'s.
We now estimate using \eqref{eq:conv-rel-3} and \eqref{eq:Psi_zflat-Hdot1}:
\begin{align*}
(-\Delta^{(\frkm)}\eps,i\Psi_{z^{\flat}})_{r} & =(\rd_{y}\eps,\rd_{y}(i\Psi_{z^{\flat}}))_{r}+(\tfrac{4}{y^{2}}\eps,i\Psi_{z^{\flat}})_{r}\\
 & \aleq\|\eps\|_{\dot{\calH}_{0}^{1}}\|\langle\log_{-}y\rangle|\Psi_{z^{\flat}}|_{-1}\|_{L^{2}}\aleq b|\log b|^{\frac{1}{2}}\cdot b^{2}|t|^{\delta}\aleq b^{3}|\log b|^{\frac{1}{2}}\cdot|t|^{\delta}
\end{align*}
and (using Lemmas~\ref{lem:duality-estimates-Holder}--\ref{lem:duality-estimates-weighted-L1})
\begin{align*}
\calM_{\ast}^{(\frkm)}(\psi_{1},\psi_{2},\dots) & \aleq\|\tfrac{1}{\langle\log_{-}y\rangle}\eps\|_{L^{\infty}\cap yL^{2}}\|\langle\log_{-}y\rangle\Psi_{z^{\flat}}\|_{L^{\infty}\cap yL^{2}}(\|z^{\flat}\|_{L^{2}}^{2}+\|z^{\flat}\|_{L^{2}}^{4})\\
 & \aleq\|\eps\|_{\dot{\calH}_{0}^{1}}\|\langle\log_{-}y\rangle|\Psi_{z^{\flat}}|_{-1}\|_{L^{2}}\aleq b^{3}|\log b|^{\frac{1}{2}}\cdot|t|^{\delta}.
\end{align*}
This completes the proof of \eqref{eq:energy-identity-step3-claim-0}.

\smallskip
\textbf{Step 4.} Contribution from the interaction term $R_{Q^{\sharp},z}$.

In this step, we claim that 
\begin{equation}
\begin{aligned}-(R_{Q^{\sharp},z},(\rd_{t} & +\tht_{z}i)Q^{\sharp})_{r}\\
 & =8\sqrt{8}\pi\Re(\rd_{t}(\lmb e^{-i\gmm})\cdot pq(4it)^{\frac{\nu-2}{2}})+\tfrac{1}{\lmb^{4}}\cdot O(|t|^{\delta}\cdot b^{3}|\log b|).
\end{aligned}
\label{eq:energy-identity-step3-claim}
\end{equation}
Indeed, the leading order term can be extracted from: 
\begin{align*}
 & -(R_{Q^{\sharp},z},\rd_{t}Q^{\sharp})_{r}\\
 & =-\tfrac{1}{\lmb^{2}}(R_{Q,z^{\flat}},(-\tfrac{\lmb_{t}}{\lmb}\Lmb+\gmm_{t}i)Q)_{r}\\
 & =-8\sqrt{8}\pi(-\lmb_{t}\Re(e^{-i\gmm}pq(4it)^{\frac{\nu-2}{2}})-\lmb\gmm_{t}\Im(e^{-i\gmm}pq(4it)^{\frac{\nu-2}{2}}))\\
 & \quad+\tfrac{1}{\lmb^{2}}\cdot O((|\tfrac{\lmb_{t}}{\lmb}|+|\gmm_{t}|)\cdot b^{2}|t|^{\delta})\\
 & =8\sqrt{8}\pi\Re(\rd_{t}(\lmb e^{-i\gmm})\cdot pq(4it)^{\frac{\nu-2}{2}})+\tfrac{1}{\lmb^{4}}\cdot O(b^{3}|\log b|^{\frac{1}{2}}|t|^{\delta}),
\end{align*}
where we used Lemma~\ref{lem:RQ,z} and \eqref{eq:rough-control-lmb-gmm}.
Next, we use \eqref{eq:RQ,z-inner-prod-2} and $|\tht_{z^{\flat}}|\aleq\lmb^{2}$
to have 
\[
-(R_{Q^{\sharp},z},\tht_{z}iQ^{\sharp})_{r}=\tfrac{1}{\lmb^{4}}(R_{Q,z^{\flat}},\tht_{z^{\flat}}iQ)_{r}\aleq\tfrac{1}{\lmb^{4}}\cdot\lmb^{2}b^{2}|\log b|^{\frac{1}{2}}\aleq\tfrac{1}{\lmb^{4}}\cdot b^{2}|\log b|^{\frac{1}{2}}\cdot|t|^{\delta}.
\]
Combining the above two displays completes the proof of the claim
\eqref{eq:energy-identity-step3-claim}.

\smallskip
\textbf{Step 5.} Estimates for the quadratic error $R_{Q+z^{\flat}}(\eps)$.

In this step, we show 
\begin{equation}
(i\nabla\td E[z^{\flat}],R_{Q+z^{\flat}}(\eps))_{r}\aleq b^{3}|\log b|\cdot|t|^{\delta}.\label{eq:energy-identity-step4-claim}
\end{equation}
If we were to have sufficient regularity for the radiation $z$, say
$z^{\ast}\in H_{-2}^{2}$ (equivalently, $\Re(\nu)>1$), then the
following estimate

\[
(i\nabla\td E[z^{\flat}],R_{Q+z^{\flat}}(\eps))_{r}\aleq\|\nabla\td E[z^{\flat}]\|_{L^{2}}\|R_{Q+z^{\flat}}(\eps)\|_{L^{2}}\aleq\lmb^{2}\|\eps\|_{\dot{\calH}_{0}^{1}}^{2}
\]
gives a quick proof of \eqref{eq:energy-identity-step4-claim}. However,
in the full range $\Re(\nu)>0$, $\nabla\td E[z^{\flat}]$ no longer
belongs to $L^{2}$ due to the term $-\Delta^{(\frkm)}z^{\flat}$
of $\nabla\td E[z^{\flat}]$. Thus, we integrate $-\Delta^{(\frkm)}z^{\flat}$
by parts. This, in turn, requires several derivative estimates of the
nonlinearity $R_{Q+z^{\flat}}(\eps)$ and generates many technicalities
in the following estimates.

Recall the structure of $\nabla\td E[z^{\flat}]=-\Delta^{(\frkm)}z^{\flat}+\calN^{(\frkm)}(z^{\flat})$.
We first show that the nonlinear term $\calN^{(\frkm)}(z^{\flat})$
is safe. Indeed, $\calN^{(\frkm)}(z^{\flat})=V_{z^{\flat}}^{(\frkm)}z^{\flat}$
for some potential $V_{z^{\flat}}^{(\frkm)}$ (see \eqref{eq:def-V-2z}
below for the precise formula) and we have 
\[
\|V_{z^{\flat}}^{(\frkm)}\|_{L^{\infty}}\aleq\|z^{\flat}\|_{rL^{2}\cap L^{\infty}}^{2}(1+\|z^{\flat}\|_{L^{2}}^{2})\aleq\lmb^{2}
\]
in view of the duality estimates (Lemma~\ref{lem:duality-estimates-weighted-L1}).
Thus, we have the following pointwise bound: 
\begin{equation}
|\calN^{(\frkm)}(z^{\flat})|\aleq\lmb^{2}|z^{\flat}|.\label{eq:calN-2-ptwise-bound}
\end{equation}
Using this and \eqref{eq:rough-control-tmp3}, we see that the contribution
of $\calN^{(\frkm)}(z^{\flat})$ is safe: 
\[
|(i\calN^{(\frkm)}(z^{\flat}),R_{Q+z^{\flat}}(\eps))_{r}|\aleq\lmb^{2}\|z^{\flat}\|_{L^{2}}\|R_{Q+z^{\flat}}(\eps)\|_{L^{2}}\aleq\lmb^{2}\|\eps\|_{\dot{\calH}_{0}^{1}}^{2}\aleq b^{3}|\log b|\cdot|t|.
\]

The rest of this step is devoted to estimate 
\[
(-i\Delta^{(\frkm)}z^{\flat},R_{Q+z^{\flat}}(\eps))_{r}.
\]
We first decompose 
\[
R_{Q+z^{\flat}}(\eps)=V_{R,2}Q+V_{R,2}z^{\flat}+V_{R,1}\eps,
\]
where $V_{R,1}$ and $V_{R,2}$ are potential parts of $R_{Q+z^{\flat}}(\eps)$.
More precisely, $V_{R,1}$ is a linear combination of the potentials
$\Re(\br{\psi_{1}}\psi_{2})$, $\tfrac{1}{y^{2}}A_{\tht}[\psi_{1},\psi_{2}]A_{\tht}[\psi_{3},\psi_{4}]$,
and $\int_{y}^{\infty}A_{\tht}[\psi_{1},\psi_{2}]\Re(\br{\psi_{3}}\psi_{4})\frac{dy'}{y'}$,
where in each expression there is at least one $\psi_{j}=\eps$.
Similarly, each expression of $V_{R,2}$ contains at least two $\psi_{j}=\eps$.

\uline{Case A}. Estimate of $(-i\Delta^{(\frkm)}z^{\flat},V_{R,2}Q)_{r}$.

In this case, although $\Delta^{(\frkm)}z^{\flat}\notin L^{2}$, we
can hope for certain integrability of $Q\Delta^{(\frkm)}z^{\flat}$ in
view of $\Delta^{(\frkm)}=\rd_{-}^{(\frkm)}\rd_{+}^{(\frkm)}$ and
\eqref{eq:z-flat-degenerate-H4-bound}. Indeed, by the duality estimate
(see the proof of \eqref{eq:rough-control-tmp3}), we have 
\begin{align*}
|(-i\Delta^{(\frkm)}z^{\flat},V_{R,2}Q)_{r}| & \aleq\|\langle\log_{-}y\rangle Q\Delta^{(\frkm)}z^{\flat}\|_{L^{2}\cap yL^{1}}\|\eps\|_{L^{2}}\|\tfrac{1}{\langle\log_{-}y\rangle}\eps\|_{L^{\infty}\cap yL^{2}}\\
 & \aleq\|\tfrac{\langle\log_{-}y\rangle}{\langle y\rangle^{2}}|\rd_{-}^{(\frkm)}z^{\flat}|_{-1}\|_{L^{2}\cap yL^{1}}\|\eps\|_{L^{2}}\|\eps\|_{\dot{\calH}_{0}^{1}}\\
 & \aleq b^{\frac{3}{2}}|t|^{\delta}\cdot b^{\frac{1}{2}}\cdot b|\log b|^{\frac{1}{2}}\aleq b^{3}|\log b|\cdot|t|^{\delta}.
\end{align*}

\uline{Case B}. Estimate of $(-i\Delta^{(\frkm)}z^{\flat},V_{R,2}z^{\flat})_{r}$.

In this case, we need to integrate by parts $-i\Delta^{(\frkm)}z^{\flat}=-i\rd_{-}^{(\frkm)\ast}\rd_{-}^{(\frkm)}z^{\flat}$.
We start with writing
\[
(-i\Delta^{(\frkm)}z^{\flat},V_{R,2}z^{\flat})_{r}=(i\rd_{-}^{(\frkm)}z^{\flat},(\rd_{y}V_{R,2})z^{\flat})_{r}+(i\rd_{-}^{(\frkm)}z^{\flat},V_{R,2}(\rd_{-}^{(\frkm)}z^{\flat}))_{r}.
\]
We then notice that 
\[
y\rd_{y}V_{R,2}=V_{R,2}^{(I)}+V_{R,2}^{(II)},
\]
where $V_{R,2}^{(I)}$ is a linear combination of $\Re(\br{\psi_{1}}\psi_{2})$,
$\tfrac{1}{y^{2}}A_{\tht}[\psi_{1},\psi_{2}]A_{\tht}[\psi_{3},\psi_{4}]$
and $V_{R,2}^{(II)}$ is a linear combination of $y\rd_{y}\Re(\br{\psi_{1}}\psi_{2})$,
$A_{\tht}[\psi_{1},\psi_{2}]\Re(\br{\psi_{3}}\psi_{4})$, where each
expression in $V_{R,2}^{(I)}$ and $V_{R,2}^{(II)}$ has at least
two $\psi_{j}$'s equal to $\eps$. Now we have 
\begin{align*}
 & |(-i\Delta^{(\frkm)}z^{\flat},V_{R,2}z^{\flat})_{r}|\\
 & \quad\aleq\||\rd_{-}^{(\frkm)}z^{\flat}|\cdot(|V_{R,2}^{(I)}|+|V_{R,2}|)|z^{\flat}|_{-1}\|_{L^{1}}+\|(\rd_{-}^{(\frkm)}z^{\flat})\tfrac{1}{y}V_{R,2}^{(II)}z^{\flat}\|_{L^{1}}.
\end{align*}
To estimate the first term, since the terms in $V_{R,2}^{(I)}$ have already
appeared in the expression of $V_{R,2}$, it can be easily estimated
by the duality estimate (as in the proof of \eqref{eq:rough-control-tmp3}):
\begin{align*}
\||\rd_{-}^{(\frkm)}z^{\flat}|\cdot(|V_{R,2}^{(I)}|+|V_{R,2}|)|z^{\flat}|_{-1}\|_{L^{1}} & \aleq\|\langle\log_{-}y\rangle|z^{\flat}|_{-1}\|_{L^{2}}^{2}\|\eps\|_{\dot{\calH}_{0}^{1}}^{2}\\
 & \aleq\lmb^{2}\|\eps\|_{\dot{\calH}_{0}^{1}}^{2}\aleq b^{3}|\log b|\cdot|t|.
\end{align*}
For the second term, we note that $V_{R,2}^{(II)}$ does not appear
in the duality estimates and, hence, needs to be estimated separately.
We have 
\begin{align*}
 & |(i\rd_{-}^{(\frkm)}z^{\flat},\rd_{y}\Re(\br{\eps}\eps)\cdot z^{\flat})_{r}|\\
 & \aleq\|\langle\log_{-}y\rangle\rd_{-}^{(\frkm)}z^{\flat}\|_{L^{2}}\|\rd_{y}\eps\|_{L^{2}}\|\tfrac{1}{\langle\log_{-}y\rangle}\eps\|_{L^{\infty}}\|z^{\flat}\|_{L^{\infty}}\\
 & \aleq\lmb^{2}\|\eps\|_{\dot{\calH}_{0}^{1}}^{2}\aleq b^{3}|\log b|\cdot|t|
\end{align*}
and 
\begin{align*}
 & |(i\rd_{-}^{(\frkm)}z^{\flat},A_{\tht}[\psi_{1},\psi_{2}]\Re(\br{\psi_{3}}\psi_{4})\cdot\tfrac{1}{y}z^{\flat})_{r}|\\
 & \aleq\||z^{\flat}|_{-1}\|_{L^{\infty}}^{2}\|\psi_{1}\psi_{2}\|_{L^{1}}\|\psi_{3}\psi_{4}\|_{L^{1}}\\
 & \aleq\||z^{\flat}|_{-1}\|_{L^{\infty}}^{2}\|\eps\|_{L^{2}}^{2}\aleq b^{3}|\log b|\cdot|t|^{\delta},
\end{align*}
where we used \eqref{eq:z-flat-H2-bound}.

\uline{Case C}. Estimate of $(-i\Delta^{(\frkm)}z^{\flat},V_{R,1}\eps)_{r}$.

As in the previous case, we need to integrate by parts. We start by
writing
\[
(-i\Delta^{(\frkm)}z^{\flat},V_{R,1}\eps)_{r}=(i\rd_{-}^{(\frkm)}z^{\flat},(\rd_{y}V_{R,1})\eps)_{r}+(i\rd_{-}^{(\frkm)}z^{\flat},V_{R,1}(\rd_{-}^{(\frkm)}\eps))_{r}.
\]
We then notice that 
\[
y\rd_{y}V_{R,1}=V_{R,1}^{(I)}+V_{R,1}^{(II)},
\]
where $V_{R,1}^{(I)}$ and $V_{R,1}^{(II)}$ are defined similarly
to $V_{R,2}^{(I)}$ and $V_{R,2}^{(II)}$ with the only difference
being that their expressions contain at least one $\psi_{j}=\eps$. Thus
\begin{align*}
 & |(-i\Delta^{(\frkm)}z^{\flat},V_{R,1}\eps)_{r}|\\
 & \quad\aleq\||\rd_{-}^{(\frkm)}z^{\flat}|\cdot(|V_{R,1}^{(I)}|+|V_{R,1}|)\cdot|\eps|_{-1}\|_{L^{1}}+\|(\rd_{-}^{(\frkm)}z^{\flat})\tfrac{1}{y}V_{R,1}^{(II)}\eps\|_{L^{1}}.
\end{align*}
To estimate the first term, since the terms in $V_{R,1}^{(I)}$ have already
appeared in the expression of $V_{R,1}$, it can be easily estimated
by the duality estimate (Lemma~\ref{lem:duality-estimates-Holder}):
\begin{align*}
 & \||\rd_{-}^{(\frkm)}z^{\flat}|\cdot(|V_{R,1}^{(I)}|+|V_{R,1}|)\cdot|\eps|_{-1}\|_{L^{1}}\\
 & \aleq\||\rd_{-}^{(\frkm)}z^{\flat}|\cdot|\eps|_{-1}\|_{L^{2}}\|\eps\psi\|_{L^{2}}\\
 & \aleq\|\langle\log_{-}y\rangle\rd_{-}^{(\frkm)}z^{\flat}\|_{L^{\infty}}\|\tfrac{1}{\langle\log_{-}y\rangle}|\eps|_{-1}\|_{L^{2}}\|\eps\psi\|_{L^{2}}\\
 & \aleq\|\langle\log_{-}y\rangle\rd_{-}^{(\frkm)}z^{\flat}\|_{L^{\infty}}\|\eps\|_{\dot{\calH}_{0}^{1}}^{2}\\
 & \aleq b^{3}|\log b|\cdot|t|^{\delta},
\end{align*}
where in the third inequality, we used 
\begin{align*}
\|\eps\psi\|_{L^{2}} & \aleq\|\eps Q\|_{L^{2}}+\|\eps z^{\flat}\|_{L^{2}}+\|\eps^{2}\|_{L^{2}}\\
 & \aleq\|\eps\|_{\dot{\calH}_{0}^{1}}(1+\|\langle\log_{-}y\rangle z^{\flat}\|_{L^{2}}+\|\eps\|_{L^{2}})\aleq\|\eps\|_{\dot{\calH}_{0}^{1}}
\end{align*}
and in the last one, we used \eqref{eq:z-flat-H2-bound}. For the second
term $\|(\rd_{-}^{(\frkm)}z^{\flat})\tfrac{1}{y}V_{R,1}^{(II)}\eps\|_{L^{1}}$,
we have 
\begin{align*}
 & |(i\rd_{-}^{(\frkm)}z^{\flat},\rd_{y}\Re(\br{\psi}\eps)\cdot\eps)_{r}|\\
 & \aleq\|(\rd_{-}^{(\frkm)}z^{\flat})\eps\|_{L^{\infty}}\|\rd_{y}(\psi\eps)\|_{L^{1}}\\
 & \aleq\|\langle\log_{-}y\rangle|z^{\flat}|_{-1}\|_{L^{\infty}}\|\tfrac{1}{\langle\log_{-}y\rangle}\eps\|_{L^{\infty}}\|\rd_{y}(\psi\eps)\|_{L^{1}}\\
 & \aleq\|\langle\log_{-}y\rangle|z^{\flat}|_{-1}\|_{L^{\infty}}\|\eps\|_{\dot{\calH}_{0}^{1}}(\|\eps\|_{\dot{\calH}_{0}^{1}}+\lmb\|\eps\|_{L^{2}})\\
 & \aleq b^{3}|\log b|\cdot|t|^{\delta},
\end{align*}
where, in the second-to-last inequality, we used 
\begin{align*}
\|\rd_{y}(\psi\eps)\|_{L^{1}} & \aleq\|\rd_{y}(Q\eps)\|_{L^{1}}+\|\rd_{y}(z^{\flat}\eps)\|_{L^{1}}+\|\rd_{y}(\eps^{2})\|_{L^{1}}\\
 & \aleq\|\eps\|_{\dot{\calH}_{0}^{1}}(1+\|z^{\flat}\|_{L^{2}}+\|\eps\|_{L^{2}})+\|\rd_{y}z^{\flat}\|_{L^{2}}\|\eps\|_{L^{2}}\\
 & \aleq\|\eps\|_{\dot{\calH}_{0}^{1}}+\lmb\|\eps\|_{L^{2}},
\end{align*}
and in the last one, we used \eqref{eq:z-flat-H2-bound}, and we have
\begin{align*}
 & |(i\rd_{-}^{(\frkm)}z^{\flat},\tfrac{1}{y}A_{\tht}[\psi_{1},\psi_{2}]\Re(\br{\psi_{3}}\psi_{4})\cdot\eps)_{r}|\\
 & \aleq\|\langle\log_{-}y\rangle(\rd_{-}^{(\frkm)}z^{\flat})\eps\|_{L^{\infty}}\|\tfrac{1}{y\langle\log_{-}y\rangle}\eps\psi\|_{L^{1}}\\
 & \aleq\|\langle\log_{-}y\rangle^{2}|z^{\flat}|_{-1}\|_{L^{\infty}}\|\tfrac{1}{\langle\log_{-}y\rangle}\eps\|_{L^{\infty}}\|\tfrac{1}{y\langle\log_{-}y\rangle}\eps\|_{L^{2}}\\
 & \aleq\|\langle\log_{-}y\rangle^{2}|z^{\flat}|_{-1}\|_{L^{\infty}}\|\eps\|_{\dot{\calH}_{0}^{1}}^{2}\\
 & \aleq b^{3}|\log b|\cdot|t|^{\delta},
\end{align*}
where, in the last inequality, we used \eqref{eq:z-flat-H2-bound}.
This completes the proof of the claim \eqref{eq:energy-identity-step4-claim}.

\smallskip
\textbf{Step 6.} Estimates for the degenerate linear term $(\calL_{Q+z^{\flat}}-\calL_{z^{\flat}}^{(\frkm)})\epsilon$.

In this step, we show 
\begin{equation}
(i\nabla\td E[z^{\flat}],(\calL_{Q+z^{\flat}}-\calL_{z^{\flat}}^{(\frkm)})\epsilon)_{r}\aleq b^{3}|\log b|\cdot|t|^{\delta}.\label{eq:energy-identity-step5-claim}
\end{equation}
Here, we need to exploit the cancelation of the potentials of $\calL_{Q+z^{\flat}}-\calL_{z^{\flat}}^{(\frkm)}$.
Moreover, we need to use higher Sobolev bounds of $\rd_{-}^{(\frkm)}z^{\flat}$
(cf. Case A of Step 4), which utilize the degeneracy of $\rd_{-}^{(\frkm)}z$
near the origin, recorded in \eqref{eq:rd-z-ss-degen} and \eqref{eq:z-flat-degenerate-H4-bound}.

We start by expanding the term $(\calL_{Q+z^{\flat}}-\calL_{z^{\flat}}^{(\frkm)})\epsilon$
using the Coulomb form \eqref{eq:CSS-m-equiv}. First, we decompose
\begin{align*}
\calL_{Q+z^{\flat}}\eps & =-(\rd_{yy}+\tfrac{1}{y}\rd_{y})\eps+V_{Q,z^{\flat}}\eps+V_{\eps,Q+z^{\flat}}(Q+z^{\flat}),\\
\calL_{z^{\flat}}^{(\frkm)}\eps & =-(\rd_{yy}+\tfrac{1}{y}\rd_{y})\eps+V_{z^{\flat}}^{(\frkm)}\eps+V_{\eps,z^{\flat}}^{(\frkm)}z^{\flat},
\end{align*}
where 
\begin{align}
V_{Q,z^{\flat}} & =(\tfrac{A_{\tht}[Q+z^{\flat}]}{y})^{2}+A_{t}[Q+z^{\flat}]-|Q+z^{\flat}|^{2},\label{eq:def-VQ,z}\\
V_{\eps,Q+z^{\flat}} & =\tfrac{A_{\tht}[Q+z^{\flat}]\cdot2A_{\tht}[Q+z^{\flat},\eps]}{y^{2}}-\tint y{\infty}A_{\tht}[Q+z^{\flat}]\cdot2\Re((\br{Q+z^{\flat}})\eps)\tfrac{dy'}{y'}\label{eq:def-Veps,Qz}\\
 & \peq-\tint y{\infty}2A_{\tht}[Q+z^{\flat},\eps]|Q+z^{\flat}|^{2}\tfrac{dy'}{y'}-2\Re((\br{Q+z^{\flat}})\eps)\nonumber 
\end{align}
and 
\begin{align}
V_{z^{\flat}}^{(\frkm)} & =(\tfrac{\frkm+A_{\tht}[z^{\flat}]}{y})^{2}+A_{t}^{(\frkm)}[z^{\flat}]-|z^{\flat}|^{2},\label{eq:def-V-2z}\\
V_{\eps,z^{\flat}}^{(\frkm)} & =\tfrac{(\frkm+A_{\tht}[z^{\flat}])\cdot2A_{\tht}[z^{\flat},\eps]}{y^{2}}-\tint y{\infty}(\frkm+A_{\tht}[z^{\flat}])\cdot2\Re(\br{z^{\flat}}\eps)\tfrac{dy'}{y'}\label{eq:def-V-2epsz}\\
 & \peq-\tint y{\infty}2A_{\tht}[z^{\flat},\eps]|z^{\flat}|^{2}\tfrac{dy'}{y'}-2\Re(\br{z^{\flat}}\eps).\nonumber 
\end{align}
With the above notation, we can write 
\[
(\calL_{Q+z^{\flat}}-\calL_{z^{\flat}}^{(\frkm)})\eps=(V_{Q,z^{\flat}}-V_{z^{\flat}}^{(\frkm)})\eps+V_{\eps,Q+z^{\flat}}Q+(V_{\eps,Q+z^{\flat}}-V_{\eps,z^{\flat}}^{(\frkm)})z^{\flat}.
\]
Our aim is to show that each term after taking the inner product with
$i\nabla\td E[z^{\flat}]$ satisfies \eqref{eq:energy-identity-step5-claim}.
We also recall that 
\[
i\nabla\td E[z^{\flat}]=-i\Delta^{(\frkm)}z^{\flat}+i\calN^{(\frkm)}(z^{\flat})\quad\text{and}\quad|\calN^{(\frkm)}(z^{\flat})|\aleq\lmb^{2}|z^{\flat}|.
\]

\uline{Case A}. Estimate of $(i\nabla\td E[z^{\flat}],V_{\eps,Q+z^{\flat}}Q)_{r}$.

This case is very similar to Case A of Step 4. Indeed, we have by
the duality estimates (Lemma~\ref{lem:duality-estimates-Holder})
\begin{align*}
|(i\nabla\td E[z^{\flat}],V_{\eps,Q+z^{\flat}}Q)_{r}| & \aleq\|Q\nabla\td E[z^{\flat}]\|_{L^{2}}\|(Q+z^{\flat})\eps\|_{L^{2}}\\
 & \aleq\|\langle y\rangle^{-2}(|\rd_{-}^{(\frkm)}z^{\flat}|_{-1}+\lmb^{2}|z^{\flat}|)\|_{L^{2}}\|\eps\|_{\dot{\calH}_{0}^{1}}\\
 & \aleq b^{3}|\log b|^{\frac{1}{2}}\cdot|t|^{\delta},
\end{align*}
where we used \eqref{eq:z-flat-degenerate-H4-bound} and \eqref{eq:z-flat-H2-bound}.

\uline{Case B}. Estimate of $(i\nabla\td E[z^{\flat}],(V_{Q,z^{\flat}}-V_{z^{\flat}}^{(\frkm)})\eps)_{r}$.

First, we will decompose $V_{Q,z^{\flat}}-V_{z^{\flat}}^{(\frkm)}$
into two parts: 
\[
V_{Q,z^{\flat}}-V_{z^{\flat}}^{(\frkm)}=V_{B}^{(I)}+V_{B}^{(II)}.
\]
Using the formulas \eqref{eq:def-VQ,z} and \eqref{eq:def-V-2z} of
$V_{Q,z^{\flat}}$ and $V_{z^{\flat}}^{(\frkm)}$, and $|A_{\tht}[z^{\flat}]|_{1}\aleq1+y^{2}|z^{\flat}|^{2}$,
we can write $V_{Q,z^{\flat}}-V_{z^{\flat}}^{(\frkm)}$ as 
\begin{align*}
V_{Q,z^{\flat}}-V_{z^{\flat}}^{(\frkm)} & =\tfrac{2+A_{\tht}[Q]}{y^{2}}\cdot O_{|\cdot|_{1}}(1+y^{2}|z^{\flat}|^{2})+\tfrac{A_{\tht}[Q,z^{\flat}]}{y^{2}}\cdot O_{|\cdot|_{1}}(1+y^{2}|z^{\flat}|^{2})\\
 & \peq-\tint y{\infty}O_{L^{\infty}}(1)\cdot Q^{2}\tfrac{dy'}{y'}-\tint y{\infty}(2+A_{\tht}[Q])|z^{\flat}|^{2}\tfrac{dy'}{y'}\\
 & \peq-\tint y{\infty}O_{L^{\infty}}(1)\cdot2\Re(Qz^{\flat})\tfrac{dy'}{y'}-\tint y{\infty}2A_{\tht}[Q,z^{\flat}]|z^{\flat}|^{2}\tfrac{dy'}{y'}\\
 & \peq-Q^{2}-2\Re(Qz^{\flat}),
\end{align*}
where $f=O_{|\cdot|_{1}}(g)$ means some function $f$ satisfying
$|f|_{1}\aleq g$. Let $V_{B}^{(I)}$ collect the terms containing
the pointwise product $\Re(Qz^{\flat})$; let $V_{B}^{(II)}$ collect
the terms with spatial decay in the spirit of $Q+|2+A_{\tht}[Q]|\aleq\langle y\rangle^{-2}$.
More precisely, $V_{B}^{(I)}$ is the sum of the second, fifth, sixth,
and eights term, and $V_{B}^{(II)}$ is the sum of the first, third,
fourth, and seventh term.

Next, we claim that 
\begin{align}
\||V_{B}^{(I)}|_{1}\|_{L^{2}} & \aleq\|Qz^{\flat}\|_{L^{2}}+\lmb\|Qz^{\flat}\|_{L^{1}}\aleq b\cdot|t|^{\delta},\label{eq:claim-VBI}\\
|V_{B}^{(II)}|_{1} & \aleq\tfrac{1}{\langle y\rangle^{2}}(\tfrac{1}{y^{2}}+\lmb^{2}).\label{eq:claim-VBII}
\end{align}
Indeed, the proof of \eqref{eq:claim-VBI} follows from the pointwise
bound 
\[
|V_{B}^{(I)}|_{1}\aleq(\tfrac{1}{y^{2}}\tint 0y\cdot y'dy'+\tint y{\infty}\cdot\tfrac{dy'}{y'}+1)(|Qz^{\flat}|)+(\tint y{\infty}\cdot\tfrac{dy'}{y'}+1)(|A_{\tht}[Q,z^{\flat}]||z^{\flat}|^{2})
\]
and (using \eqref{eq:z-flat-H2-bound} and \eqref{eq:z-flat-H1-bound-2})
\begin{align*}
\||V_{B}^{(I)}|_{1}\|_{L^{2}} & \aleq\|Qz^{\flat}\|_{L^{2}}+\|A_{\tht}[Q,z^{\flat}]|z^{\flat}|^{2}\|_{L^{2}}\\
 & \aleq\|Qz^{\flat}\|_{L^{2}}+\|Qz^{\flat}\|_{L^{1}}\|z^{\flat}\|_{L^{2}}\|z^{\flat}\|_{L^{\infty}}\\
 & \aleq\|Qz^{\flat}\|_{L^{2}}+\lmb\|Qz^{\flat}\|_{L^{1}}\aleq b\cdot|t|^{\delta}.
\end{align*}
The proof of \eqref{eq:claim-VBII} follows from 
\[
|V_{B}^{(II)}|_{1}\aleq\tfrac{1}{\langle y\rangle^{2}}(\tfrac{1}{y^{2}}+\|z^{\flat}\|_{L^{\infty}}^{2})\aleq\tfrac{1}{\langle y\rangle^{2}}(\tfrac{1}{y^{2}}+\lmb^{2}).
\]

We turn to estimate $(i\nabla\td E[z^{\flat}],(V_{Q,z^{\flat}}-V_{z^{\flat}}^{(\frkm)})\eps)_{r}$.
As in Case B of Step 4, we need to integrate by parts the term $-\Delta^{(\frkm)}z^{\flat}$
of $\nabla\td E[z^{\flat}]$; we note the integration by parts formula
\begin{align}
(i\nabla\td E[z^{\flat}],V\eps)_{r} & =(i\rd_{-}^{(\frkm)}z^{\flat},V(\rd_{-}^{(\frkm)}\eps)+(\rd_{y}V)\eps)_{r}+(i\calN^{(\frkm)}(z^{\flat}),V\eps)_{r}\label{eq:int-by-pts-VB}
\end{align}
for a potential $V\in\{V_{B}^{(I)},V_{B}^{(II)}\}$. To estimate the
contribution of $V_{B}^{(I)}$, by \eqref{eq:int-by-pts-VB}, \eqref{eq:claim-VBI},
\eqref{eq:calN-2-ptwise-bound}, and \eqref{eq:z-flat-H2-bound},
we have 
\begin{align*}
 & |(i\nabla\td E[z^{\flat}],V_{B}^{(I)}\eps)_{r}|\\
 & \aleq\big(\|\langle\log_{-}y\rangle\rd_{-}^{(\frkm)}z^{\flat}\|_{L^{\infty}}\|\tfrac{1}{\langle\log_{-}y\rangle}|\eps|_{-1}\|_{L^{2}}\\
 & \qquad+\lmb^{2}\|\langle\log_{-}y\rangle z^{\flat}\|_{L^{2}}\|\tfrac{1}{\langle\log_{-}y\rangle}\eps\|_{L^{\infty}}\big)\cdot\||V_{B}^{(I)}|_{1}\|_{L^{2}}\\
 & \aleq\big(\|\langle\log_{-}y\rangle\rd_{-}^{(\frkm)}z^{\flat}\|_{L^{\infty}}+\lmb^{2}\big)\cdot\||V_{B}^{(I)}|_{1}\|_{L^{2}}\|\eps\|_{\dot{\calH}_{0}^{1}}\\
 & \aleq b|t|^{\delta}\cdot b|t|^{\delta}\cdot\|\eps\|_{\dot{\calH}_{0}^{1}}\aleq b^{3}|\log b|^{\frac{1}{2}}\cdot|t|^{\delta},
\end{align*}
To estimate the contribution of $V_{B}^{(II)}$, by \eqref{eq:int-by-pts-VB},
\eqref{eq:claim-VBII}, \eqref{eq:calN-2-ptwise-bound}, \eqref{eq:z-flat-H2-bound},
and \eqref{eq:z-flat-degenerate-H4-bound}, we have 
\begin{align*}
 & |(i\nabla\td E[z^{\flat}],V_{B}^{(II)}\eps)_{r}|\\
 & \aleq\|y\langle\log_{-}y\rangle|V_{B}^{(II)}|_{1}\cdot(\tfrac{1}{y}|\rd_{-}^{(\frkm)}z^{\flat}|+\lmb^{2}|z^{\flat}|)\|_{L^{2}}\|\tfrac{1}{\langle\log_{-}y\rangle}|\eps|_{-1}\|_{L^{2}}\\
 & \aleq(\|\tfrac{\langle\log_{-}y\rangle}{y\langle y\rangle^{2}}(\tfrac{1}{y}|\rd_{-}^{(\frkm)}z^{\flat}|+\lmb^{2}|z^{\flat}|)\|_{L^{2}}\\
 & \qquad+\lmb^{2}\|\tfrac{y\langle\log_{-}y\rangle}{\langle y\rangle^{2}}(\tfrac{1}{y}|\rd_{-}^{(\frkm)}z^{\flat}|+\lmb^{2}|z^{\flat}|)\|_{L^{2}})\cdot\|\eps\|_{\dot{\calH}_{0}^{1}}\\
 & \aleq b^{3}|\log b|^{\frac{1}{2}}\cdot|t|^{\delta}.
\end{align*}

\uline{Case C}. Estimate of $(i\nabla\td E[z^{\flat}],(V_{\eps,Q+z^{\flat}}-V_{\eps,z^{\flat}}^{(\frkm)})z^{\flat})_{r}$.

First, we will decompose $V_{\eps,Q+z^{\flat}}-V_{\eps,z^{\flat}}^{(\frkm)}$
into three parts: 
\[
V_{\eps,Q+z^{\flat}}-V_{\eps,z^{\flat}}^{(\frkm)}=V_{C}^{(I)}+V_{C}^{(II)}+V_{C}^{(III)}.
\]
Using the formulas \eqref{eq:def-Veps,Qz} and \eqref{eq:def-V-2epsz}
of $V_{\eps,Q+z^{\flat}}$ and $V_{\eps,z^{\flat}}^{(\frkm)}$, we
have 
\begin{align*}
 & V_{\eps,Q+z^{\flat}}-V_{\eps,z^{\flat}}^{(\frkm)}\\
 & =\tfrac{A_{\tht}[Q+z^{\flat}]\cdot2A_{\tht}[Q,\eps]}{y^{2}}+\tfrac{(2+A_{\tht}[Q])\cdot2A_{\tht}[z^{\flat},\eps]}{y^{2}}+\tfrac{2A_{\tht}[Q,z^{\flat}]\cdot2A_{\tht}[z^{\flat},\eps]}{y^{2}}\\
 & \peq-\tint y{\infty}A_{\tht}[Q+z^{\flat}]\cdot2\Re(Q\eps)\tfrac{dy'}{y'}-\tint y{\infty}(2+A_{\tht}[Q])\cdot2\Re(\br{z^{\flat}}\eps)\tfrac{dy'}{y'}-\tint y{\infty}2A_{\tht}[Q,z^{\flat}]\cdot2\Re(\br{z^{\flat}}\eps)\tfrac{dy'}{y'}\\
 & \peq-\tint y{\infty}2A_{\tht}[Q,\eps]|Q+z^{\flat}|^{2}\tfrac{dy'}{y'}-\tint y{\infty}2A_{\tht}[z^{\flat},\eps]Q^{2}\tfrac{dy'}{y'}-\tint y{\infty}2A_{\tht}[z^{\flat},\eps]\cdot2\Re(Qz^{\flat})\tfrac{dy'}{y'}\\
 & \peq-2\Re(Q\eps).
\end{align*}
Let $V_{C}^{(I)}$ collect the terms having the pointwise product
$\Re(Q\eps)$; let $V_{C}^{(II)}$ collect the terms having the pointwise
product $\Re(Qz^{\flat})$; let $V_{C}^{(III)}$ collect the terms
with spatial decay. More precisely, $V_{C}^{(I)}$ is the sum of the
first, fourth, seventh, and tenth terms, $V_{C}^{(II)}$ is the sum
of the third, sixth, and ninth terms, and $V_{C}^{(III)}$ is the
sum of the second, fifth, and eight terms.

Next, we claim that 
\begin{align}
\||V_{C}^{(I)}|_{1}\|_{L^{1}+y^{-2}L^{\infty}} & \aleq\|Q\eps\|_{L^{1}}+\|\eps\|_{\dot{\calH}_{0}^{1}}\aleq\|\eps\|_{\dot{\calH}_{0}^{1}}^{1-},\label{eq:claim-VCI}\\
\||V_{C}^{(II)}|_{1}\|_{L^{2}} & \aleq b^{\frac{1}{2}}|t|^{\delta}\cdot\|\eps\|_{\dot{\calH}_{0}^{1}}\label{eq:claim-VCII}\\
\|y|V_{C}^{(III)}|_{1}\|_{L^{2}} & \aleq\lmb\|\eps\|_{\dot{\calH}_{0}^{1}}.\label{eq:claim-VCIII}
\end{align}
Indeed, the proof of \eqref{eq:claim-VCI} (terms having $\Re(Q\eps)$)
follows from 
\[
|V_{C}^{(I)}|_{1}\aleq(\tfrac{1}{y^{2}}\tint 0y\cdot y'dy'+1)(|Q\eps|)+(\tint y{\infty}\cdot\tfrac{dy'}{y'}+1)(|A_{\tht}[Q,\eps]||Q+z^{\flat}|^{2})+Q|y\rd_{y}\eps|
\]
and the estimates\footnote{In the second display, we used the embedding $y^{-1}L^{2}\embed L^{1}+y^{-2}L^{\infty}$,
which follows from $L^{\infty}\cap y^{2}L^{1}\embed yL^{2}$ and taking
the dual.} 
\begin{align*}
||\tfrac{1}{y^{2}}\tint 0y|Q\eps|y'dy'\|_{y^{-2}L^{\infty}} & \aleq\|Q\eps\|_{L^{1}},\\
\|Q\cdot y\rd_{y}\eps\|_{L^{1}+y^{-2}L^{\infty}} & \aleq\|Q\cdot y\rd_{y}\eps\|_{y^{-1}L^{2}}\aleq\|\rd_{y}\eps\|_{L^{2}}\aleq\|\eps\|_{\dot{\calH}_{0}^{1}},\\
\|(\tint y{\infty}\cdot\tfrac{dy'}{y'}+1)(A_{\tht}[Q,\eps]|Q+z^{\flat}|^{2})\|_{L^{1}} & \aleq\|A_{\tht}[Q,\eps]|Q+z^{\flat}|^{2}\|_{L^{1}}\aleq\|Q\eps\|_{L^{1}}.
\end{align*}
The proof of \eqref{eq:claim-VCII} (terms having $\Re(Qz^{\flat})$)
follows from 
\begin{align*}
|V_{C}^{(II)}|_{1} & \aleq\|A_{\tht}[Q,z^{\flat}]\|_{L^{\infty}}\cdot(\tfrac{1}{y^{2}}\tint 0y\cdot y'dy'+\tint y{\infty}\cdot\tfrac{dy'}{y'}+1)(|z^{\flat}\eps|)\\
 & \quad+(\tint y{\infty}\cdot\tfrac{dy'}{y'}+1)(|A_{\tht}[z^{\flat},\eps]||Qz^{\flat}|)\\
 & \aleq\|Qz^{\flat}\|_{L^{1}}(\tfrac{1}{y^{2}}\tint 0y\cdot y'dy'+\tint y{\infty}\cdot\tfrac{dy'}{y'}+1)(|z^{\flat}\eps|)\\
 & \quad+\|z^{\flat}\|_{L^{\infty}}(\tint y{\infty}\cdot\tfrac{dy'}{y'}+1)(|QA_{\tht}[z^{\flat},\eps]|)
\end{align*}
and 
\begin{align*}
\||V_{C}^{(II)}|_{1}\|_{L^{2}} & \aleq\|Qz^{\flat}\|_{L^{1}}\|z^{\flat}\eps\|_{L^{2}}+\|z^{\flat}\|_{L^{\infty}}\|QA_{\tht}[z^{\flat},\eps]\|_{L^{2}}\\
 & \aleq(\|Qz^{\flat}\|_{L^{1}}+\lmb)\|z^{\flat}\eps\|_{L^{2}}\\
 & \aleq b^{\frac{1}{2}}|t|^{\delta}\cdot\|\langle\log_{-}y\rangle z^{\flat}\|_{L^{2}}\|\tfrac{1}{\langle\log_{-}y\rangle}\eps\|_{L^{\infty}}\\
 & \aleq b^{\frac{1}{2}}|t|^{\delta}\cdot\|\eps\|_{\dot{\calH}_{0}^{1}}.
\end{align*}
The proof of \eqref{eq:claim-VCIII} (terms with spatial decay) follows
from
\begin{align*}
 & y|V_{C}^{(III)}|_{1}\\
 & \aleq\tfrac{1}{\langle y\rangle^{2}y}\tint 0y|z^{\flat}\eps|y'dy'+y(\tint y{\infty}\cdot\tfrac{dy'}{y'}+1)(\tfrac{1}{\langle y'\rangle^{2}}|z^{\flat}\eps|)+y(\tint y{\infty}\cdot\tfrac{dy'}{y'}+1)(|Q^{2}A_{\tht}[z^{\flat},\eps]|)\\
 & \aleq(\tfrac{1}{y^{2}}\tint 0y\cdot y'dy'+\tint y{\infty}\cdot\tfrac{dy'}{y'}+1)(\tfrac{1}{\langle y\rangle}|z^{\flat}\eps|)+(\tint y{\infty}\cdot\tfrac{dy'}{y'}+1)(|\tfrac{1}{y^{2}}A_{\tht}[\tfrac{1}{\langle y'\rangle}z^{\flat},\eps]|)
\end{align*}
and 
\begin{align*}
\|y|V_{C}^{(III)}|_{1}\|_{L^{2}} & \aleq\|\tfrac{1}{\langle y\rangle}z^{\flat}\eps\|_{L^{2}}+\|\tfrac{1}{y^{2}}A_{\tht}[\tfrac{1}{\langle y'\rangle}z^{\flat},\eps]\|_{L^{2}}\\
 & \aleq\|\tfrac{1}{\langle y\rangle}z^{\flat}\eps\|_{L^{2}}\aleq\|z^{\flat}\|_{L^{\infty}}\|\tfrac{1}{y\langle\log_{-}y\rangle}\eps\|_{L^{2}}\aleq\lmb\|\eps\|_{\dot{\calH}_{0}^{1}}.
\end{align*}

We turn to estimate $(i\nabla\td E[z^{\flat}],(V_{\eps,Q+z^{\flat}}-V_{\eps,z^{\flat}}^{(\frkm)})z^{\flat})_{r}$.
As in Case C of Step 4, we integrate by parts: 
\begin{align}
(i\nabla\td E[z^{\flat}],V\eps)_{r} & =(i\rd_{-}^{(\frkm)}z^{\flat},V(\rd_{-}^{(\frkm)}z^{\flat})+(\rd_{y}V)z^{\flat})_{r}+(i\calN^{(\frkm)}(z^{\flat}),Vz^{\flat})_{r}\label{eq:int-by-pts-VC}
\end{align}
for a potential $V\in\{V_{C}^{(I)},V_{C}^{(II)},V_{C}^{(III)}\}$.
For each contribution, by \eqref{eq:int-by-pts-VC} and \eqref{eq:calN-2-ptwise-bound},
we have (using \eqref{eq:z-flat-H2-bound})
\begin{align*}
 & |(i\nabla\td E[z^{\flat}],V_{C}^{(I)}z^{\flat})_{r}|\\
 & \aleq(\||z^{\flat}|_{-1}\|_{L^{\infty}\cap yL^{2}}^{2}+\lmb^{2}\||z^{\flat}|^{2}\|_{L^{\infty}\cap y^{2}L^{1}})\||V_{C}^{(I)}|_{1}\|_{L^{1}+y^{-2}L^{\infty}}\\
 & \aleq(\||z^{\flat}|_{-1}\|_{L^{\infty}\cap yL^{2}}^{2}+\lmb^{2}\|z^{\flat}\|_{L^{\infty}\cap yL^{2}}^{2})\cdot\|\eps\|_{\dot{\calH}_{0}^{1}}^{1-}\\
 & \aleq(b|t|^{\delta})^{2}\cdot(b|\log b|^{\frac{1}{2}})^{1-}\aleq b^{3}|\log b|\cdot|t|^{\delta},
\end{align*}
and (using \eqref{eq:z-flat-H2-bound})
\begin{align*}
 & |(i\nabla\td E[z^{\flat}],V_{C}^{(II)}z^{\flat})_{r}|\\
 & \aleq(\||z^{\flat}|_{-1}^{2}\|_{L^{2}}+\lmb^{2}\||z^{\flat}|^{2}\|_{L^{2}})\||V_{C}^{(II)}|_{1}\|_{L^{2}}\\
 & \aleq(\||z^{\flat}|_{-1}\|_{L^{\infty}}\||z^{\flat}|_{-1}\|_{L^{2}}+\lmb^{2}\|z^{\flat}\|_{L^{\infty}}\|z^{\flat}\|_{L^{2}})\||V_{C}^{(II)}|_{1}\|_{L^{2}}\\
 & \aleq(b^{\frac{3}{2}}|t|^{\delta})\cdot(b^{\frac{1}{2}}|t|^{\delta}\|\eps\|_{\dot{\calH}_{0}^{1}})\aleq b^{3}|\log b|\cdot|t|^{\delta},
\end{align*}
and (using \eqref{eq:z-flat-H2-bound})
\begin{align*}
 & |(i\nabla\td E[z^{\flat}],V_{C}^{(III)}z^{\flat})_{r}|\\
 & \aleq\|\tfrac{1}{y}|z^{\flat}|_{-1}^{2}+\lmb^{2}|z^{\flat}\cdot\tfrac{1}{y}z^{\flat}|\|_{L^{2}}\|y|V_{C}^{(III)}|_{1}\|_{L^{2}}\\
 & \aleq(\||z^{\flat}|_{-1}\|_{L^{\infty}\cap yL^{2}}^{2}+\lmb^{2}\|z^{\flat}\|_{L^{\infty}\cap yL^{2}}^{2})\|y|V_{C}^{(III)}|_{1}\|_{L^{2}}\\
 & \aleq(b^{2}|t|^{2\delta})\cdot\lmb\|\eps\|_{\dot{\calH}_{0}^{1}}\aleq b^{3}|\log b|\cdot|t|^{\delta}.
\end{align*}
This completes the proof of the claim \eqref{eq:energy-identity-step5-claim}
and finishes the proof of \eqref{eq:energy-differential-equality}.

\smallskip
\textbf{Step 7.} Differential inequality.

In this step, we prove \eqref{eq:energy-differential-inequality}.
First, we show by adding a small correction to $\calE$ that the energy
identity \eqref{eq:energy-differential-equality} can be rewritten
as 
\begin{equation}
\rd_{t}\{\calE+o_{t\to0}(\tfrac{1}{\lmb^{2}}b^{2}|\log b|)\}-\Re(\rd_{t}\br{\bm{\zeta}}\cdot8\sqrt{8}\pi pq(4it)^{\frac{\nu-2}{2}})=o_{t\to0}(\tfrac{b}{\lmb^{4}}\cdot b^{2}|\log b|).\label{eq:energy-identity-step6-claim}
\end{equation}
Indeed, we have 
\begin{align*}
\Re(\rd_{t}\br{\bm{\lmb}}\cdot(4it)^{\frac{\nu-2}{2}}) & =\Re(\rd_{t}\br{\bm{\zeta}}\cdot(4it)^{\frac{\nu-2}{2}})\\
 & \quad-\rd_{t}\{\Re((\br{\bm{\zeta}-\bm{\lmb}})\cdot(4it)^{\frac{\nu-2}{2}})\}+\Re((\br{\bm{\zeta}-\bm{\lmb}})\cdot\rd_{t}(4it)^{\frac{\nu-2}{2}}).
\end{align*}
Using \eqref{eq:control-zeta-bound}, we see that 
\begin{align*}
|\Re((\br{\bm{\zeta}-\bm{\lmb}})\cdot(4it)^{\frac{\nu-2}{2}})| & \aleq|\bm{\zeta}-\bm{\lmb}|\cdot|t|^{\frac{\Re(\nu)-2}{2}}\aleq\lmb|t|^{\frac{\Re(\nu)-2}{2}}|\log b|^{-\frac{1}{2}},\\
|\Re(\br{\bm{\zeta}-\bm{\lmb}})\cdot\rd_{t}(4it)^{\frac{\nu-2}{2}})| & \aleq|\bm{\zeta}-\bm{\lmb}|\cdot|t|^{\frac{\Re(\nu)-2}{2}-1}\aleq\lmb|t|^{\frac{\Re(\nu)-2}{2}-1}|\log b|^{-\frac{1}{2}}.
\end{align*}
Since $\lmb|t|^{\frac{\Re(\nu)-2}{2}}\sim\frac{1}{\lmb^{2}}b^{2}|\log b|$,
$\frac{b}{\lmb^{2}}\sim|t|^{-1}$, and $|\log b|^{-\frac{1}{2}}=o_{t\to0}(1)$,
this completes the proof of \eqref{eq:energy-identity-step6-claim}.

Next, we compute 
\begin{align*}
 & -\rd_{t}(2\pi\log B_{0}\cdot|\bm{b}/\br{\bm{\zeta}}|^{2})\\
 & =-4\pi\log B_{0}\cdot\Re((\br{\bm{b}}/\bm{\zeta})\cdot\rd_{t}(\bm{b}/\br{\bm{\zeta}}))-2\pi\tfrac{(B_{0})_{t}}{B_{0}}|\bm{b}/\br{\bm{\zeta}}|^{2}\\
 & =-4\pi\log B_{0}\cdot\Re((\br{\bm{b}}/\bm{\zeta})\cdot\rd_{t}(\bm{b}/\br{\bm{\zeta}}))+O(\tfrac{b}{\lmb^{4}}\cdot b^{2}|\log b|^{\frac{1}{2}}).
\end{align*}
where we used $|\frac{(B_{0})_{t}}{B_{0}}|=|\frac{1}{2|t|}-\frac{\lmb_{t}}{\lmb}|\aleq\frac{b}{\lmb^{2}}|\log b|^{\frac{1}{2}}$,
which is a consequence of \eqref{eq:rough-control-lmb-gmm}. Next,
we use the modulation estimate \eqref{eq:mod-est-rd-b/zeta}, the
sign condition $b\geq0$, as well as the almost positivity \eqref{eq:P-positivity}
of $P$ to continue the previous display as 
\begin{align*}
 & =\Re\Big\{8\sqrt{8}\pi pq(4it)^{\frac{\nu-2}{2}}\cdot(\br{\bm{b}}/\bm{\zeta})+4\pi\log B_{0}\frac{\br{\bm{b}}P}{\lmb^{2}|\bm{\zeta}|^{2}}\Big\}-o_{t\to0}\Big(\frac{b}{\lmb^{4}}\cdot b^{2}|\log b|\Big)\\
 & \geq\Re\big\{8\sqrt{8}\pi pq(4it)^{\frac{\nu-2}{2}}\cdot(\br{\bm{b}}/\bm{\zeta})\big\}-o_{t\to0}(\tfrac{b}{\lmb^{4}}\cdot b^{2}|\log b|).
\end{align*}
Adding the above display to \eqref{eq:energy-identity-step6-claim}
and using \eqref{eq:mod-est-rd-zeta}, we have 
\begin{align*}
 & \rd_{t}\{\calE-2\pi\log B_{0}\cdot|\bm{b}/\br{\bm{\zeta}}|^{2}+o_{t\to0}(\tfrac{1}{\lmb^{2}}b^{2}|\log b|)\}\\
 & \quad\geq\Re\{(\rd_{t}\br{\bm{\zeta}}+(\br{\bm{b}}/\bm{\zeta}))\cdot8\sqrt{8}\pi pq(4it)^{\frac{\nu-2}{2}}\}-o_{t\to0}(\tfrac{b}{\lmb^{4}}\cdot b^{2}|\log b|)\\
 & \quad=-o_{t\to0}(\tfrac{b}{\lmb^{4}}\cdot b^{2}|\log b|).
\end{align*}
This completes the proof of \eqref{eq:energy-differential-inequality}.
\end{proof}

\subsection{$L^{2}$ energy estimates}

In order to close the $L^{2}$-bound of the bootstrap hypothesis,
we perform the energy method for $\eps$.
\begin{lem}[$L^{2}$ energy estimate]
We have 
\begin{equation}
\Big|\rd_{t}\|\eps^{\sharp}\|_{L^{2}}^{2}\Big|\aleq\tfrac{b}{\lmb^{2}}\cdot b|\log b|^{\frac{3}{2}}.\label{eq:L2-energy-est}
\end{equation}
\end{lem}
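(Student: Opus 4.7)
The plan is to compute $\rd_{t}\|\eps^{\sharp}\|_{L^{2}}^{2}=\lmb^{-2}\rd_{s}\|\eps\|_{L^{2}}^{2}=2\lmb^{-2}(\rd_{s}\eps,\eps)_{r}$ by substituting the $\eps$-equation \eqref{eq:eps-eq-s,y}. The antisymmetry identities $(\Lmb f,f)_{r}=(if,f)_{r}=0$ in the real inner product immediately eliminate the contributions of $\tfrac{\lmb_{s}}{\lmb}\Lmb\eps$, $-\gmm_{s}i\eps$, and $i\tht_{z^{\flat}}\eps$, leaving
\begin{equation*}
\tfrac{1}{2}\rd_{s}\|\eps\|_{L^{2}}^{2}=\tfrac{\lmb_{s}}{\lmb}(\Lmb Q,\eps)_{r}-\gmm_{s}(iQ,\eps)_{r}-(i\calL_{Q+z^{\flat}}\eps,\eps)_{r}-(iR_{Q+z^{\flat}}(\eps),\eps)_{r}-(iR_{Q,z^{\flat}},\eps)_{r}+(i\Psi_{z^{\flat}},\eps)_{r}.
\end{equation*}
After rescaling, the target is to bound each right-hand side term by $b^{2}|\log b|^{3/2}$ in the $s$-variable.

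The error terms on the second line are routine. Using the duality estimates of Section~\ref{subsec:duality-ests}, the $z^{\flat}$-bounds of Corollary~\ref{cor:bounds-z-bootstrap}, the bound $\|R_{Q,z^{\flat}}\|_{L^{2}}$ from Lemma~\ref{lem:RQ,z}, and the bootstrap bounds $\|\eps\|_{L^{2}}\aleq b^{1/2}|\log b|$, $\|\eps\|_{\dot{\calH}_{0}^{1}}\aleq b|\log b|^{1/2}$, each contribution is admissible. For the Hessian-type term, the kinetic part contributes zero ($-\Delta^{(m)}$ being $\bbC$-linear self-adjoint), while the quartic/sextic parts are bounded by $\|(Q+z^{\flat})\eps\|_{L^{2}}^{2}\aleq\|\eps\|_{\dot{\calH}_{0}^{1}}^{2}\aleq b^{2}|\log b|$ via Hardy-type estimates.

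The main obstacle is the modulation contribution $\tfrac{\lmb_{s}}{\lmb}(\Lmb Q,\eps)_{r}-\gmm_{s}(iQ,\eps)_{r}$: the direct estimates $|\tfrac{\lmb_{s}}{\lmb}|+|\gmm_{s}|\aleq b|\log b|^{1/2}$ from \eqref{eq:rough-control-lmb-gmm} together with $\|\eps\|_{L^{2}}\aleq b^{1/2}|\log b|$ give only $b^{3/2}|\log b|^{3/2}$, missing the target by a factor of $b^{1/2}$. To close the gap, I will establish sharp bounds $|(\Lmb Q,\eps)_{r}|+|(iQ,\eps)_{r}|\aleq b|\log b|$ by exploiting the adjoint relations hidden in \eqref{eq:truncated-est-2}: $\Lmb_{B_{0}}Q=iL_{Q}^{\ast}(\chi_{B_{0}}\tfrac{1}{2}iyQ)+O(\langle y\rangle^{-1})$ and $\chi_{B_{0}}Q=L_{Q}^{\ast}(\chi_{B_{0}}\tfrac{1}{2}yQ)+O(\langle y\rangle^{-1})$ in a weighted $L^{2}$-sense. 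Pairing with $\eps$ and commuting $L_{Q}^{\ast}$ via the antisymmetry of $i$ reduces the truncated inner products to $\pm(\chi_{B_{0}}\tfrac{1}{2}yQ,L_{Q}(i\eps))_{r}+O(\|\langle y\rangle^{-1}\eps\|_{L^{2}})$, which is bounded by $(\log B_{0})^{1/2}\|L_{Q}(i\eps)\|_{L^{2}}+\|\eps\|_{\dot{\calH}_{0}^{1}}\aleq b|\log b|$, using $\|L_{Q}(i\eps)\|_{L^{2}}\aleq\|\eps\|_{\dot{\calH}_{0}^{1}}$ (from the explicit formula for $L_{Q}$) together with the Hardy-type bound $\|\langle y\rangle^{-1}\eps\|_{L^{2}}\aleq\|\eps\|_{\dot{\calH}_{0}^{1}}$ (via the $L^{\infty}$-control $|\eps|\aleq\langle\log_{-}y\rangle\|\eps\|_{\dot{\calH}_{0}^{1}}$ on $\{y\leq 1\}$ and Hardy on $\{y\geq 1\}$). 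The tails $((\Lmb-\Lmb_{B_{0}})Q,\eps)_{r}$ and $((1-\chi_{B_{0}})Q,\eps)_{r}$ are controlled by $B_{0}^{-1}\|\eps\|_{L^{2}}\aleq b|\log b|$ since $B_{0}\sim b^{-1/2}$. Combining, $|\tfrac{\lmb_{s}}{\lmb}(\Lmb Q,\eps)_{r}-\gmm_{s}(iQ,\eps)_{r}|\aleq b|\log b|^{1/2}\cdot b|\log b|=b^{2}|\log b|^{3/2}$, and multiplication by $\lmb^{-2}$ completes the proof.
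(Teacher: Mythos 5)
Your proof is correct and reaches the paper's estimate by the same overall scheme: pair the $\eps$-equation with $\eps$ in the real inner product (equivalently, the paper pairs the $\eps^{\sharp}$-equation with $i\eps^{\sharp}$), use the antisymmetry of $\Lmb$, $i$, and $i\Delta^{(0)}$ to eliminate the advection, phase, $\tht_{z^{\flat}}$, and kinetic terms, and estimate the remaining contributions with the duality estimates, Lemma~\ref{lem:RQ,z}, and Corollary~\ref{cor:bounds-z-bootstrap}. You also correctly identified the critical difficulty -- the naive bound $|(\Lmb Q,\eps)_{r}|\aleq\|\eps\|_{L^{2}}$ is short by roughly $b^{1/2}$ -- and correctly targeted the improved bound $|(\Lmb Q,\eps)_{r}|+|(iQ,\eps)_{r}|\aleq b|\log b|$.

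Where you genuinely diverge from the paper is in how the improved modulation bound is obtained. The paper uses the pointwise decay $|\Lmb Q|,|Q|\aleq\langle y\rangle^{-2}$ together with the weighted $L^{1}$ estimate \eqref{eq:weighted-L1} (Lemma~\ref{lem:L1-log-bound}), applied with $\|\langle y\rangle^{-1}\eps\|_{L^{2}}\aleq\|\eps\|_{\dot{\calH}_{0}^{1}}\aleq b|\log b|^{1/2}$, to get $\|Q\eps\|_{L^{1}}\aleq b|\log b|$ in one line. You instead route through the truncated generalized kernel relations \eqref{eq:truncated-est-2}, rewriting $\Lmb_{B_{0}}Q$ and $\chi_{B_{0}}Q$ as $L_{Q}^{\ast}$ applied to $\chi_{B_{0}}\tfrac{y}{2}Q$-type profiles, moving $L_{Q}^{\ast}$ onto $\eps$ via duality and the antisymmetry of $i$, and using $\|L_{Q}(i\eps)\|_{L^{2}}\aleq\|\eps\|_{\dot{\calH}_{0}^{1}}$ together with $\|\chi_{B_{0}}\tfrac{y}{2}Q\|_{L^{2}}\sim(\log B_{0})^{1/2}$. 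Both methods produce the same $|\log b|^{1/2}$ loss on top of $\|\eps\|_{\dot{\calH}_{0}^{1}}$, so they are quantitatively equivalent. The paper's route is shorter and more elementary (a single abstract lemma, no use of the specific $L_{Q}$-structure); your route is heavier but structurally transparent in that it reuses exactly the algebraic relations that drive the modulation analysis. One minor point: you should also note that the needed estimate $\|L_{Q}(i\eps)\|_{L^{2}}\aleq\|\eps\|_{\dot{\calH}_{0}^{1}}$ is the \emph{easy} direction and holds without orthogonality conditions (unlike the coercivity \eqref{eq:coercivity}), which you implicitly rely on.
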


\begin{proof}
We start by rewriting the $\eps^{\sharp}$-equation as 
\[
i\rd_{t}\eps^{\sharp}+\Delta^{(0)}\eps^{\sharp}+\tht_{z}\eps^{\sharp}=-i\rd_{t}Q^{\sharp}+(\calL_{Q^{\sharp}+z}+\Delta^{(0)})\eps^{\sharp}+R_{Q^{\sharp}+z}(\eps^{\sharp})+R_{Q^{\sharp},z}-\Psi_{z}.
\]
Taking the inner product with $i\eps^{\sharp}$ and using the scaling
properties of the $\sharp/\flat$-operations, we obtain
\[
\tfrac{1}{2}\rd_{t}\|\eps^{\sharp}\|_{L^{2}}^{2}=-(\rd_{t}Q^{\sharp},\eps^{\sharp})_{r}+\tfrac{1}{\lmb^{2}}((\calL_{Q+z^{\flat}}+\Delta^{(0)})\eps+R_{Q+z^{\flat}}(\eps)+R_{Q,z^{\flat}}-\Psi_{z^{\flat}},i\eps)_{r}.
\]
For the modulation term, we use \eqref{eq:rough-control-lmb-gmm}
and $\|Q\eps\|_{L^{1}}\aleq b|\log b|$ (by \eqref{eq:weighted-L1})
to have 
\[
|-(\rd_{t}Q^{\sharp},\eps^{\sharp})_{r}|\aleq\tfrac{1}{\lmb^{2}}(|\tfrac{\lmb_{s}}{\lmb}|+|\gmm_{s}|)\|Q\eps\|_{L^{1}}\aleq\tfrac{1}{\lmb^{2}}\cdot b|\log b|^{\frac{1}{2}}\cdot b|\log b|\aleq\tfrac{b}{\lmb^{2}}\cdot b|\log b|^{\frac{3}{2}}.
\]
For the linear term $(\calL_{Q+z^{\flat}}+\Delta^{(0)})\eps$, we
notice that $((\calL_{Q+z^{\flat}}+\Delta^{(0)})\eps,i\eps)_{r}$
is a linear combination of $\calM_{\ast}(\psi_{1},\dots,\psi_{\ast})$
with $\psi_{j}\in\{Q+z^{\flat},\eps,i\eps\}$ and $\#\{j:\psi_{j}\in\{\eps,i\eps\}\}=2$.
Thus this contribution can be estimated by (cf. the proof of \eqref{eq:rough-control-tmp3})
\[
|\tfrac{1}{\lmb^{2}}((\calL_{Q+z^{\flat}}+\Delta^{(0)})\eps,i\eps)_{r}|\aleq\tfrac{1}{\lmb^{2}}\|\eps\|_{\dot{\calH}_{0}^{1}}^{2}\aleq\tfrac{b}{\lmb^{2}}\cdot b|\log b|.
\]
For the nonlinear term, we use \eqref{eq:rough-control-tmp3} to have
\[
|\tfrac{1}{\lmb^{2}}(R_{Q+z^{\flat}}(\eps),i\eps)_{r}|\aleq\tfrac{1}{\lmb^{2}}\|R_{Q+z^{\flat}}(\eps)\|_{L^{2}}\|\eps\|_{L^{2}}\aleq\tfrac{1}{\lmb^{2}}\|\eps\|_{\dot{\calH}_{0}^{1}}^{2}\|\eps\|_{L^{2}}\aleq\tfrac{b}{\lmb^{2}}\cdot b^{\frac{3}{2}-}.
\]
For the interaction and the $\Psi_{z^{\flat}}$ term, we use \eqref{eq:RQ,z-L2-est}
and \eqref{eq:def-Psi-zflat} to have 
\begin{align*}
|\tfrac{1}{\lmb^{2}}(R_{Q,z^{\flat}}-\Psi_{z^{\flat}},i\eps)_{r}| & \aleq\tfrac{1}{\lmb^{2}}(\|R_{Q,z^{\flat}}\|_{L^{2}}+\|\Psi_{z^{\flat}}\|_{L^{2}})\|\eps\|_{L^{2}}\\
 & \qquad\qquad\aleq\tfrac{1}{\lmb^{2}}\cdot b^{\frac{3}{2}}|t|^{\delta}\cdot b^{\frac{1}{2}}|\log b|\aleq\tfrac{b}{\lmb^{2}}\cdot b.
\end{align*}
This completes the proof.
\end{proof}

\subsection{\label{subsec:Closing-bootstrap}Closing bootstrap}

Finally, we finish the proof of Proposition~\ref{prop:bootstrap}.
\begin{proof}[Proof of Proposition~\ref{prop:bootstrap}]
\ 

\textbf{Step 1.} Smallness of $P$.

In this step, we claim that $P$ enjoys the following smallness: 
\begin{equation}
|P|\leq o_{t\to0}(b^{2}).\label{eq:closing-bootstrap-step1-claim}
\end{equation}
Note that the lower bound $P\geq-o_{t\to0}(b^{2})$ was already proved
in \eqref{eq:P-positivity}. The point here is that we can also obtain
the \emph{upper bound} $P\leq o_{t\to0}(b^{2})$. We will use \eqref{eq:energy-differential-inequality},
which is a consequence of the \emph{sign condition} $b\geq0$.

We integrate \eqref{eq:energy-differential-inequality} backwards
in time on $[t,\tau]$ to have 
\begin{align}
 & \Big\{\calE-2\pi\log B_{0}\cdot|\bm{b}/\br{\bm{\zeta}}|^{2}+o_{t\to0}(\tfrac{1}{\lmb^{2}}b^{2}|\log b|)\Big\}(t)\nonumber \\
 & \leq\Big\{\calE-2\pi\log B_{0}\cdot|\bm{b}/\br{\bm{\zeta}}|^{2}+o_{t\to0}(\tfrac{1}{\lmb^{2}}b^{2}|\log b|)\Big\}(\tau)\label{eq:closing-bootstrap-step1-1}\\
 & \quad+\int_{t}^{\tau}o_{t'\to0}\Big(\frac{b}{\lmb^{4}}\cdot b^{2}|\log b|\Big)dt'.\nonumber 
\end{align}
On one hand, we have at the initial time $t=\tau$ that 
\begin{equation}
|\calE-2\pi\log B_{0}\cdot|\bm{b}/\br{\bm{\zeta}}|^{2}|\leq o_{\tau\to0}(\tfrac{1}{\lmb^{2}(\tau)}b^{2}(\tau)|\log b(\tau)|),\label{eq:closing-bootstrap-step1-2}
\end{equation}
which follows from \eqref{eq:energy-coercivity}, \eqref{eq:initial-data-prop},
\eqref{eq:control-zeta-bound}, and \eqref{eq:proximity-b-bqnu}:
\begin{align*}
\calE(\tau) & =\tfrac{1}{2\lmb^{2}}\|L_{Q}\eps\|_{L^{2}}^{2}+o_{\tau\to0}(\tfrac{1}{\lmb^{2}}b^{2}|\log b|)\\
 & =\tfrac{1}{2\lmb^{2}}\cdot4\pi\log B_{0}\cdot|\bm{b}_{q,\nu}|^{2}+o_{\tau\to0}(\tfrac{1}{\lmb^{2}}b^{2}|\log b|)\\
 & =2\pi\log B_{0}\cdot|\bm{b}/\br{\bm{\zeta}}|^{2}+o_{\tau\to0}(\tfrac{1}{\lmb^{2}}b^{2}|\log b|).
\end{align*}
On the other hand, since $\frac{b}{\lmb^{4}}\cdot b^{2}|\log b|\sim|t|^{\mu}|\log|t||^{-1}$
with $\mu=\Re(\nu)-1>-1$ (by \eqref{eq:conv-rel-1}), we have 
\begin{equation}
\int_{t}^{\tau}\frac{b}{\lmb^{4}}\cdot b^{2}|\log b|dt'\aleq\frac{|t|^{\mu+1}}{|\log|t||}\sim\frac{1}{\lmb^{2}}\cdot b^{2}|\log b|.\label{eq:closing-bootstrap-step1-3}
\end{equation}
Substituting \eqref{eq:closing-bootstrap-step1-2} and \eqref{eq:closing-bootstrap-step1-3}
into \eqref{eq:closing-bootstrap-step1-1} yields the following control
at time $t$: 
\begin{equation}
\Big\{\calE-2\pi\log B_{0}\cdot|\bm{b}/\br{\bm{\zeta}}|^{2}+o_{t\to0}(\tfrac{1}{\lmb^{2}}b^{2}|\log b|)\Big\}(t)\leq o_{t\to0}\Big(\frac{1}{\lmb^{2}}\cdot b^{2}|\log b|\Big).\label{eq:closing-bootstrap-step1-4}
\end{equation}
Applying \eqref{eq:P-positivity} and \eqref{eq:energy-coercivity},
we obtain \eqref{eq:closing-bootstrap-step1-claim}.

\smallskip
\textbf{Step 2.} Closing the bootstrap bounds for the modulation parameters.

In this step, we prove the first and second rows of \eqref{eq:bootstrap-conclusion}.
The key is to use the smallness of $P$ \eqref{eq:closing-bootstrap-step1-claim},
which allows us to ignore the $P/\br{\bm{\zeta}}$-term in \eqref{eq:mod-est-rd-b/zeta}.
Using \eqref{eq:closing-bootstrap-step1-claim}, 
\begin{equation}
4\pi\log B_{0}=2\pi(\Re(\nu)+1)|\log|t||+O(\log|\log|t||),\label{eq:logB0-asymp}
\end{equation}
and \eqref{eq:def-p}, we can rewrite \eqref{eq:mod-est-rd-zeta}
and \eqref{eq:mod-est-rd-b/zeta} as 
\begin{align*}
|\rd_{t}\bm{\zeta}+(\bm{b}/\br{\bm{\zeta}})| & \aleq o_{t\to0}\Big(\frac{|t|^{\frac{\Re(\nu)}{2}}}{|\log|t||}\Big),\\
\Big|\rd_{t}(\bm{b}/\br{\bm{\zeta}})+\frac{4\sqrt{2}\Gmm(\tfrac{\nu}{2}+2)}{\Re(\nu)+1}\cdot q\frac{(4it)^{\frac{\nu}{2}-1}}{|\log|t||}\Big| & \aleq o_{t\to0}\Big(\frac{|t|^{\frac{\Re(\nu)}{2}-1}}{|\log|t||}\Big).
\end{align*}
Note that, using \eqref{eq:proximity-b-bqnu} and \eqref{eq:control-zeta-bound},
we have at the initial time $t=\tau$
\begin{align*}
\frac{\bm{b}(\tau)}{\br{\bm{\zeta}}(\tau)} & =\frac{\bm{b}_{q,\nu}(\tau)}{\br{\bm{\lmb}_{q,\nu}}(\tau)}+o_{\tau\to0}\Big(\frac{|\tau|^{\frac{\Re(\nu)}{2}}}{|\log|\tau||}\Big),\\
\bm{\zeta}(\tau) & =\bm{\lmb}_{q,\nu}(\tau)+o_{\tau\to0}\Big(\frac{|\tau|^{\frac{\Re(\nu)}{2}+1}}{|\log|\tau||}\Big).
\end{align*}
Integrating the above modulation equations backwards in time, we obtain
\begin{align}
(\bm{b}/\br{\bm{\zeta}})(t) & =\sqrt{2}i\frac{(\tfrac{\nu}{2}+1)\Gmm(\tfrac{\nu}{2})}{\Re(\nu)+1}\cdot q\frac{(4it)^{\frac{\nu}{2}}}{|\log|t||}+o_{t\to0}\Big(\frac{|t|^{\frac{\Re(\nu)}{2}}}{|\log|t||}\Big),\label{eq:closing-bootstrap-step2-1}\\
\bm{\zeta}(t) & =-\frac{\sqrt{2}}{4}\frac{\Gmm(\tfrac{\nu}{2})}{\Re(\nu)+1}\cdot q\frac{(4it)^{\frac{\nu}{2}+1}}{|\log|t||}+o_{t\to0}\Big(\frac{|t|^{\frac{\Re(\nu)}{2}+1}}{|\log|t||}\Big).\label{eq:closing-bootstrap-step2-2}
\end{align}
Applying \eqref{eq:control-zeta-bound} to \eqref{eq:closing-bootstrap-step2-2}
proves the first row of \eqref{eq:bootstrap-conclusion}. On the other
hand, multiplying \eqref{eq:closing-bootstrap-step2-1} and \eqref{eq:closing-bootstrap-step2-2}
gives 
\[
\bm{b}(t)=b_{q,\nu}(t)+o_{t\to0}\Big(\frac{|t|^{\Re(\nu)+1}}{|\log|t||}\Big),
\]
which combined with \eqref{eq:b-bound} yields the second row of \eqref{eq:bootstrap-conclusion}.

\smallskip
\textbf{Step 3.} Closing the bootstrap bounds for $\|L_{Q}\eps\|_{L^{2}}$.

In this step, we prove the third row of \eqref{eq:bootstrap-conclusion}.
This is an easy consequence of what we have proved in Step 1. Indeed,
by \eqref{eq:energy-coercivity}, \eqref{eq:logB0-asymp}, \eqref{eq:control-zeta-bound},
and \eqref{eq:proximity-b-bqnu}, we have 
\begin{align*}
 & \Big|\|L_{Q}\eps(t)\|_{L^{2}}^{2}-2\pi(\Re(\nu)+1)|\bm{b}_{q,\nu}(t)|^{2}\Big|\\
 & \quad\leq2\lmb^{2}\Big|\calE-2\pi\log B_{0}\cdot|\bm{b}/\br{\bm{\zeta}}|^{2}\Big|+o_{t\to0}(b^{2}|\log b|)).
\end{align*}
Thanks to \eqref{eq:closing-bootstrap-step1-4}, we obtain 
\[
\Big|\|L_{Q}\eps(t)\|_{L^{2}}^{2}-2\pi(\Re(\nu)+1)|\bm{b}_{q,\nu}(t)|^{2}\Big|=o_{t\to0}(b^{2}|\log b|).
\]
Taking the square root, this concludes the proof of the third row
of \eqref{eq:bootstrap-conclusion}.

\smallskip
\textbf{Step 4.} Closing the bootstrap bounds for $\|\eps\|_{L^{2}}$
and conclusion.

In this step, we prove the last row of \eqref{eq:bootstrap-conclusion}.
We integrate \eqref{eq:L2-energy-est} to have 
\[
\|\eps(t)\|_{L^{2}}^{2}\leq\|\eps(\tau)\|_{L^{2}}^{2}+O\Big(\int_{t}^{\tau}\frac{b}{\lmb^{2}}\cdot b|\log b|^{\frac{3}{2}}dt'\Big).
\]
Proceeding as in Step 1, while using \eqref{eq:initial-data-prop}
for $\|\eps(\tau)\|_{L^{2}}^{2}$ and $\frac{b}{\lmb^{2}}\cdot b|\log b|^{\frac{3}{2}}\sim|t|^{\mu}|\log|t||^{-\frac{1}{2}}$
with $\mu=\Re(\nu)>-1$, we obtain 
\[
\|\eps(t)\|_{L^{2}}^{2}\aleq b_{q,\nu}(\tau)+b(t)|\log b(t)|^{\frac{3}{2}}\aleq o_{t\to0}(b|\log b|^{2})\aleq o_{t\to0}(b_{q,\nu}|\log b_{q,\nu}|^{2}).
\]
This completes the proof of the last row of \eqref{eq:bootstrap-conclusion}
and hence the proof of Proposition~\ref{prop:bootstrap}.
\end{proof}
\bibliographystyle{abbrv}
\bibliography{References}

\end{document}